\numberwithin{equation}{section}
\numberwithin{figure}{section}
\theoremstyle{plain}
\newtheorem{thm}{Theorem}
  \theoremstyle{plain}
  \numberwithin{thm}{section}
  \newtheorem{cor}[thm]{Corollary}
  \theoremstyle{plain}
  \newtheorem{lem}[thm]{Lemma}
  \theoremstyle{plain}
   \newtheorem{proposition}[thm]{Proposition}
  \theoremstyle{remark}
  \newtheorem{rem}[thm]{Remark}
  \newtheorem{ex}[thm]{Example}
  \def\Ddots{\mathinner{\mkern1mu\raise\p@
\vbox{\kern7\p@\hbox{.}}\mkern2mu
\raise4\p@\hbox{.}\mkern2mu\raise7\p@\hbox{.}\mkern1mu}}
\newcommand{\eps}{\varepsilon}
\newcommand{\phw}{\widetilde \Phi^{wk}}
\newcommand{\norm}[1]{\left\| #1 \right\|}
\newcommand{\mklm}[1]{\left\{ #1 \right\}}
\newcommand{\eklm}[1]{\left\langle #1 \right\rangle}
\renewcommand{\d}{\,d}
\newcommand{\N}{{\mathbb N}}
\newcommand{\Z}{{\mathbb Z}}
\newcommand{\C}{{\mathbb C}}
\newcommand{\Ccal}{{\mathcal C}}
\newcommand{\R}{{\mathbb R}}
\newcommand{\E}{{\mathcal E}}
\newcommand{\F}{{\mathcal F}}
\renewcommand{\H}{{\mathcal H}}
\newcommand{\J}{{\mathcal J }}
\newcommand{\Jbb}{{\mathbb J }}
\newcommand{\M}{{\mathcal M}}
\newcommand{\T}{{\rm T}}
\renewcommand{\O}{{\mathcal O}}
\newcommand{\Xbf}{{\mathbf X}}
\newcommand{\0}{{\rm 0}}
\newcommand{\1}{{\bf 1}}
\renewcommand{\epsilon}{\varepsilon}
\renewcommand{\rho}{\varrho}
\newcommand{\Cinft}{{\rm C^{\infty}}}
\newcommand{\CT}{{\rm C^{\infty}_c}}
\renewcommand{\L}{{\rm L}}
\newcommand{\Lcal}{{\mathcal L}}
\newcommand{\Ncal}{{\mathcal N}}
\renewcommand{\S}{{\mathcal S}}
\newcommand{\G}{{\mathcal G}}
\newcommand{\SO}{\mathrm{SO}}
\newcommand{\g}{{\bf \mathfrak g}}
\renewcommand{\t}{{\bf \mathfrak t}}
\newcommand{\U}{{\mathcal U}}
\newcommand{\Ad}{\mathrm{Ad}\,}
\newcommand{\id}{\mathrm{id}\,}
\renewcommand{\det}{\mathrm{det}\,}
\renewcommand{\Re}{\mathrm{Re}\,}
\newcommand{\vol}{\text{vol}\,}
\newcommand{\dist}{\text{dist}\,}
\newcommand{\Crit}{\mathrm{Crit}}
\DeclareMathOperator{\supp}{supp\,}
\DeclareMathOperator{\tr}{tr}
\DeclareMathOperator{\gd}{\partial}
\DeclareMathOperator{\grad}{grad}
\newcommand{\e}[1]{\,{\mathrm e}^{#1}\,}
\newcommand{\dbar}{{\,\raisebox{-.1ex}{\={}}\!\!\!\!d}}
\newcommand{\bdm}{\begin{displaymath}}
\newcommand{\edm}{\end{displaymath}}
\newcommand{\bq}{\begin{equation}}
\newcommand{\eq}{\end{equation}}
\newcommand{\bqn}{\begin{equation*}}
\newcommand{\eqn}{\end{equation*}}
\begin{document}
\author{Pablo Ramacher}
\email[Pablo Ramacher]{\href{mailto:ramacher@mathematik.uni-marburg.de}{ramacher@mathematik.uni-marburg.de}}
\address{Philipps-Universit\"at Marburg, FB  12 Mathematik und Informatik, Hans-Meerwein-Str., 35032 Marburg}
\title[The equivariant spectral function of an invariant elliptic operator]{The equivariant spectral function of an invariant elliptic operator. $\L^p$-bounds, caustics, and concentration of  eigenfunctions}
\date{September 17, 2017}

\begin{abstract} Let $M$ be a compact boundaryless Riemannian manifold, carrying an  effective and isometric action of a compact   Lie group $G$,  and $P_0$ an invariant elliptic classical pseudodifferential operator on $M$. Using Fourier integral operator techniques, we prove a local Weyl law  with remainder estimate  for the equivariant (or reduced) spectral function  of $P_0$ for each isotpyic component in the Peter-Weyl decomposition of $\L^2(M)$, generalizing work of Avacumovi\v{c}, Levitan, and H\"ormander. From this we deduce  a generalized Kuznecov sum formula for periods of G-orbits, and recover the local Weyl law for orbifolds shown by Stanhope and Uribe. Relying on recent results on singular equivariant asymptotics of oscillatory integrals, we  further characterize  the  caustic behaviour of the reduced spectral function near singular  orbits, which allows us to give corresponding point-wise bounds  for  clusters of eigenfunctions in specific isotypic components.  In  case that $G$ acts on $M$ without singular orbits, we are able to deduce hybrid   $\L^p$-bounds  for  $2 \leq p \leq \infty$ in the eigenvalue and isotypic aspect that improve on the classical estimates of Seeger and Sogge for generic eigenfunctions. Our results are sharp in the eigenvalue aspect, but not in the isotypic aspect, and reduce to the classical ones in the case $G=\mklm{e}$. 
\end{abstract}

\maketitle

\setcounter{tocdepth}{1}
\tableofcontents{}

\section{Introduction}

In this paper, we derive an asymptotic formula  with remainder estimate for the equivariant (or reduced) spectral function of  an invariant elliptic operator on a compact Riemannian manifold with an effective and isometric action of a compact Lie group $G$, generalizing previous work of Avacumovi\v{c} \cite{avacumovic}, Levitan\cite{levitan52}, H\"ormander \cite{hoermander68}, and,  more recently, Stanhope and Uribe \cite{stanhope-uribe}. If $G$ acts on $M$ with orbits of the same dimension,   we obtain hybrid $\L^p$-bounds for eigenfunctions in the eigenvalue and isotypic aspect that improve on  the classical estimates for generic eigenfunctions proved by Seeger and Sogge \cite{sogge88, seeger-sogge}, but cannot hold when singular orbits are present. In the latter  case, we are able to describe the caustic   behaviour of the reduced spectral function as one approaches  orbits of singular type, relying on our recent work  \cite{ramacher10} on singular equivariant asymptotics  obtained via desingularization techniques.  As an application, we are able to prove  point-wise bounds  for isotypic clusters of eigenfunctions, showing that they tend to concentrate on singular orbits. Since very little can be said about the shape of eigenfunctions in general, this result is rather striking. In particular, this gives a new interpretation of the classical bounds for  spherical harmonics in terms of caustics of the equivariant spectral function, generalizing them to eigenfunctions on arbitrary compact manifolds with symmetries.  The concentration of eigenfunctions along singular orbits was already observed in \cite{kuester-ramacher15} for Schr\"odinger operators in the context of equivariant quantum ergodicity under the additional assumption that the reduced Hamiltonian flow is ergodic. Our results can be viewed as part of  the more general problem of studying  the eigenfunctions  of  a commuting family of differential operators on  a  general compact manifold  that are independent in some sense  \cite{marshall16}.

To explain our results, consider  a closed\footnote{By a closed manifold we will understand a compact manifold without boundary.} connected Riemannian manifold $M$ of dimension $n$, together with an elliptic classical pseudodifferential operator 
\bqn 
P_0:\Cinft(M) \, \longrightarrow \, \L^2(M)
\eqn
of degree $m$, where $\Cinft(M)$ denotes the space of smooth functions on $M$ and $\L^2(M)$ the Hilbert space of square integrable functions  with respect to the Riemannian  volume density $dM$ on $M$. We assume that $P_0$ is positive and symmetric, so that it has a unique self-adjoint extension $P$. Furthermore, the compactness of $M$ implies that $P$ has discrete spectrum. Let $\mklm{E_\lambda}$ be a spectral resolution of $P$, and denote by $e(x,y,\lambda)$  the Schwartz kernel of $E_\lambda$, which is called  the \emph{spectral function} of $P$. Within the theory of Fourier integral operators one can then show the following \emph{local Weyl formula} \cite{avacumovic, levitan52, hoermander68}
 \bq
\label{eq:1.1}
\Big |e(x,x,\lambda)-\frac{\lambda^{\frac nm}}{(2\pi)^n} \int_{\{p(x,\xi)< 1\}} \d \xi \Big | \leq C \lambda^{\frac{n-1}{m}}, \qquad   x \in M, \, \lambda \to +\infty ,
\eq
 for some constant $C>0$ independent of $x$ and $\lambda$,  $p$ being the principal symbol of $P_0$. By integrating over $M$ one  deduces from this  for  the \emph{spectral counting function} $  N(\lambda):=\sum_{t\leq \lambda} \dim \E_t=\int_M e(x,x,\lambda) \d M(x)$  the 
\emph{global Weyl formula}
\bqn
  N(\lambda)=\frac{\vol S^\ast M}{n(2\pi)^n} \lambda^{\frac nm} + O(\lambda^{\frac{n-1}m}),
\eqn
 where $\E_t$ denotes the eigenspace of $P$ belonging to the eigenvalue $t$ and $S^\ast M$ the co-sphere bundle $\mklm{(x,\xi) \in T^\ast M\mid p(x,\xi)=1}$. In order to show the stronger point-wise formula \eqref{eq:1.1} one first proves the estimate
 \bq
\label{eq:1.2}
|e(x,x,\lambda+1)- e(x,x,\lambda)|\leq C \cdot \lambda^{\frac{n-1}{m}}, \qquad x \in M,
\eq
which describes   the order of magnitude of the discontinuities of $N(\lambda)$ or, more generally, the amount of eigenvalues in the interval $(\lambda, \lambda+1]$ as $\lambda \to +\infty$, yielding the asymptotics $N(\lambda+1)-N(\lambda)=O(\lambda^{\frac{n-1}m})$. The  bound \eqref{eq:1.2} is equivalent to 
\bq
\sum_{{\lambda_j \in (\lambda,\lambda+1]}} |e_j(x)|^2 \leq C \cdot \lambda^{\frac{n-1}{m}}, \qquad x \in M,
\eq
where $\mklm{e_j}_{j\geq 0}$ denotes an arbitrary  orthonormal basis of eigenfunctions of $P$  in $\L^2(M)$  with corresponding eigenvalues $\mklm{\lambda_j}_{j \geq 0}$, and actually implies  the bound
\bq
\label{eq:1.4}
\norm{\chi_\lambda u}_{\L^\infty(M)} \leq C(1+\lambda)^{\frac{n-1}{2m}} \norm{u}_{\L^2(M)}, \qquad u \in \L^2(M),
\eq
where $\chi_\lambda$ denotes the spectral projection onto the sum of eigenspaces with eigenvalues in the interval  $(\lambda, \lambda+1]$ with Schwartz kernel $\chi_\lambda(x,y)=e(x,y,\lambda+1) - e(x,y,\lambda)$, since $\norm {\chi_\lambda}_{\L^2 \to \L^\infty}^2\equiv \sup_{x\in M} \chi_\lambda(x,x)$. From this  the estimate for $N(\lambda+1)-N(\lambda)$ immediately follows by taking the trace of $\chi_\lambda$. In particular,  one deduces from \eqref{eq:1.4}  the bound for eigenfunctions
\bq
\label{eq:1.4a}
\norm{u}_{\L^\infty(M)} \leq C \, \lambda^{\frac{n-1}{2m}}, \qquad u \in \E_\lambda, \, \norm{u}_{\L^2}=1.
\eq
Under the additional assumption that the co-spheres $S_x^\ast M$ are strictly convex, Seeger and Sogge \cite{seeger-sogge} were also able to prove upper bounds for  $\L^p$-norms of eigenfunctions via analytic interpolation techniques, generalizing previous work of Sogge for second order elliptic differential operators \cite{sogge88}. More precisely, let
\bqn 
\delta_n(p):=\max \left ( n \left |\frac 1 2 -\frac 1p \right | -\frac 12,0 \right ).
\eqn  
Then, for $u \in \E_\lambda, \, \norm{u}_{\L^2}=1$ one has 
\bq
\label{eq:27.12.2015}
\norm{u}_{\L^p(M)} \leq \begin{cases} C \lambda ^{\frac{\delta_n(p)}{m}}, & \frac{2(n+1)}{n-1} \leq p \leq \infty, \\ C \lambda^{\frac{(n-1)(2-p')}{4mp'}}, & 2 \leq p \leq \frac{2(n+1)}{n-1}, \end{cases}
\eq
where $\frac1p+\frac 1{p'}=1$.

In this paper, we shall sharpen the bounds \eqref{eq:1.1}--\eqref{eq:27.12.2015} in the presence of  symmetries. To explain our results, assume that  $M$ carries an effective and isometric action of a compact Lie group $G$ with Lie algebra $\g$ and orbits of dimension less or equal $n-1$. The group $G$ might be disconnected or even finite, though the case of interest is when $G$ is continuous. Suppose that $P$ commutes with the {left-regular representation} $(\pi,\L^2(M))$ of $G$ in $\L^2(M)$ given by 
\bqn 
\pi(g) u (x) = u(g^{-1}\cdot x), \qquad u \in \L^2(M),
\eqn
 so that each eigenspace of $P$ becomes a unitary $G$-module. If $\widehat G$ denotes the set of equivalence classes of irreducible unitary representations of $G$, which we shall  identify with the set of characters of $G$, the Peter-Weyl theorem asserts that 
 \bq
\label{eq:PW} 
\L^2(M)=\bigoplus_{\gamma \in \widehat G} \L^2_\gamma(M),
\eq
a Hilbert sum decomposition, where $\L^2_\gamma(M):=\Pi_\gamma \L^2(M)$ denotes the $\gamma$-isotypic component, and $\Pi_\gamma$  the corresponding projection. Assume that the orthonormal basis $\mklm{e_j}_{j \geq 0}$ has been chosen such that it is compatible with the decomposition \eqref{eq:PW}, and let $e_\gamma(x,y,\lambda)$ be the spectral function of the operator $P_\gamma:=\Pi_\gamma \circ P\circ \Pi_\gamma=P\circ \Pi_\gamma=\Pi_\gamma \circ P$, which is also called the \emph{reduced spectral function} of $P$. Further, let $\Jbb:T^\ast M \to \g^\ast$ denote the momentum map of the Hamiltonian $G$-action on $T^\ast M$, induced by the action of $G$ on $M$,  and write $\Omega:=\Jbb^{-1}(\mklm{0})$.  
As our first result, we show in  Theorem \ref{thm:main}   the \emph{equivariant local Wey law}
 \bq
 \label{eq:14.05.2015}
\left |e_\gamma(x,x,\lambda)- \lambda^{\frac{n-\kappa_x}{m}} \frac{d_\gamma [\pi_{\gamma|G_x}:\1]}{(2\pi)^{n-\kappa_x}}  \int_{\{ (x,\xi) \in \Omega, \, p(x,\xi)< 1\}} \frac{ \d \xi}{\vol \O_{(x,\xi)}} \right | \leq C_{x,\gamma} \, \lambda^{\frac{n-\kappa_x-1}{m}}, \quad x \in M, 
\eq as $\lambda \to +\infty $, where $\kappa_x:=\dim \O_x$ is the dimension of the $G$-orbit through $x$,  $d_\gamma$ denotes the dimension of an irreducible $G$-representation $\pi_\gamma$ belonging to $\gamma$ and $ [\pi_{\gamma|G_x}:\1]$ the multiplicity of the trivial representation  in the restriction of $\pi_\gamma$ to the isotropy group $G_x$ of $x$, while $C_{x,\gamma}>0 $ is a constant depending on $x$ and $\gamma$.  It should be  emphasized that $\kappa_x$, and therefore also the leading term and the constant $C_{x,\gamma}$, which are independent of $\lambda$,   will in general depend in a highly non-uniform way on $x\in M$. In fact, the description of  $e_\gamma(x,y,\lambda)$ reduces in essence to the study of   oscillatory integrals of the form 
\bq
\label{eq:1.12.2015}
I_{x,y}(\mu):=\int_{G}\int_{S^{\ast}_x Y} e^{i\mu \Phi_{x,y}(\omega,g)} a (x,y,\omega,g) \d (S^{\ast}_x Y)(\omega) \d g, \qquad  \, \mu >0,
\eq
with phase function 
\bqn 
\Phi_{x,y}(\omega,g):=  \eklm{\kappa(x) - \kappa( g\cdot y), \omega},
\eqn 
where $(Y,\kappa)$ is a local chart on $M$ and  $a \in \CT$ an amplitude that might depend on $\mu$ and is such that $(x,y, \omega, g) \in \supp a$ implies $x, g \cdot y \in Y$, while $d(S^\ast Y)$ and $dg$ denote  Liouville and Haar measure, respectively.  Now, when trying to describe the asymptotic behaviour of $I_{x,x}(\mu)$ as $\mu \to +\infty $ uniformly in $x$ via the stationary phase principle, one encounters the phenomenon that the critical set of $\Phi_{x,x}$ changes abruptly its dimension when $x$ passes through points of  singular orbits, leading to a drastic change in the asymptotics of $I_{x,x}(\mu)$. Such points are called \emph{caustics} \cite{varadarajan97}, and are ultimately responsible for the qualitatively very different asymptotic behaviour of  the reduced spectral function  as $x$ approaches  such points. A precise description of the asymptotics of the integrals \eqref{eq:1.12.2015} is given in Theorems \ref{thm:12.05.2015}, \ref{thm:14.05.2017}, and Proposition \ref{prop:10.08.2017}.

Though the leading coefficient in the asymptotic formula \eqref{eq:14.05.2015}  for $e_\gamma(x,x,\lambda)$  is explicit, and has a clear geometric meaning, it does not unveil the caustic nature of $e_\gamma(x,x,\lambda)$ when singular orbits are present, and   blows up  in an unknown way  as $x$ approaches  such orbits. To obtain a precise description of this  caustic behaviour it is  necessary to examine the integrals \eqref{eq:1.12.2015}  more carefully. For this, we shall  rely on our recent results \cite{ramacher10} on singular equivariant asymptotics  obtained via resolution of singularities,  from which we will be able to deduce  a uniform description of the integrals $I_{x,x}(\mu)$ and  the behaviour of $e_\gamma(x,x,\lambda)$ near singular   orbits.  More precisely, consider the stratification  $M=M(H_1) \, \dot \cup \dots \dot \cup \, M(H_L)$ of $M$ into orbit types, arranged in such a way that 
$(H_i) \leq (H_j)$ implies $i \geq j$, and let $\Lambda$ be the maximal length that a maximal totally ordered subset of isotropy types can have. Write  $M_\mathrm{prin}:=M(H_L)$, $M_\mathrm{except}$, and $M_\mathrm{sing}$ for the union of all orbits of principal, exceptional, and singular type, respectively, so that 
\bq
\label{eq:15.08.2016}
M= M_\mathrm{prin}\, \dot \cup \, M_\mathrm{except}\, \dot \cup \, M_\mathrm{sing},
\eq
  and denote by $\kappa:=\dim G/H_L$ the dimension of an orbit of principal type.   Then, by   Theorem \ref{thm:15.11.2015} one has  for $x \in M_\mathrm{prin}\cup M_\mathrm{except}$  and $\lambda \to +\infty $ the \emph{singular equivariant local Weyl law}
   \begin{align}
 \label{eq:17.06.2016}
 \begin{split}
 \Big |e_\gamma(x,x,\lambda)&- \frac{d_\gamma \lambda^{\frac{n-\kappa}{m}}}{(2\pi)^{n-\kappa}} \sum_{N=1}^{\Lambda-1} \,  \sum_{{i_1<\dots< i_{N} }} \, \prod_{l=1}^{N}   |\tau_{i_l}|^{\dim G- \dim H_{i_l}-\kappa}  \mathcal{L}^{0,0}_{i_1\dots i_{N} }(x,\gamma) \Big  |\\ 
&\leq C_\gamma \lambda^{\frac{n-\kappa-1}m} \sum_{N=1}^{\Lambda-1}\, \sum_{{i_1<\dots< i_{N}}}   \prod_{l=1}^N     |\tau_{i_l}|^{\dim G- \dim H_{i_l}-\kappa-1}, 
\end{split}
\end{align}
 where the multiple sums run over all possible totally ordered subsets $\mklm{(H_{i_1}),\dots, (H_{i_N})}$ of singular isotropy types, the  coefficients $\mathcal{L}^{0,0}_{i_1\dots i_{N}}$  are explicitly given and bounded functions  in $x$, and  $\tau_{i_j} =\tau_{i_j}(x)\in (-1,1)$ are desingularization parameters that arise in the resolution process satisfying  $|\tau_{i_j}|\approx \dist (x, M(H_{i_j}))$, while  $C_\gamma>0$ is a constant independent of $x$ and $\lambda$. Thus, {the combinatorial complexity of the underlying group action is reflected in  the asymptotic shape of the equivariant spectral function}. By integrating the asymptotic formulae \eqref{eq:14.05.2015} and \eqref{eq:17.06.2016}  over $x\in M$, one obtains for the equivariant counting function $N_\gamma(\lambda):= \int_M e_\gamma(x,x,\lambda) \d M(x)$ the \emph{equivariant Weyl law}
 \bq
 \label{eq:weylglobal}
N_\gamma(\lambda)= \frac{d_\gamma [{\pi_\chi}_{|H_L}:\1]}{(n-\kappa)(2\pi)^{n-\kappa}}   \mathrm{vol} \, [(\Omega \cap S^\ast M)/G]  \,  {\lambda} ^{\frac{n-\kappa}m }   + O_\gamma\big (\lambda^{{(n-\kappa-1)}/m} (\log \lambda)^{\Lambda} \big ).
\eq
This was the main result of \cite{ramacher10}. Notice that in spite of the fact that the desingularization techniques developed there are necessary to establish the remainder estimate in \eqref{eq:weylglobal}, singular and exceptional orbits, being of measure zero, do not contribute to the equivariant Weyl law \eqref{eq:weylglobal}, and remain hidden. It is only in the stronger local Weyl laws  \eqref{eq:14.05.2015} and \eqref{eq:17.06.2016} for the reduced spectral function that the whole orbit structure of the underlying group action becomes manifest.

As a major consequence, Theorems \ref{thm:main} and  \ref{thm:15.11.2015} lead to  refined bounds for  eigenfunctions. In the non-singular case, that is, when only principal and exceptional orbits are present,  and consequently
all $G$-orbits have the same dimension $\kappa$,     the obtained bounds are uniform in $x\in M$, while in the singular case, they show that eigenfunctions tend to concentrate along lower dimensional orbits. Indeed, as in the non-equivariant case, the crucial bound  for obtaining \eqref{eq:14.05.2015} is a bound for $e_\gamma(x,x,\lambda+1) - e_\gamma(x,x,\lambda)$, which is equivalent to the non-uniform bound
\bq
\label{eq:clust}
\sum_{\stackrel{\lambda_j \in (\lambda,\lambda+1],}{ e_j \in \L^2_\gamma(M)}} |e_j(x)|^2\leq C_{x, \gamma} \, \lambda^{\frac{n-\kappa_x-1}{m}}, \quad x \in M,
\eq
see Corollary \ref{cor:21.06.2016}.  
From this one immediately deduces in the non-singular case  the hybrid $\L^\infty$-estimate in the eigenvalue and isotypic aspect
\bqn
\norm{(\chi_\lambda\circ \Pi_\gamma) u}_{\L^\infty(M)} \leq C_{\gamma}\,  (1+ \lambda)^{\frac{n-\kappa-1}{2m}} \norm{u}_{\L^2(M)}, \qquad u \in \L^2(M),
\eqn
 where $C_{\gamma}>0$ is a constant independent of $\lambda$ satisfying the estimate
 \bq
 \label{eq:24.07.2017}
 C_\gamma \ll \sqrt{d_\gamma \sup_{l \leq \lfloor \kappa/2+1\rfloor} \norm{D^l\gamma}_\infty},
 \eq
 see Proposition \ref{thm:bounds} and \eqref{eq:1.6.2017}. In particular,  we obtain the hybrid equivariant bound for eigenfunctions
\bqn 
\norm{u}_{\L^\infty(M)} \ll C_\gamma \, \lambda^{\frac{n-\kappa-1}{2m}}, \qquad  u \in \E_\lambda \cap \L^2_\gamma(M), \quad \norm{u}_{\L^2}=1.
\eqn
 Note that if $n=\kappa+1$, this bound reads   $\norm{u}_\infty \leq C_\gamma$. The proof of $\L^p$-bounds is considerably more envolved, since it no longer suffices to study the integrals $I_{x,y}(\mu)$ restricted to the diagonal. Instead,  it is necessary to estimate their growth as $\mu\to +\infty $ in a neighborhood of the latter, for which we have to assume that  the co-spheres $S_x^\ast M$ are strictly convex. Using complex interpolation techniques, we then prove in Theorem \ref{thm:20.02.2016} the hybrid bounds in the eigenvalue and isotypic aspect
\bqn
\norm{(\chi_\lambda \circ \Pi_\gamma) u}_{\L^q(M)} \leq \begin{cases} C_{\gamma} \,  \lambda^{\frac{\delta_{n-\kappa}(q)}{m}} \norm{u}_{\L^2(M)}, &  \frac{2(n-\kappa+1)}{n-\kappa-1} \leq q \leq \infty, \vspace{2mm} \\ C_{\gamma} \, \lambda^{\frac{(n-\kappa-1)(2-q')}{4m q'}} \norm{u}_{\L^2(M)}, &  2 \leq q \leq \frac{2(n-\kappa+1)}{n-\kappa-1}, \end{cases} 
\eqn
 where $\frac 1q+\frac 1{q'}=1$, 
 and $C_\gamma$ is as in \eqref{eq:24.07.2017}.   In particular, we have the hybrid equivariant bound
\bqn 
\norm{u}_{\L^q(M)} \leq \begin{cases} C_\gamma \,  \lambda^{\frac{\delta_{n-\kappa}(q)}{m}}, &  \frac{2(n-\kappa+1)}{n-\kappa-1} \leq q \leq \infty, \vspace{2mm} \\ C_\gamma \, \lambda^{\frac{(n-\kappa-1)(2-q')}{4m q'}}, &  2 \leq q \leq \frac{2(n-\kappa+1)}{n-\kappa-1}, \end{cases} 
\eqn
for any eigenfunction of $P$  belonging to  $u \in \E_\lambda \cap \L^2_\gamma(M)$ and satisfying $\norm{u}_{\L^2}=1$, 
provided that $G$ acts on $M$ with  orbits of the same dimension $\kappa$. Nevertheless, the $\L^p$-bounds  above cannot hold when singular orbits are present, and the situation in this case is described by Corollary \ref{cor:2.12.2015}, by which   one has the uniform bound
\bq
\label{eq:4.12.2015}
\sum_{\stackrel{\lambda_j \in (\lambda,\lambda+1],}{ e_j \in \L^2_\gamma(M)}} |e_j(x)|^2  \leq \begin{cases}  C \, \lambda^{\frac{n-1}m}, & \hspace{-.0cm} x\in M_\mathrm{sing}, \\
& \\
C_\gamma \, \lambda^{\frac{n-\kappa-1}m} \sum\limits_{N=1}^{\Lambda-1}\, \sum\limits_{{i_1<\dots< i_{N}}}   \prod\limits_{l=1}^N  |\tau_{i_l}|^{\dim G- \dim H_{i_l}-\kappa-1}, & x\in M- M_\mathrm{sing},  \end{cases}
\eq
for a constant $C_\gamma>0$ independent of $x$ and $\lambda$, and $C>0$ even independent of $\gamma$.  In comparison with the bound \eqref{eq:clust}, where the dependency of the constant $C_{x,\gamma}$ on $x$ remains unspecified, the bound \eqref{eq:4.12.2015} gives a rather precise description of the growth of eigenfunctions near singular orbits. 

To illustrate our results, consider the classical  case where $M=S^2$, and $G=\SO(2)$ acts on $M$ by rotations around  the symmetry axis through the poles. The eigenfunctions of the Laplace-Beltrami operator on $M=S^2$  are given by the spherical functions
\begin{equation*}
Y_{k,m}(\phi,\theta)=\sqrt{\frac{2k+1}{4\pi}\frac{(k-m)!}{(k+m)!}} P_{k,m}(\cos \theta)e^{im\phi}, \qquad 0\leq \phi<2\pi, \, 0 \leq \theta < \pi, 
\end{equation*}
with corresponding eigenvalues $k(k+1)$, where $k \in \N$, $|m|\leq k$,  and  $P_{k,m}$ are the associated Legendre polynomials. Furthermore, with the identification $\widehat{\SO(2)} \simeq \Z$ the spherical function $Y_{k,m}$ belongs to the isotypic component $\L^2_m(S^2)$. The Legendre polynomials $P_k(\cos \theta):=P_{k,0}(\cos \theta)$ satisfy $P_k(1)=1$, and for $k \sin \theta >1$ obey the classical asymptotics
\bqn
P_k(\cos \theta)=\sqrt{\frac 2{ \pi k \sin \theta}} \cos \left ( \left (k+\frac 12 \right ) \theta -\frac \pi 4 \right ) +O\left (\frac 1{(k \sin \theta)^{3/2}}\right ), \qquad \theta \in (0, \pi), 
\eqn
where the remainder is uniform in $\theta$ on any interval  $[\eps,\pi - \eps]$ with $0<\eps$ small, see \cite[p. 303]{hobson}. From this one concludes in the limit $k \to \infty $ that 
\bq
\label{eq:3.12.2015}
 |Y_{k,0}(\phi,\theta)|^2= {\frac{2k+1}{4\pi}} |P_{k}(\cos \theta)|^2 \approx \begin{cases} k, & \theta=0,\pi, \\
 \frac{1}{{\sin \theta}},
 & \theta \in (0,\pi). \end{cases}
\eq
 Thus, as $k \to \infty $ the  eigenfuntions $Y_{k,0}$  concentrate on the poles, which are precisely the fixed points of the $\SO(2)$-action on $S^2$, and maximize the bound \eqref{eq:1.4a}.  The bounds \eqref{eq:4.12.2015} are precisely of the type \eqref{eq:3.12.2015}, and provide an interpretation of the latter in terms  of the caustic behaviour of the equivariant spectral function, compare also Example \ref{ex:7.12.2015}. Furthermore, as discussed  in Section \ref{sec:sharpness}, the bounds \eqref{eq:3.12.2015} show that the point-wise bounds  \eqref{eq:4.12.2015} are sharp in the spectral parameter. \\

Collecting everything, the main conclusions to be drawn from this work are that 

\medskip

\begin{itemize}
\item \emph{asymptotics for the equivariant spectral function of an invariant elliptic operator are  determined by the orbit structure of the underlying group action}; \\
\item \emph{symmetries lead to refined $\L^p$-estimates for eigenfunctions of invariant elliptic operators, provided that all orbits of the underlying group action have the same dimension};\\
\item \emph{lower dimensional  orbits  are responsible for concentration of eigenfunctions, and this concentration is due to the caustic behaviour of the equivariant spectral function. In other words, the orbit structure is reflected in the shape of eigenfunctions}. 
\end{itemize}

\medskip

We would like to close this introduction by making some final comments.  In the particular case that $\gamma=\gamma_\text{triv}$ is the trivial representation,  \eqref{eq:14.05.2015} actually implies in passing a generalized Kuznecov sum formula for periods of  $G$-orbits, see  Corollary \ref{cor:kuznecov}, which generalizes previous results of Zelditch \cite{zelditch92} on periods of  closed geodesics.  In case that $G$ acts with finite isotropy groups on $M$, that is, when $\widetilde M:=M/G$ is an orbifold, an asymptotic formula for the spectral function of an elliptic operator on $\widetilde M$ was given by Stanhope and Uribe in \cite{stanhope-uribe}, and we recover their result in Corollary \ref{cor:orbifold}. If $G=\mklm{e}$, our results just reduce to the classical ones. Finally, let us mention that  one can deduce also bounds for the spectral function $e(x,y,\lambda)$ of an elliptic operator of the form
\bqn
|e(x,y,\lambda)| \leq C \cdot \lambda^{n/m},  \qquad x, y \in M, 
\eqn
by using heat-equation or, equivalently, zeta-function methods. Nevertheless, bounds of the form \eqref{eq:1.2}, which are necessary for proving the local Weyl  law \eqref{eq:1.1},  are not accessible via these techniques, and can only be obtained within the theory of Fourier integral operators, see \cite{hoermander68} and \cite[Sections 15 and 21, in particular Problem 15.1 and Lemma 21.4]{shubin}. In the equivariant case, bounds of the 
form 
\bqn
|e_\gamma (x,y,\lambda)| \leq C_\gamma \cdot \lambda^{\frac{n-\kappa}m},  \qquad x, y \in M, 
\eqn
could in principle  be deduced from work of Donnelly \cite{donnelly78} and Br\"uning-Heintze \cite{bruening-heintze79}, at least when $G$ acts on $M$ with orbits of the same dimension $\kappa$. But they would not be sufficient to imply our results, and the  desingularization techniques developed in  \cite{ramacher10} are necessary  in order  to describe the precise nature of the reduced spectral function of an invariant elliptic operator.

$\L^p$-bounds for spectral clusters for elliptic second-order differential operators on  $2$-dimensional compact manifolds with boundary and either Dirichlet or Neumann conditions were shown in \cite{smith-sogge}, while manifolds with maximal eigenfunction growth were studied in \cite{sogge-toth-zelditch}. For locally symmetric spaces of higher rank, improved $\L^p$-bounds have been shown by Sarnak and Marshall in \cite{sarnak_letter, marshall16}.   They also derived corresponding subconvex $\L^\infty$-bounds based on the presence of an additional family of commuting operators given by the Hecke algebra \cite{iwaniec-sarnak95,Marshall}. In a forthcoming article \cite{ramacher-wakatsuki17} we shall extend their results to compact arithmetic quotients of semisimple algebraic groups relying on the asymptotic description  of the integrals \eqref{eq:1.12.2015} given in Theorems \ref{thm:12.05.2015} and \ref{thm:14.05.2017}.  For a general overview on eigenfunctions on Riemannian manifolds, we refer to the survey articles \cite{zelditch08,zelditch13}. 

Through the whole document, the notation $O(\mu^{k}), k \in \R \cup \mklm{\pm \infty},$ will mean an  upper bound of the form $C \mu^k$  with a constant $C>0$ that is uniform in all relevant variables, while $O_\aleph(\mu^{k})$ will denote an upper bound of the form $C_\aleph \,  \mu^k$ with a constant $C_\aleph> 0$ that depends on the indicated variable $\aleph$. In the same way,  we shall write $a\ll_\aleph b$ for two real numbers $a$ and $b$, if there exists a constant $C_\aleph>0$ depending only on $\aleph$ such that $|a| \leq C_\aleph b$, and similarly $a \ll b$, if the bound is uniform in all relevant variables. Finally, $\N$ will denote the set of natural numbers $0,1,2,3,\dots$. \\

{\bf Acknowledgements.} I am grateful to  Simon Marshall and Christopher Sogge for conversations about the subject. Also, I would like to thank  Panagiotis Konstantis and Benjamin K\"uster  for helpful discussions concerning the proof of Lemma \ref{lem:21.04.2015}. 

\section{The reduced spectral function of an invariant elliptic operator}

Let $M$ be  a closed connected Riemannian manifold  of dimension $n$ with Riemannian volume density $dM$, and $P_0$ an elliptic classical pseudodifferential operator on $M$ 
of degree $m$ which  is positive and symmetric. Let further $T^\ast M$ be the cotangent bundle  of $M$. The  principal symbol $p(x,\xi)$ of $P_0$  constitutes a strictly positive function on $T^\ast M\setminus\mklm{0}$,  and is homogeneous in $\xi$ of degree $m$. Denote by $P$ the unique self-adjoint extension of $P_0$, its domain being the $m$-th Sobolev space $H^m(M)$, and let $\mklm{e_j}_{j\geq 0}$ be an orthonormal basis of $\L^2(M)$ consisting of eigenfunctions of $P$ with eigenvalues $\mklm{\lambda_j}_{j \geq 0}$ repeated according to their multiplicity. In order to deal with a hyperbolic problem, consider the $m$-th root $Q:=\sqrt[m]{P}$ of $P$ given by the spectral theorem. It is well  known that $Q$ is a classical pseudodifferential operator of order $1$ with principal symbol $q(x,\xi):=\sqrt[m]{p(x,\xi)}$ and domain  $H^1(M)$. Again, $Q$ has discrete spectrum, and its eigenvalues  are given by $\mu_j:=\sqrt[m]{\lambda_j}$.  The spectral function $e(x,y,\lambda)$ of $P$ can then be described by  studying the spectral function of $Q$, which in terms of the basis $\mklm{e_j}$ is given by
\bqn 
e(x,y,\mu):=\sum_{\mu_j\leq \mu} e_j(x) \overline{e_j(y)}, \qquad \mu\in \R, 
\eqn
and belongs to $\Cinft(M \times M)$ as a function of $x$ and $y$. Let $\chi_\mu$ be the spectral projection onto the sum of eigenspaces of $Q$ with eigenvalues in the interval  $(\mu, \mu+1]$, and denote its Schwartz kernel by $\chi_\mu(x,y):=e(x,y,\mu+1) - e(x,y,\mu)$. To obtain an asymptotic description of the spectral function of $Q$, one first derives a description of $\chi_\mu(x,y)$ by approximating $\chi_\mu$ by Fourier integral operators. To do so, let $\rho \in \S(\R,\R_+)$ be such that $\rho(0)=1$ and $\supp \hat \rho\in (-\delta/2,\delta/2)$ for a given $\delta>0$, and define the {approximate spectral projection operator} 
\bq
\label{eq:2.1} 
\widetilde \chi_\mu u := \sum_{j=0}^\infty \rho(\mu-\mu_j) E_{j} u, \qquad u \in \L^2(M),
\eq
where $E_j$ denotes the orthogonal projection onto the subspace spanned by $e_j$. Clearly, $K_{\widetilde \chi_\mu}(x,y):=\sum_{j=0}^\infty \rho(\mu-\mu_j) e_j(x) \overline{e_j(y)}\in \Cinft(M\times M)$ constitutes the kernel of $\widetilde \chi_\mu$. Now, notice that for $\mu,\tau\in \R$ one has
\bqn 
\rho(\mu -\tau) = \frac 1 {2\pi} \intop_\R \hat \rho(t) e^{-it\tau} e^{it\mu} \d t, 
\eqn
where $\hat \rho(t)$ denotes the Fourier transform of $\rho$, 
so that for $u \in \L^2(M)$ we obtain
\begin{align*}
\widetilde \chi_\mu u=\frac 1 {2\pi} \sum_{j=0}^\infty \intop_\R \hat \rho(t)  e^{it\mu}  e^{-it\mu_j}\d t \, E_j u= \frac 1 {2\pi}  \intop_\R \hat \rho(t)  e^{it\mu}  U(t) u \d t, 
\end{align*}
where $U(t)$ denotes the one-parameter group of unitary operators in $\L^2(M)$ 
  \bqn 
 U(t)=\int e^{-it\mu} dE_\mu^Q=e^{-itQ}, \qquad t \in \R,
 \eqn
 given by the Fourier transform of the spectral measure, 
 $\{E_\mu^Q\}$ being a spectral resolution of $Q$. The central result of H\"ormander \cite{hoermander68}  then says that  $U(t)=e^{-itQ}:\L^2(M)\rightarrow \L^2(M)$ can be approximated by Fourier integral operators, yielding an asymptotic formula for the kernels of $\widetilde \chi_\mu$ and  $\chi_\mu$, and finally for the spectral function of $Q$ and $P$. 
 
Let us now come back to the problem described in the introduction, and assume that  $M$ carries an effective and isometric action of a compact   Lie group $G$. Let $P$ commute with the left-regular representation $(\pi,\L^2(M))$ of $G$. Consider  the  Peter-Weyl decomposition \eqref{eq:PW} of $\L^2(M)$, and let $\Pi_\gamma$  be the projection onto the isotypic component belonging to $\gamma \in \widehat G$, which is given by the Bochner integral
\bqn 
\Pi_\gamma=d_\gamma \intop_G \overline{\gamma(g)} \pi(g) \d_G(g),
\eqn 
where $d_\gamma$ is the dimension of an unitary irreducible  representation of class $\gamma$, and  $d_G(g) \equiv dg$  Haar measure on $G$, which we assume to be normalized such that $\vol G=1$. If $G$ is finite, $d_G$ is simply the counting measure.  In addition, let us suppose that the orthonormal basis $\mklm{e_j}_{j\geq 0}$ is  compatible with the decomposition \eqref{eq:PW} in the sense that each vector $e_j$ is contained in some isotypic component $\L^2_\gamma(M)$. In order to describe   the spectral function of the operator $Q_\gamma:=\Pi_\gamma \circ Q\circ \Pi_\gamma=Q\circ \Pi_\gamma=\Pi_\gamma \circ Q$ given by
\bq
\label{eq:24.09.2015}
e_\gamma (x,y,\mu):=\sum_{\mu_j\leq \mu,\, e_j \in \L^2_\gamma(M)} e_j(x) \overline{e_j(y)},
\eq
we consider   the composition
\bqn 
( \chi_\mu\circ \Pi_\gamma) u = \sum_{\mu_j \in (\mu, \mu+1]} (E_j \circ \Pi_\gamma) u= \sum_{\mu_j \in (\mu, \mu+1], \, e_j \in \L^2_\gamma(M)} E_j u, \qquad u \in \L^2(M),
\eqn
with kernel 
$
K_{\chi_\mu \circ \Pi_\gamma}(x,y)=e_\gamma(x,y,\lambda+1)-e_\gamma(x,y,\lambda)
$, 
together with  the corresponding equivariant approximate spectral projection
\begin{align}
\label{eq:1004}
(\widetilde \chi_\mu \circ \Pi_\gamma) u = \sum_{j\geq 0,\, e_j \in \L^2_\gamma(M)} \rho(\mu-\mu_j) E_{j} u= \frac {d_\gamma} {2\pi} \intop_G  \intop_\R \hat \rho(t)  e^{it\mu}  \overline{\gamma(g)} \big ( U(t)\circ \pi(g) \big ) u \d t \d g.
\end{align}
Its kernel can be written as 
\bqn 
K_{\widetilde \chi_\mu \circ \Pi_\gamma}(x,y):=\sum_{j\geq 0, e_j \in \L^2_\gamma(M)} \rho(\mu-\mu_j) e_j(x) \overline{e_j(y)}\in \Cinft(M\times M).
\eqn
Put  $m_\gamma(\mu_j):=d_\gamma \text{mult}_\gamma(\mu_j)/\dim \E_{\mu_j}$, where $\text{mult}_\gamma(\mu_j)$ denotes the multiplicity of an unitary irreducible representation of class $\gamma$ in the eigenspace $\E_{\mu_j}$. 
In \cite{ramacher10},  an asymptotic formula for 
\bqn
\tr \, (\widetilde \chi_\mu \circ \Pi_\gamma) =\int_M K_{\widetilde \chi_\mu \circ \Pi_\gamma}(x,x) \d M(x)=\sum_{j=0}^\infty m_\gamma(\mu_j)  \rho(\mu-\mu_j)
\eqn
was given in order to describe the behaviour of the equivariant counting function as the eigenvalues become large, while now we are interested in the spectral function itself, which makes it necessary to derive  asymptotics for the restriction of  $K_{\widetilde \chi_\mu \circ \Pi_\gamma}$  to the diagonal, or even to a neighborhood of it, and is therefore considerably more subtle than computing the trace. 

% Graefestr., 10.10.2014

 As mentioned before, one can  approximate $U(t)$ by means of Fourier integral operators. More precisely, let $\mklm{(\kappa_\iota, Y_\iota)}_{\iota \in I}$, $\kappa_\iota:Y_\iota \stackrel{\simeq}\to \widetilde Y_\iota \subset \R^n$, be an atlas for $M$,  $\mklm{f_\iota}$ a corresponding partition of unity and  
$
\hat v (\eta) :=\F(v)(\eta):= \int_{\R^n} e^{-i\langle \tilde y, \eta \rangle} v(\tilde y) \, d\tilde y$ the Fourier transform of $v \in \CT(\widetilde Y_\iota)$. Write $\dbar \eta:= \d\eta/(2\pi)^n$, and introduce   the operator 
\bqn 
[\widetilde U_\iota(t)v] (\tilde x):= \int _{\R^n} e ^{i\psi_\iota(t,\tilde  x, \eta)} a_\iota (t, \tilde x, \eta) \hat{v}(\eta) \dbar \eta
\eqn 
on $\widetilde Y_\iota$, where $a_\iota \in S^0_{\mathrm{phg}}$ is a classical polyhomogeneous symbol satisfying $a_\iota(0,\tilde x, \eta)=1$ and $\psi_\iota$  the defining phase function given as the solution of the Hamilton-Jacobi equation
\bqn 
\frac {\gd \psi_\iota} { \gd t } + q \Big (x, \frac{\gd \psi_\iota}{\gd \tilde x}\Big )=0 , \qquad  \psi_\iota( 0, \tilde x, \eta) = \eklm{\tilde x, \eta},
\eqn  
see \cite[ p. 254]{hoermanderIV}.   Let us remark that  $\psi_\iota$ is homogeneous in $\eta$ of degree $1$, so that Taylor expansion for small $t$ gives
\bq
\label{eq:19.04}
\psi_\iota(t,\tilde x, \eta) =\psi_\iota(0,\tilde x, \eta) +t \frac{\gd \psi_\iota}{\gd t} (0, \tilde x , \eta) + O(t^2|\eta|)= \eklm{\tilde x, \eta} -t q_\iota(\tilde x, \eta) +O(t^2|\eta|), 
\eq 
where we wrote $q_\iota(\tilde x, \eta):=q(\kappa_\iota^{-1}(\tilde x),\eta)$. In other words, there exists a smooth function $\zeta_\iota$ which is homogeneous in $\eta$ of degree $1$ and satisfies
\begin{align}
\label{eq:24.12.2015}
\begin{split}
\psi_\iota(t, \tilde x, \eta) &= \eklm {\tilde x, \eta} -t \zeta_\iota(t, \tilde x, \eta), \qquad \qquad \zeta_\iota(0, \tilde x, \eta) = q_\iota(\tilde x, \eta).
\end{split}
\end{align}
Let now $\bar U_\iota(t) u := [ \widetilde U_\iota(t) (u \circ \kappa_\iota^{-1})] \circ \kappa_\iota$, $ u \in \CT( Y_\iota)$. Consider further  test functions $\bar f_\iota \in \CT( Y_\iota)$ satisfying $\bar f_\iota \equiv 1$ on $\supp f_\iota$, and define
\bqn 
\bar U(t) := \sum _\iota F_\iota \, \bar U_\iota(t) \, \bar F_\iota, 
\eqn
where  $F_\iota$, $\bar F_\iota$ denote the multiplication operators corresponding to  $f_\iota$ and $\bar f_\iota$, respectively. Then  H\"ormander showed that for small $|t|$ 
\bq
\label{eq:R(t)}
R(t) := U(t) -\bar U(t) \, \text{is an operator with smooth kernel,}
\eq 
compare \cite[ p. 134]{grigis-sjoestrand} and \cite[Theorem 20.1]{shubin}; in particular, the kernel $R_t(x,y)$ of $R(t)$ is smooth in $t$.  We now have the following

\begin{proposition}
\label{prop:0}
Let $\delta>0$ be sufficiently small and $x,y \in M$. Then, as $\mu \to +\infty$, 
\begin{gather*}
K_{\widetilde \chi_\mu \circ \Pi_\gamma}(x,y)
= \frac {\mu^n  d_\gamma}{(2\pi)^{n+1}} \sum _\iota  \int_{-\infty}^{+\infty}   \int _G  \int_{\R^n} e^{i\mu t[1-\zeta_\iota(t,\kappa_\iota(x),\eta)]} e^{i{ \mu} \eklm{\kappa_\iota( x) - \kappa_\iota( g \cdot y),\eta}}  \hat \rho(t)  \overline{\gamma(g)} f_\iota( x) \\  \cdot  a_\iota(t, \kappa_\iota(x) , \mu \eta)  \bar f _\iota (g \cdot y)   \alpha( q(  x, \eta)) J_\iota(g,y) d\eta \d g \d t, 
\end{gather*}
up to terms of order  $O(\mu^{-\infty})$ which are uniform in $x$ and $y$, where  $0 \leq \alpha \in \CT(1/2, 3/2)$ is a test function such that $\alpha \equiv 1$ in a neighborhood of $1$, $J_\iota(g,y)$ is a Jacobian, and   
$d\eta$ denotes Lebesgue measure on $\R^n$. 
On the other hand,  $K_{\widetilde \chi_\mu \circ \Pi_\gamma}(x,y)$ is rapidly decaying as  $\mu \to -\infty$.
\end{proposition}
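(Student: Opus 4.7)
The plan is to realise $K_{\widetilde\chi_\mu\circ\Pi_\gamma}(x,y)$ as a finite sum of oscillatory integrals by first expressing it through \eqref{eq:1004} as the kernel of an operator involving $e^{-itQ}\circ\pi(g)$, then replacing $e^{-itQ}$ by H\"ormander's Fourier-integral-operator parametrix $\bar U(t)$ in each coordinate patch, and finally rescaling the fibre variable $\eta$ by $\mu$. Concretely, starting from \eqref{eq:1004} and using that the $G$-action is isometric (so $dM$ is $G$-invariant), the kernel equals
\begin{equation*}
K_{\widetilde\chi_\mu\circ\Pi_\gamma}(x,y)=\frac{d_\gamma}{2\pi}\int_G\int_\R \hat\rho(t)\,e^{it\mu}\,\overline{\gamma(g)}\,K_{U(t)}(x,g\cdot y)\,\d t\,\d g.
\end{equation*}
By \eqref{eq:R(t)} we have $U(t)=\bar U(t)+R(t)$ for $|t|<\delta/2$, and since $\supp\hat\rho\subset(-\delta/2,\delta/2)$ and $R_t(x,y)$ is smooth jointly in all variables, repeated integration by parts in $t$ against $e^{it\mu}$ shows that the $R(t)$--contribution is $O(\mu^{-\infty})$, uniformly in $(x,y)$.

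Next I insert the explicit description of $\bar U(t)$ on each chart $(Y_\iota,\kappa_\iota)$: writing $\bar U_\iota(t)$ as the oscillatory integral with phase $\psi_\iota$ and classical symbol $a_\iota$, and converting Lebesgue measure to $\d M$ via a Jacobian $J_\iota(g,y)$, one obtains a sum over $\iota$ of integrals with phase $\psi_\iota(t,\kappa_\iota(x),\eta)-\eklm{\kappa_\iota(g\cdot y),\eta}$ localised by the cut-offs $f_\iota(x)\bar f_\iota(g\cdot y)$. Using \eqref{eq:24.12.2015} this phase splits as $\eklm{\kappa_\iota(x)-\kappa_\iota(g\cdot y),\eta}-t\,\zeta_\iota(t,\kappa_\iota(x),\eta)$. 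Performing the change of variables $\eta\mapsto\mu\eta$ (permissible for $\mu>0$) and invoking the degree-$1$ homogeneity of $\zeta_\iota$ in $\eta$ produces the prefactor $\mu^n$ together with the phase $\mu t(1-\zeta_\iota(t,\kappa_\iota(x),\eta))+\mu\eklm{\kappa_\iota(x)-\kappa_\iota(g\cdot y),\eta}$ appearing in the statement. The cutoff $\alpha(q(x,\eta))$ is inserted at this stage: on the support of $1-\alpha(q(x,\eta))$ one has $|1-q_\iota(\kappa_\iota(x),\eta)|\geq\eps>0$, and \eqref{eq:19.04} then implies $|\gd_t[t(1-\zeta_\iota)]|\geq\eps/2$ for $\delta$ small enough, so iterated integration by parts in $t$ turns this contribution into $O(\mu^{-\infty})$, uniformly in $(x,y,g,\eta)$; the symbol estimates on $a_\iota\in S^0_{\mathrm{phg}}$, together with additional integrations by parts in $\eta$ for $|\eta|$ large (where $|1-q_\iota|$ grows linearly in $|\eta|$), make the resulting error integrable.

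The rapid decay as $\mu\to-\infty$ then follows without Fourier-integral-operator machinery: since $P$ is positive we have $\mu_j\geq 0$, so $|\mu-\mu_j|\geq|\mu|$ for $\mu\leq 0$, and the Schwartz decay of $\rho$ combined with Sobolev-type bounds $\|e_j\|_\infty\leq C(1+\mu_j)^{n/2}$ and the polynomial growth of the spectral counting function directly yields $|K_{\widetilde\chi_\mu\circ\Pi_\gamma}(x,y)|=O(|\mu|^{-\infty})$, uniformly in $(x,y)$. The main technical obstacle is the preceding step: outside the shell $q(x,\eta)\approx 1$ the rescaled integral is not absolutely convergent before introducing $\alpha(q(x,\eta))$, so its insertion must be justified by combining non-stationary phase in $t$ on $\supp(1-\alpha)$ with suitable $\eta$--integrations by parts in the high-frequency regime $|\eta|\to\infty$, all while keeping the bounds uniform in the additional parameter $g\in G$ that the equivariant projection introduces.
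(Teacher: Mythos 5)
Your proposal is correct and follows the same overall strategy — kernel via \eqref{eq:1004}, replacement of $U(t)$ by H\"ormander's parametrix $\bar U(t)$ with smooth error, pull-back to charts, phase splitting via \eqref{eq:24.12.2015}, rescaling $\eta\mapsto\mu\eta$, and insertion of the frequency cutoff $\alpha$ — but the ordering of the regularization steps differs from the paper's in a way that matters technically.

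The paper performs the $t$-integration first, defining $\G(\tau,\tilde x,\eta):=\int e^{it\tau}\hat\rho(t)a_\iota(t,\tilde x,\eta)e^{iO(t^2|\eta|)}\d t$, observes that $\G(\mu-q_\iota(\tilde x,\eta),\tilde x,\eta)$ is rapidly decaying in $\eta$ for fixed $\mu$ (because $q_\iota\asymp|\eta|$), and thereby converts the oscillatory integral into an absolutely convergent one \emph{before} inserting $\alpha$ or rescaling. The insertion of $\alpha$ then reduces to pointwise estimates on $\G$ in the region $|1-q_\iota(\tilde x,\eta/\mu)|\gtrsim 1$, and rapid decay as $\mu\to-\infty$ falls out of the same estimates. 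You instead rescale first and then justify the cutoff by non-stationary phase in $t$ together with high-$|\eta|$ integrations by parts. This works — the $t$-derivative of $t(1-\zeta_\iota)$ is bounded below by a quantity growing like $1+|\eta|$ on $\supp(1-\alpha)$ for $\delta$ small, and the symbol bounds on $a_\iota\in S^0$ keep the derivative losses under control — but you are manipulating an integral that, as you acknowledge, is only conditionally (oscillatorily) convergent prior to introducing $\alpha$; to be fully rigorous one should either insert a mollified cutoff first and pass to the limit, or, as the paper does, regularize by performing the $t$-integral before touching the $\eta$-variable. Your treatment of the $\mu\to-\infty$ decay via positivity of the spectrum, Schwartz decay of $\rho$, Sobolev bounds $\|e_j\|_\infty\lesssim(1+\mu_j)^{s}$, and the Weyl law is a clean, more elementary alternative to the paper's implicit derivation from the $\G$-estimates; both are valid, and yours has the advantage of being independent of the FIO parametrix.
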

\begin{proof}
To obtain an explicit espression for the kernel of $\widetilde \chi_\mu \circ \Pi_\gamma$ let $u \in \Cinft(M)$, and notice that 
\begin{align*}
F_\iota  \bar U_\iota(t) &\bar F_\iota u (x) = f_\iota(x) [ \widetilde U_\iota(t) ( \bar f_\iota u \circ \kappa^{-1}_\iota)] \circ \kappa_\iota(x)\\ 
& = f_\iota(x) \int_{\R^n} e^{i\psi_\iota ( t, \kappa_\iota(x),\eta)} a_\iota(t, \kappa_\iota(x), \eta) \widehat{ (\bar f_\iota u \circ \kappa^{-1}_\iota)} (\eta) \dbar \eta \\
&=\int_{\widetilde Y_\iota} \int_{\R^n} f_\iota(x) e^{i[\psi_\iota(t,\kappa_\iota(x), \eta)- \eklm{ \tilde y,\eta}]} a_\iota(t, \kappa_\iota(x), \eta)  (\bar f_\iota u) (\kappa_\iota^{-1} ( \tilde y ))  d\tilde y \, \dbar \eta \\
&= \int _{Y_\iota} \bigg [  \int _{\R^n}   e^{i[\psi_\iota(t,\kappa_\iota(x), \eta)- \eklm{ \kappa_\iota(y),\eta}]} a_\iota(t, \kappa_\iota(x), \eta) \, \dbar \eta \,  f_\iota(x) \,  \bar f _\iota(y) ( \beta^{-1}_\iota \circ \kappa_\iota)(y)  \bigg ] u(y) \,  dM(y),
\end{align*}
where we wrote $ (\kappa^{-1}_\iota)^\ast (dM)=\beta_\iota d \tilde y$.  The last two expressions are oscillatory integrals with suitable regularizations.
With \eqref{eq:1004} and \eqref{eq:R(t)}  we therefore obtain for $K_{\widetilde \chi_\mu \circ \Pi_\gamma}(x,y) $ the expression
\begin{gather*}
 \frac{d_\gamma}{(2\pi)^{n+1}} \sum _\iota  \int_{-\infty}^{+\infty}  \int _G  \int_{\R^n} \hat \rho(t)  e^{it\mu}\overline{\gamma(g)} f_\iota( x) e^{i[\psi_\iota(t,\kappa_\iota(x), \eta)- \eklm{ \kappa_\iota(g \cdot y),\eta}]} a_\iota(t, \kappa_\iota(x), \eta) \\ \cdot  \bar f _\iota(g \cdot y)  J_\iota(g,y) d\eta \d g \d t + O(|\mu|^{-\infty}),
 \end{gather*}
 since  
 \bqn 
 \frac 1 {2\pi} \int_G\int_{-\infty}^{+\infty} \hat \rho(t) e^{it\mu}  R_t( x, g \cdot y) \d t \, \overline{\gamma(g)} J_\iota(g,y) \d g= \int_G  \F^{-1}\big (\hat \rho (\bullet)  R_\bullet (x, g \cdot y) \big ) (\mu) \, \overline{\gamma(g)} J_\iota(g,y) \d g,
 \eqn
and $\F^{-1}\big (\hat \rho (\bullet)  R_\bullet ( x, g \cdot y)\big )$ is rapidly falling in $\mu$;  in particular, $O(|\mu|^{-\infty})$ is uniform in $x,y$.  
We are interested in the asymptotic behaviour of $K_{\widetilde \chi_\mu \circ \Pi_\gamma}(x,y) $ as $\mu \to \pm \infty$. In order to study it by means of the stationary phase theorem, we  define
\bqn 
\G(\tau, \tilde x, \eta):= \int_{-\infty}^{+\infty} e^{it \tau} \hat \rho(t) a_\iota(t, \tilde x, \eta)e^{iO(t^2 | \eta|) }  dt,
\eqn 
where $O(t^2 | \eta|)$ denotes the remainder in \eqref{eq:19.04}. Clearly, $\G(\tau, \tilde x, \eta)$  is rapidly decaying as a function in $\tau$. On the other hand, there must exist a constant $C>0$ such that 
\bqn 
 C |\eta| \geq q_\iota( \tilde x, \eta) \geq \frac 1 C |\eta| \qquad \forall \tilde x \in \widetilde Y_\iota, \, \eta \in \R^n,
\eqn 
which implies that for fixed $\mu$, $\G(\mu-q_\iota(\tilde x, \eta), \tilde x, \eta)$ is rapidly decaying in $\eta$. This yields a regularization of the oscillatory integral above, and we obtain
\begin{align*}
K_{\widetilde \chi_\mu \circ \Pi_\gamma}(x,y) 
=&  \frac{d_\gamma}{(2\pi)^{n+1}} \sum _\iota    \int _G  \int_{\R^n } e^{i\eklm{\kappa_\iota(x)-\kappa_\iota(g \cdot  y),\eta}}   \overline{\gamma(g)} f_\iota( x) \\  & \cdot \G(\mu -q( x, \eta), \kappa_\iota( x), \eta)   \bar f _\iota (g \cdot y)  J_\iota(g,y) d \eta \d g  + O(|\mu|^{-\infty}).
\end{align*}
 But even more is true. $K_{\widetilde \chi_\mu \circ \Pi_\gamma}(x,y)$  is rapidly decreasing as $\mu \to -\infty$, reflecting the positivity of the spectrum.  Furthermore, assume that $|1- q_\iota (\tilde x, \eta/\mu)| \geq \text{const} >0$. Then
\begin{align*}
|\G(\mu-q_\iota( \tilde x, \eta), \tilde x, \eta)| &\leq C_{N+M} \frac 1 {|\mu|^{N}} \frac 1 { |1 - q_\iota( \tilde x, \eta /\mu)|^N} \frac 1 {| \mu - q_\iota( \tilde x, \eta)|^M}\\
& \leq C'_{N+M} \frac 1 {|\mu|^{N}}  \frac 1 {| \mu- q_\iota( \tilde x, \eta)|^M}
\end{align*}
for arbitrary $N, M \in \N$ and suitable constants. Let therefore $ \alpha \in \CT(1/2, 3/2)$ be as indicated, so that 
\bqn 
1 - \alpha(q_\iota( \tilde x , \eta/\mu)) \not= 0 \quad \Longrightarrow \quad |1-q_\iota( \tilde x, \eta/\mu)| \geq  C >0
\eqn  
for a constant depending only on $\alpha$. 
Substituting $\eta=\mu \eta'$,  we can  re-write  $K_{\widetilde \chi_\mu \circ \Pi_\gamma}(x,y)$ as 
\begin{align*}
K_{\widetilde \chi_\mu \circ \Pi_\gamma}(x,y)
= &\frac {|\mu|^n  d_\gamma}{(2\pi)^{n+1}} \sum _\iota  \int_{-\delta/2}^{+\delta/2}  \int _G  \int_{\R^n} e^{i\mu \big [\psi_\iota(t,\kappa_\iota( x),\eta)- \eklm{ \kappa_\iota(g \cdot y),\eta}+t \big ]}  \hat \rho(t)  \overline{\gamma(g)} f_\iota(x)   \\ & \cdot a_\iota(t, \kappa_\iota( x) , \mu \eta)  \bar f _\iota (g \cdot y)  \alpha( q( x, \eta)) J_\iota(g,y) \d\eta \d g \d t + O(|\mu|^{-\infty}),
\end{align*}
where all integrals are absolutely convergent, and the remainder is uniform in $x,y$. The proposition now follows with \eqref{eq:24.12.2015}.
\end{proof}

Since $\zeta_\iota(0, \tilde x , \eta)= q_\iota(\tilde x, \eta)$, there exists a constant $C>0$ such that for sufficiently small $t \in (-\delta/2, \delta/2)$
\bqn 
 C |\eta| \geq \zeta_\iota(t,  \tilde x, \eta) \geq \frac 1 C |\eta| \qquad \forall \tilde x \in \widetilde Y_\iota, \, \eta \in \R^n.
\eqn 
We can therefore introduce  in $\R^n\setminus\mklm{0}$ the coordinates
$$
\eta = R\, \omega_1, \qquad  R>0, \qquad \zeta_\iota(t,\kappa_\iota( x), \omega_1)=1.
$$
Indeed, since  $\zeta_\iota(t,\kappa_\iota(x),\eta)$ is homogeneous of degree $1$ in $\eta$,  its derivative in radial  direction  reads  
\bqn 
\lim_{s \to 0} s^{-1} (R+s-R)  \zeta_\iota(t,\kappa_\iota(x),\omega_1) =1,
\eqn 
so that for all $\eta=R \, \omega_1$ we have 
\bq
\label{eq:07.04.2016}
\eklm{\grad_\eta \zeta_\iota(t, \tilde x , \eta), \eta}=R>0.
\eq
Consequently, the Jacobian of the coordinate change $\eta=R\, \omega_1$ does not vanish. Re-writing  the expression for the kernel of $\widetilde \chi_\mu \circ \Pi_\gamma$ in Proposition \ref{prop:0} in terms of the coordinates $\eta =R \, \omega_1$ we obtain 
\begin{align}
\label{eq:25.12.2015}
\begin{split}
K_{\widetilde \chi_\mu \circ \Pi_\gamma}(x,y)
=& \frac {\mu^n  d_\gamma}{(2\pi)^{n+1}} \sum _\iota  \int_{\R}\int _{\R} e^{i\mu(t-Rt)}  \int _G  \int_{\Sigma^{R,t}_{\iota,x} } e^{i \mu \eklm{\kappa_\iota( x) - \kappa_\iota( g \cdot y),\omega}}  \hat \rho(t)  \overline{\gamma(g)} f_\iota( x) \\   & \cdot a_\iota(t, \kappa_\iota(x) , \mu  \omega)  \bar f _\iota (g \cdot y)   \alpha( q(  x, \omega)) J_\iota(g,y) {\d\Sigma^{R,t}_{\iota,x}( \omega) \d g}  \d R\d t
\end{split}
\end{align}
up to terms of order  $O(\mu^{-\infty})$ which are uniform in $x$ and $y$, where  we set   
\bq
\label{eq:20.04.2015}
\Sigma^{R,t}_{\iota,x}:=\mklm{\omega \in \R^n \mid  \zeta_\iota (t, \kappa_\iota (x),\omega) = R}.
\eq
 Here ${d\Sigma^{R,t}_{\iota,x}(\omega)}$ denotes the quotient of Lebesgue measure  in $\R^n$ by Lebesgue measure in $\R$ with respect to $\zeta_\iota(t, \tilde x, \omega)$.   Note  that for sufficiently small $\delta>0$ we can assume that the $R$-integration is over a compact set. Furthermore, $R$ and $t$ are close to $1$ and $0$, respectively. To describe the asymptotic behaviour of  $K_{\widetilde \chi_\mu \circ \Pi_\gamma}(x,y)$  as $ \mu \to +\infty$, we shall now first apply the stationary phase theorem to the integral over $R$ and $t$, and then to the integral over $G\times \Sigma^{R,t}_{\iota,x}$.
\begin{cor}
\label{cor:12.05.2015}
Let  $\mu\geq 1$, $ x,y \in M$, and  with the notation of Proposition \ref{prop:0}  set 
\begin{align}
\begin{split}
\label{eq:02.05.2015}
 I^\gamma_\iota(\mu, R, t, x,y):= &   \int _G  \int_{\Sigma^{R,t}_{\iota,x}} e^{i{ \mu}  \Phi_{\iota,x,y}(\omega,g)} \hat \rho(t)  \overline{\gamma(g)} f_\iota( x) \\ &\cdot     a_\iota(t, \kappa_\iota(  x) , \mu \omega)  \bar f _\iota (g \cdot y)   \alpha(q( x, \omega)) J_\iota(g,y) {\d\Sigma^{R,t}_{\iota,x}(\omega) \d g},
 \end{split}
\end{align}
where $\Phi_{\iota,x,y}(\omega,g):=\eklm{\kappa_\iota(  x) - \kappa_\iota(g \cdot  y),\omega}$. Then, for each $\tilde N \in \N$
\begin{align}
\label{eq:13.06.2016}
K_{\widetilde \chi_\mu \circ \Pi_\gamma}(x,y)
= &\Big(\frac{\mu}{2\pi}\Big )^{n-1}  \frac{d_\gamma}{2\pi} \sum _\iota \Big [   \sum_{j=0}^{\tilde N-1} D^{2j}_{R,t} I^\gamma_\iota(\mu, R, t,x,y)_{|(R,t)=(1,0)} \, \mu^{-j} +  \mathcal{R}^\gamma_{\iota}(\mu,x,y) \Big ]
\end{align}
up to terms of order $O(\mu^{-\infty})$ which are uniform in x,y, where $D^{2j}_{R,t}$ are known differential operators of order $2j$ in $R,t$, and 
\begin{align*}
|\mathcal{R}^\gamma_{\iota}(\mu,x,y)| \leq& C\mu^{-\tilde N} \sum_{|\beta| \leq 2\tilde N +3} \sup_{R,t} \big |\gd_{R,t}^\beta  I^\gamma_\iota(\mu,R,t,x,y)  \big |
\end{align*}
for some constant $C>0$. 
\end{cor}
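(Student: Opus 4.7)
The plan is to apply the stationary phase principle in the two variables $(R,t)$ to the representation \eqref{eq:25.12.2015} of $K_{\widetilde \chi_\mu \circ \Pi_\gamma}(x,y)$, treating the inner $G\times \Sigma^{R,t}_{\iota,x}$--integral $I^\gamma_\iota(\mu,R,t,x,y)$ as a parameter-dependent amplitude. Rewriting the phase in \eqref{eq:25.12.2015} as $\mu(t-Rt)=\mu\,\Psi(R,t)$ with
\[
\Psi(R,t):=t(1-R),
\]
I would first verify that $(R,t)=(1,0)$ is the only critical point of $\Psi$ in a neighbourhood of the relevant integration domain: $\partial_R\Psi=-t$ vanishes iff $t=0$, and $\partial_t\Psi=1-R$ vanishes iff $R=1$. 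The Hessian at $(1,0)$ is $\bigl(\begin{smallmatrix}0&-1\\-1&0\end{smallmatrix}\bigr)$, which has determinant $-1$ and signature $0$, so the critical point is non-degenerate.

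Next, I would invoke the standard stationary phase expansion (e.g.\ H\"ormander, vol.~I, Theorem 7.7.5) with remainder, which for a phase with Hessian determinant $-1$ and signature $0$ in two variables yields, for a smooth compactly supported amplitude $A(R,t)$,
\[
\int_\R\int_\R e^{i\mu\Psi(R,t)} A(R,t)\,dR\,dt = \frac{2\pi}{\mu}\sum_{j=0}^{\widetilde N-1} \mu^{-j}\, D^{2j}_{R,t} A\big|_{(R,t)=(1,0)} + \mathcal R(\mu),
\]
with $D^{2j}_{R,t}$ the universal differential operators of order $2j$ coming from the expansion, and with a remainder obeying
\[
|\mathcal R(\mu)|\leq C\mu^{-\widetilde N-1}\sum_{|\beta|\leq 2\widetilde N+3}\sup_{R,t}|\partial^\beta_{R,t}A(R,t)|.
\]
I would apply this with $A(R,t)=I^\gamma_\iota(\mu,R,t,x,y)$; the integrand in \eqref{eq:25.12.2015} is compactly supported in $(R,t)$ (up to terms already absorbed into $O(\mu^{-\infty})$) because, by the remarks after \eqref{eq:25.12.2015}, $R$ is confined near $1$ by the cutoff $\alpha(q(x,\omega))$ together with the definition of $\Sigma^{R,t}_{\iota,x}$, and $t\in(-\delta/2,\delta/2)$ because $\supp\hat\rho\subset(-\delta/2,\delta/2)$. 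Multiplying by the prefactor $\mu^n d_\gamma/(2\pi)^{n+1}$ in \eqref{eq:25.12.2015} converts $2\pi/\mu$ into $(\mu/2\pi)^{n-1}\cdot d_\gamma/(2\pi)$, matching \eqref{eq:13.06.2016}.

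The main technical point, and what I expect to require the most care, is to justify that the bound on the remainder $\mathcal R^\gamma_\iota(\mu,x,y)$ is uniform in the parameters $x,y$ and has the form stated. This amounts to showing that $I^\gamma_\iota(\mu,R,t,x,y)$ is smooth in $(R,t)$ with $\partial^\beta_{R,t}$--derivatives controlled uniformly in $x,y$, and that these derivatives can be bounded in the way the stationary phase remainder requires. The $R$--dependence enters only through the surface $\Sigma^{R,t}_{\iota,x}$, so I would locally parametrise a neighbourhood of $\Sigma^{1,0}_{\iota,x}$ by the map $\omega\mapsto(R,\omega')$ associated with $\zeta_\iota$, using \eqref{eq:07.04.2016}; this recasts $I^\gamma_\iota$ as an ordinary integral with smooth $(R,t)$--dependent amplitude. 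The $t$--derivatives act only on $\hat\rho(t)$, $a_\iota(t,\kappa_\iota(x),\mu\omega)$, and (after the change of variables) on the Jacobian of $\omega\mapsto(R,\omega')$; since $a_\iota\in S^0_{\mathrm{phg}}$ is of order $0$ and $t$--differentiation does not raise its order, these derivatives remain uniformly bounded in $x,y$, which yields the claimed estimate on $\mathcal R^\gamma_\iota(\mu,x,y)$ and hence the stated expansion.
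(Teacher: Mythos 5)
Your proof is correct and follows essentially the same route as the paper's one-line argument: apply the classical stationary phase theorem in $(R,t)$ to \eqref{eq:25.12.2015} with $I^\gamma_\iota$ as the parameter-dependent amplitude, observing that $(R,t)=(1,0)$ is the unique non-degenerate critical point of $t(1-R)$. The final paragraph on uniformity in $x,y$ is not actually needed for this corollary, since the stated remainder bound is expressed directly in terms of $\partial^\beta_{R,t}I^\gamma_\iota$; that extra discussion belongs to the later analysis of $I^\gamma_\iota$ itself.
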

\begin{proof}
Since $(R,t)=(1,0)$ is the only critical point of $t-Rt$, the assertion follows immediately from \eqref{eq:25.12.2015} and the classical stationary phase theorem \cite[Proposition 2.3]{grigis-sjoestrand}. 
\end{proof}

Thus, we are left with the task of describing the asymptotics of the oscillatory integrals $ I^\gamma_\iota(\mu, R, t, x,y)$ as $\mu \to +\infty$, which will occupy us in the next sections. 

\section{Equivariant asymptotics of oscillatory integrals}

Let the notation be as in the previous section. As we have seen there, the question of describing the spectral function  in the equivariant setting reduces to the study of   oscillatory integrals of the form 
\bq
\label{eq:03.05.2015}
I_{x,y}(\mu):=\int_{G}\int_{\Sigma^{R,t}_x} e^{i\mu \Phi_{x,y}(\omega,g)} a (x,y, \omega,g) \d \Sigma^{R,t}_x (\omega) \d g, \qquad  \mu \to + \infty,
\eq
with $\Sigma^{R,t}_x$ as in \eqref{eq:20.04.2015} and phase function 
\bqn
\Phi_{x,y}(\omega,g):=  \eklm{\kappa(x) - \kappa( g\cdot y), \omega},
\eqn 
where we have skipped the index $\iota$ for simplicity of notation, and  $a \in \CT$ is an amplitude that might depend on $\mu$  and other parameters such that $(x,y,\omega, g) \in \supp a$ implies $x, g \cdot y \in Y$. In what follows, we shall also write 
 \bq
 \label{eq:11.9.2017} 
 I_x(\mu) :=I_{x,x}(\mu), \qquad \Phi_x:=\Phi_{x,x}.
 \eq
The asymptotic behaviour of these integrals is related to that  of  oscillatory integrals of the form
\bq
\label{eq:integral} 
I(\mu)= \intop_G\intop_{T^\ast Y} e^{i\mu \Phi(x,\eta,g)} a (x,\eta,g) \d (T^\ast Y)(x,\eta) \d g, \qquad \mu \to + \infty,
\eq
with  phase function
\bq
\label{eq:phase}
\Phi(x,\eta,g):= \eklm{\kappa(  x) - \kappa(g \cdot  x),\eta},
\eq
and suitable amplitude $a$. Let us assume in the following that $G$ is a continuous group. Asymptotics for the integrals \eqref{eq:integral} were given  in \cite{ramacher10} using the stationary phase principle, and we will rely on these results in parts to perform a similar analysis for the integrals $I_{x,y}(\mu)$. Write $\kappa(x)=(\tilde x_1, \dots, \tilde x_n)$ so that the canonical local trivialization of $T^\ast Y$ reads
\bqn 
Y \times \R^n \, \ni (x,\eta) \quad \equiv \quad \sum_{k=1}^n \eta_k (d\tilde x_k)_{x} \in \, T^\ast_xY.
\eqn
With respect to this trivialization, we shall identify  $\Sigma^{R,t}_{x'}$ with a subset in $T^\ast_{x} Y$ for eventually different $x$ and $x'$, if convenient. 
Let $\Omega:=\Jbb^{-1}(\mklm{0})$ be the zero level set of the momentum map $\Jbb: T^\ast M \to \g^\ast$ of the underlying Hamiltonian $G$-action on $T^\ast M$. Since
 \bq
\label{eq:Ann}
(x,\eta) \in \Omega  \cap T^\ast _xM \quad \Longleftrightarrow \quad (x,\eta)  \in \mathrm{Ann}(T_x (G\cdot x)),
\eq
where $\mathrm{Ann} \, (V_x) \subset T_x^\ast M$ denotes the annihilator of a vector subspace $V_x \subset T_xM$,  a simple computation shows that the critical set of $\Phi $ is  given by  
\bq
\label{eq:23.04.2015}
\Crit \, \Phi=\mklm{(x,\eta,g) \in T^\ast Y \times G\mid d(\Phi)_{(x,\eta,g)}=0} =\mklm{(x,\eta,g) \in ( \Omega \cap T^\ast Y) \times G\mid g \in G_{(x,\eta)}}.
\eq
In what follows, we shall compute  the critical set of the phase function $\Phi_{x,y}$, which is much more involved.  Let $\O_x:=G\cdot x$ denote the $G$-orbit and $G_x:=\mklm{g \in G\mid g\cdot x=x}$ the stabilizer or isotropy group of a point  $x\in M$. Throughout the paper, we shall assume that 
\bqn 
 \dim \O_x \leq n-1 \qquad \text{for all } x \in M.
\eqn
Let further $N_y\O_x$ be the normal space to the orbit $\O_x$ at a point $y \in \O_x$, which can be identified with  $\mathrm{Ann}(T_y\O_x)$ via the underlying Riemannian metric. With $M_\text{prin}$, $M_\text{except}$, and $M_\text{sing}$ as in \eqref{eq:15.08.2016} we  now have the following

\begin{lem}\label{lem:21.04.2015}
Let $x\in Y$, $\O_y \cap Y \not=\emptyset$,  and 
\bqn 
\Crit \, \Phi_{x,y}:=\Big \{(\omega,g) \in \Sigma^{R,t}_x  \times \mklm{g \in G\mid g \cdot y \in Y}\mid \, d(\Phi_{x,y})_{(\omega,g)}=0\Big \}
\eqn
be the critical set of $\Phi_{x,y}$. 
\begin{enumerate}
\item[(a)] If  $y\in \mathcal{O}_x$, the set $\Crit \, \Phi_{x,y}$ is clean and given by the smooth submanifold
\bqn
%\label{eq:22.09.2015}
\Ccal_{x,y}:=\big \{(\omega,g)\mid (g \cdot y, \omega) \in \Omega,  \,  x=g\cdot y\big \}
\eqn
of  codimension $2\dim \O_x$.

\item[(b)] If $y \not\in  \mathcal{O}_x$, 
$$\Crit \, \Phi_{x,y}=\Big \{(\omega,g)\mid (g \cdot y,\omega) \in \Omega, \,  \kappa(x)-\kappa(g \cdot y) \in N_\omega \Sigma^{R,t}_x\Big \};$$
furthermore, assume that $G$ acts on $M$ with orbits of the same dimension $\kappa$, that is, $M=M_\mathrm{prin}\,  \cup\,  M_\mathrm{except}$, and that the co-spheres $S_x^\ast M$ are strictly convex. Then, either $\Crit \, \Phi_{x,y}$ is empty, or,  choosing $Y$ sufficiently small,  $\Crit \, \Phi_{x,y}$ is locally diffeomorphic to $G_y$,  clean, and of codimension $n-1+\kappa$. 

\item[(c)] In  case that $x\in Y\cap M_\mathrm{prin}$ one has
\bqn
%\label{eq:21.04.2015}
\Ccal_{x,x}=\mathrm{Crit} \, \Phi \cap (\Sigma^{R,t}_x\times G),
\eqn
a transversal intersection. In particular $\Ccal_{x,x}$ is a smooth submanifold of codimension $2\kappa$. 
\end{enumerate}
\end{lem}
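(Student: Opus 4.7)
My plan is to compute $d\Phi_{x,y}$ explicitly, read off the set-theoretic description in (b), and then specialize to (a) and (c). On the product $\Sigma^{R,t}_x \times \{g \in G : g\cdot y \in Y\}$ tangent vectors to $\Sigma^{R,t}_x$ at $\omega$ are precisely those annihilated by $\nabla_\omega \zeta_\iota(t,\kappa(x),\omega)$, so the $\omega$-derivative of $\Phi_{x,y}$ vanishes iff $\kappa(x) - \kappa(g\cdot y) \in N_\omega \Sigma^{R,t}_x$. A variation $g \mapsto e^{sX}g$ with $X \in \g$ yields at $s=0$ the expression $-\langle d\kappa_{g\cdot y}(X_M(g\cdot y)),\omega\rangle$, which vanishes for all $X \in \g$ iff $\omega$ annihilates $T_{g\cdot y}\O_{g\cdot y}$, i.e.\ $(g\cdot y,\omega) \in \Omega$ by \eqref{eq:Ann}. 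This settles the description of $\Crit\Phi_{x,y}$ in (b).

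For (a), assume $y \in \O_x$. At a critical point $\omega$ annihilates $T_{g\cdot y}\O_x$, and condition (ii) gives $\kappa(x) - \kappa(g\cdot y) = \lambda\,\nabla_\omega\zeta_\iota$ for some scalar $\lambda$. Since both $x$ and $g\cdot y$ lie on $\O_x$, the second-order expansion of the orbit near $g\cdot y$ shows the secant $\kappa(x) - \kappa(g\cdot y)$ has tangential component of first order and normal component of second order in $\|x-g\cdot y\|$; pairing with $\omega$ kills the tangential part, yielding $\langle \omega,\kappa(x)-\kappa(g\cdot y)\rangle = O(\|x-g\cdot y\|^2)$. By \eqref{eq:07.04.2016} the same pairing equals $\lambda R$ with $R>0$, so $|\lambda| = O(\|x-g\cdot y\|^2)$. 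Combined with $\|\kappa(x)-\kappa(g\cdot y)\| = |\lambda|\cdot\|\nabla_\omega\zeta_\iota\|$ and the bi-Lipschitz bound $\|\kappa(x)-\kappa(g\cdot y)\| \asymp \|x-g\cdot y\|$, the inequality $\|x-g\cdot y\| \ll \|x-g\cdot y\|^2$ forces $g\cdot y = x$ for $Y$ sufficiently small. The codimension $2\kappa_x = (n-1+\dim G) - ((n-\kappa_x-1)+(\dim G-\kappa_x))$ of $\Ccal_{x,y}$ follows by a dimension count, and cleanness is obtained by checking that the Hessian of $\Phi_{x,y}$ is non-degenerate on the normal bundle of $\Ccal_{x,y}$, using \eqref{eq:07.04.2016} and the faithful linear action of $G/G_x$ on $T_x\O_x$.

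For the refinement in (b), strict convexity of $S^\ast_x M$ persists for $\Sigma^{R,t}_x$ with $|t|$ small and makes the Gauss-type map $\omega \mapsto \R\cdot\nabla_\omega\zeta_\iota$ on $\Sigma^{R,t}_x$ injective, so condition (ii) determines $\omega$ uniquely from $g$ whenever $\kappa(x)\neq\kappa(g\cdot y)$. Substituting $\omega=\omega(g)$ into (i), an implicit function theorem argument combined with the constant orbit-dimension hypothesis shows that the locus of admissible $g$ is either empty or, near any solution $g_0$, coincides with the right $G_y$-coset of $g_0$, giving the local diffeomorphism with $G_y$ and the codimension $n-1+\kappa = (n-1+\dim G)-(\dim G-\kappa)$. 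Cleanness again follows from a Hessian computation on transverse directions.

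For (c), set $y=x \in Y\cap M_\mathrm{prin}$. By (a), $\Ccal_{x,x} = \{(\omega,g):(x,\omega)\in\Omega,\,g\in G_x\}$. Identifying $\Sigma^{R,t}_x$ with $\{x\}\times\Sigma^{R,t}_x \subset T^\ast Y$, \eqref{eq:23.04.2015} gives $\Crit\Phi \cap (\Sigma^{R,t}_x\times G) = \{(\omega,g):(x,\omega)\in\Omega\cap\Sigma^{R,t}_x,\,g\in G_{(x,\omega)}\}$. For $x\in M_\mathrm{prin}$ the slice representation of the principal isotropy $H=G_x$ on $T_xM/T_x\O_x$ is trivial by definition, hence $H$ acts trivially on the dual $\mathrm{Ann}(T_x\O_x)=\Omega\cap T^\ast_xM$ as well, giving $G_{(x,\omega)}=G_x$ for every relevant $\omega$ and proving equality of the two sets. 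Transversality at any such $(x,\omega,g)$ is direct: $\Omega$ projects submersively to $M_\mathrm{prin}$, so $T(\Crit\Phi)$ has non-zero components in the $T_xM$-direction, which are absent from $T(\{x\}\times\Sigma^{R,t}_x\times G)$, and the dimension count yields codimension $2\kappa$. I expect the main obstacle to be the refinement in (b): rigorously converting the strict-convexity uniqueness of $\omega$ into an implicit function argument producing the local $G_y$-parametrization under the constant orbit-dimension hypothesis, together with the Hessian verifications of cleanness in (a) and (b).
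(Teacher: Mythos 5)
Your set-theoretic computation of $\Crit\,\Phi_{x,y}$ and the reduction of part (a) match the paper in substance. Your argument for (a) that $\Crit\,\Phi_{x,y}\subset\Ccal_{x,y}$ is presented in a slightly more quantitative way than the paper (you expand the secant to second order and compare $\|x-g\cdot y\|$ to $\|x-g\cdot y\|^2$; the paper uses the qualitative lower bound \eqref{eq:22.05.2016} plus "approximately normal to $N_x\O_x$"), but these are the same idea dressed differently. Part (c) also follows the paper's route via the triviality of the slice representation at principal points.

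The serious gap is in part (b). You correctly identify that strict convexity makes the Gauss map a diffeomorphism, so $\omega$ is determined by the direction of $\kappa(x)-\kappa(g\cdot y)$, but then you assert that "an implicit function theorem argument combined with the constant orbit-dimension hypothesis" produces the local $G_y$-coset structure — without saying how the constant-dimension hypothesis enters or why the argument could fail otherwise. It can fail: the paper's own Remark immediately after the lemma exhibits $\SO(2)\curvearrowright S^2$ near the north pole with $\Crit\,\Phi_{x_N,y}\simeq\SO(2)\times\Z_2$ of codimension $1$, not $n-1+\kappa=2$, precisely because the singular orbit $\{x_N\}$ has smaller dimension. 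The paper's mechanism for ruling this out under $M=M_\mathrm{prin}\cup M_\mathrm{except}$ is a contradiction argument: if $\Crit\,\Phi_{x,y_n}$ contained a curve transverse to $G_{y_n}$ for $y_n\to x$, then the normal vector $\Ncal(\omega_n(t))$ would have to rotate faster and faster as $y_n\to x$ (because all nearby orbits are locally diffeomorphic), forcing the Gaussian curvature of $\Sigma^{R,t}_x$ to blow up at a limit point, contradicting its smoothness. Nothing resembling this mechanism appears in your proposal, and an off-the-shelf implicit function theorem applied to the equations $(g\cdot y,\omega)\in\Omega$, $\kappa(x)-\kappa(g\cdot y)\in N_\omega\Sigma^{R,t}_x$ does not by itself give the Jacobian rank needed, since the relevant Hessian entries degenerate like $\|x-g\cdot y\|$ as $y\to\O_x$ (cf.\ \eqref{eq:8.6.2017}).

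Finally, the cleanness claims in both (a) and (b) are deferred to "a Hessian computation on transverse directions" without being carried out. In the paper these are the bulk of the work: in (a) one must show the kernel of the matrix \eqref{eq:27.05.2016bis} is exactly $T\Ccal_{x,y}$, using that $\langle\widetilde X(\tilde s)(\kappa),\omega\rangle$ has a second-order zero in orbit direction; in (b) one must solve the coupled system \eqref{eq:28.05.2016a}--\eqref{eq:28.05.2016b}, which after elimination reduces to the scaling identity $F^i_{\tilde s}=\|x-g\cdot y\|\,G^i_{\tilde s}$ in \eqref{eq:Koppelow/02.08.2017}, and one then needs a uniform lower bound on $|F^i_{\tilde s}|$ (which again uses that $\omega$ cannot be tangent to $\Sigma^{R,t}_x$, by \eqref{eq:07.04.2016a}) against the vanishing factor $\|x-g\cdot y\|$. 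Your appeal to "the faithful linear action of $G/G_x$ on $T_x\O_x$" is not the mechanism actually used, and in (b) it is not enough: you also need the strict negativity of the shape operator $A$ of $\Sigma^{R,t}_x$ to guarantee $F^i_{\tilde s}=0$ forces $\widetilde X(\tilde s)_{g\cdot y}=0$.

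In short: (a), the set description, and (c) are essentially sound and in the paper's spirit; (b) and the cleanness verifications are genuine gaps, and (b) in particular needs an argument that actively exploits the constant orbit dimension, not just the IFT.
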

\begin{proof}
Consider a local parametrization 
\bq
\label{eq:paramhyp}
F:\R^{n-1} \supset W \, \longrightarrow \, \Sigma_x^{R,t}\subset \R^n , \quad \alpha \longmapsto F(\alpha)=\omega,
\eq
of the hyerpsurface $\Sigma_x^{R,t}$, where $W$ denotes an open subset. Differentiating $\Phi_{x,y}$ with respect to $\alpha$ and setting the derivatives to zero gives the conditions $\eklm{\kappa(x)-\kappa(g \cdot y), \gd F/\gd \alpha_i}=0$ for $i=1,\dots, n-1$, implying  that $\kappa(x)-\kappa(g \cdot y)$ must be normal to $\Sigma^{R,t}_x$ at $\omega$. On the other side, the derivatives of  $\Phi_{x,y}$ with respect to $g$ read $\sum_{k=1}^n \omega_k(d\tilde x_k)_{g\cdot y} (\widetilde X_j)$, where $\mklm{X_1,\dots, X_d}$ denotes a basis of $\g$ and $\big \{\widetilde X_1,\dots, \widetilde X_d\big \}$ are  the corresponding fundamental vector fields on $M$. Setting them to zero   yields $(g\cdot y,\omega) \in T^\ast_{g\cdot y} Y \, \cap \, \Omega \simeq N_{g \cdot y}\O_y$, and we conclude that
\bq
\label{eq:21.05.2016}
\Crit \, \Phi_{x,y}=\Big \{(\omega,g)\mid (g \cdot y,\omega) \in \Omega, \,  \kappa(x)-\kappa(g \cdot y) \in N_\omega \Sigma^{R,t}_x\Big \}.
\eq
The second condition means that  $\kappa(x)-\kappa(g \cdot y) $ is co-linear to $\grad_\eta \zeta(t,\kappa(x), \omega)$. But in view of \eqref{eq:07.04.2016} we have the equality
\bq
\label{eq:07.04.2016a}
\eklm{\grad_\eta \zeta(t, \tilde x , \omega), \omega}=R>0, \qquad  \omega \in \Sigma^{R,t}_x,
\eq
so that if  $x\not=g \cdot y$ and $\kappa(x)-\kappa(g \cdot y) \in N_\omega\Sigma_x^{R,t}$, we deduce the lower bound 
\bq
\label{eq:22.05.2016}
\bigg |\eklm{\frac{ \kappa(x)-\kappa(g \cdot y)}{\norm{ \kappa(x)-\kappa(g \cdot y)}}, \omega}\bigg | \geq C >0
\eq
for a uniform constant $C>0$.  Since the $G$-action on $M$ is smooth,  there is an invariant tubular neighbourhood  around each $G$-orbit in $M$, and  we may assume that the chart $(\kappa,Y)$ is given in terms of such a neighbourhood around $\O_x$. Thus, let $N\O_x$ be the normal bundle to $\O_x$, and 
\bqn 
\tau:=\exp \circ \, \gamma:N \O_x \longrightarrow M
\eqn 
an equivariant diffeomorphism onto some open neighborhood of $\O_x$, where $\exp$ denotes the exponential map and  $\gamma:N\O_x \rightarrow N\O_x$ is certain contraction  \cite[Theorem VI.2.2]{bredon}. In particular, note that
\bqn 
d(\exp)_z:T_z(N\O_x) \equiv N_z\O_x \oplus T_z\O_x=T_zM \longrightarrow T_zM, \qquad z \in \O_x,
\eqn
is the identity, where $\O_x$ is embedded as the zero section in $N\O_x$.  If we now let $Y \subset \tau(NO_x)$ be small enough, we can  identify $\tau^{-1}(Y)$ with an open neighbourhood of the origin in $T_{x}(N\O_x)$ via the exponential map, and  we put $\kappa:=(\tau^{-1})_{|Y}$.  

To show $(a)$, let us assume that  $y \in \O_x$.  Then the vector $\kappa(x)-\kappa(g \cdot y)$  must  be  approximately normal to $(d\kappa)_x(N_{x} \O_x)\equiv N_{x}\O_x$ for sufficiently small $Y$, so that if $(\omega,g) \in \Crit \,  \Phi_{x,y}$, which in particular means that   $\omega\in N_{g\cdot y} \O_x$, the vector $\kappa(x)-\kappa(g \cdot y)$  must  be  approximately normal to $\omega$, which would be a contradiction to the lower bound  \eqref{eq:22.05.2016}, unless $x=g\cdot y$. Thus, we conclude that  $ \Crit \,  \Phi_{x,y} =\Ccal_{x,y}$. In order to see  that $\Crit \,  \Phi_{x,y}$ is clean, note that with respect to the parametrization  \eqref{eq:paramhyp} of $\Sigma^{R,t}_x$ and canonical coordinates on $G$ the Hessian $\mathrm{Hess} \, \Phi_{x,y}(\omega,g)$  of $\Phi_{x,y}$ at a critical point $(\omega,g) \in \Ccal_{x,y}$, as a symmetric bilinear form on $T_\omega \Sigma^{R,t}_x\times T_gG$, is  given by the matrix
\bq
\label{eq:27.05.2016bis}
\left ( \begin{array}{c|c}  \vspace{3mm} 0 & \sum_{k=1}^n \frac{\gd F_k (\alpha^{-1}(\omega)) }{\gd \alpha_i} (d\tilde x_k)_{x}(\widetilde X_j) \\ \hline \\ \sum_{k=1}^n \frac{\gd F_k (\alpha^{-1}(\omega)) }{\gd \alpha_j}(d\tilde x_k)_{x}(\widetilde X_i) &-\frac 12 \eklm{\widetilde X_{i,x} (\widetilde X_j (\kappa))+\widetilde X_{j,x} (\widetilde X_i (\kappa)),\omega}  \end{array} \right  ).
\eq
The kernel of the corresponding linear transformation is given by those $(\tilde \alpha, \tilde s)\in \R^{n-1} \times \R^d$ satisfying the conditions 
\begin{align}
\label{eq:07.04.2016b}  \sum_{k} \frac{\gd F_k (\alpha^{-1}(\omega)) }{\gd \alpha_i}  (d\tilde x_k)_{x}\big(\widetilde X(\tilde s) \big ) =0 \qquad &\text{ for all } i=1,\dots, n-1,  \\ 
\label{eq:07.04.2016c} \sum_{j,k} \tilde \alpha _j  \frac{\gd F_k (\alpha^{-1}(\omega)) }{\gd \alpha_j}  (d\tilde x_k)_{x}(\widetilde X_i)=0 \qquad &\text{ for all } \, i=1,\dots,d, 
 \end{align}
 where we put $X(\tilde s):=\sum_{j=1}^d \tilde s_j  X_j$.  Indeed, \eqref{eq:07.04.2016b} implies that  
 $$(d\kappa)_x\big(\widetilde X(\tilde s)\big )=\Big ( (d\tilde x_1)_x \big(\widetilde X(\tilde s)\big ), \dots , (d\tilde x_n)_x \big(\widetilde X(\tilde s)\big ) \Big )\in (d\kappa)_x(T_x\O_x)$$ 
 is co-linear to  $\grad_\eta \zeta(t,\kappa_\iota(x), \omega)$. Furthermore, since $(d\kappa)_x(T_x\O_x)\equiv T_x\O_x$ is clearly normal to $(d\kappa)_x(N_x\O_x)\equiv N_x\O_x$, the vector $(d\kappa)_x\big(\widetilde X(\tilde s)\big )$ is normal to $N_x\O_x$.  In view of \eqref{eq:07.04.2016a} and the fact that  $(x,\omega) \in N_x \O_x$ we would obtain a contradiction, unless   $\widetilde X(\tilde s)$ vanishes at   $x$; in particular, this implies that  $\langle  \widetilde X (\tilde s)(\kappa),\omega\rangle$ has a zero of second order at $x$ in orbit direction, so that 
 \bqn 
\widetilde X_{i,x}  \eklm{ \widetilde X(\tilde s) (\kappa),\omega} +\widetilde X(\tilde s)_{x} \eklm{\widetilde X_i (\kappa),\omega}=0.
 \eqn
 Thus, the coefficients in the fourth quadrant of the matrix \eqref{eq:27.05.2016bis} do not contribute to Equations \eqref{eq:07.04.2016c}, and the kernel in question is given by  
 \begin{gather*}  
 \mklm{(\tilde \alpha, \tilde s)\in \R^{n-1} \times \R^d\mid  \sum_{j=1}^d \tilde s_j \widetilde X_{j,x}=0, \, \sum_{j,k} \tilde \alpha _j  \frac{\gd F_k (\alpha^{-1}(\omega)) }{\gd \alpha_j}  (d\tilde x_k)_{x} \in \text{Ann } ( T_{x}\O_x )  }\simeq T_{(\omega,g)} \Ccal_{x,y},
\end{gather*}
which means that $\mathrm{Hess} \, \Phi_{x,y}$ is transversally non-degenerate on $\Ccal_{x,y}$, yielding  (a).

In order to see (b), assume that $y \notin \O_x$, and let the chart $(\kappa,Y)$ be defined as above in terms of the tubular neighbourhood $\tau:N\O_x \rightarrow M$.  Note that without loss of generality we can assume that $y \in S_x\cap Y$, where $S_z:=\tau(N_z\O_x)$.  The first part of (b) is clear from \eqref{eq:21.05.2016}. Now, assume that  the co-spheres $S_x^\ast M$ are strictly convex. For small $|t|$, the hypersurfaces $\Sigma^{R,t}_x$ will  be strictly convex, too. In particular, $\Sigma^{R,t}_x$ is orientable, and the Gauss map 
\bqn 
\Ncal: \Sigma_x^{R,t} \ni \omega \longmapsto \Ncal(\omega) \in N_\omega \Sigma_x^{R,t},
\eqn
which assigns to each point of $\Sigma^{R,t}_x$   the {outer} normal unit vector to $ \Sigma^{R,t}_x$ at that point,  is a global diffeomorphism. Therefore, for each $x \not= \tilde y \in Y$ there is a unique $\omega_{\tilde y} \in \Sigma_x^{R,t}$ such that
\bqn 
\frac{ \kappa(\tilde y)-\kappa(x) }{\norm{ \kappa(\tilde y)-\kappa(x)}}=\Ncal(\omega_{\tilde y}).
\eqn
Consequently, if $(\omega,g) \in \Crit \, \Phi_{x,y}$, the vector $\omega$ is locally uniquely determined by the condition $\Ncal(\omega) =\pm \, \Ncal(\omega_{g\cdot y})$. 
Now, introduce the sets
\bqn 
W_n:= \tau(V_{1/n}), \qquad V_{1/n}:= \mklm{v \in N\O_x\mid \norm{v} < 1/n}, \qquad n \in \N, 
\eqn
and  assume that for each $n \in \N$ there is a $y_n \in W_n \cap Y\cap S_x$ such that $\Crit \, \Phi_{x,y_n}$ is not empty, but $\Crit \, \Phi_{x,y_n}\not \simeq G_{y_n}$ locally. In other words, assume that for each $n \in \N$ there is a smooth curve 
$$
\gamma_n:(-\eps_n,\eps_n) \ni t \longmapsto  (\omega_n(t), g_n(t))\in \Crit \, \Phi_{x,y_n}, \qquad \eps_n >0, 
$$ parametrized such that $\norm{\dot \omega_n(t)}=1.$ In this way, we obtain for each $n \in \N$ a curve $\omega_n(t)$ in $\Sigma_x^{R,t}$ along which the unit normal vector field to $\Sigma_x^{R,t}$ is determined by the direction of $\kappa(x)-\kappa(g_n(t) \cdot y_n)$, so that $\Ncal(\omega_n(t)) = \pm \, \Ncal(\omega_{g_n(t) \cdot y_n})$. In view of \eqref{eq:22.05.2016},  the curves
\bqn 
\mklm{g_n(t) \cdot y_n\mid \, t \in (-\eps_n,\eps_n)}\subset Y  
\eqn
converge to $x$ as  $n \to \infty$, which in particular implies that $\eps_n \to 0$.  Similarly, due to the compactness of $\Sigma_x^{R,t}$ the curves 
\bqn 
\mklm{\omega_n(t)\mid \, t \in (-\eps_n,\eps_n)}\subset \Sigma_x^{R,t}
\eqn
converge to at least one $\omega_\infty \in \Sigma_x^{R,t}\cap N_x \O_x$ after passing to a suitable convergent subsequence $\omega_{n_k}(t)$.  Now, assume that $G$ acts on $M$ with orbits of the same dimension $\kappa$.  If $\O_\mathrm{prin}$ is a principal orbit and $\O$ a principal or exceptional orbit, there is an equivariant covering map $\O_\mathrm{prin} \rightarrow \O$, so that $\O_\mathrm{prin}$ and $\O$ are locally diffeomorphic, compare \cite[p. 181]{bredon}. Therefore, we can assume that all orbits in $Y$ are diffeomorphic, which implies that the more $y_n$ approaches $x$, the faster the direction of $\kappa(x)-\kappa(g_n(t) \cdot y_n)$ changes as $t \in (-\eps_n,\eps_n)$ varies, and the faster $\Ncal(\omega_n(t))$ changes as $t \in (-\eps_n,\eps_n)$ varies. Consequently, the Gaussian curvature of $\Sigma_x^{R,t}$ at $\omega_\infty$, which is given by the product of the principal curvatures,   cannot stay bounded, compare Figure \ref{fig:orbits}.

\medskip

\begin{figure}[h!]
\begin{center}\includegraphics[width=0.4\linewidth]{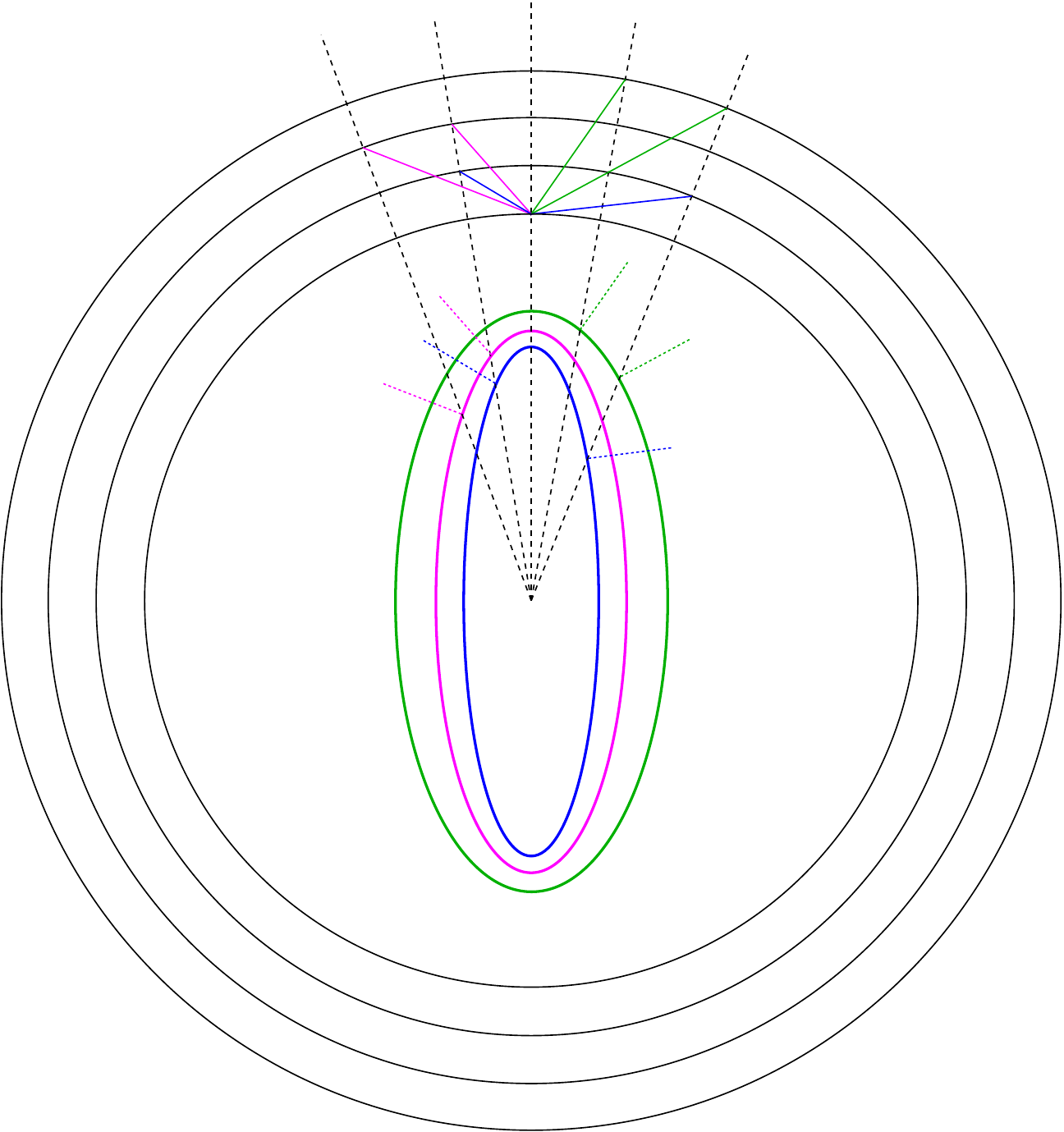}
\end{center}
\vspace{-.3cm}

\caption{\small Concerning the critical set of $\Phi_{x,y}$ in case that $y\not \in \O_x$. The black circle segments represent $G$-orbits in $Y\equiv \kappa(Y)\subset \R^n$, the inner one through $x$ and the outer ones through different points $y$; the black dotted lines represent normal spaces to the orbits. The three coloured ellipse segments depict different hypersurfaces $\Sigma^{R,t}_x\subset \R^n$ whose unit normal at $\omega\in N_{g\cdot y} \O_y\cap \Sigma^{R,t}_x$, depicted by a colored dotted line,  is determined  by the corresponding colored line segments $\kappa(x)-\kappa(g\cdot y)$.}\label{fig:orbits}
\end{figure}

\smallskip

\noindent
Thus, we have shown that for sufficiently small $Y$ we locally have 
$
\Crit \, \Phi_{x,y}\simeq  G_y
$,
 which implies that $\Crit \, \Phi_{x,y} $ is a smooth submanifold of codimension $n-1+\dim \O_y$. We are left with the task of showing that $\mathrm{Hess}\,  \Phi_{x,y}$ is transversally non-degenerate. For this, we are going to show that for each fixed $(\omega,g)\in \Crit \, \Phi_{x,y}$  one has
$
\mathrm{Ker} \,  \mathrm{Hess}\,  \Phi_{x,y}(\omega,g) \simeq T_{(\omega,g)}  \Crit \, \Phi_{x,y}.
$
To do so, note that with respect to the coordinates introduced at the beginning, the Hessian $\mathrm{Hess} \, \Phi_{x,y}(\omega,g) $ of $\Phi_{x,y}$ at a critical point $(\omega,g)$ is  given by the matrix
\begin{gather}
\begin{split}
\label{eq:27.05.2016}
\left ( \begin{array}{c | c} \vspace{3mm}  \eklm{\kappa (x)-\kappa (g \cdot y), \frac{\gd ^2 F}{\gd \alpha_i \gd \alpha_j}(\alpha^{-1}(\omega))}  & \sum_{k=1}^n \frac{\gd F_k (\alpha^{-1}(\omega)) }{\gd \alpha_i} (d\tilde x_k)_{g\cdot y}(\widetilde X_j) \\ \hline \\ \sum_{k=1}^n \frac{\gd F_k (\alpha^{-1}(\omega)) }{\gd \alpha_j}(d\tilde x_k)_{g\cdot y}(\widetilde X_i) &  -\frac 12 \eklm{\widetilde X_{i,g\cdot y} (\widetilde X_j (\kappa))+\widetilde X_{j,g\cdot y} (\widetilde X_i (\kappa)) ,\omega}
 \end{array} \right  ).
\end{split}
\end{gather}
Since $\kappa(g\cdot y)-\kappa(x)\in N_\omega\Sigma^{R,t}_x$, the submatrix in the first quadrant corresponds to a multiple of the second fundamental of $\Sigma^{R,t}_x$
\bqn 
\mathrm{II}: T \Sigma^{R,t}_x \times T \Sigma^{R,t}_x \longrightarrow \Cinft(\Sigma^{R,t}_x), \quad \mathrm{II}(\mathcal{X},\mathcal{Y}):=\eklm{\nabla_\mathcal{X} \mathcal{Y}, \mathcal{N}}=\eklm{\mathcal{X},A \, \mathcal{Y}},
\eqn
where $\nabla_\mathcal{X} \mathcal{Y}\equiv \mathcal{X} (\mathcal{Y})$ denotes the covariant derivative in Euclidean space $\R^n$  and $A: T \Sigma^{R,t}_x \rightarrow T \Sigma^{R,t}_x$ the symmetric endomorphism induced by $\mathrm{II}$ \cite[Chapter VII, Section 3]{kobayashi-nomizuII}. Indeed, assume that $\kappa(x)-\kappa(g\cdot y)$ points in the direction of $-\Ncal (\omega)$, and let $\gd /{\gd \alpha_{i}}_{|\omega}:= \gd F (\alpha^{-1}(\omega))/{\gd \alpha_{i}}$, $1\leq i\leq n-1$,  be the coordinate frame given by the parametrization \eqref{eq:paramhyp}. Then, the entries of the submatrix in the first quadrant of $\eqref{eq:27.05.2016}$ read
\bq
\label{eq:10.04.2016}
-\norm{\kappa(x)-\kappa(g \cdot y)} \, \mathrm{II}\left (\frac {\gd}{\gd \alpha_{i |\omega}},\frac {\gd}{\gd \alpha_{j |\omega}}\right )=-\norm{\kappa(x)-\kappa(g \cdot y)} \, \eklm{\frac {\gd}{\gd \alpha_{i |\omega}},A \frac {\gd}{\gd \alpha_{j |\omega}} }.
\eq
To compute the kernel of the matrix  \eqref{eq:27.05.2016}, assume that  the $X_1,\dots, X_d\in \g$ are  such that the vector fields $\widetilde X_1,\dots, \widetilde X_\kappa$  constitute  an orthonormal basis of  $T_{g\cdot y}\O_y$ at $g \cdot y$, while the vector fields $\widetilde X_{\kappa+1},\dots, \widetilde X_d$ vanish at $g\cdot y$,  
and  consider for $(\tilde \alpha, \tilde s) \in \R^{n-1} \times \R^d$ the system of equations
\bq
\label{eq:28.05.2016a}
\sum_{j=1}^{n-1} \eklm{\kappa (x)-\kappa (g \cdot y), \frac{\gd ^2 F}{\gd \alpha_i \gd \alpha_j}(\alpha^{-1}(\omega))} \tilde \alpha_j +  \sum_{k=1}^n \frac{\gd F_k (\alpha^{-1}(\omega)) }{\gd \alpha_i} (d\tilde x_k)_{g\cdot y}\big (\widetilde X(\tilde s) \big ) =0
\eq
with  $i=1,\dots, n-1$, as well as
\bq
\label{eq:28.05.2016b}
 \sum_{k=1}^n \sum_{j=1}^{n-1}\tilde \alpha_j \frac{\gd F_k (\alpha^{-1}(\omega)) }{\gd \alpha_j}(d\tilde x_k)_{g\cdot y}(\widetilde X_i) - \frac 12 \eklm{\widetilde X_{i,g\cdot y} \big (\widetilde X(\tilde s) (\kappa)\big )+\widetilde X(\tilde s)_{g\cdot y} \big (\widetilde X_i(\kappa)\big ),\omega}=0
\eq
 with $i=1, \dots, d$, where we wrote again $X(\tilde s) := \sum_{j=1}^d \tilde s_j  X_{j}$ for short. We have to show that Equations \eqref{eq:28.05.2016a}--\eqref{eq:28.05.2016b}  are equivalent to   $\tilde \alpha=0$, $\tilde s_1=\dots=\tilde s_\kappa=0$.   Writing $\mathcal{W}_\omega(\tilde \alpha):=\sum_{j=1}^{n-1} \tilde \alpha_j \gd /{\gd \alpha_j}_{|\omega}$ and identifying $Y$ with $\kappa(Y)$, the system of equations \eqref{eq:28.05.2016a}  reads
 \bqn
 -\norm{x- g \cdot y} \, \eklm{\frac {\gd}{\gd \alpha_{i |\omega}},A \,\mathcal{W}_\omega(\tilde \alpha) }+\bigg \langle \frac{\gd }{\gd \alpha_{i|\omega}}, \widetilde X(\tilde s)_{g\cdot y} \bigg \rangle =0,\qquad i=1,\dots, n-1, 
 \eqn
which is equivalent to 
 \bq
 \label{eq:29.05.2016a}
\mathcal{W}_\omega(\tilde \alpha)=\norm{x-g \cdot y}^{-1} A^{-1} \big ( \mathrm{proj}_{|T_\omega \Sigma^{R,t}_x} (\widetilde X(\tilde s)_{g\cdot y} )\big ),
 \eq
compare Figure \ref{fig:nondeg}.  

   \medskip
 
    \begin{figure}[h!]
    \begin{center}
\includegraphics[width=0.4\linewidth]{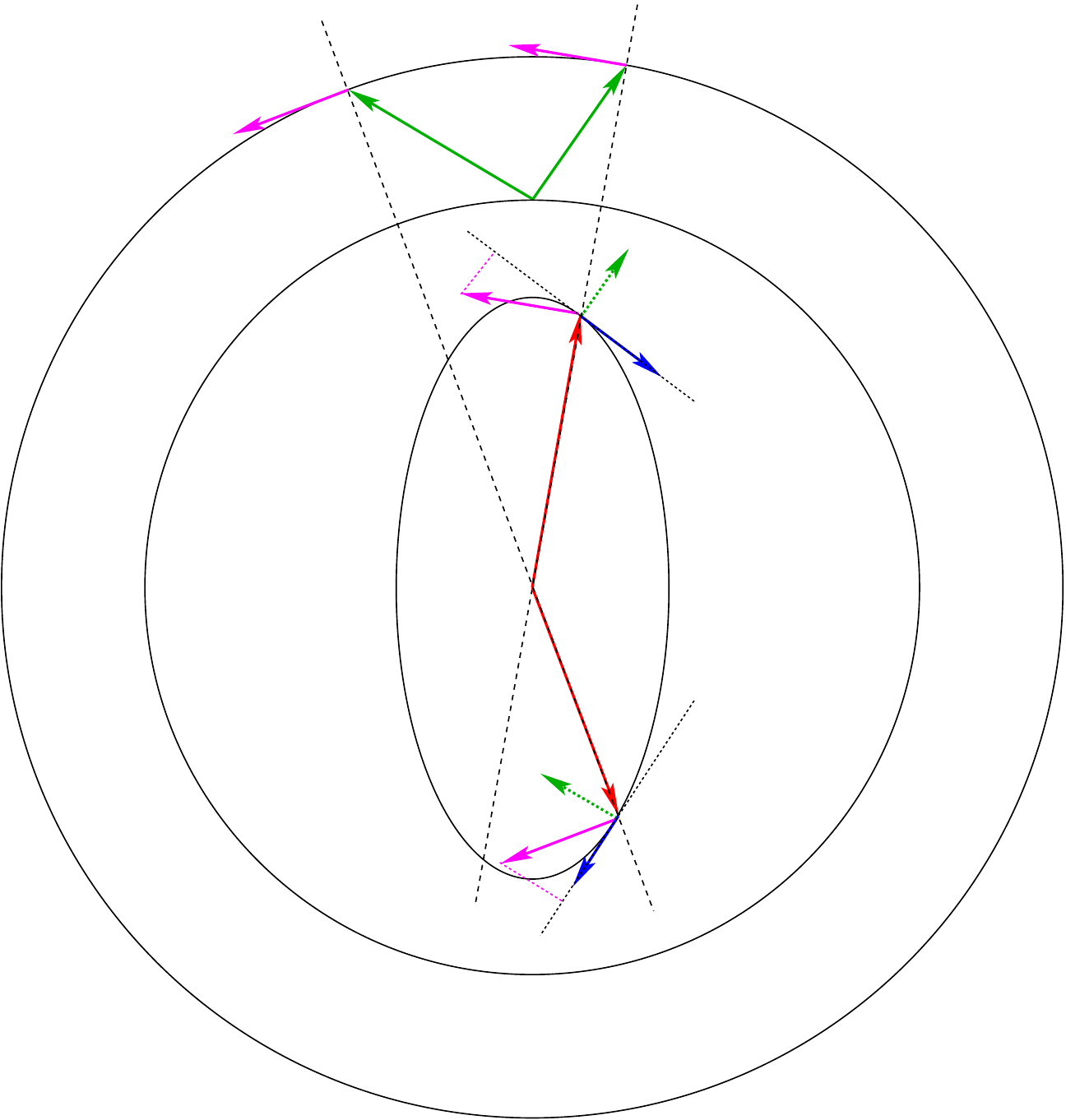}
\end{center}
\vspace{-.3cm}

\caption{\small Concerning the cleanness of the critical set of $\Phi_{x,y}$ in case that $y\not \in \O_x$. Black circles represent $G$-orbits in $Y\equiv \kappa(Y)\subset \R^n$ through $x$ and $y$, respectively; the black dotted lines represent normal spaces to the orbits and  tangent spaces to the hypersurface $\Sigma^{R,t}_x$, respectively, the latter being depicted by an ellipse. The red arrows represent  points $\omega\in \Sigma^{R,t}_x$, the green arrows segments $\kappa(g\cdot y)-\kappa(x) $. The magenta arrows depict vectors $\widetilde X(\tilde s)_{g \cdot y}$ and the blue arrows the corresponding vectors $\mathcal{W}_\omega(\tilde \alpha)$, compare \eqref{eq:29.05.2016a}. }\label{fig:nondeg}
\end{figure}

 \noindent
 Note that $A$ is invertible, since the Gaussian curvature of $\Sigma^{R,t}_x$ does not vanish. Furthermore,  since $\Sigma_x^{R,t}$ is strictly convex, the eigenvalues of $A$, which are given by the principal curvatures of $\Sigma^{R,t}_x$ with respect to the {outer} unit normal vector field, are strictly  {negative}\footnote{Note that  the sign convention used here is such that  if $\Sigma^{R,t}_x$ equals the standard $(n-1)$-sphere $S^{n-1}(R)$ of radius $R$,   then  $A=-\1/R$, where $\1$ represents the identity transformation on $T_\omega S^{n-1}(R)$, see \cite[Chapter VII, Example 4.2]{kobayashi-nomizuII}.}. Hence $A$ defines a non-positive operator on $T_\omega \Sigma_x^{R,t}$.  On the other hand, \eqref{eq:28.05.2016b} amounts to   the equations 
 \bq
 \label{eq:29.05.2016b}
 \langle {\mathcal{W}_\omega(\tilde \alpha),\widetilde X_{i,g\cdot y}}\rangle= \frac 1 2\Big (\widetilde X_{i,g \cdot y} \big (\langle \widetilde X(\tilde s), \omega\rangle \big ) + \widetilde X(\tilde s)_{g \cdot y} \big (\langle \widetilde X_i,\omega \rangle \big )\Big ), \qquad i=1,\dots,d. 
 \eq
Inserting \eqref{eq:29.05.2016a} into \eqref {eq:29.05.2016b} one obtains for all $i=1,\dots, d$
\bq
\label{eq:Koppelow/02.08.2017} 
F^i_{\tilde s} (g\cdot y,\omega)= \norm{x-g \cdot y} \, G^i_{\tilde s} (g\cdot y,\omega),  
 \eq
where we set
\bqn
F^i_{\tilde s} (z,\omega):=\langle {A^{-1} \big ( \mathrm{proj}_{|T_\omega \Sigma^{R,t}_x} (\widetilde X(\tilde s)_{z} )\big ),\widetilde X_{i,z}}\rangle, \quad  G^i_{\tilde s} (z,\omega):= \frac 1 2\Big (\widetilde X_{i,z} \big (\langle \widetilde X(\tilde s), \omega\rangle \big ) + \widetilde X(\tilde s)_{z} \big (\langle \widetilde X_i,\omega \rangle \big )\Big ).
\eqn
As functions on $\mklm{(z,\omega) \in Y \times \Sigma^{R,t}_x\mid \, \omega \in N_z\O_z}$, $F^i_{\tilde s}$ and $G^i_{\tilde s}$ are smooth and bounded from above. Furthermore, the projection from $T_{z} \O_z$ to $T_\omega \Sigma_x^{R,t}$  has a trivial kernel if $\omega$ is normal to $\O_z$ at $z$, since $\omega$ cannot be tangential to $\Sigma_x^{R,t}$ in view of \eqref{eq:07.04.2016a}. Therefore, 
\bq
\label{eq:IHP} 
F^i_{\tilde s} (z,\omega)=0 \quad \text{for all $i=1,\dots, d$} \quad \Longleftrightarrow \quad \widetilde X(\tilde s)_{z}=0,
\eq
$A$ being a non-positive operator. Let us now assume that $\widetilde X(\tilde s)_{g\cdot y}\not=0$. Choosing $Y$ sufficiently small, we deduce from \eqref{eq:IHP} that there is at least one $i$ such that $|F^i_{\tilde s} (z,\omega)| \geq C^i_{\tilde s}$ on $Y$ for some uniform constant $C^i_{\tilde s}>0$. But then, letting $Y$ become even smaller so that $1/\norm{x-g \cdot y}$ becomes large compared to $|G^i_{\tilde s} (g\cdot y,\omega)|/C^i_{\tilde s}$ we arrive at a contradiction in view of  \eqref{eq:Koppelow/02.08.2017}. Thus, we must have $\widetilde X(\tilde s)_{g\cdot y}=0$, which in turn implies $\mathcal{W}_\omega(\tilde \alpha)=0$ by \eqref{eq:29.05.2016a}. We have therefore shown that Equations  \eqref{eq:28.05.2016a}--\eqref{eq:28.05.2016b}  are fulfilled iff $\tilde s_1=\dots=\tilde s_\kappa=0$ and  $\tilde \alpha=0$, so that
 \bqn 
 \mathrm{Ker} \,  \mathrm{Hess}\,  \Phi_{x,y}(\omega,g) \simeq \mklm{0} \times \R^{d-\kappa} \simeq  \T_{(\omega,g)}  \Crit \, \Phi_{x,y},
 \eqn
and we obtain  (b). {An alternative proof of the fact that the Hessian of $\Phi_{x,y}$ is transversally non-degenerate in the cases (a) and (b) will be given in Theorem \ref{thm:12.05.2015} by explicitly computing the transversal Hessian.}  

In order to show (c), let $x \in Y\cap M_\mathrm{prin}$ and 
 $(\omega,g) \in \Ccal_{x,x}$. If $x$ is of principal isotropy type, $G_x$ acts trivially on $N_x(G\cdot x)$ \cite[pp. 308 and 181]{bredon} and, via the identification $T^\ast M\simeq T M$, also on $\mathrm{Ann}(T_x(G\cdot x))$. But  in view of   \eqref{eq:Ann} and (a) we have $\omega \in \mathrm{Ann}(T_x(G\cdot x))$, so  that   $g \cdot \omega =\omega$ in this case, and with \eqref{eq:23.04.2015} we obtain the desired inclusion and therefore (c).  In particular, since $\Crit \, \Phi$ has codimension $2\kappa$, $\Ccal_{x,x}$ has codimension $2\kappa$ as well.
\end{proof}

\begin{rem} \hspace{0cm}
\begin{enumerate}
\item Let   $y  \notin \O_x$. As an example where $\Crit \, \Phi_{x,y}$ is not isomorphic to $G_y$, and does not have codimension $n-1+\kappa$, consider the singular action of $G=\SO(2)$ on the standard $2$-sphere $M=S^2\subset \R^3$ by rotations around the poles $x_N,x_S$, and assume that $\Sigma_x^{R,t}=S^{1}$. Let $(Y,\kappa)$ be an invariant tubular neighborhood around the fixed point $x_N$. Then, for any $y\in Y-\mklm{x_N}$ one has 
\bqn 
\Crit \, \Phi_{x_N,y}=\mklm{(\omega,g)\mid (g \cdot y,\omega) \in N_{g\cdot y}(G\cdot y), \,  \kappa(x_N)-\kappa (g \cdot y) \parallel  \omega }\simeq \SO(2)\times \Z_2\not \simeq G_y=\mklm{e},
\eqn
which has codimension $\kappa=1$ instead of $2$, showing the necessity of the assumption in Lemma \ref{lem:21.04.2015} (b) that all $G$-orbits must have the same  dimension. 

\item Note that Lemma \ref{lem:21.04.2015} (c) cannot hold in general for arbitrary $x \in Y\cap (M_\mathrm{except} \cup M_\mathrm{sing})$. In particular, if $x$ were a fixed point we would have  $\Phi_{x,x}\equiv 0$, so that  $\Crit \, \Phi_{x,x}=\Sigma^{R,t}_x \times G$ in this case. Furthermore, Assertion (c) means that $ \Phi_{x,x}$ does not have secondary critical points for $x \in Y\cap M_\mathrm{prin}$, that is, critical points which do not arise from critical points of $\Phi$.
\end{enumerate}
\end{rem}

From the previous lemma one now deduces

\begin{thm}
\label{thm:12.05.2015}
Assume that $G$ is a continuous compact Lie group acting on $M$ with orbits of dimension less or equal $n-1$, and consider the oscillatory integrals $I_{x,y}(\mu)$  defined  in \eqref{eq:03.05.2015}.
 \begin{enumerate}
 \item[(a)] Let $y \in \mathcal{O}_x$. Then, for every $\tilde N$ one has the asymptotic formula
\bqn
  I_{x,y}(\mu)=(2\pi/\mu)^{\dim \mathcal{O}_x} \left [\sum_{k=0}^{\tilde N-1} \mathcal{Q}_{k}(x,y) \mu^{-k}  +\mathcal{R}_{\tilde N}(x,y,\mu)\right ],  \qquad \mu \to +\infty,
\eqn
with  explicitly known coefficients and remainder. In particular, 
\bqn
%\label{eq:L0}
\mathcal{Q}_0(x,y)=\int_{ \mathcal{C}_{x,y}} \frac { a(  x , y, \omega,g) }{|\det   \, \Phi_{x,y}''(\omega,g)_{N_{( \omega, g)} \mathcal{C}_{x,y}}|^{1/2}} \d\mathcal{C}_{x,y}(\omega,g),
\eqn
where   $d \mathcal{C}_{x,y}$ denotes the induced volume density. Furthermore, $\mathcal{Q}_k(x,y)$ and $\mathcal{R}_{\tilde N}(x,y, \mu)$ depend smoothly on $R$ and $t$, and satisfy the bounds
\begin{align*}
|\mathcal{Q}_k(x,y)|&\leq  C_{k,\Phi_{x,y}} \vol (\supp a(x,y,\cdot,\cdot)\cap \mathcal{C}_{x,y}) \sup _{l\leq 2k} \norm{D^l a(x,y, \cdot,\cdot)}_{\infty,\mathcal{C}_{x,y}}, \\
|\mathcal{R}_{\tilde N}(x,y,   \mu) | &\leq \widetilde C_{\tilde N,\Phi_{x,y}}  \vol (\supp a  (x,y,\cdot,\cdot)) \sup_{l\leq  2\tilde N+\dim \mathcal{O}_x +1 } \norm{D^l a(x,y,\cdot, \cdot )}_{\infty,  \Sigma^{R,t}_x \times G} \, \mu^{-\tilde N},
\end{align*}
uniformly in $R,t$ for suitable constants $C_{k,\Phi_{x,y}}>0$ and  $\widetilde C_{\tilde N,\Phi_{x,y}}>0$, where $D^l$ denote  differential operators of order $l$ on $\Sigma^{R,t}_x \times G$. Moreover, as functions in $x$ and $y$, $\mathcal{Q}_k(x,y)$ and $\mathcal{R}_{\tilde N}(x,y,\mu)$ are smooth on  $Y \cap M_\mathrm{prin}$, and the constants $C_{k,\Phi_{x,y}}$ and  $\widetilde C_{\tilde N,\Phi_{x,y}}$ are uniformly bounded in $x$ and $y$ if  $M= M_\mathrm{prin} \cup M_\mathrm{except}$. If the amplitude factorizes according to $a(x,y,\omega,g)=a_1(x,y,\omega) \, a_2(x,y,g)$, the remainder can also be estimated by 
\bqn 
|\mathcal{R}_{\tilde N}(x,y,   \mu) | \leq \widetilde C_{\tilde N,\Phi_{x,y}}  \prod_{i=1,2} \vol (\supp a_i  (x,y,\cdot)) \sup_{l\leq  2\tilde N+\lfloor \dim \mathcal{O}_x/2 +1\rfloor } \norm{D^l_i a_i(x,y,\cdot )}_{\infty, \M_i} \, \mu^{-\tilde N},
\eqn
where $D^l_1$ and $D^l_2$ denote differential operators of order $l$ on $\M_1=\Sigma^{R,t}_x$ and $\M_2=G$, respectively. 
\smallskip

\item[(b)] Let $y \not \in \mathcal{O}_x$. Assume that  $M=M_\mathrm{prin}\,  \cup\,  M_\mathrm{except}$ and that the co-spheres $S_x^\ast M$ are strictly convex.  Then, for sufficiently small $Y$ and every $\tilde N \in \N$   one has the asymptotic formula
\bqn
  I_{x,y}(\mu)=\sum_{\J \in \pi_0(\Crit \, \Phi_{x,y})}(2\pi/\mu)^{\frac{n-1+\kappa}{2}} e^{i\mu \,^0\Phi_{x,y}^\J} \left [  \sum_{k=0}^{\tilde N-1} \mathcal{Q}_{\J,k}(x,y) \mu^{-k}  +\mathcal{R}_{\J, \tilde N}(x,y,\mu)\right ]
\eqn
as $\mu \to +\infty$ with explicitly known coefficients and remainder, where $\kappa:=\dim M/G$.  The coefficients $\mathcal{Q}_{\J,k}(x,y)$  and the remainder term $\mathcal{R}_{\J,\tilde N}(x,y,\mu) $ are given by distributions depending smoothly on $R,t$, and $x,y \in Y \cap M_\mathrm{prin}$ with support in $\Crit  \, \Phi_{x,y}$ and $\Sigma^{R,t}_x \times G$, respectively. Furthermore, they satisfy bounds analogous to the ones in (1), where now the constants $C_{k,\Phi_{x,y}}$ and  $\widetilde C_{\tilde N,\Phi_{x,y}}$ are no longer uniformly bounded, but satisfy
\begin{align*}
C_{k,\Phi_{x,y}}& \ll  \dist(y, \O_x)^{-(n-1-\kappa)/2 -k}, \qquad 
\widetilde C_{\tilde N,\Phi_{x,y}} \ll \dist(y, \O_x)^{-(n-1-\kappa)/2-\tilde N}.
\end{align*}
Finally, $^0\Phi_{x,y}^\J$ stands for  the constant values of $\Phi_{x,y}$ on the connected components $\J$ of  its critical set, and is  given by
\bqn
^0\Phi_{x,y}^\J(R,t)= R \, c_{x,g \cdot y}(t), \qquad c_{x,g \cdot y}(t):=\pm \frac{\norm {\kappa(x) - \kappa( g_\J \cdot y )}}{\norm {\grad_\eta \zeta(t,\kappa(x),\omega_\J) }}, \qquad (\omega_\J,g_\J) \in \J.
\eqn
\end{enumerate}
\end{thm}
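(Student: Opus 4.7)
The plan is to derive both parts as applications of the generalized method of stationary phase for clean critical manifolds (e.g.\ H\"ormander, Vol.\ I, Theorem 7.7.5), combined with the explicit description of $\Crit \, \Phi_{x,y}$ provided by Lemma \ref{lem:21.04.2015}. Indeed, $I_{x,y}(\mu)$ is an oscillatory integral over the compact manifold $\Sigma^{R,t}_x \times G$ of total dimension $(n-1)+d$, with smooth, compactly supported amplitude. Once cleanness of the critical set is available, the general stationary phase machinery produces an asymptotic expansion in powers of $\mu^{-1}$ whose leading exponent equals one half of the codimension of the critical set, together with explicit integral formulas for the coefficients and standard $C^k$-type bounds for the remainder. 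My task reduces to (i) identifying the critical set and its codimension, (ii) computing the value of the phase on each component, and (iii) obtaining the quantitative control of the constants as functions of $x,y$.

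For part (a), Lemma \ref{lem:21.04.2015}(a) gives that $\Crit \, \Phi_{x,y} = \Ccal_{x,y}$ is a smooth clean submanifold of codimension $2\dim \O_x$, so stationary phase immediately produces the expansion of order $\mu^{-\dim \O_x}$ with prefactor $(2\pi)^{\dim \O_x}$. On $\Ccal_{x,y}$ we have $x = g\cdot y$, hence $\Phi_{x,y}\equiv 0$ on the critical set, which explains the absence of an oscillatory factor $e^{i\mu \,^0\Phi}$. The leading coefficient $\mathcal{Q}_0(x,y)$ is then the standard integral of $a/|\det \Phi_{x,y}''_{|N\Ccal_{x,y}}|^{1/2}$ against the induced density, and the higher coefficients $\mathcal{Q}_k$ are obtained from the usual recursion involving powers of the transversal Hessian inverse acting on derivatives of the amplitude. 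The bounds on $\mathcal{Q}_k$ and $\mathcal{R}_{\tilde N}$ follow from the standard $C^{2\tilde N+\dim \O_x+1}$-estimates for stationary phase; uniformity in $R,t$ and smoothness in $x,y \in Y \cap M_{\mathrm{prin}}$ come from smoothness of the critical manifold and the transversal Hessian in these parameters. For $M = M_{\mathrm{prin}} \cup M_{\mathrm{except}}$, all orbits have the same dimension, so the geometric data vary smoothly and the constants $C_{k,\Phi_{x,y}}$, $\widetilde C_{\tilde N,\Phi_{x,y}}$ are uniformly bounded. Finally, the sharpened remainder estimate in the factorized case $a = a_1 \cdot a_2$ is obtained by carrying out the stationary phase first in $\omega$ (gaining only $\lfloor \dim \O_x/2\rfloor + 1$ derivatives on $a_1$ needed after reduction along the $G$-direction) and then applying the clean stationary phase along $G$, bounding each factor separately.

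For part (b), Lemma \ref{lem:21.04.2015}(b) ensures (for $Y$ small and strictly convex co-spheres) that each connected component $\J$ of $\Crit \, \Phi_{x,y}$ is locally diffeomorphic to $G_y$, clean, and of codimension $n-1+\kappa$, producing the expansion in powers of $\mu^{-(n-1+\kappa)/2}$. Summing over the (finitely many) components and extracting the non-zero phase constant on each yields the stated formula. The value $^0\Phi^\J_{x,y}$ is computed directly: on $\J$ one has $\kappa(x) - \kappa(g \cdot y) \in N_\omega \Sigma^{R,t}_x$, hence co-linear with $\grad_\eta \zeta(t,\kappa(x),\omega)$, so writing $\kappa(x) - \kappa(g\cdot y) = c_{x,g\cdot y}(t) \grad_\eta \zeta(t,\kappa(x),\omega)$ and pairing with $\omega$ using \eqref{eq:07.04.2016a} yields $^0\Phi^\J_{x,y} = R c_{x, g_\J \cdot y}(t)$ with the sign determined by orientation.

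The main obstacle, and the only step that is not a direct invocation of the black-box stationary phase theorem, is the sharp control of the constants as $y \to \O_x$. The point is that for $y$ approaching $\O_x$ the transversal Hessian develops a $(n-1-\kappa)$-dimensional block that degenerates at rate $\norm{x - g\cdot y}$, as is visible from \eqref{eq:10.04.2016}: the $(n-1)\times(n-1)$ block of $\mathrm{Hess}\, \Phi_{x,y}$ corresponding to the hypersurface directions equals $-\norm{\kappa(x)-\kappa(g\cdot y)}$ times the shape operator of $\Sigma^{R,t}_x$, whose restriction to the $(n-1-\kappa)$ directions complementary to the tangent directions of $G\cdot y$ is non-degenerate by strict convexity. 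Hence $|\det \Phi''_{x,y}|_{N\J}|^{1/2}$ picks up a factor $\norm{\kappa(x) - \kappa(g\cdot y)}^{(n-1-\kappa)/2} \asymp \dist(y,\O_x)^{(n-1-\kappa)/2}$ in the denominator of $\mathcal{Q}_0$, which accounts for the stated estimate $C_{0,\Phi_{x,y}} \ll \dist(y,\O_x)^{-(n-1-\kappa)/2}$. Each additional $\mu^{-k}$ in the expansion requires two derivatives of the inverse Hessian, producing the additional factor $\dist(y,\O_x)^{-k}$, and similarly for the remainder after $\tilde N$ steps. This quantitative analysis of the Hessian, together with the explicit recursion formulas from stationary phase applied to an amplitude whose effective support in the transversal directions shrinks with $\dist(y,\O_x)$, yields the claimed bounds on $C_{k,\Phi_{x,y}}$ and $\widetilde C_{\tilde N,\Phi_{x,y}}$ and completes the proof.
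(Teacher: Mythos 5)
Your proposal is essentially correct and follows the same route as the paper: invoke the generalized clean stationary phase theorem (Theorem~\ref{thm:SP}) together with Lemma~\ref{lem:21.04.2015} for the structure of the critical set, read off the constant phase value $^0\Phi^\J_{x,y}$ from colinearity with $\grad_\eta\zeta$ and \eqref{eq:07.04.2016a}, and control the $x,y$-dependence of the constants via the degeneracy of the transversal Hessian as $y\to\O_x$. The one place where your argument is imprecise is the explanation of why exactly $(n-1-\kappa)$ of the $\omega$-directions contribute a factor $\norm{x-g\cdot y}$ to the transversal Hessian determinant: the entire $(n-1)\times(n-1)$ $\omega$-block equals $-\norm{x-g\cdot y}$ times the shape operator and hence degenerates in \emph{all} $n-1$ directions on its own, so the stated rate is not a consequence of the shape operator being non-degenerate on the complementary $(n-1-\kappa)$ directions. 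Rather, the $\kappa$ directions of $T_\omega\Sigma^{R,t}_x$ that project onto $T_{g\cdot y}\O_y$ are rescued from degenerating by their $O(1)$ coupling with the $g$-block through the off-diagonal terms $\eklm{\mathcal{W}_\omega(\tilde\alpha^i),\widetilde X_{j,g\cdot y}}$, while the remaining $(n-1-\kappa)$ directions lie in $N_{g\cdot y}\O_y$ and are decoupled from $g$. The paper makes this precise by choosing a basis of $T_\omega\Sigma^{R,t}_x$ adapted to the decomposition $\mathrm{proj}_{|T_\omega\Sigma^{R,t}_x}(T_{g\cdot y}\O_y)\oplus(N_{g\cdot y}\O_y\cap T_\omega\Sigma^{R,t}_x)$, reading off the block form of $\mathcal{M}_{x,y}(\omega,g)$ and obtaining $\det\mathcal{M}_{x,y}=\norm{x-g\cdot y}^{n-1-\kappa}(c_0+O(\norm{x-g\cdot y}))$ with $|c_0|$ bounded away from zero uniformly; your argument needs this coupling step spelled out to justify the claimed rate.
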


\begin{proof}
The asymptotic expansions for the integrals $I_{x,y}(\mu)$,  the smoothness of the coefficients $\mathcal{Q}_{k}(x,y)$, $\mathcal{Q}_{\J,k}(x,y)$, and the remainder terms  in the parameters $R,t$, and $x,y \in Y\cap M_\text{prin}$, as well as the bounds satisfied by them  are a direct consequence of Lemma  \ref{lem:21.04.2015}, Theorem \ref{thm:SP}, and Remark \ref{rem:25.07.2017},  together with \cite[Theorem 7.7.6]{hoermanderI}.  
To see that  the constants $C_{k,\Phi_{x,y}}$, $\widetilde C_{\tilde N,\Phi_{x,y}}$ satisfy the specified bounds, we have to compute the transversal Hessian of $\Phi_{x,y}$ and its determinant in the two cases $y \in \O_x$ and $y \not \in \O_x$. Recall the notation and the proof of  Lemma \ref{lem:21.04.2015}, and let $(\omega,g) \in \Crit \, \Phi_{x,y}$ be a critical point.  As a bilinear form on $T_\omega \Sigma^{R,t}_x\times T_gG$, the Hessian of $\Phi_{x,y}$ can be written as the $(n-1+d)\times (n-1+d)$--matrix
\bqn
\mathrm{Hess} \, \Phi_{x,y}(\omega,g)\equiv \left ( \begin{array}{c  | c} \vspace{3mm}  -\norm{x-g \cdot y} \, \eklm{\frac {\gd}{\gd \alpha_{i |\omega}},A \frac {\gd}{\gd \alpha_{j |\omega}} }  & \eklm{\frac {\gd}{\gd \alpha_{i |\omega}},\widetilde X_{j,g \cdot y}} \\  \hline  \\ \eklm{\frac {\gd}{\gd \alpha_{j |\omega}},\widetilde X_{i,g \cdot y}} &  -\frac 12 \Big (\widetilde X_{i,g\cdot y} \big (\langle \widetilde X_j ,\omega\rangle\big )+\widetilde X_{j,g\cdot y} \big (\langle \widetilde X_i ,\omega\rangle\big ) \Big )
 \end{array} \right  ),
\eqn
 compare   \eqref{eq:27.05.2016bis} and  \eqref{eq:27.05.2016}, where we took into account \eqref{eq:10.04.2016}, and made the identification $\kappa(Y)\simeq Y$. Note that for $y \in \O_x$ one has $g\cdot y =x$.  Now, recall that the projection from $T_{g\cdot y} \O_y$ to $T_\omega \Sigma^{R,t}_x$ has a trivial kernel, and choose vectors $\tilde\alpha^1,\dots, \tilde \alpha^{n-1}\in \R^{n-1}$ such that one has the decomposition
\begin{align*}
\begin{split}
T_\omega \Sigma^{R,t}_x &=\mathrm{proj}_{|T_\omega \Sigma^{R,t}_x}(T_{g \cdot y} \O_y) \oplus \mathrm{Span}\mklm{\mathcal{W}_\omega(\tilde \alpha) \in N_{g\cdot y} \O_y}\\
&=\mathrm{Span}\mklm{\mathcal{W}_\omega(\tilde \alpha^1), \dots, \mathcal{W}_\omega(\tilde \alpha^{\dim \O_x})} \oplus \mathrm{Span}\mklm{\mathcal{W}_\omega(\tilde \alpha^{{\dim \O_x}+1}), \dots, \mathcal{W}_\omega(\tilde \alpha^{n-1})},
\end{split}
\end{align*}
where we wrote $\mathcal{W}_\omega(\tilde \alpha):=\sum_{j=1}^{n-1} \tilde \alpha_j \gd /{\gd \alpha_j}_{|\omega}$. Further, suppose that the $X_j\in \g $ have been chosen such that the vector fields $\{\widetilde X_1, \dots, \widetilde X_{\dim \O_x}\}$ constitute an orthonormal basis of $T_{g\cdot y} \O_y$ at $g\cdot y$, while the vector fields $\{\widetilde X_{{\dim \O_x}+1}, \dots, \widetilde X_d\}$ vanish at $g \cdot y$. Then, with respect to the basis $\mklm{\mathcal{W}_\omega(\tilde \alpha^i)}$ the Hessian of $\Phi_{x,y}(\omega,g)$ is essentially given by  the $(n-1+{\dim \O_x})\times (n-1+{\dim \O_x})$--matrix
\bqn
\mathcal{M}_{x,y}(\omega,g):= \left ( \begin{array}{c | c} \vspace{3mm}  -\norm{x-g \cdot y} \, \eklm{\mathcal{W}_\omega(\tilde \alpha^i),A \mathcal{W}_\omega(\tilde \alpha^j) }  & \begin{array}{c} \eklm{\mathcal{W}_\omega(\tilde \alpha^i),\widetilde X_{j,g \cdot y}}   \\ \hline \\ 0 \end{array} \\ \hline \\  \eklm{\mathcal{W}_\omega(\tilde \alpha^j),\widetilde X_{i,g \cdot y}}  \quad \Big {|} \qquad   0 &  -\frac 12 \Big (\widetilde X_{i,g\cdot y} \big (\langle \widetilde X_j ,\omega\rangle\big )+\widetilde X_{j,g\cdot y} \big (\langle \widetilde X_i ,\omega\rangle\big ) \Big )
 \end{array} \right  ),
\eqn
since $\langle \widetilde X_j,\omega \rangle$ has a zero of second order at $g\cdot y$ in orbit direction for $j=\dim \O_x+1, \dots, d$.  If $y \not \in \O_x$, the transversal Hessian of $\Phi_{x,y}(\omega,g)$ is given by  $\mathcal{M}_{x,y}(\omega,g)$; if $y \in \O_x$, it is given by the $(2\, {\dim \O_x} \times 2\, {\dim \O_x})$--matrix
\bqn
\mathcal{M}_{g\cdot y}(\omega):= \left ( \begin{array}{c | c} \vspace{3mm}  0  & \eklm{\mathcal{W}_\omega(\tilde \alpha^i),\widetilde X_{j,g\cdot y}}   \\ \hline \\  \eklm{\mathcal{W}_\omega(\tilde \alpha^j),\widetilde X_{i,g\cdot y}}  & -\frac 12 \Big (\widetilde X_{i,g\cdot y} \big (\langle \widetilde X_j ,\omega\rangle\big )+\widetilde X_{j,g\cdot y} \big (\langle \widetilde X_i ,\omega\rangle\big ) \Big )
 \end{array} \right  ),
\eqn
which is obtained from  $\mathcal{M}_{x,y}(\omega,g)$ by  removing the $({\dim \O_x}+1)$-th, ..., $(n-1)$-th columns and rows. Clearly, $\det \mathcal{M}_{g\cdot y}(\omega)=\det \mathcal{M}'_{g\cdot y}(\omega)$, where $\mathcal{M}'_{g\cdot y}(\omega)$ is the matrix obtained from $\mathcal{M}_{g\cdot y}(\omega)$ by setting the coefficients in the fourth quadrant equal to zero. A computation as in \eqref{eq:07.04.2016b}-\eqref{eq:07.04.2016c} then shows that the kernel of the linear transformation corresponding to $\mathcal{M}'_{g\cdot y}(\omega)$ is trivial, so that $\det \mathcal{M}_{g\cdot y}(\omega)\not =0$. Thus, in the case (a) we have shown again that the transversal Hessian  of $\Phi_{x,y}$ is non-degenerate, as in Lemma \ref{lem:21.04.2015} (a),  and that 
\bqn 
\frac 1{\det \mathrm{Trans \, Hess} \, \Phi_{x,y}(\omega,g)} =\frac 1{\det \mathcal{M}_{x}(\omega)} \ll 1
\eqn
uniformly in $x,y\in M_\mathrm{prin} \cup M_\mathrm{except}$, since principal and exceptional orbits are locally diffeomorphic \cite[p.\ 181]{bredon}, and principal and exceptional isotropy groups infinitesimally isomorphic. On the other hand, in the case (b),  suppose that the matrix $A$ has  diagonal form  with respect to the basis $\mathcal{W}_\omega(\tilde \alpha^i)$. Denote its entries, which correspond to the principal curvatures of $\Sigma^{R,t}_x$, by $(\rho_1,\dots, \rho_{n-1})$. Then
\begin{align}
\label{eq:8.6.2017}
\begin{split}
\det \mathcal{M}_{x,y}(\omega,g)=& \norm{x-g \cdot y}^{n-1-\kappa} \\
& \cdot \Big ( c_0+\norm{x-g \cdot y} c_1+ \dots + \norm{x-g \cdot y}^\kappa c_\kappa \Big ), \qquad c_i \in \R, 
\end{split}
\end{align}
where 
\bqn 
c_0=\pm \rho_{\kappa+1} \dots \rho_{n-1} \, \det \mathcal{M}_{g \cdot y}(\omega).
\eqn
Since $\det \mathcal{M}_{g \cdot y}(\omega)$ is uniformly bounded away from zero,  we have $|c_0| \geq C >0$ for a uniform constant  $C>0$. Taking $Y$ sufficiently small, it is mainly the term $c_0$ that contributes to $\det \M_{x,y}(\omega,g)$ so that  we conclude again that the Hessian of $\Phi_{x,y}$ is transversally non-degenerate in the case (b), compare  Lemma \ref{lem:21.04.2015} (b), and that 
\bqn 
\frac 1{\det \mathrm{Trans \, Hess} \, \Phi_{x,y}(\omega,g)} = \frac 1{\det \mathcal{M}_{x,y}(\omega,g)} \ll  \norm{x-g \cdot y} ^{-(n-1-\kappa)}
\eqn
uniformly in $x,y\in M=M_\mathrm{prin}\,  \cup\,  M_\mathrm{except}$. Summing up, we have shown on $M_\mathrm{prin}\,  \cup\,  M_\mathrm{except}$ the uniform bound
\bqn 
\frac 1{\det \mathrm{Trans \, Hess} \, \Phi_{x,y}(\omega,g)} \ll \begin{cases} 1, & y \in \O_x, \\ \dist (y, \O_x)^{-(n-1-\kappa)},& y \not\in \O_x. \end{cases}
\eqn
 By  \eqref{eq:25.07.2017} it then follows 
that the constants $C_{k,\Phi_{x,y}}$, $\widetilde C_{\tilde N,\Phi_{x,y}}$ satisfy the specified bounds.  Regarding the values of $\Phi_{x,y}$ on its critical set, note that for $(\omega,g) \in \Crit \, \Phi_{x,y}$ one computes with \eqref{eq:07.04.2016}
\bqn
\Phi_{x,y}^0(R,t)=\eklm{\kappa(x) - \kappa( g \cdot y ), \omega}=  \pm c_{x,g \cdot y}(t) \underbrace{\eklm{ \grad_\eta \zeta(t,\kappa(x),\omega), \omega}}_{=R} =\pm R \, c_{x,g \cdot y}(t),
\eqn
since $\kappa(x) - \kappa( g \cdot y )$ must be co-linear to $\grad_\eta \zeta(t,\kappa(x),\omega)$. In particular notice that $c_{x,g \cdot y}(t)$ is independent of $R$ due to the fact that $\zeta(t,\kappa(x),\eta)$ is homogeneous of degree $1$ in $\eta$, so that $\grad_\eta \zeta(t,\kappa(x),\omega)$ only depends on the direction of $\omega$.  
\end{proof}

As  the previous theorem shows, the integrals $I_{x,y}(\mu)$ exhibit a caustic behaviour\footnote{See Appendix \ref{appendix} for a discussion of the terminology.} in their dependence on the variables $x$ and $y$, obeying different asymptotics in the cases $y \in \O_x$ and $y \not \in \O_x$, respectively. In particular, in the latter case, the coefficients in the asymptotic expansion become singular as $y \to \O_x$. In what follows, we shall derive a uniform asymptotic expansion for the integrals $I_{x,y}(\mu)$ that interpolates between these two different asymptotic behaviours. This result will be necessary for deriving the equivariant $\L^p$-bounds in Section \ref{sec:equivLp}.

\begin{thm}
\label{thm:14.05.2017} 
Consider the integrals $I_{x,y}(\mu)$  defined in \eqref{eq:03.05.2015}. Assume that the continuous compact Lie group $G$ acts on $M$ with orbits of the same dimension $\kappa \leq n-1$,  and that the co-spheres $S_x^\ast M$ are strictly convex.  Then, for sufficiently small $Y$ and arbitrary $\tilde N_1, \tilde N_2 \in \N$   one has the asymptotic formula
\begin{gather*}
 I_{x,y}(\mu)\\
 = \sum_{\J \in \pi_0(\Crit \, \Phi_{x,y})}   \frac{e^{i \mu  \,^0\Phi_{ x,y}^\J}}{\mu^\kappa (\mu \norm{ \kappa(x)-\kappa(g_\J \cdot y)}+1 )^{\frac{n-1-\kappa}2}} \left [ \sum_{k_1,k_2 =0}^{\tilde N_1-1,\tilde N_2-1}  \frac {   \mathcal{Q}_{\J, k_1,k_2} (x,y)}{\mu^{k_1} (\mu \norm{ \kappa(x)-\kappa(g_\J \cdot y)}+1)^{k_2}}\right. \\  \left. + \mathcal{R}_{\J,\tilde N_1, \tilde N_2}(x,y,\mu) \right ]
\end{gather*}
as $\mu \to +\infty$.  The coefficients  and the remainder term 
\bqn
\mathcal{R}_{\J,\tilde N_1, \tilde N_2}(x,y,\mu)=O\Big (\mu^{-\tilde N_1}  (\mu \norm{ \kappa(x)-\kappa(g_\J \cdot y)}+1)^{-\tilde N_2}\Big )
\eqn
 are given by distributions depending smoothly on $R,t$ with support in each of the components $\J$ of $\Crit  \, \Phi_{x,y}$ and $\Sigma^{R,t}_{x} \times G$, respectively. Furthermore, they are uniformly bounded in $x$ and $y$ by derivatives of $a$ with respect to $g$  up to order $2k_1$ and $2\tilde N_1+\kappa+1$, respectively, while  
$^0\Phi_{x,y}^\J:= R \,  c_{x,g_\J\cdot y} (t)$ denotes the constant value of $\Phi_{x,y}$ on $\J$. If the amplitude factorizes according to $a(x,y,\omega,g)=a_1(x,y,\omega) \, a_2(x,y,g)$, the remainder can also be estimated by derivatives of $a$ with respect to $g$  up to order  $2\tilde N_1+\lfloor \kappa/2+1\rfloor$.
\end{thm}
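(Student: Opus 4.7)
The plan is to interpolate the two cases of Theorem \ref{thm:12.05.2015} into a single uniform expansion by using the joint large parameters $(\mu,\, \mu\eps+1)$, where $\eps := \|\kappa(x)-\kappa(g_\J\cdot y)\|$ at a critical component $\J$. The structural input is already contained in the calculation of the transversal Hessian carried out in the proof of Theorem \ref{thm:12.05.2015}: the $(n-1+\kappa)\times(n-1+\kappa)$ matrix $\mathcal{M}_{x,y}(\omega,g)$ has $2\kappa$ eigenvalues of size $\mathcal{O}(1)$---arising from the pairing between $\mathrm{proj}_{|T_\omega\Sigma^{R,t}_x}(T_{g\cdot y}\O_y)$ and $\g/\g_y$---and $n-1-\kappa$ eigenvalues of size $\eps$ coming from the shape operator $A$ of the strictly convex hypersurface $\Sigma^{R,t}_x$; this is made explicit by the factorization $\det \mathcal{M}_{x,y}=\eps^{n-1-\kappa}(c_0+\eps c_1+\dots+\eps^\kappa c_\kappa)$ established in \eqref{eq:8.6.2017}. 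Stationary phase in the $2\kappa$ ``good'' directions with large parameter $\mu$ will produce the factor $\mu^{-\kappa}$, whereas in the $n-1-\kappa$ ``marginal'' directions the effective parameter is $\mu\eps$, yielding the interpolating factor $(\mu\eps+1)^{-(n-1-\kappa)/2}$.

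Concretely, I would first use a partition of unity to localize near each component $\J\in\pi_0(\Crit \, \Phi_{x,y})$; outside the critical set, non-stationary phase yields an $O(\mu^{-\infty})$ contribution absorbable in the remainder. Near $\J$, following the decomposition exhibited in the proof of Theorem \ref{thm:12.05.2015}, I would introduce coordinates $(u,v,w)$ on $\Sigma^{R,t}_x\times G$ with $u\in\R^{d-\kappa}$ parametrizing $\J\simeq G_y$, $v\in\R^{2\kappa}$ spanning the non-degenerate directions of the Hessian, and $w\in\R^{n-1-\kappa}$ parametrizing the $\eps$-degenerating directions in $T_\omega\Sigma^{R,t}_x$ orthogonal to $\mathrm{proj}_{|T_\omega\Sigma^{R,t}_x}(T_{g\cdot y}\O_y)$. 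Taylor expansion around $\J$ in these coordinates yields
\[
\Phi_{x,y}(u,v,w) = {}^0\Phi^\J_{x,y} + \tfrac12\langle Q(u,x,y) v, v\rangle + \tfrac{\eps}{2}\langle \widetilde A(u) w, w\rangle + \mathcal{O}(|v|^3+\eps|w|^3+|v|^2|w|),
\]
with $Q$ and $\widetilde A$ uniformly invertible (the latter by strict convexity of $\Sigma^{R,t}_x$).

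The expansion is then produced in two stages. First, I apply the parameter-dependent stationary phase principle \cite[Theorem~7.7.6]{hoermanderI} in the $v$-variables with large parameter $\mu$, treating $(u,w,x,y)$ as parameters; this produces the prefactor $\mu^{-\kappa}$ and leaves a new amplitude $\widetilde a(u,w;\mu)$ that is smooth in $w$ and classical in $\mu$. The residual $w$-integral
\[
\int_{\R^{n-1-\kappa}} e^{i\mu\eps\,\Psi(u,w)/2}\widetilde a(u,w;\mu)\, dw
\]
has a Morse phase at $w=0$ whose Hessian $\widetilde A(u)$ is uniformly non-degenerate. After the rescaling $w=\tau/\sqrt{\mu\eps+1}$, the uniform stationary-phase principle for oscillatory integrals in the single large parameter $\mu\eps+1$---essentially the classical bound for Fourier transforms of smooth densities on strictly convex hypersurfaces---yields the factor $(\mu\eps+1)^{-(n-1-\kappa)/2}$ together with the full expansion in powers of $(\mu\eps+1)^{-1}$. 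Integration over $u\in\J$ finally extracts the coefficients $\mathcal{Q}_{\J,k_1,k_2}(x,y)$, with leading term $\mathcal{Q}_{\J,0,0}$, and iterating the stages produces all lower-order contributions.

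The main obstacle is the marginal regime $\mu\eps\sim 1$, where classical stationary phase in the $w$-variables is not directly applicable; this is precisely where the strict convexity of the co-spheres enters, yielding the uniform surface-measure bound that interpolates between the oscillatory and non-oscillatory regimes. The remainder estimate $\mathcal{R}_{\J,\tilde N_1,\tilde N_2}=O(\mu^{-\tilde N_1}(\mu\eps+1)^{-\tilde N_2})$ then follows by retaining the corresponding error bounds in each stage, controlled by derivatives of $a$ up to order $2\tilde N_1+\kappa+1$. When $a$ factorizes as $a_1(x,y,\omega)a_2(x,y,g)$, the stationary phases in $g$ (with $\kappa$ normal directions to $G_y$) and in $\omega$ (with $n-1-\kappa$ relevant directions on $\Sigma^{R,t}_x$) can be carried out independently, so that Hörmander's standard $\lfloor\dim/2+1\rfloor$ derivative bound applies separately in each, and the required $g$-regularity of $a_2$ drops to $2\tilde N_1+\lfloor\kappa/2+1\rfloor$, as asserted.
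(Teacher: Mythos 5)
Your proposal takes essentially the same two-stage stationary-phase approach as the paper: first integrate out the $2\kappa$ uniformly non-degenerate directions (the $\alpha'$-directions on $\Sigma^{R,t}_x$ transversal to $N_{g_\J\cdot y}\O_y$ together with the $\kappa$ directions in $G$ normal to $G_y$) with large parameter $\mu$, then the remaining $n-1-\kappa$ marginal directions with large parameter $\mu\,\norm{\kappa(x)-\kappa(g_\J\cdot y)}+1$, using strict convexity of the co-spheres to keep the residual phase non-degenerate after factoring out the small parameter. Two details you gloss over but the paper spells out: (i) the cleanness of the inner critical set $\Crit\,\Phi^\J_{x,y,\omega'}=\mklm{\alpha'=0}\times g_\J\cdot G_y$ must be established \emph{uniformly} in $\omega'\in N_{g_\J\cdot y}\O_y\cap\U_\J$, not merely at $\omega'=\omega_\J$, which the paper proves via the Gaussian-curvature blow-up argument adapted from the proof of Lemma \ref{lem:21.04.2015}(b); and (ii) the interpolation step, where instead of your rescaling $w=\tau/\sqrt{\mu\eps+1}$ the paper writes $e^{i\mu\,{}^0\Phi^\J_{x,y}}=e^{i(\mu\eps+1)\,{}^0\Psi^\J_{x,y}}e^{-i\,{}^0\Psi^\J_{x,y}}$ with $\Psi={}^0\Phi/\eps$, so that $\Psi$ has uniformly bounded Hessian (by \eqref{eq:9.6.2017b} and convexity) and the bounded prefactor $e^{-i\Psi}$ is absorbed into the amplitude before applying Theorem \ref{thm:SP} with asymptotic parameter $\mu\eps+1$.
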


\begin{proof}
Let the notation be as before, and recall from Lemma \ref{lem:21.04.2015} the description of the critical set of $\Phi_{x,y}$ in the two cases $y \in \O_x$ and $y\not \in \O_x$. For simplicity, let us assume that $G_y$ is connected. In the first case, $\Crit \, \Phi_{x,y}$ is given by the set
\bqn
\J= (\Sigma^{R,t}_x\cap N_{x} \O_x) \times \mklm{g_\J \cdot G_y},
\eqn
where $g_\J\in G$ is determined by the condition $g_\J \cdot y=x$, and is connected if $\kappa < n-1$. In the second case, each of the connected components of $\Crit \, \Phi_{x,y}$ has the form 
\bqn 
\J= \mklm{\omega_\J} \times \mklm{g_\J \cdot G_y},
\eqn
where $g_\J \in G$, $\omega_\J \in \Sigma^{R,t}_x\cap N_{g_\J\cdot y} \O_y$ are determined by the condition $\kappa(x)-\kappa(g_\J \cdot y) \in N_{\omega_\J}\Sigma^{R,t}_x$. 
In both cases, each of the  $\J \in \pi_0(\Crit \, \Phi_{x,y})$ is contained in a set of the form
\begin{align*}
 \Big \{(\omega,g)\mid \omega  \in \Sigma^{R,t}_x\cap N_{g_\J\cdot y} \O_y, \,   g \in g_\J \cdot G_y \Big \}.
\end{align*}
Let therefore $\U_\J \subset \Sigma^{R,t}_x$ be a sufficiently small neighbourhood of $\Sigma^{R,t}_x\cap N_{x} \O_x$ and $\omega_\J$, respectively, so  that on $\U_\J$ one has local coordinates of the form 
\bqn 
(-\eps,\eps)^\kappa \times (N_{g_\J \cdot y} \O_y\cap \U_\J) \ni (\alpha',\omega') \quad \longmapsto \quad F_\J(\alpha',\omega')=\omega \in \Sigma^{R,t}_x, \qquad \eps>0,
\eqn 
with $N_{g_\J \cdot y} \O_y\cap \U_\J \equiv \mklm{\alpha'=0}$, where we took into account that $\kappa \leq n-1$. Consider next a partition of unity $\mklm{\chi_\J,\chi_0}$ subordinated to the covering of $\Sigma^{R,t}_x$ by the  $\U_\J$ and a small neighbourhood $\U_0$ of their complement. Then, by the non-stationary phase theorem we have 
\bqn
I_{x,y}(\mu)=\sum_\J I^\J_{x,y}(\mu)+O(\mu^{-\infty}),
\eqn
 where we wrote
\begin{align}
\nonumber I^\J_{x,y}(\mu)&:=\int_{G}\int_{\U_\J} e^{i\mu \Phi_{x,y}(\omega,g)} \chi_\J(\omega) a (x,y, \omega,g) \d \Sigma^{R,t}_x (\omega) \d g \\
&=: \int_{N_{g_\J \cdot y} \O_y\cap \, \U_\J} \left [ \int_G \int_{(-\eps,\eps)^\kappa} e^{i\mu \Phi_{x,y}(F_\J(\alpha',\omega'),g)}  a_\J (x,y, \alpha',\omega',g) \d \alpha' \d g  \right ]  \d\omega'.
\label{eq:9.8.2017}
\end{align}
If $\kappa=n-1$, the intersection $N_{g_\J \cdot y} \O_y\cap \U_\J$ consists of isolated points, and $\d\omega'$ corresponds to the counting measure. 
Our intention is to apply the stationary phase principle first to the inner integral and after this to the outer integral. For this, let $\J$ and $\omega'$ be fixed, and introduce the phase function $\Phi^\J_{x,y,\omega'}(\alpha',g):=\Phi_{x,y}(F_\J(\alpha',\omega'),g)$.  We clearly have 
\bqn
\Crit \, \Phi^\J_{x,y,\omega'}=\mklm{(\alpha',g) \in (-\eps,\eps)^\kappa \times G\mid \, F_\J(\alpha',\omega') \in N_{g\cdot y}\O_y,  \, \kappa(x)-\kappa(g\cdot y) \perp V_{F_\J(\alpha',\omega')}},
\eqn
where we put $V_\omega:=\mathrm{Span}\big \{ \frac \gd{\gd \alpha'_i}_{|\omega}\big \}$.  The  two conditions imply that the vector $\kappa(x)-\kappa(g\cdot y)$ cannot be  roughly tangential to $T_{g\cdot y}\O_y$, unless it is zero. Now, if $y \in \O_x$, the mentioned vector becomes  almost  tangential  to $T_{g\cdot y}\O_y$ for small $Y$, so that we must have $x=g \cdot y$ and consequently 
\bq 
\label{eq:9.6.2017a}
\Crit \, \Phi^\J_{x,y,\omega'}=\mklm{(\alpha',g) \in (-\eps,\eps)^\kappa \times G\mid \, \alpha'=0,  \, g \in g_\J \cdot G_y},
\eq
since  $x=g_\J \cdot y$ in this case.  Let us now consider the case $y \not \in \O_x$. We then assert that if $Y$ is chosen sufficiently small, the critical set $\Crit \, \Phi^\J_{x,y,\omega'}$ is again given by \eqref{eq:9.6.2017a}. Indeed,  if $\kappa=n-1$, the condition $\kappa(x)-\kappa(g \cdot y)\perp V_{F_\J(\alpha',\omega')}$ implies that $\kappa(x)-\kappa(g \cdot y)$ is normal to $\Sigma^{R,t}_x$ at $F_\J(\alpha',\omega')$, which can only hold for $\alpha'=0$ and $g\in g_\J \cdot G_y$ by the choice of $\U_\J$, 
yielding \eqref{eq:9.6.2017a}. If $\kappa <n-1$, note that we can choose the coordinates $\alpha'$ in $\U_\J$ in such a way that 
 \bq
% Koppelow-Marburg, den 5.8.2017
\label{eq:SGD}
\kappa(x)-\kappa(g_\J \cdot y) \perp V_{\omega'} \qquad \text{for all $\omega'\in N_{g_\J \cdot y} \O_y\cap  \U_\J$},
 \eq
showing the inclusion ''$\supset$'' in \eqref{eq:9.6.2017a}. To see the converse, assume as in the proof of Lemma \ref{lem:21.04.2015} (b) that for each $n \in \N$  there is a point $y_n\in W_n \cap Y \cap S_x$ approaching $x$ and a smooth curve 
\bqn 
(-\eps_n,\eps_n) \ni t \longmapsto (\alpha'_n(t),g_n(t)) \in \Crit \, \Phi^\J_{x,y_n,\omega'}
\eqn
parametrized such that $\norm{\dot \alpha_n'(t)}=1$. Since the vectors  $\kappa(x)-\kappa(g_n(t) \cdot y_n)$ are approximately normal to $T_{g_n(t)\cdot y_n}\O_{y_n}$,  the curves $\mklm{g_n(t) \cdot y_n\mid \, t \in (-\eps_n,\eps_n)}\subset Y$ must converge  to $x$ as  $n \to \infty$, while $\eps_n \to 0$.  Similarly, due to the compactness of $\Sigma_x^{R,t}$ the curves 
\bqn 
\mklm{F_\J(\alpha'_n(t), \omega')\mid \, t \in (-\eps_n,\eps_n)}\subset \Sigma_x^{R,t}
\eqn
converge to at least one $F_\J(\alpha'_\infty, \omega') \in \Sigma_x^{R,t}\cap N_x \O_x$, eventually after passing to a suitable convergent subsequence. By assumption, $G$ acts on $M$ with orbits of the same dimension $\kappa$, so that we can assume that all orbits in $Y$ are diffeomorphic. Consequently,  the more $y_n$ approaches $x$, the faster the direction of $\kappa(x)-\kappa(g_n(t) \cdot y_n)$ changes in orbital direction as $t \in (-\eps_n,\eps_n)$ varies. Thus, the Gaussian curvature of $\Sigma_x^{R,t}$ at $F_\J(\alpha'_\infty, \omega')$   cannot stay bounded in $\alpha'$-direction,  and  we have shown that for sufficiently small $Y$ we locally have 
$\Crit \, \Phi^\J_{x,y,\omega'} \simeq  G_y$, obtaining the inclusion ''$\subset$'' in \eqref{eq:9.6.2017a}. In any of the cases $y \in \O_x$ or $y \not \in \O_x$ we see that $\Crit \, \Phi^\J_{x,y,\omega'}$ is given by \eqref{eq:9.6.2017a} and, in particular, a smooth submanifold  of codimension $2\kappa$. 
To see that it is clean, note that with the notation and  the arguments in the proof of the previous theorem the transversal Hessian of $\Phi^\J_{x,y,\omega'}$ is constant on its critical set, and can be represented by the $(2\kappa\times 2\kappa)$--matrix
\bqn
 \left ( \begin{array}{c | c} \vspace{3mm} \norm{x-g_\J \cdot y} \mathrm{diag}(\rho_1,\dots,\rho_\kappa) & \Big \langle{\frac{\gd}{\gd\alpha'_i}}_{|\omega'},\widetilde X_{j,g_\J\cdot y}\Big \rangle   \\ \hline \\  \Big \langle {\frac{\gd}{\gd\alpha'_j}}_{|\omega'},\widetilde X_{i,g_\J\cdot y } \Big \rangle &  -\frac 12 \Big ({\widetilde X_{i,g_\J\cdot y} \big (\langle  \widetilde X_j,\omega'}\rangle\big )+ {\widetilde X_{j,g_\J\cdot y} \big (\langle  \widetilde X_i,\omega'}\rangle \big ) \Big ) 
 \end{array} \right  )
\eqn
with $\omega'=F_\J(0,\omega')$, where  the vector fields $\{\widetilde X_1, \dots, \widetilde X_\kappa\}$ constitute an orthonormal basis of $T_{g_\J\cdot y} \O_y$ at $g_\J\cdot y$, while the vector fields $\{\widetilde X_{\kappa+1}, \dots, \widetilde X_d\}$ vanish at $g_\J \cdot y$. As in \eqref{eq:8.6.2017} one then computes
\begin{align}
\label{eq:9.6.2017}
\begin{split}
\det \mathrm{Trans \, Hess} \,  {\Phi^\J_{x,y,\omega'}}=& c_0+\norm{x-g_\J \cdot y} c_1+ \dots + \norm{x-g_\J \cdot y}^\kappa c_\kappa 
\end{split}
\end{align}
with $c_i \in \R$ and $c_0\not=0$ uniformly in $x$ and $y$, showing the cleanness of $\Crit \, \Phi^\J_{x,y,\omega'}$ for sufficiently small $Y$. Now, applying Theorem \ref{thm:SP} to the inner integral in \eqref{eq:9.8.2017} we obtain
\begin{align*}
I^\J_{x,y}(\mu)= \mu^{-\kappa} \int_{N_{g_\J \cdot y} \O_y\cap \, \U_\J}  e^{i\mu \, ^0\Phi^\J_{x,y}(\omega')} \left [  \sum_{k_1=0}^{\tilde N_1-1} \mathcal{Q}_{\J,k_1}(x,y,\omega') \mu^{-k_1}+\mathcal{R}_{\J,\tilde N_1}(x,y,\omega',\mu) \right ] \d \omega',
\end{align*}
where  $^0\Phi^\J_{x,y}(\omega'):=\eklm{\kappa(x)-\kappa(g_\J\cdot y), \omega'}$ stands for the constant value of $\Phi^\J_{x,y,\omega'}$  on its critical set, and the coefficients and the remainder satisfy the usual estimates. Note that they are bounded from above by negative powers of the determinant of the transversal Hessian of $\Phi^\J_{x,y,\omega'}$. But since by \eqref{eq:9.6.2017} the determinant  of $\mathrm{Trans \, Hess} \,  {\Phi^\J_{x,y,\omega'}}$ is uniformly bounded away from zero in $x$ and $y$, the coefficients $\mathcal{Q}_{\J,k_1}(x,y,\omega')$ and the remainder $\mathcal{R}_{\J,\tilde N_1}(x,y,\omega',\mu)$ must be uniformly bounded in $x$ and $y$. Furthermore, they are bounded by derivatives with respect to $(\alpha',g)$ of the amplitude $a_\J$ up to order $2k_1$ and $2\tilde N_1+\kappa+1$, or, if the amplitude factorizes according to  $a(x,y,\omega,g)=a_1(x,y,\omega) \, a_2(x,y,g)$, by derivatives up to order $2 \tilde N_1 + \lfloor \kappa/2+1\rfloor$, respectively, compare Remark \ref{rem:25.07.2017}. If $y \in \O_x$, we have $^0\Phi^\J_{x,y}(\omega')=0$ for all $\omega'$, and we recover the asymptotic expansion for $I_{x,y}(\mu)$ derived in Theorem \ref{thm:12.05.2015} (a). Let us therefore assume that $y \not\in \O_x$. If $\kappa=n-1$, the set $N_{g_\J \cdot y} \O_y\cap \, \U_\J$ consists only of $\omega_\J$, and we are done. If  $\kappa<n-1$, in order to apply the stationary phase principle  to the integral over $\omega'$, observe that 
\bqn
%\label{eq:9.7.2017}
\Crit \, ^0\Phi^\J_{x,y}=\mklm{\omega' \in N_{g_\J \cdot y} \O_y\cap \, \U_\J\mid \, \kappa(x)-\kappa(g_\J\cdot y) \in N_{\omega'} \Sigma^{R,t}_x}
=\mklm{\omega_\J},
\eqn
since  $\kappa(x)-\kappa(g_\J\cdot y) \perp V_{\omega'}$ by \eqref{eq:SGD}, so that 
\begin{align*}
\grad_{\omega'} \, ^0\Phi^\J_{x,y}=0 \quad &\Longleftrightarrow \quad   
\kappa(x)-\kappa(g_\J\cdot y) \in N_{\omega'} \Sigma^{R,t}_x.
\end{align*} 
Further,  the Hessian of  $ ^0\Phi^\J_{x,y}$ at the critical point $\omega_\J$ is given by
\bq
\label{eq:9.6.2017b}
\mathrm{Hess } \, ^0\Phi^\J_{x,y}(\omega_\J)= -\norm{\kappa(x)-\kappa( g_\J \cdot y) } \, \mathrm{diag}(\rho_{\kappa+1},\dots,\rho_{n-1})_{|\omega_\J}.
\eq
Consequently, $\omega_\J$ is a non-degenerate critical point  due to the strict convexity of  $\Sigma^{R,t}_x$. In order to find an interpolation formula we proceed as in Appendix \ref{appendix}, and write
\bqn 
e^{i\mu \, ^0\Phi^\J_{x,y}(\omega')}=e^{i(\norm{\kappa(x)-\kappa(g_\J\cdot y)} \mu+1) \, ^0\Psi^\J_{x,y}(\omega')}\, e^{-i  \, ^0\Psi^\J_{x,y}(\omega')}, \quad \Psi_{x,y}(\omega,g):=\frac{\Phi_{x,y}(\omega,g)}{\norm{\kappa(x)-\kappa(g\cdot y)}}.
\eqn
Theorem \ref{thm:SP} then implies for $\kappa < n-1$ with $^0\Psi^\J_{x,y}$ as phase function and $\mu\norm{\kappa(x)-\kappa(g_\J \cdot y)}+1$ as asymptotic parameter the asymptotic expansion
\begin{gather*}
\int_{N_{g_\J \cdot y} \O_y\cap \, \U_\J}   \, e^{i\mu \, ^0\Phi^\J_{x,y}(\omega')}  \mathcal{Q}_{\J,k_1}(x,y,\omega') \d \omega'=(\mu\norm{\kappa(x)-\kappa(g_\J \cdot y)}+1)^{-(n-\kappa-1)/2} e^{i\mu \, ^0\Phi^\J_{x,y} }\\ 
\cdot \left [ \sum_{k_2=0}^{\tilde N_2-1} (\mu\norm{\kappa(x)-\kappa(g_\J \cdot y)}+1)^{-k_2} \mathcal{Q}_{\J,k_1,k_2}(x,y,\omega_\J)+\mathcal{R}_{\J,k_1,\tilde N_2}(x,y,\mu) \right ],
\end{gather*}
where $ ^0\Phi^\J_{x,y}$ denotes the constant value of $^0\Phi^\J_{x,y}(\omega')$ at $\omega_\J$, which equals the constant value of $\Phi_{x,y}$ at $(\omega_\J,g_\J)\in \J$. Note that the coefficients $\mathcal{Q}_{\J,k_1,k_2}(x,y,\omega_\J)$ and the remainder  $\mathcal{R}_{\J,k_1,\tilde N_2}(x,y,\mu)$ are uniformly bounded in $x,y$ in view of \eqref{eq:9.6.2017b}, and are not bounded by additional derivatives with respect to $g$.  Treating the remainders $\mathcal{R}_{\J,\tilde N_1}$ alike, the theorem follows.
\end{proof}

To close this section, let us still consider the case of a finite group $G$. For this, one has to examine the asymptotic behavior of  oscillatory integrals of the form 
\bq
\label{eq:23.06.2016}
I_{z}(\nu):=  \int_{\Sigma} e^{i\nu \Psi_z(\omega)} a ( \omega) \d \Sigma (\omega)  , \qquad  z \in S^{n-1}, \quad \nu \to + \infty,
\eq
where  $\Sigma \subset \R^n$ denotes a strictly convex $\Cinft$-hypersurface, $d\Sigma$ the induced volume density, and $\Phi_z$ the phase function 
$
\Psi_z(\omega):=  \eklm{z, \omega},
$
 while   $a \in \CT(\Sigma)$ is an amplitude that might depend on $\nu$  and other parameters. 
  
 \begin{lem}
\label{thm:24.08.2016} 
For every   $ \tilde N \in \N$ one has the asymptotic formula
\begin{align*}
  I_{z}(\nu)&=  \sum_{{\omega_0 \in \Crit \Psi_z}} \frac{e^{i\nu\Psi_z(\omega_0)}  }{(\det (\nu\,   \mathrm{II}_{\omega_0}/2\pi i ))^{1/2}} \left [ \sum_{j=0}^{\tilde N-1} \mathcal{Q}_{j}(a,\Psi_z,\omega_0) \nu^{-j}  +\mathcal{R}_{\tilde N}(a,\Psi_z,\omega_0,\nu)\right ]
\end{align*}
as $\mu \to +\infty$, where the critical set of $\Psi_z$ is given by 
\bqn 
\Crit \, \Psi_z=\Big \{\omega \in \Sigma \mid   z \in N_\omega \Sigma\Big \},
\eqn
and only consists of non-degenerate, isolated points, while $\mathrm{II}$ denotes the second fundamental form of $\Sigma$. The coefficients  and the remainder satisfy the bounds
\begin{align*}
\begin{split}
|\mathcal{Q}_j| \leq C_{j}  \sup _{l \leq 2j}|D^l a(\omega_0)|, \qquad 
|\mathcal{R}_{\tilde N} | \leq \tilde C_{\tilde N} \sup _{l \leq \lfloor (n-1)/2 +1\rfloor+ 2\tilde N}\norm{D^l a}_{\infty, \Sigma}  \mu^{- \tilde N} 
\end{split}
\end{align*}
for suitable constants $C_{j}, \tilde C_{\tilde N}>0$ independent of $z$ and $\nu$, where $D^l$ denotes a differential operator on $\Sigma$ of order $l$.  In particular, 
\bqn
%\label{eq:L0}
\mathcal{Q}_{0}(a,\Psi_z,\omega_0)=a(\omega_0).
\eqn
\end{lem}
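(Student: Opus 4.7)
The plan is to apply the classical stationary phase theorem (Theorem~\ref{thm:SP}) directly to the integral~\eqref{eq:23.06.2016}. The main task reduces to identifying the critical set of $\Psi_z$ on $\Sigma$ and computing its Hessian at each critical point in terms of the second fundamental form $\mathrm{II}$.

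First I would determine $\Crit \, \Psi_z$. Choosing a local parametrization $F:W\subset\R^{n-1} \to \Sigma$ around an arbitrary $\omega\in\Sigma$, the differential $d(\Psi_z\circ F)_\alpha(v) = \eklm{z, dF_\alpha(v)}$ vanishes on all $v\in\R^{n-1}$ precisely when $z$ is orthogonal to $T_\omega\Sigma$, that is, when $z\in N_\omega\Sigma$. Since $\Sigma$ is a strictly convex $\Cinft$-hypersurface, the Gauss map $\mathcal N:\Sigma\to S^{n-1}$ is a global diffeomorphism, as already exploited in the proof of Lemma~\ref{lem:21.04.2015}~(b). Hence for each $z\in S^{n-1}$ there exist exactly two critical points $\omega_0^{\pm}\in\Sigma$, characterized by $\mathcal N(\omega_0^{\pm})=\pm z$, so that $\Crit \, \Psi_z$ consists of isolated points.

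Next I would compute the Hessian at such a critical point $\omega_0=F(\alpha_0)$. A direct calculation gives
\begin{equation*}
\frac{\gd^2(\Psi_z\circ F)}{\gd\alpha_i\gd\alpha_j}(\alpha_0) = \eklm{z,\tfrac{\gd^2 F}{\gd\alpha_i\gd\alpha_j}(\alpha_0)}.
\end{equation*}
Decomposing $\gd^2 F/\gd\alpha_i\gd\alpha_j$ at $\omega_0$ into its tangential and normal components, the tangential part contributes nothing because $z\perp T_{\omega_0}\Sigma$, while the normal component paired with $z=\pm\mathcal N(\omega_0)$ is by definition $\pm\mathrm{II}_{\omega_0}\big(\gd F/\gd\alpha_i,\gd F/\gd\alpha_j\big)$. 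Hence $\mathrm{Hess}\, \Psi_z(\omega_0) = \pm \, \mathrm{II}_{\omega_0}$, and strict convexity of $\Sigma$ ensures that $\mathrm{II}_{\omega_0}$ is definite, so each $\omega_0^{\pm}$ is a non-degenerate critical point.

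With the critical points identified and shown to be non-degenerate, the asserted expansion is an immediate consequence of the classical stationary phase theorem applied to~\eqref{eq:23.06.2016}: the leading factor $(\det(\nu\, \mathrm{II}_{\omega_0}/2\pi i))^{1/2}$ (the choice of argument absorbing both the Morse index of $\mathrm{II}_{\omega_0}$ and the sign $\pm$ coming from $z=\pm\mathcal N(\omega_0)$) together with the leading coefficient $\mathcal{Q}_0 = a(\omega_0)$ and the stated bounds on the coefficients $\mathcal Q_j$ and the remainder $\mathcal R_{\tilde N}$ by derivatives of $a$ up to orders $2j$ and $\lfloor (n-1)/2+1\rfloor +2\tilde N$, respectively, follow at once from Theorem~\ref{thm:SP} and Remark~\ref{rem:25.07.2017}. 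The only non-routine step is the identification $\mathrm{Hess}\, \Psi_z(\omega_0)=\pm\mathrm{II}_{\omega_0}$; everything else is a direct appeal to the standard stationary phase machinery, so I expect no substantial obstacle.
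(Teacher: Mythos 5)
Your proof is correct and follows essentially the same route as the paper: compute $\Crit\,\Psi_z$ via a local parametrization, use strict convexity and the Gauss map diffeomorphism to get isolated critical points (two antipodal points $\omega_0^\pm$), identify the Hessian with $\pm\mathrm{II}_{\omega_0}$, and apply Theorem~\ref{thm:SP}. The only difference is cosmetic: you spell out the tangential/normal decomposition of $\partial^2 F/\partial\alpha_i\partial\alpha_j$ to justify $\mathrm{Hess}\,\Psi_z(\omega_0)=\pm\mathrm{II}_{\omega_0}$, where the paper states the identification more tersely.
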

 
\begin{proof}
The statement of the proposition is essentially known \cite[Theorem 7.7.14]{hoermanderI}, but for completeness, we include a proof here. 
Consider a local parametrization 
\bq
\label{eq:param}
F:\R^{n-1} \supset U \, \longrightarrow \, \Sigma\subset \R^n , \quad \xi \longmapsto F(\xi)=\omega,
\eq
of the hypersurface $\Sigma$. If we compute the derivatives of $\Psi_z$ with respect to this parametrization and set them equal to zero, we arrive at the conditions $\eklm{z, \gd F/\gd \xi_i}=0$ for $i=1,\dots, n-1$, implying  that $z$ must be normal to $\Sigma$ at $\omega$. Thus, $\Crit \, \Psi_z=\{\omega \in \Sigma\mid  z \in N_\omega \Sigma\}$.   Since $\Sigma$ is strictly convex,  the Gauss map 
$
\Ncal: \Sigma \ni \omega \longmapsto \Ncal(\omega) \in N_\omega \Sigma
$
 is a global diffeomorphism, so that for each $\tilde z \in S^{n-1}$ there is a unique $\omega_{\tilde z} \in \Sigma$ such that $\tilde z=\Ncal(\omega_{\tilde z})$.
Consequently,  $\omega \in \Crit \, \Psi_z$ is locally uniquely determined by the condition $\Ncal(\omega) =\pm \, \Ncal(\omega_{z})$, so that $\omega$ is an isolated point. In order to see  that $\Crit \,  \Psi_z$ consists of non-degenerate points, note that with respect to the parametrization  \eqref{eq:param} of $\Sigma$  the Hessian of $\Psi_z$ at a critical point $\omega$ is  given by the matrix
\bq
\label{eq:27.06.2016}
\mathrm{Hess} \, \Psi_z(\omega)\equiv \left ( \eklm{z, \frac{\gd ^2 F}{\gd \xi_i \gd \xi_j}(F^{-1}(\omega))}_{1\leq i,j \leq n-1}   \right  ).
\eq
Since $z\in N_\omega\Sigma$, $\mathrm{Hess} \, \Psi_z(\omega)$ corresponds to the second fundamental form $\mathrm{II}$ of $\Sigma$, compare  
\cite[Chapter VII, Section 3]{kobayashi-nomizuII}.
 Because $\Sigma$ is strictly convex,  the eigenvalues of $\mathrm{II}$  at  $\omega$, which are given by the principal curvatures of $\Sigma$ at that point, are all non-zero. Therefore, the determinant of $\mathrm{Hess} \, \Psi_z(\omega)$ is non-zero, and $\omega$ must be a non-degenerate critical point. In conclusion,   $\Psi_z$ has a clean critical set, so that the asymptotic formula for $I_z(\nu)$, together with the estimates for $\mathcal{Q}_j$ and  the remainder, follow directly  by applying Theorem \ref{thm:SP} to $I_z(\nu)$. 
\end{proof}

We now have the following 

\begin{proposition}
\label{prop:10.08.2017} 
Consider the integrals $I_{x,y}(\mu)$  defined in \eqref{eq:03.05.2015}. Assume that $G$ is finite,  and that the co-spheres $S_x^\ast M$ are strictly convex.  Then, for arbitrary $\tilde N \in \N$   one has the asymptotic formula
\begin{gather*}
 I_{x,y}(\mu)\\
 = \sum_{\stackrel{g \in G,\, \omega_0 \in }{\Crit \, \Phi_{x,y,g}}}   \frac{e^{i \mu  \Phi_{ x,y,g}(\omega_0)}}{(\mu \norm{ \kappa(x)-\kappa(g \cdot y)}+1 )^{\frac{n-1}2}} \left [ \sum_{k=0}^{\tilde N-1}  \frac {   \mathcal{Q}_{\omega_0,k} (x,y,g)}{ (\mu \norm{ \kappa(x)-\kappa(g \cdot y)}+1)^{k}}+ \mathcal{R}_{\omega_0,\tilde N}(x,y,g,\mu) \right ]
\end{gather*}
as $\mu \to +\infty$, where  $\Phi_{x,y,g}(\omega):=\Phi_{x,y}(\omega,g)$.   The coefficients  and the remainder term 
\bqn
\mathcal{R}_{\omega_0, \tilde N}(x,y,g,\mu)=O\Big ((\mu \norm{ \kappa(x)-\kappa(g \cdot y)}+1)^{-\tilde N}\Big )
\eqn
 are explicitly given and depend smoothly on $R,t$. Furthermore, they are uniformly bounded in $x$ and $y$. 
\end{proposition}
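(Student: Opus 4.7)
Since $G$ is finite, the Haar measure on $G$ is the counting measure, and the outer integral in \eqref{eq:03.05.2015} becomes a finite sum:
\begin{equation*}
I_{x,y}(\mu) = \sum_{g \in G} J_g(x,y,\mu), \qquad J_g(x,y,\mu) := \int_{\Sigma^{R,t}_x} e^{i\mu \Phi_{x,y,g}(\omega)} a(x,y,\omega,g) \d \Sigma^{R,t}_x(\omega).
\end{equation*}
It therefore suffices, for each fixed $g$, to derive a uniform asymptotic expansion for $J_g(x,y,\mu)$ as $\mu \to +\infty$, and to sum the (finitely many) resulting contributions.

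Set $z_g := \kappa(x)-\kappa(g\cdot y)$, so that $\Phi_{x,y,g}(\omega) = \eklm{z_g, \omega}$ is linear in $\omega$. Differentiating with respect to a local parametrization of $\Sigma^{R,t}_x$, exactly as in the proof of Lemma \ref{thm:24.08.2016}, one obtains $\Crit \, \Phi_{x,y,g} = \{\omega \in \Sigma^{R,t}_x \mid z_g \in N_\omega \Sigma^{R,t}_x\}$. Since the co-spheres $S^*_x M$ are strictly convex by assumption, the hypersurfaces $\Sigma^{R,t}_x$ are strictly convex for sufficiently small $|t|$, and their Gauss maps are diffeomorphisms. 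Hence, whenever $z_g \neq 0$, $\Crit \, \Phi_{x,y,g}$ consists of exactly two non-degenerate critical points, in complete analogy with the situation in Lemma \ref{thm:24.08.2016}.

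To obtain an expansion uniform in $x$ and $y$ — in particular as $\norm{z_g} \to 0$ — I would import the interpolation device employed in the proof of Theorem \ref{thm:14.05.2017}. Namely, for $z_g \neq 0$ I would rewrite
\begin{equation*}
e^{i\mu\Phi_{x,y,g}(\omega)} = e^{i(\mu \norm{z_g}+1)\Psi_{x,y,g}(\omega)} \cdot e^{-i\Psi_{x,y,g}(\omega)}, \qquad \Psi_{x,y,g}(\omega) := \eklm{z_g/\norm{z_g}, \omega},
\end{equation*}
and apply Theorem \ref{thm:SP} with phase function $\Psi_{x,y,g}$ and asymptotic parameter $\mu\norm{z_g}+1$, absorbing the bounded smooth factor $e^{-i\Psi_{x,y,g}(\omega)}$ into the amplitude. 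Since $\Psi_{x,y,g}$ and $\Phi_{x,y,g}$ share the same critical points, and since the Hessian of $\Psi_{x,y,g}$ at each critical point $\omega_0$ is, up to sign, the restriction of the second fundamental form of $\Sigma^{R,t}_x$ at $\omega_0$ — uniformly non-degenerate by strict convexity — the resulting coefficients $\mathcal{Q}_{\omega_0,k}(x,y,g)$ and remainder $\mathcal{R}_{\omega_0,\tilde N}(x,y,g,\mu)$ depend smoothly on $R,t$ and are uniformly bounded in $x$ and $y$. The corresponding leading constant phase $\mu\Phi_{x,y,g}(\omega_0)$ is recovered from the factor $e^{-i\Psi_{x,y,g}(\omega_0)}$ combined with $e^{i(\mu\norm{z_g}+1)\Psi_{x,y,g}(\omega_0)}$.

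The main obstacle is the degenerate case $\norm{z_g}=0$, where the normalization $\Psi_{x,y,g}$ is ill-defined and $\Crit \, \Phi_{x,y,g}$ collapses onto the whole of $\Sigma^{R,t}_x$. In this case, however, $\Phi_{x,y,g}\equiv 0$, so the integrand is non-oscillatory and $J_g(x,y,\mu)$ reduces to the $\mu$-independent, uniformly bounded integral $\int_{\Sigma^{R,t}_x} a(x,y,\omega,g) \d\Sigma^{R,t}_x(\omega)$; since $(\mu\norm{z_g}+1)^{-(n-1)/2} = 1$ here, this contribution is automatically compatible with the claimed asymptotic form, and can be reconciled with the interpolation expansion by a continuity argument in $z_g$. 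Summing over the finitely many $g \in G$ and the at most two critical points per $g$ then yields the stated asymptotic formula.
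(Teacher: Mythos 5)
Your argument is essentially the paper's: split the finite Haar integral into a sum over $g\in G$, renormalize the phase to $\Psi_{x,y,g}=\Phi_{x,y,g}/\norm{z_g}$ and apply stationary phase (packaged in Lemma~\ref{thm:24.08.2016}) with the shifted asymptotic parameter $\mu\norm{z_g}+1$, absorbing $e^{-i\Psi_{x,y,g}}$ into the amplitude. One small point: when $x=g\cdot y$ the stated formula only makes sense if, as in the paper's proof, one sets $\Crit\,\Phi_{x,y,g}:=\Sigma^{R,t}_x$ and replaces the sum over critical points by the integral $\int_{\Sigma^{R,t}_x}a\,d\Sigma^{R,t}_x$ — your appeal to a ``continuity argument in $z_g$'' glosses over this reinterpretation, although the resulting bound you give is correct.
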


\begin{proof}
For finite $G$, the integral \eqref{eq:03.05.2015} reads
\bqn
I_{x,y}(\mu):=\sum_{g \in G} I_{x,y,g}(\mu), \qquad I_{x,y,g}(\mu):= \int_{\Sigma^{R,t}_x} e^{i\mu \Phi_{x,y,g}(\omega)} a (x,y, \omega,g) \d \Sigma^{R,t}_x (\omega).
\eqn
Assume that $x\not=g\cdot y$. Writing
\bqn 
e^{i\mu \, \Phi_{x,y,g}(\omega)}=e^{i(\norm{\kappa(x)-\kappa(g\cdot y)} \mu+1) \, \Psi_{x,y,g}(\omega)}\, e^{-i  \, \Psi_{x,y,g}(\omega)}, \quad \Psi_{x,y,g}(\omega):=\frac{\Phi_{x,y,g}(\omega)}{\norm{\kappa(x)-\kappa(g\cdot y)}},
\eqn
 the previous lemma implies with $\nu=\norm{\kappa(x)-\kappa(g\cdot y)} \mu+1$ as asymptotic parameter and $\Psi_{x,y,g}$ as phase function the expansion
\begin{align*}
I_{x,y,g}(\mu)&=  \sum_{{\omega_0 \in \Crit \Psi_{x,y,g}}} \frac{e^{i\nu\Psi_{x,y,g}(\omega_0)}  }{(\det (\nu\,   \mathrm{II}_{\omega_0}/2\pi i ))^{1/2}} \Big [ \sum_{j=0}^{\tilde N-1} \mathcal{Q}_{j}(a (x,y,\cdot,g)e^{-i  \, \Psi_{x,y,g}},\Psi_{x,y,g},\omega_0) \, \nu^{-j}  \\ 
&+\mathcal{R}_{\tilde N}(a(x,y,\cdot,g) e^{-i  \, \Psi_{x,y,g}},\Psi_{x,y,g},\omega_0,\nu)\Big ]
\end{align*}
as $\mu \to +\infty$, where 
\bqn 
\Crit \, \Psi_{x,y,g}=\Big \{\omega \in \Sigma^{R,t}_x \mid   \kappa(x)-\kappa(g\cdot y) \in N_\omega \Sigma^{R,t}_x\Big \}
\eqn
and all expressions are uniformly bounded in $x$ and $y$. If $x=g\cdot y$, 
\bqn 
I_{x,y,g}(\mu)= \int_{\Sigma^{R,t}_x}  a (x,y, \omega,g) \d \Sigma^{R,t}_x (\omega),
\eqn
and the assertion of the proposition follows by setting $\Crit \, \Phi_{x,y,g}:= \Sigma^{R,t}_x$ and replacing the sum over $\Crit \, \Phi_{x,y,g}$ by an integral over $\Sigma^{R,t}_x$ in this case.
\end{proof}

\section{The equivariant local Weyl law}

Let us now come back to our initial question of finding an asymptotic description of the equivariant spectral function. With the notation of the previous sections we have 

\begin{proposition} [\bf Point-wise  asymptotics for the kernel of the equivariant approximate projection]
\label{thm:kernelasymp}
For any  fixed $x \in M$, $\gamma \in \widehat G$,  and $\tilde N\in \N$ one has as $\mu \to +\infty$
\begin{align}
\label{eq:13.05.2015}
\begin{split}
K_{\widetilde \chi_\mu \circ \Pi_\gamma}(x,x)&=\sum_{j\geq 0, \, e_j \in \L^2_\gamma(M)} \rho(\mu-\mu_j) |{e_j(x)}|^2 \\ & = \Big (\frac{\mu}{2\pi}\Big )^{n-\dim \mathcal{O}_x-1} \frac{d_\gamma}{2\pi} \left [\sum_{k=0}^{\tilde N-1} \Lcal_k(x,\gamma) \mu^{-k}+ \mathcal{R}_{\tilde N}(x,\gamma) \right ]
\end{split}
\end{align}
with known coefficients  and  remainder  that depend smoothly  on $x \in M_\mathrm{prin}$. If $G$ is continuous, they  satisfy the bounds
\bqn 
|\mathcal{L}_k(x,\gamma)| \leq C_{k,x} \sup_{l \leq 2k} \norm{D^l \gamma}_\infty, \qquad |\mathcal{R}_{\tilde N}(x,\gamma)| \leq \tilde C_{\tilde N,x} \sup_{l \leq 2\tilde N+\lfloor \dim \O_x/2+1\rfloor} \norm{D^l \gamma}_\infty \mu^{-\tilde N},
\eqn
where $D^l$ denotes a differential operator on $G$ of order $l$, and the constants $C_{k,x}$, $\tilde C_{\tilde N,x}$ are uniformly bounded in $x$ if $M= M_\mathrm{prin} \cup M_\mathrm{except}$; if $G$ is finite, similar bounds hold with $l=0$. In particular, the leading coefficient is given by
\begin{align*}
\Lcal_0(x,\gamma) =  \hat \rho(0) [{\pi_\gamma}_{|G_x}:\1] \, \mbox{vol} \, [( \Omega \cap S_x^\ast M)/G],
\end{align*}
where $S^\ast M:=\mklm{(x,\xi) \in T^\ast M\mid p(x,\xi)=1}$, which for finite $G$ simply reads 
\bqn 
\hat \rho(0) \sum_{g \in G_x} \overline{\gamma(g)}  \, \mbox{vol} \, [( S_x^\ast M)/G].
\eqn
If $\mu \to -\infty$, the function $K_{\widetilde \chi_\mu \circ \Pi_\gamma}(x,x)$ is rapidly decreasing in $\mu$. 
\end{proposition}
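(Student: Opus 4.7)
The plan is to combine Corollary \ref{cor:12.05.2015}, which already reduces $K_{\widetilde \chi_\mu \circ \Pi_\gamma}(x,y)$ to the inner oscillatory integrals $I^\gamma_\iota(\mu, R, t, x, y)$, with the equivariant stationary phase expansion for those integrals provided by Theorem \ref{thm:12.05.2015}, specialized to the diagonal $y = x$. Since $x \in \mathcal{O}_x$, we are in case (a) of Theorem \ref{thm:12.05.2015}, so each $I^\gamma_\iota(\mu, R, t, x, x)$ admits an asymptotic expansion with leading order $(2\pi/\mu)^{\dim \mathcal{O}_x}$. Multiplying by the prefactor $(\mu/2\pi)^{n-1} d_\gamma/(2\pi)$ from Corollary \ref{cor:12.05.2015} produces the overall power $(\mu/2\pi)^{n-\dim \mathcal{O}_x - 1} d_\gamma/(2\pi)$ appearing in \eqref{eq:13.05.2015}. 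The differential operators $D^{2j}_{R,t}$ in Corollary \ref{cor:12.05.2015} commute with the $\mu$-expansion since the coefficients $\mathcal{Q}_k(x,x)$ and remainders $\mathcal{R}_{\tilde N}(x,x,\mu)$ depend smoothly on $(R,t)$ by Theorem \ref{thm:12.05.2015}(a); this allows one to reorganize the double sum and read off the $\mathcal{L}_k(x,\gamma)$ and $\mathcal{R}_{\tilde N}(x,\gamma)$.

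The bounds on $\mathcal{L}_k$ and $\mathcal{R}_{\tilde N}$ follow from the corresponding bounds in Theorem \ref{thm:12.05.2015}(a), together with the observation that the only factor in the amplitude depending on $g \in G$ is $\overline{\gamma(g)}$ (the other factors depend on $x$, $y$, $t$, or $\mu\omega$). Therefore, using the factorized-amplitude version of the remainder bound in Theorem \ref{thm:12.05.2015}(a) with $\M_2 = G$, the number of $g$-derivatives needed is $l \leq 2 k$ for the coefficients and $l \leq 2\tilde N + \lfloor \dim \mathcal{O}_x/2 + 1 \rfloor$ for the remainder, yielding the stated $\sup_{l} \norm{D^l\gamma}_\infty$ bounds. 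Uniformity of the constants on $M_\mathrm{prin} \cup M_\mathrm{except}$ comes straight from Theorem \ref{thm:12.05.2015}(a). For $\mu \to -\infty$, the rapid decay of $K_{\widetilde \chi_\mu\circ \Pi_\gamma}(x,x)$ follows directly from Proposition \ref{prop:0}, which is ultimately a consequence of the positivity of the spectrum of $Q$ and the Schwartz decay of $\rho$.

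The main step, and the one I expect to be the real work, is the explicit identification of the leading coefficient $\mathcal{L}_0(x,\gamma)$. By Theorem \ref{thm:12.05.2015}(a), at $(R,t)=(1,0)$ the critical manifold is
\begin{equation*}
\mathcal{C}_{x,x} = \big\{(\omega,g) \mid \omega \in \Omega \cap S_x^\ast M, \; g \in G_x\big\},
\end{equation*}
since $\Sigma^{1,0}_{\iota,x} = S_x^\ast M$ and the condition $g \cdot x = x$ just cuts out $G_x$. The amplitude at $(R,t,\mu\omega) = (1,0,\mu\omega)$ evaluates to $\hat\rho(0)\, \overline{\gamma(g)}\, f_\iota(x)\, \bar f_\iota(x)\, \alpha(1)\, J_\iota(g,x) = \hat\rho(0)\, \overline{\gamma(g)}\, f_\iota(x)$, after using $\bar f_\iota \equiv 1$ on $\supp f_\iota$ and $\alpha \equiv 1$ near $1$; summing over $\iota$ gives $\sum_\iota f_\iota(x) = 1$. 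The crucial geometric step is then to show that the density $d\mathcal{C}_{x,x}(\omega,g)/|\det \Phi_{x,x}''(\omega,g)_{N_{(\omega,g)}\mathcal{C}_{x,x}}|^{1/2}$ factorizes as Haar measure on $G_x$ times the natural volume on the quotient $(\Omega \cap S_x^\ast M)/G$. Since $\Omega \cap S_x^\ast M = \mathrm{Ann}(T_x \mathcal{O}_x) \cap S_x^\ast M$ by \eqref{eq:Ann}, and the $G$-action is free on this set modulo $G_x$, this factorization is a standard consequence of the coarea formula applied to the orbit map, with the transversal Hessian of $\Phi_{x,x}$ at $\mathcal{C}_{x,x}$ producing exactly the Jacobian of the $G$-action on $T^\ast_x M$; one checks this from the explicit form of the transversal Hessian in \eqref{eq:27.05.2016bis}. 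Character orthogonality then yields
\begin{equation*}
\int_{G_x} \overline{\gamma(g)} \, dg = [\pi_{\gamma|G_x} : \mathbf{1}],
\end{equation*}
whence $\mathcal{L}_0(x,\gamma) = \hat\rho(0)\, [\pi_{\gamma|G_x}:\mathbf{1}]\, \vol[(\Omega \cap S_x^\ast M)/G]$.

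In the finite-group case, the integral over $G$ in $I^\gamma_\iota$ is replaced by a finite sum, so one instead appeals to Proposition \ref{prop:10.08.2017} (applied at $y=x$, where all terms with $g \notin G_x$ are negligible because $\norm{\kappa(x) - \kappa(g \cdot x)}$ stays bounded away from zero for small $Y$); the $\dim \mathcal{O}_x = 0$ power count matches \eqref{eq:13.05.2015}, and the leading coefficient becomes $\hat\rho(0) \sum_{g \in G_x} \overline{\gamma(g)} \, \vol[S_x^\ast M /G]$, with bounds involving only $l = 0$ derivatives of $\gamma$ since $G$ is discrete.
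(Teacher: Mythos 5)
Your approach follows the paper's line exactly: plug Theorem \ref{thm:12.05.2015}(a) into Corollary \ref{cor:12.05.2015} on the diagonal, reorganize, and compute $\mathcal{L}_0$. The structure and the constant counting are right, but your proposal glosses over two technical points that the paper's proof makes explicit and that are genuinely needed for the argument to close.

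First, the amplitude of $I^\gamma_\iota(\mu,R,t,x,x)$ in \eqref{eq:02.05.2015} contains the factor $a_\iota(t,\kappa_\iota(x),\mu\omega)$, which depends on the large parameter $\mu$. The stationary phase bounds of Theorem \ref{thm:12.05.2015}(a) involve sup-norms of $\omega$-derivatives of the amplitude, and these could a priori grow with $\mu$. The point that saves the argument is that $a_\iota \in S^0_{\mathrm{phg}}$ is a classical symbol of order $0$, so that
$\big|\partial^\alpha_\omega a_\iota(t,\kappa_\iota(x),\mu\omega)\big| = |\mu|^{|\alpha|}\big|(\partial^\alpha_\omega a_\iota)(t,\kappa_\iota(x),\mu\omega)\big| \leq C|\omega|^{-|\alpha|}$
uniformly in $\mu$; hence the $\mu$-dependence of the amplitude does not corrupt the expansion. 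You acknowledge that a factor "depends on $\mu\omega$" but do not verify this estimate, and without it the claimed uniform bounds on $\mathcal{L}_k$ and $\mathcal{R}_{\tilde N}$ do not follow.

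Second, the legitimacy of applying $D^{2j}_{R,t}$ term by term to the expansion of $I^\gamma_\iota$ and evaluating at $(R,t)=(1,0)$ rests on more than smooth dependence of the coefficients on $(R,t)$. One needs that the phase $\Phi_{\iota,x}$ vanishes identically on its critical set $(\Omega\cap\Sigma^{R,t}_{\iota,x})\times G_x$ for \emph{every} $(R,t)$, not just at $(1,0)$; otherwise the expansion of $I^\gamma_\iota$ would carry a factor $e^{i\mu\psi_0(R,t)}$ whose $R,t$-derivatives would reintroduce positive powers of $\mu$ and break the asserted hierarchy. You rely on this implicitly by invoking Theorem \ref{thm:12.05.2015}(a) in the form it is stated (with no oscillating prefactor), but the underlying fact is exactly what makes the diagonal $y=x$ special and is worth naming.

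Otherwise your identification of the critical set, your evaluation of $\mathcal{L}_0$ (where one should additionally record that $a_\iota(0,\cdot,\cdot)=1$ and $J_\iota(g,x)=1$ for $g\in G_x$, so those factors drop out), your use of the orbit-map/coarea argument to express the density as $\vol[(\Omega\cap S_x^\ast M)/G]$ together with $\int_{G_x}\overline{\gamma}\,dg=[\pi_{\gamma|G_x}:\1]$, and your treatment of the finite-group case via Proposition \ref{prop:10.08.2017} all track the paper's argument.
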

\begin{proof}
Let the notation be as in  Corollary  \ref{cor:12.05.2015}, and $R,t \in \R$, $x \in Y_\iota$ be fixed.  If $G$ is continuous, one deduces as a direct consequence of Theorem \ref{thm:12.05.2015} (a)  for any $\tilde N\in \N$
\bqn 
 \gd_{R,t}^\beta I^\gamma_\iota(\mu, R, t, x,x)= (2\pi/\mu)^{\dim \mathcal{O}_x}\sum_{k=0}^{\tilde N-1} \mathcal{Q}^k_{\iota,\beta}(R,t,x,\gamma) \mu^{-k}+ O_{R,t,x,\gamma}(\mu^{-\dim \mathcal{O}_x-\tilde N}),
\eqn
where the coefficients  and the remainder term are explicitly given by distributions depending smoothly on $R,t$, and $x\in Y \cap M_\mathrm{prin}$.  Furthermore, both the coefficients $\mathcal{Q}^k_{\iota,\beta}(R,t,x,\gamma)$  and the remainder are bounded by expressions involving derivatives of $\gamma$ up to order $2k$ and $2\tilde N +\lfloor \dim \O_x/2+1\rfloor$, respectively, which are uniformly bounded in $x$ if $M=M_\mathrm{prin} \cup M_\mathrm{except}$.  Note  that  $\Phi_{\iota,x}$ vanishes on its critical set $\Crit_{R,t} \, \Phi_{\iota,x}:=(\Omega \cap \Sigma^{R,t}_{\iota,x}) \times G_x$ no matter what values  $R$ and $t$ take. Otherwise differentiation with respect to $R$ and $t$ of the factor $e^{i\mu\psi_0}$ in \eqref{eq:07.10.2015} with $\psi_0\equiv\Phi_{\iota,x|\Crit_{R,t} \, \Phi_{\iota,x}}$  would yield additional positive powers of $\mu$. Furthermore,  $a_\iota \in \mathrm{S}^0_{phg}$ is a classical symbol of order $0$, so that 
\bqn 
\big | \gd ^\alpha_\omega a_\iota ( t, \kappa _\iota (x), \mu \omega) \big | =|\mu|^{|\alpha|} \big | (\gd^\alpha_\omega a _\iota)( t, \kappa_\iota(x), \mu \omega) \big | \leq C |\omega|^{-|\alpha|}.
\eqn
Consequently, the  dependence of the amplitude on $\mu$ in \eqref{eq:02.05.2015} does not interfer with the asymptotics, compare \cite[Proposition 1.2.4]{duistermaatFIO}. Corollary \ref{cor:12.05.2015} then  implies the asymptotic expansion \eqref{eq:13.05.2015} with  
\bqn
\Lcal_0(x,\gamma)=   \sum _\iota {f_\iota(x)  \hat \rho(0)  } \int_{\Crit_{1,0} \, \Phi_{\iota,x} } \frac {    \overline{\gamma(g)}  }{|\det   \, \Phi''_{\iota,x} (\omega,g)_{N_{(\omega, g)}\Crit_{1,0} \, \Phi_{\iota,x} }|^{1/2}} \d(\Crit_{1,0} \, \Phi_{\iota,x})(\omega,g),
\eqn
since  $\alpha(q( x, \omega))=1$ on $\Sigma_{\iota,x}^{1,0}$ and $J_\iota(g,x)=1$ for $g \in G_x$. 
In order to compute $\Lcal_0(x,\gamma)$, let us note that for any $x \in Y_\iota$ and  smooth, compactly supported function $f$ on $\Omega  \cap \Sigma^{R,t}_{\iota,x}$ one has the formula
\begin{align*}
\int_{\Crit_{R,t} \, \Phi_{\iota,x} }&\frac{\overline {\gamma(g)}  f(x,\omega)}{|\det  \, \Phi_{\iota,x}'' (\omega,g)_{|N_{(\omega,g)} \Crit_{R,t} \, \Phi_{\iota,x} } |^{1/2}} d(\Crit_{R,t} \, \Phi_{\iota,x})( \omega,g) \\ &
=[{\pi_\gamma}_{|G_x}:\1]\int_{\Omega\cap \Sigma^{R,t}_{\iota,x}} f (x, \omega) \frac{d( \Omega\cap \Sigma^{R,t}_{\iota,x})(\omega)}{\mbox{vol }\mathcal{O}_{(x,\omega)}},
\end{align*}
where we took into account that $\int_{G_x} \overline{\gamma(g)} \d G_x(g)=[{\pi_\gamma}_{|G_x}:\1]$,  compare \cite[Lemma 7]{cassanas-ramacher09}, \cite[Proof of Theorem 9.5]{ramacher10}, and \cite[Section 3.3.2]{cassanas}, the map $\Crit_{R,t} \, \Phi_{\iota,x}\to  \Omega\cap \Sigma^{R,t}_{\iota,x}$ being a submersion. As a consequence of this, we obtain for $\Lcal_0(x)$ the expression
\begin{align*}
\Lcal_0(x,\gamma)&=   \hat \rho(0) [{\pi_\gamma}_{|G_x}:\1] \sum_\iota f_\iota(x) \int_{\Omega\cap \Sigma^{1,0}_{\iota, x}}  \frac{d( \Omega\cap \Sigma^{1,0}_{\iota,x})(\omega)}{\mbox{vol }\mathcal{O}_{(x,\omega)}} = \hat \rho(0)  [{\pi_\gamma}_{|G_x}:\1] \, \mbox{vol} \, [(\Omega \cap S^\ast_x M)/G].
\end{align*}
The case when $G$ is finite can be deduced from Proposition \ref{prop:10.08.2017} in an analogous way, since then $\Omega=T^\ast M$. 

\end{proof}

\begin{rem}
\label{rem:12.7.2017} Note that, if $M= M_\mathrm{prin} \cup M_\mathrm{except}$,   the previous proposition and  the Cauchy-Schwarz inequality imply  for $\tilde N=0$ with $ \kappa :=\dim G/K$    the estimate
\begin{align*}
\begin{split}
K_{\widetilde \chi_\mu \circ \Pi_\gamma}(x,y)&\leq  \sqrt{K_{\widetilde \chi_\mu \circ \Pi_\gamma}(x,x)} \sqrt{K_{\widetilde \chi_\mu \circ \Pi_\gamma}(y,y)}\ll\, \mu^{n-\kappa-1} \, d_\gamma \, \sup_{l \leq  \lfloor \kappa/2+1\rfloor} \norm{D^l \gamma}_\infty,
\end{split}
\end{align*}
uniformly in $x$ and $y$, $\rho\in \S(\R,\R_+)$ being a positive function. 
\end{rem}

Using a standard Tauberian argument, we can now deduce from Proposition \ref{thm:kernelasymp} our first main result.

\begin{thm}[\bf Equivariant local Weyl law]  
\label{thm:main}
Let $M$ be  a closed connected Riemannian manifold $M$ of dimension $n$ carrying an isometric and effective action of a compact   Lie group $G$, and $P_0$ a $G$-invariant elliptic classical pseudodifferential operator on $M$ 
of degree $m$. Let  $p(x,\xi)$ be its  principal symbol, and assume that $P_0$    is positive and symmetric. Denote its  unique self-adjoint extension by  $P$, and for a given $\gamma \in \widehat G$ let $e_\gamma(x,y,\lambda)$ be its reduced spectral  function. Further, let $\Jbb:T^\ast M \to \g^\ast$ be the momentum map of the $G$-action on $M$, and put $\Omega:=\Jbb^{-1}(\mklm{0})$.  Then, for fixed $x \in M$ one has
 \bq
 \label{eq:29.10.2015}
\left |e_\gamma(x,x,\lambda)-\frac{d_\gamma [\pi_{\gamma|G_x}:\1]}{(2\pi)^{n-{\kappa_x}}}  \lambda^{\frac{n-\kappa_x}{m}} \int_{\mklm{\xi\mid \, (x,\xi) \in \Omega, \, p(x,\xi)< 1}} \frac{ \d \xi}{\vol \O_{(x,\xi)}} \right | \leq C_{x,\gamma} \, \lambda^{\frac{n-{\kappa_x}-1}{m}}
\eq
as $\lambda \to +\infty$,  where $\kappa_x:=\dim \O_x$,  $d_\gamma$ denotes the dimension of an irreducible $G$-representation $\pi_\gamma$ belonging to $\gamma$ and $ [\pi_{\gamma|G_x}:\1]$ the multiplicity of the trivial representation  in the restriction of $\pi_\gamma$ to the isotropy group $G_x$ of $x$. If $G$ is continuous,  
\bq
\label{eq:4.6.2017}
C_{x,\gamma}=O_x\Big (d_\gamma \sup_{l \leq \lfloor \dim \O_x/2+3\rfloor} \norm{D^l \gamma}_\infty\Big )
\eq
 is a constant that depends smoothly on $x\in M_\mathrm{prin}$ and is uniformly bounded in $x$ if   $M= M_\mathrm{prin} \cup M_\mathrm{except}$; if $G$ is finite,  $C_{x,\gamma}=O(d_\gamma \norm{\gamma}_\infty )$.
\end{thm}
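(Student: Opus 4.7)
The plan is to reduce the assertion to a Tauberian argument applied to Proposition \ref{thm:kernelasymp}. First I would pass from $P$ to its $m$-th root $Q = \sqrt[m]{P}$, whose eigenvalues are $\mu_j = \lambda_j^{1/m}$, and introduce the point-wise counting function
\begin{equation*}
N_x^\gamma(\mu) := \sum_{\mu_j \leq \mu,\, e_j \in \L^2_\gamma(M)} |e_j(x)|^2,
\end{equation*}
which is monotone nondecreasing in $\mu$ and satisfies $e_\gamma(x,x,\lambda) = N_x^\gamma(\lambda^{1/m})$. The kernel $K_{\widetilde\chi_\mu \circ \Pi_\gamma}(x,x)$ of Proposition \ref{thm:kernelasymp} is precisely the convolution $\int_\R \rho(\mu - \nu)\, dN_x^\gamma(\nu)$, so taking $\tilde N = 1$ in \eqref{eq:13.05.2015} gives
\begin{equation*}
\int_\R \rho(\mu - \nu)\, dN_x^\gamma(\nu) = A_x(\gamma)\, \mu^{n-\kappa_x-1} + O_x\bigl(C_{x,\gamma}\, \mu^{n-\kappa_x-2}\bigr),
\end{equation*}
with $A_x(\gamma) := (2\pi)^{-(n-\kappa_x)} d_\gamma \hat\rho(0)\, [\pi_{\gamma|G_x}:\1]\, \vol\bigl[(\Omega \cap S^\ast_x M)/G\bigr]$ and $C_{x,\gamma}$ as in \eqref{eq:4.6.2017}, absorbing the dependence on $d_\gamma$ and derivatives of the character $\gamma$ recorded in Proposition \ref{thm:kernelasymp}.

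Choosing $\rho \in \S(\R,\R_+)$ with $\hat\rho \in \CT(-\delta/2,\delta/2)$, $\hat\rho \geq 0$, and $\hat\rho(0) = 1$, I would apply H\"ormander's Tauberian theorem in two steps. The first, \emph{local}, step exploits the monotonicity of $N_x^\gamma$ and the positivity of $\rho$ to extract from the convolution asymptotic the increment bound
\begin{equation*}
N_x^\gamma(\mu + 1) - N_x^\gamma(\mu) \leq C_{x,\gamma}\, \mu^{n-\kappa_x-1};
\end{equation*}
see \cite[\S 17.5]{hoermanderIV} or \cite[\S 15, \S 21]{shubin}. The second step combines this increment bound with the sharp convolution asymptotic, and with the rapid decay of the kernel as $\mu \to -\infty$ asserted in Proposition \ref{thm:kernelasymp} to anchor the lower endpoint, to upgrade the information into the full asymptotic
\begin{equation*}
N_x^\gamma(\mu) = \frac{A_x(\gamma)}{n-\kappa_x}\, \mu^{n-\kappa_x} + O_x\bigl(C_{x,\gamma}\, \mu^{n-\kappa_x-1}\bigr).
\end{equation*}
Substituting $\mu = \lambda^{1/m}$ then converts these exponents into those of \eqref{eq:29.10.2015}.

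It still remains to identify $A_x(\gamma)/(n-\kappa_x)$ with the coefficient on the right-hand side of \eqref{eq:29.10.2015}. For this I would pass to polar coordinates adapted to $q(x,\xi) := p(x,\xi)^{1/m}$, which is homogeneous of degree $1$ in $\xi$: writing $\xi = r\omega$ with $\omega \in S^\ast_x M$ and $r \in (0,1)$, using the coarea-type decomposition of Lebesgue measure $d\xi$ along the level sets of $q$, and then employing the factor $1/\vol\O_{(x,\xi)}$ to descend the induced Liouville measure on $\Omega \cap S^\ast_x M$ to the quotient $(\Omega \cap S^\ast_x M)/G$, one evaluates the integral on the right of \eqref{eq:29.10.2015} to $\tfrac{1}{n-\kappa_x}\, \vol[(\Omega \cap S^\ast_x M)/G]$, matching $A_x(\gamma)/(n-\kappa_x)$ after cancellation of $\hat\rho(0)=1$.

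The main obstacle is the Tauberian step itself, and in particular keeping a precise track of the dependence of $C_{x,\gamma}$ on $\gamma$: the bound $C_{x,\gamma} = O_x\!\bigl(d_\gamma \sup_{l \leq \lfloor \kappa_x/2+3\rfloor}\!\norm{D^l\gamma}_\infty\bigr)$ recorded in \eqref{eq:4.6.2017} is exactly what is produced by taking $\tilde N = 1$ in the remainder estimate of Proposition \ref{thm:kernelasymp}, since $2\tilde N + \lfloor \kappa_x/2 + 1\rfloor = \lfloor \kappa_x/2 + 3\rfloor$; the factor $d_\gamma$ propagates from the explicit $d_\gamma$ prefactor in \eqref{eq:13.05.2015}. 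Uniformity of $C_{x,\gamma}$ on $M_\mathrm{prin} \cup M_\mathrm{except}$, as well as the version for finite $G$, transfer automatically from the corresponding uniformity statements in Proposition \ref{thm:kernelasymp}.
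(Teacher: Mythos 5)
Your proposal is correct and follows the same route as the paper: take $\tilde N = 1$ in Proposition~\ref{thm:kernelasymp} and apply the Tauberian argument from Duistermaat--Guillemin (H\"ormander's Lemma) to the resulting convolution asymptotic for $\rho \ast dN_x^\gamma$, using the positivity of $\rho$ and the monotonicity of $N_x^\gamma$ for the increment bound and the rapid decay for $\mu\to -\infty$ to anchor the lower endpoint. The paper's own proof is just this in compressed form (it cites \cite[Proof of Eq.~(2.25)]{duistermaat-guillemin75}), so the only place you go beyond its level of detail is the coarea identification of $\tfrac{1}{n-\kappa_x}\vol[(\Omega\cap S_x^\ast M)/G]$ with the integral in \eqref{eq:29.10.2015}; that step is consistent with Remark~\ref{rem:23.04.2017}(1) and with the definition of $\vol[(\Omega\cap S_x^\ast M)/G]$ in the proof of Proposition~\ref{thm:kernelasymp}, but it relies on the precise measure conventions for $d\xi$ and $\vol\O_{(x,\xi)}$, so one should flag that it inherits the same implicit normalizations as $\mathcal{L}_0(x,\gamma)$ rather than treating it as a free-standing polar-coordinate computation.
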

\begin{proof}
This follows directly  by taking $\tilde N=1$ in  \eqref{eq:13.05.2015} and integrating with respect to $\mu$ from $-\infty$ to $\sqrt[m]{\lambda}$ with  the arguments given in \cite[Proof of Eq. (2.25)]{duistermaat-guillemin75}.
\end{proof}

\begin{rem}
\label{rem:23.04.2017}
\hspace{0cm}
\begin{enumerate}
\item Note that in view of \eqref{eq:Ann} the integral in the leading term can also be written as
\bqn 
 \lambda^{\frac{n-\kappa_x}m}\int_{\mklm{\xi\mid \, (x,\xi) \in \Omega, \, p(x,\xi)< 1 }} \frac{ \d \xi}{\vol \O_{(x,\xi)}}=  \int_{\mklm{\xi\mid \, (x,\xi) \in \Omega, \, p(x,\xi)< \lambda^{1/m}}} \frac{ \d \xi}{\vol \O_{(x,\xi)}}.
 \eqn
\item 
The formula \eqref{eq:29.10.2015} is shown by proving first the estimate
\bq
\label{eq:21.06.2016}
\left | e_\gamma(x,x,\lambda+1)-e_\gamma(x,x,\lambda) \right | \leq C_{x,\gamma}  \, \lambda^{\frac{n-\kappa_x-1}m}, \qquad x \in M,
\eq
compare \cite[Lemma 2.3]{duistermaat-guillemin75}. Since $e_\gamma(x,y,\lambda+1)-e_\gamma(x,y,\lambda)$ is the kernel of a positive operator, one immediately infers from this with the Cauchy-Schwarz inequality the bound 
\bqn 
\left | e_\gamma(x,y,\lambda+1)-e_\gamma(x,y,\lambda) \right | \leq \sqrt{C_{x,\gamma} \lambda^{\frac{n-\kappa_x-1}m}} \sqrt{C_{y,\gamma} \lambda^{\frac{n-\kappa_y-1}m}}, \qquad x, y \in M. 
\eqn
From this, it is not difficult to deduce a corresponding equivariant local Weyl law for $e_\gamma(x,y,\lambda)$ in a neighborhood of the diagonal, see \cite[pp. 210]{hoermander68} or \cite[Section 21]{shubin}.\end{enumerate} 
\end{rem}

\begin{rem}
\label{rem:22.5.2017}
In case that $G$ is a connected compact semisimple Lie group, the bound \eqref{eq:4.6.2017} of the previous theorem can be rephrased using the Cartan-Weyl classification  of unitary irreducible representations of $G$. In fact, let $\g$ be its Lie algebra,  and $T\subset G$ a maximal torus with Lie algebra $\t$. Denote by $\g_\C$ and $\t_\C$  the complexifications of $\g$ and $\t$, respectively. Then $\t_\C$ is a Cartan subalgebra of $\g_\C$, and we write  $\Sigma(\g_\C,\t_\C)$ for the corresponding system  of roots and $\Sigma^+$ for a set of positive roots. Since any element in $G$ is conjugated to an element of $T$, a character $\gamma \in \widehat G$ is fully determined by its restriction to $T$. Now, as a consequence of the Cartan-Weyl classification of irreducible finite-dimensional representations of reductive Lie algebras over $\C$ one has the  identification 
\bqn
\widehat G\, \simeq\, \mklm{\Lambda \in \t_\C^\ast\mid \text{ $\Lambda$ is  dominant integral and $T$-integral}},
\eqn 
compare \cite{wallach}, and we write   $\Lambda_\gamma\in \t_\C^\ast$ for the {highest weight} corresponding to  $\gamma \in \widehat G$ given by  this isomorphism. Weyl's dimension formula then implies that $d_\gamma=O\big (|\Lambda_\gamma|  ^{|\Sigma^+|}\big )$, while from Weyl's character formula one infers that  if $D^l$  is a differential operator on $G$ of order $l$, 
\bqn
 \norm{D^l \gamma}_\infty = O\big (|\Lambda_\gamma|  ^{l+|\Sigma^+|}\big ), \qquad |\Lambda_\gamma|\to \infty,
\eqn
compare \cite[Eq.\ (3.5)]{ramacher10add}. Consequently, the bound \eqref{eq:4.6.2017} can be rewritten as
\[
C_{x,\gamma}=O_x\big (|\Lambda_\gamma|  ^{\lfloor \dim \O_x/2+3\rfloor +2|\Sigma^+|}\big ).
\]
\end{rem}

As a first consequence of Theorem \ref{thm:main}, let us note that the estimate \eqref{eq:21.06.2016} is equivalent to the following bound for spectral clusters. 
\begin{cor}  [\bf Point-wise bounds for isotypic spectral clusters] 
\label{cor:21.06.2016}
In the situation of Theorem \ref{thm:main} we have
\bqn 
\sum_{\stackrel{\lambda_j \in (\lambda,\lambda+1],}{ e_j \in \L^2_\gamma(M)}} |e_j(x)|^2 \leq C_{x, \gamma} \, \lambda^{\frac{n-\kappa_x-1}m}, \qquad x \in M,
\eqn
where $\mklm{e_j}$ denotes an orthonormal basis of $\L^2(M)$ consisting of eigenfunctions of $P$ with eigenvalues $\mklm{\lambda_j}$. 
\end{cor}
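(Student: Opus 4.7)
The plan is to deduce the stated estimate as a direct consequence of the equivariant local Weyl law (Theorem \ref{thm:main}). The first step is to recognize that, by the definition of the reduced spectral function recalled in \eqref{eq:24.09.2015},
\[
\sum_{\stackrel{\lambda_j \in (\lambda,\lambda+1],}{e_j \in \L^2_\gamma(M)}} |e_j(x)|^2 = e_\gamma(x,x,\lambda+1) - e_\gamma(x,x,\lambda),
\]
so it suffices to prove the cluster bound \eqref{eq:21.06.2016} from Remark \ref{rem:23.04.2017}(2), namely $|e_\gamma(x,x,\lambda+1) - e_\gamma(x,x,\lambda)| \leq C_{x,\gamma}\lambda^{(n-\kappa_x-1)/m}$.

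To obtain this, I would write $\Lcal(x,\gamma)$ for the explicit leading coefficient appearing on the left-hand side of \eqref{eq:29.10.2015}, apply Theorem \ref{thm:main} at both $\lambda$ and $\lambda+1$, and combine the two estimates via the triangle inequality to obtain
\[
|e_\gamma(x,x,\lambda+1) - e_\gamma(x,x,\lambda)| \leq 2 C_{x,\gamma}\, \lambda^{(n-\kappa_x-1)/m} + \Lcal(x,\gamma) \left|(\lambda+1)^{(n-\kappa_x)/m} - \lambda^{(n-\kappa_x)/m}\right|.
\]
The last term is, by the mean value theorem, of order $\lambda^{(n-\kappa_x)/m - 1} = \lambda^{(n-\kappa_x-m)/m}$, which since $m \geq 1$ is bounded by $\lambda^{(n-\kappa_x-1)/m}$ and can therefore be absorbed into the error at the cost of enlarging $C_{x,\gamma}$.

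No serious obstacle is anticipated. As pointed out in Remark \ref{rem:23.04.2017}(2), the cluster bound \eqref{eq:21.06.2016} is not genuinely a consequence of Theorem \ref{thm:main}; rather, it is the decisive intermediate estimate extracted from Proposition \ref{thm:kernelasymp} by the standard Tauberian argument of Duistermaat--Guillemin and H\"ormander, after which integration in $\mu$ yields the Weyl law itself. So the corollary may equally be viewed as an explicit reformulation of that intermediate step. The only bookkeeping point worth attention is to confirm that the constant in the cluster bound inherits the $\gamma$-dependence displayed in \eqref{eq:4.6.2017}, which follows directly from the explicit bounds for $\Lcal_k(x,\gamma)$ and $\mathcal{R}_{\tilde N}(x,\gamma)$ furnished by Proposition \ref{thm:kernelasymp}.
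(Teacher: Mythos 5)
Your proposal is correct, and you have in fact already identified the two possible routes. The derivation you carry out in the middle of the proposal --- apply Theorem \ref{thm:main} at $\lambda$ and at $\lambda+1$, combine via the triangle inequality, and absorb the difference $\Lcal(x,\gamma)\bigl|(\lambda+1)^{(n-\kappa_x)/m}-\lambda^{(n-\kappa_x)/m}\bigr| = O_{x,\gamma}(\lambda^{(n-\kappa_x-m)/m})$ using the mean value theorem and $m\geq 1$ --- is a valid, self-contained deduction of the corollary from the statement of Theorem \ref{thm:main} alone, treating the theorem as a black box. The paper takes the other route you mention in your last paragraph: it declares the corollary to be nothing more than a restatement, via the identity $\sum_{\lambda_j\in(\lambda,\lambda+1],\, e_j\in\L^2_\gamma} |e_j(x)|^2 = e_\gamma(x,x,\lambda+1)-e_\gamma(x,x,\lambda)$, of the intermediate bound \eqref{eq:21.06.2016}, which is what the Tauberian step in the proof of Theorem \ref{thm:main} establishes before integrating in $\mu$. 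The trade-off is largely cosmetic: the paper's route makes explicit that the cluster estimate is logically prior to the Weyl law rather than downstream of it, while your route has the minor advantage of not requiring the reader to re-open the Tauberian machinery and works purely from the theorem as stated. The only bookkeeping you glossed over is that $(\lambda+1)^{(n-\kappa_x-1)/m}$ is bounded by a constant times $\lambda^{(n-\kappa_x-1)/m}$, so your ``$2C_{x,\gamma}$'' picks up a harmless factor depending on $n$, $\kappa_x$, $m$; this does not affect the conclusion.
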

\qed

A further implication of Theorem \ref{thm:main} is the following  Kuznecov sum formula for periods of $G$-orbits, which generalizes the classical Kuznecov formula for periods of closed geodesics \cite{zelditch92}.

\begin{cor}[\bf Generalized Kuznecov sum formula for periods of $G$-orbits]
\label{cor:kuznecov} In the setting of Theorem \ref{thm:main} we have
 \bqn
\left |\sum_{\lambda_j \leq \lambda} \left | \int_G e_j(g^{-1} \cdot x) \d g \right |^2-\frac{\vol G_x}{(2\pi)^{n-{\kappa_x}}}  \lambda^{\frac{n-\kappa_x}{m}} \int_{\mklm{\xi\mid \, (x,\xi) \in \Omega, \, p(x,\xi)< 1}} \frac{ \d \xi}{\vol \O_{(x,\xi)}} \right | \leq C_{x} \, \lambda^{\frac{n-{\kappa_x}-1}{m}}
\eqn
for some constant $C_x>0$ depending on $x$.
\end{cor}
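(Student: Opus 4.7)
The plan is to reduce the statement to a direct application of Theorem~\ref{thm:main} to the trivial isotypic component $\gamma_\text{triv}\in \widehat G$. First I would note that for the trivial character one has $d_{\gamma_\text{triv}}=1$ and $\overline{\gamma_\text{triv}(g)}=1$, so the projection onto $\L^2_{\gamma_\text{triv}}(M)$ reduces to the group-averaging operator $\Pi_{\gamma_\text{triv}} u(x)=\int_G u(g^{-1}\cdot x)\,dg$; in particular, $\int_G e_j(g^{-1}\cdot x)\,dg=(\Pi_{\gamma_\text{triv}} e_j)(x)$. Then I would observe that $\sum_{\lambda_j\leq\lambda}|(\Pi_{\gamma_\text{triv}} e_j)(x)|^2$ is the $(x,x)$-value of the Schwartz kernel of the positive operator $\Pi_{\gamma_\text{triv}}\, E_\lambda\, \Pi_{\gamma_\text{triv}}$, which is manifestly independent of the choice of orthonormal basis of eigenfunctions.

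Next I would identify this kernel value with the reduced spectral function $e_{\gamma_\text{triv}}(x,x,\lambda)$. Because $P$ commutes with the regular representation, so does $E_\lambda$; combined with idempotency of $\Pi_{\gamma_\text{triv}}$ this yields $\Pi_{\gamma_\text{triv}}\, E_\lambda\, \Pi_{\gamma_\text{triv}}=E_\lambda\, \Pi_{\gamma_\text{triv}}$. Choosing a basis compatible with the Peter--Weyl decomposition \eqref{eq:PW} and evaluating on the diagonal then gives
\begin{equation*}
\sum_{\lambda_j\leq\lambda}\left|\int_G e_j(g^{-1}\cdot x)\,dg\right|^2=e_{\gamma_\text{triv}}(x,x,\lambda),
\end{equation*}
independent of the basis chosen in the corollary.

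Finally, I would apply Theorem~\ref{thm:main} with $\gamma=\gamma_\text{triv}$. One has $d_{\gamma_\text{triv}}=1$, while in the normalization convention used in the proof of Proposition~\ref{thm:kernelasymp} (where $dG_x$ is the Haar measure inherited from $G$, not normalized to be a probability measure), the quantity $[\pi_{\gamma_\text{triv}|G_x}:\1]$ equals $\int_{G_x}\,dG_x(g)=\vol G_x$. This yields exactly the asserted leading coefficient, and the remainder bound \eqref{eq:4.6.2017} collapses to a constant $C_x>0$ depending only on $x$, since all derivatives of the trivial character are trivially bounded.

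The whole argument is a straightforward specialization of Theorem~\ref{thm:main}, and I do not anticipate any real obstacle. The two points deserving some care are the basis-independent identification of the sum with $e_{\gamma_\text{triv}}(x,x,\lambda)$, and the bookkeeping of the normalization of $[\pi_{\gamma_\text{triv}|G_x}:\1]$ so that the factor $\vol G_x$ appears correctly in the leading term.
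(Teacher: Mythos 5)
Your proof is correct and takes essentially the same route as the paper: specialize Theorem~\ref{thm:main} to $\gamma=\gamma_{\text{triv}}$, with $d_{\gamma_\text{triv}}=1$ and $[\pi_{\gamma_\text{triv}|G_x}:\1]=\int_{G_x}dG_x(g)=\vol G_x$ giving the leading coefficient. You spell out more explicitly than the paper why $\sum_{\lambda_j\leq\lambda}|\int_G e_j(g^{-1}\cdot x)\,dg|^2=e_{\gamma_\text{triv}}(x,x,\lambda)$ (via the basis-independence of the diagonal kernel value of $\Pi_{\gamma_\text{triv}}E_\lambda\Pi_{\gamma_\text{triv}}=E_\lambda\Pi_{\gamma_\text{triv}}$), but this is a justification of a step the paper asserts, not a different approach.
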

\begin{proof}
Let  $\gamma=\gamma_\text{triv}$ correspond to the trivial representation. Then
\bqn 
e_{\gamma_\text{triv}}(x,x,\lambda) = \sum_{\lambda_j \leq \lambda, \, e_j \in \L^2_{\gamma_\text{triv}}(M) } |e_j(x)|^2=\sum_{\lambda_j \leq \lambda} \left | \int_G e_j(g^{-1} \cdot x) \d g \right |^2,
\eqn 
and the assertion follows from the previous theorem with $d_{\gamma_\text{triv}}=1$ and 

$$[{\pi_\gamma}_{\text{triv}|G_x}:\1]=\int_{G_x} \overline{\gamma_\text{triv}(g)} \d G_x(g)=\vol G_x.$$
\end{proof}

In case that $\widetilde M:=M/G$ is an orbifold  we essentially recover the description of the spectral function of a Riemannian orbifold given by Stanhope and Uribe in \cite{stanhope-uribe}. More precisely, we  infer

\begin{cor}[\bf Local Weyl law for Riemannian orbifolds]
\label{cor:orbifold} In the situation of Theorem \ref{thm:main}, assume that $G$ acts on $M$ with finite isotropy groups.  Then, for fixed $x \in M$ and $\gamma \in \widehat G$ the asymptotic formula \eqref{eq:29.10.2015} holds with $n-\kappa_x\equiv n-\kappa$ being equal to  the dimension of $\widetilde M$. Moreover,   let $\gamma_\text{triv}$ be the trivial representation. Then $e_{\gamma_\text{triv}} (x,x,\lambda)$ is $G$-invariant, and descends to a function on $\widetilde M \times \widetilde M$ satisfying
\bqn 
\left |e_{\gamma_\text{triv}} (\tilde x,\tilde x,\lambda)-\frac{|G_{\tilde x}|}{(2\pi)^{\dim \widetilde M}} \lambda ^{\frac {\dim \widetilde M} m} \vol (S^\ast_{\widetilde p,\tilde x}(\widetilde M)) \right | \leq C_{\tilde x} \lambda^{\frac{\dim \widetilde M-1}{m}}, \qquad \tilde x \in \widetilde M, 
\eqn
where   $(G_{\tilde x})$ denotes the isotropy type of $\tilde x:=G \cdot x $, $|G_{\tilde x}|$ its cardinality, while $S^\ast_{\widetilde p,\tilde x}(\widetilde M)$ equals the 
fiber over $\tilde x$ of the orbifold bundle $S^\ast_{\widetilde p}(\widetilde M):=\mklm{(\tilde x,\xi) \in T^\ast \widetilde M\mid \widetilde p(\tilde x,\xi)=1}$, $\widetilde p$ being the function on $\widetilde M$ induced by $p$.
\end{cor}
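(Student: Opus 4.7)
The plan is to derive both assertions of the corollary directly from Theorem~\ref{thm:main}, using two observations specific to the finite-isotropy setting.

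First, since $G$ acts with finite stabilizers, $\dim G_x=0$ for every $x\in M$, so $\kappa_x=\dim\O_x=\dim G=:\kappa$ is constant in $x$, and $n-\kappa=\dim\widetilde M$. Substituting this uniform value of $\kappa_x$ into \eqref{eq:29.10.2015} yields the first claim of the corollary immediately, with a constant $C_{x,\gamma}$ that is uniformly bounded in $x\in M=M_\mathrm{prin}\cup M_\mathrm{except}$.

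Second, for the statement concerning $e_{\gamma_\text{triv}}$, I observe that $\Pi_{\gamma_\text{triv}}=\int_G \pi(g)\,dg$ projects onto the space of $G$-invariant $\L^2$-functions, so every $e_j\in \L^2_{\gamma_\text{triv}}(M)$ satisfies $e_j(g\cdot x)=e_j(x)$. Hence each $|e_j(x)|^2$ is $G$-invariant, and therefore
\[
e_{\gamma_\text{triv}}(x,x,\lambda)=\sum_{\lambda_j\leq\lambda,\,e_j\in \L^2_{\gamma_\text{triv}}(M)}|e_j(x)|^2
\]
descends to a function on $\widetilde M$. I then apply Theorem~\ref{thm:main} with $\gamma=\gamma_\text{triv}$, using $d_{\gamma_\text{triv}}=1$ and $[\pi_{\gamma_\text{triv}|G_x}:\1]=1$, so that the leading coefficient reduces to the fiber integral $(2\pi)^{-\dim\widetilde M}\int_{\{\xi\mid (x,\xi)\in\Omega,\,p(x,\xi)<1\}}d\xi/\vol\O_{(x,\xi)}$ times $\lambda^{\dim\widetilde M/m}$.

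It then remains to reinterpret this integral in terms of orbifold data. For $(x,\xi)\in\Omega$ one has $\xi\in N_x^\ast\O_x$ by \eqref{eq:Ann}, and the stabilizer $G_{(x,\xi)}$ is contained in the finite group $G_x$. Effectiveness of the $G$-action on $M$ implies that the slice representation of $G_x$ on $N_x^\ast\O_x$ has trivial kernel, so on a subset of full Lebesgue measure in $N_x^\ast\O_x$ the stabilizer $G_{(x,\xi)}$ is trivial; hence $\vol\O_{(x,\xi)}=\vol(G)/|G_{(x,\xi)}|=1$ almost everywhere, with the normalization $\vol G=1$. The quotient map $N_x^\ast\O_x\to T_{\tilde x}^\ast\widetilde M$ is a local diffeomorphism that is generically $|G_x|$-to-$1$, and $p$ descends to $\widetilde p$, so the fiber integral equals $|G_{\tilde x}|$ times the volume of the unit $\widetilde p$-ball in $T_{\tilde x}^\ast\widetilde M$, which by homogeneity of $\widetilde p$ of degree $m$ is, up to the standard normalization constant, $\vol(S^\ast_{\widetilde p,\tilde x}(\widetilde M))$. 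This yields the claimed formula.

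The conceptual core of the argument is the use of the slice representation to compute $\vol\O_{(x,\xi)}$; the only nontrivial technical task is the bookkeeping of the normalization constants, namely that the factor $|G_x|$ coming from the finite quotient and the factor from passing to polar coordinates in $T_{\tilde x}^\ast\widetilde M$ combine to produce precisely the coefficient $|G_{\tilde x}|/(2\pi)^{\dim\widetilde M}$ of the corollary.
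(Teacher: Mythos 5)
Your handling of the first assertion is fine and matches the paper: since all isotropy groups are finite, every orbit has the same dimension $\kappa=\dim G$, so no singular orbits occur and \eqref{eq:29.10.2015} applies uniformly with $n-\kappa_x=n-\kappa=\dim\widetilde M$.

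For the second assertion there is a genuine gap. You set $[\pi_{\gamma_\text{triv}|G_x}:\1]=1$, reading the bracket as the abstract multiplicity of the trivial representation. But the paper's convention is different: as is clear from the proof of Proposition~\ref{thm:kernelasymp} and is stated explicitly in Corollary~\ref{cor:kuznecov} (and repeated in the proof of this corollary), the symbol is defined as $[\pi_{\gamma|G_x}:\1]=\int_{G_x}\overline{\gamma(g)}\,dG_x(g)$, with $dG_x$ the \emph{induced} (for finite $G$: counting) measure on $G_x$, not the normalized probability measure. For $\gamma=\gamma_\text{triv}$ this gives $\vol G_x$, hence $|G_x|$ when $G_x$ is finite. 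The factor $|G_{\tilde x}|$ in the corollary comes directly from here, in a single line, and the paper then simply refers to \cite[Remark 6.2]{kuester-ramacher15} for the identification of the remaining $\int d\xi/\vol\O_{(x,\xi)}$ with $\vol(S^\ast_{\widetilde p,\tilde x}\widetilde M)$. You instead take the bracket to be $1$ and try to manufacture the factor $|G_x|$ a second time from the covering degree of $N^\ast_x\O_x\to T^\ast_{\tilde x}\widetilde M$; as written this either double-counts $|G_x|$ (if the paper's $\int d\xi/\vol\O$ already equals the quotient volume) or uses normalizations for $\vol G$ and $\vol\O_{(x,\xi)}$ that contradict those appearing in Proposition~\ref{thm:kernelasymp}. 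Before the slice-representation argument can be trusted you need to fix a single consistent convention for $dG$, $dG_x$, and $\vol\O_{(x,\xi)}$, verify that it reproduces $\Lcal_0(x,\gamma)$ in Proposition~\ref{thm:kernelasymp}, and only then track the $|G_x|$ factors through to the orbifold volume.
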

\begin{proof} The first assertion is clear, since all $G$-orbits on $M$ have the same dimension $\kappa$, so that no singular orbits are present. To see the second note that since $G_x$ is finite,  one computes
\bqn 
[{\pi_{\gamma_\text{triv}}}_{|G_x}:\1]=\int_{G_x} \overline{\gamma_\text{triv}(g)} dG_x(g)=\sum_{l=1}^{|G_x|} 1 =|G_x|,
\eqn
$dG_x$ being the counting measure. For the volume factor, see \cite[Remark 6.2]{kuester-ramacher15}.
\end{proof}

\begin{ex}
\label{ex:torus}
Let us consider the case where $M=T^2\subset \R^3$ is the standard $2$-torus on which $G=\SO(2)$ acts by rotations around the symmetry axis. Then all orbits are $1$-dimensional and of principal type, and Theorem \ref{thm:main} yields with the identification $\Z\simeq \widehat{\SO(2)}$ for the reduced spectral function of the Laplace-Beltrami operator 
 \bqn
e_m(x,x,\lambda)-\frac{1}{2\pi} \sqrt \lambda  \int_{\mklm{\xi\mid \, (x,\xi) \in \Omega, \, p(x,\xi)< 1}} \frac{ \d \xi}{\vol \O_{(x,\xi)}} =O(1+|m|^3),  \qquad m \in \Z,
\eqn
uniformly in $x \in T^2$, the irreducible characters of $\SO(2)$ being given by the exponentials $\theta \mapsto e^{im\theta}$, $\theta \in [0,2\pi)\simeq \SO(2)$, $m \in \Z$. 
\end{ex}

\begin{ex}
\label{ex:cocpt}
Consider a connected semisimple Lie group $G$ with finite center  and Lie algebra $\g$, together with   a discrete  co-compact subgroup  $\Gamma$. In particular, $\Gamma$ might have torsion, meaning that there are non-trivial elements of $\Gamma$  conjugate in $G$  to an element of $K$. Let $K$ be a maximal compact subgroup of $G$, and  choose a left-invariant metric on $G$ given by an $\Ad(K)$-invariant bilinear form on  $\g$.
The quotient  $M:=\Gamma\backslash G$ is a compact manifold without boundary,  and has a  Riemannian structure induced by the one of $G$.  Furthermore, 
$K$ acts  on $\Gamma\backslash G$ from the right  in an isometric  and effective way, and  the isotropy group of a point $\Gamma g$ is conjugate 
to the finite group $gKg^{-1}\cap \Gamma$. Hence, all $K$-orbits in $\Gamma\backslash G$ are either principal or exceptional, $\Gamma \backslash G /K$ is an orbifold,  and Corollary \ref{cor:orbifold} applies. 
\end{ex}

\begin{ex}
\label{ex:sphere}
Let us now consider a case where singular orbits are present, and   $M=S^2\subset \R^3$ be the standard $2$-sphere on which $G=\SO(2)\subset \SO(3)$  acts by rotations around the $x_3$-axis with fixed points $x_N=(0,0,1)$ and $x_S=(0,0,-1)$. In this case the phase function of $I_x(\mu)$ reads $\Phi_x(\omega,g)=\eklm{ x-g\cdot x,\omega} $ with respect to standard coordinates in $\R^3$. For $x=x_N,x_S$ it simply vanishes, so that $I_x(\mu)$ is independent of $\mu$ in this case, which is consistent with the asymptotics 
\bqn 
I_x(\mu)= \begin{cases}  O(\mu^0), & x=x_N,x_S, \\ O(\mu^{-1}), & \text{otherwise,}\end{cases}
\eqn
implied by Theorem \ref{thm:12.05.2015}. Let us  now apply Theorem \ref{thm:main}  to  the Laplace-Beltrami operator $-\Delta$ on $S^2$, and notice for this that the orbit volume $\vol \O_{(x,\xi)}$ is of order $\sqrt{\xi_1^2+\xi_2^2}+\sqrt{x_1^2+x_2^2}$ for arbitrary $x$ and $\xi$.
By Theorem \ref{thm:main} and with the identification $\widehat{\SO(2)}\simeq \Z$   the reduced spectral function satisfies on $ S^2_\mathrm{prin}= S^2-\mklm{x_N,x_S}$ the estimate 
 \bq
 \label{eq:08.11.2015}
\left |e_m(x,x,\lambda)-\frac{  \sqrt\lambda}{2\pi}  \int_{\mklm{\xi\mid \, (x,\xi) \in \Omega, \, \norm{\xi}_x< 1}} \frac{ \d \xi}{\vol \O_{(x,\xi)}} \right | \leq C_x\,(1+|m|^3) , \qquad x \in S^2_\mathrm{prin}, \, m \in  \Z.
\eq
In this case, $\Omega \cap T^\ast_x(S^2)$ is $1$-dimensional; the integral in \eqref{eq:08.11.2015} is  finite, but as $S^2_\text{prin} \ni x \to x_N$ or $x_S$ the orbit volume becomes of order $\sqrt{\xi_1^2+\xi_2^2}$, so that the mentioned integral goes to infinity. On the other hand, for the fixed points  $x=x_N,x_S$ the space  $\Omega \cap T^\ast_xS^2=T^\ast _xS^2$ is $2$-dimensional and Theorem \ref{thm:main} yields
 \bq
 \label{eq:31.10.2015}
\left |e_m(x,x,\lambda)-\frac{  [\pi_{m|G}:\1]  }{(2\pi)^2} \lambda \int_{\mklm{\xi\mid  \, \norm{\xi}_x< 1}} \frac{ \d \xi}{\vol \O_{(x,\xi)}} \right | \leq C_x \, (1+|m|^3) \,  \sqrt \lambda, \qquad x=x_N,x_S, \, m \in \Z, 
\eq
where
\bqn 
[\pi_{m|G}:\1]=\begin{cases} 1, & m=0, \\ 0, & \text{otherwise.} \end{cases} 
\eqn
Thus, at the fixed points only  the trivial representation contributes to the main term in the asymptotic formula for the spectral function given by the local Weyl law \eqref{eq:1.1}. Further note that, though   for $x=x_N, x_S$ the orbit volume is proportional to $\sqrt{\xi_1^2+\xi_2^2}$,  its inverse is still locally integrable on $T_x^\ast S^2$, and the integral in \eqref{eq:31.10.2015}  certainly exists. Ultimately, the leading coefficient in \eqref{eq:08.11.2015} must blow up as $x$ approaches the fixed points in order to compensate for the fact that the leading power  changes abruptly  from  $\sqrt \lambda$ to $\lambda$ at the fixed points. Note that the remainder estimates in \eqref{eq:08.11.2015} and \eqref{eq:31.10.2015} are consistent with the asymptotics \eqref{eq:3.12.2015} for the spherical function $Y_{k,0}$. 

\end{ex}

\section{Equivariant $\L^p$-bounds of eigenfunctions for non-singular group actions}
\label{sec:equivLp}

 Let the notation be as in the previous sections. From the asymptotic formula for the equivariant spectral function proved in  Theorem \ref{thm:main} we already deduced in Corollary \ref{cor:21.06.2016} point-wise bounds for isotypic spectral clusters. Similarly,  one immediately obtains in the non-singular case the following equivariant $\L^\infty$-bounds for  eigenfunctions.

\begin{proposition}[\bf $\L^\infty$-bounds for isotypic spectral clusters]
\label{thm:bounds}
Assume that $G$ acts on $M$ with orbits of the same dimension $\kappa$, and denote by $\chi_\lambda$ the spectral projection onto the sum of eigenspaces of $P$ with eigenvalues in the interval  $(\lambda, \lambda+1]$. Then, for any $\gamma \in \widehat G$, 
\bq
\label{eq:5}
\norm{(\chi_\lambda\circ \Pi_\gamma) u}_{\L^\infty(M)} \leq C_{\gamma} (1+ \lambda)^{\frac{n-\kappa-1}{2m}} \norm{u}_{\L^2(M)}, \qquad u \in \L^2(M),
\eq
 where, if $G$ is continuous, 
 \bqn
C_{\gamma}=O\left (\sqrt {d_\gamma \sup_{l \leq \lfloor \kappa/2+3\rfloor } \norm{D^l \gamma}_\infty} \right ).
\eqn
If $G$ is finite, one simply has $C_{\gamma}=O(\sqrt {d_\gamma \norm{\gamma}_\infty})$.  In particular, we obtain
\bqn 
\norm{u}_{\L^\infty(M)} \ll C_\gamma  \, \lambda^{\frac{n-\kappa-1}{2m}}
\eqn
for any eigenfunction $u \in \L^2_\gamma(M)$ of $P$  with eigenvalue $\lambda$ satisfying  $\norm{u}_{\L^2}=1$.
\end{proposition}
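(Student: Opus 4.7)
The plan is to reduce the $L^2\to L^\infty$ bound for $\chi_\lambda\circ\Pi_\gamma$ to a pointwise estimate of its Schwartz kernel on the diagonal, and then invoke the cluster bound already established in Corollary~\ref{cor:21.06.2016}, taking advantage of the fact that under the non-singularity hypothesis the constant $C_{x,\gamma}$ is uniform in $x$.

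First, observe that $\chi_\lambda\circ\Pi_\gamma$ is an orthogonal projection onto the finite-dimensional subspace $\bigoplus_{\lambda_j\in(\lambda,\lambda+1],\, e_j\in\L^2_\gamma(M)}\E_{\lambda_j}$, and its Schwartz kernel is
\bqn
K(x,y)=\sum_{\lambda_j\in(\lambda,\lambda+1],\,e_j\in\L^2_\gamma(M)}e_j(x)\overline{e_j(y)}=e_\gamma(x,y,\lambda+1)-e_\gamma(x,y,\lambda).
\eqn
The Cauchy--Schwarz inequality yields, for every $u\in\L^2(M)$ and every $x\in M$,
\bqn
|(\chi_\lambda\circ\Pi_\gamma)u(x)|^2=\Big|\sum_j\eklm{u,e_j}e_j(x)\Big|^2\leq\|u\|_{\L^2}^2\cdot K(x,x),
\eqn
so that $\norm{\chi_\lambda\circ\Pi_\gamma}_{\L^2\to\L^\infty}^{2}\leq\sup_{x\in M}K(x,x)$.

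Second, by Corollary~\ref{cor:21.06.2016} one has $K(x,x)\leq C_{x,\gamma}\lambda^{(n-\kappa_x-1)/m}$. The assumption that all $G$-orbits have the same dimension $\kappa$ means that $M=M_\mathrm{prin}\cup M_\mathrm{except}$, so that $\kappa_x\equiv\kappa$ and, by the second part of the estimate \eqref{eq:4.6.2017} in Theorem~\ref{thm:main}, the constants $C_{x,\gamma}$ are uniformly bounded in $x$ with the specified dependence on $\gamma$. Consequently
\bqn
\sup_{x\in M}K(x,x)\ll d_\gamma\sup_{l\leq\lfloor\kappa/2+3\rfloor}\norm{D^l\gamma}_\infty(1+\lambda)^{(n-\kappa-1)/m}
\eqn
in the continuous case, and $\ll d_\gamma\norm{\gamma}_\infty(1+\lambda)^{(n-\kappa-1)/m}$ in the finite case. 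Taking square roots yields \eqref{eq:5} with $C_\gamma$ of the asserted form.

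Finally, the eigenfunction bound is a direct consequence: if $u\in\E_\lambda\cap\L^2_\gamma(M)$ with $\norm{u}_{\L^2}=1$, then $(\chi_\lambda\circ\Pi_\gamma)u=u$, and \eqref{eq:5} applied to $u$ gives $\norm{u}_{\L^\infty}\ll C_\gamma\lambda^{(n-\kappa-1)/(2m)}$. The argument is essentially routine once the equivariant local Weyl law is in hand; there is no serious obstacle, the only delicate point being the uniformity of $C_{x,\gamma}$ in $x$, which is precisely where the hypothesis that no singular orbits are present enters the picture. In the presence of singular orbits $C_{x,\gamma}$ blows up as $x$ approaches them, and a uniform $\L^\infty$-bound of this shape cannot hold---this is consistent with the concentration phenomenon described by \eqref{eq:4.12.2015} and illustrated by Example~\ref{ex:sphere}.
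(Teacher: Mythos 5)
Your proof is correct and follows essentially the same route as the paper's: reduce the $\L^2\to\L^\infty$ operator norm of the projection $\chi_\lambda\circ\Pi_\gamma$ to $\sup_x K(x,x)$ (the paper states this as an equality using the projection property, you derive the inequality via Cauchy--Schwarz, which is all that is needed), and then apply the equivariant local Weyl law together with the uniformity of $C_{x,\gamma}$ in $x$ when $M=M_\mathrm{prin}\cup M_\mathrm{except}$. Invoking Corollary~\ref{cor:21.06.2016} rather than Theorem~\ref{thm:main} directly is an immaterial difference, since the former is exactly the diagonal cluster estimate that the latter yields.
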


\begin{proof}
The assertion is a direct consequence of Theorem \ref{thm:main}. In fact, standard arguments \cite[Eq. (3.2.6)]{sogge14} imply that
\begin{align*} 
\norm{\chi_\lambda\circ \Pi_\gamma}^2_{\L^2 \rightarrow \L^\infty} &=\Big [\sup_x \Big ( \, \intop_M |K_{ \chi_\lambda\circ \Pi_\gamma}(x,y)|^2 dM(y) \Big )^{1/2} \Big ]^2 \\ &= \sup_x  K_{ \chi_\lambda\circ \Pi_\gamma}(x,x)=\sup_x \Big [e_\gamma(x,x,\lambda+1)-e_\gamma(x,x,\lambda)\Big ].
\end{align*}
Since $M=M_\mathrm{prin} \cup M_\mathrm{except}$, the assertion follows from  \eqref{eq:29.10.2015}. 
\end{proof}

It is instructive to see how Proposition \ref{thm:bounds} can be deduced directly  from Proposition \ref{thm:kernelasymp} by transferring the arguments given in \cite[pp. 50]{sogge14} to the equivariant setting.  By duality, the estimate \eqref{eq:5} is equivalent to 
\bq
\label{eq:2} 
\norm{(\chi_\lambda \circ \Pi_\gamma) u}_{\L^2(M)} \leq C_{\gamma} (1+ \lambda)^{\frac{n-\kappa-1}{2m}} \norm{u}_{\L^1(M)}.
\eq
In order to show the latter estimate, one considers again a Schwartz function $\rho \in \S(\R,\R_+)$ satisfying $\rho(0)=1$ and $\supp \hat \rho\in (-\delta/2,\delta/2)$ for a given $\delta>0$. If $\widetilde \chi_\lambda$ denotes the corresponding approximate spectral projection, one then shows that  \eqref{eq:2} is implied by 
\bq
\label{eq:3}
\norm{( \widetilde \chi_\lambda\circ \Pi_\gamma ) u}_{\L^2(M)} \leq C_{\gamma} (1+ \lambda)^{\frac{n-\kappa-1}{2m}} \norm{u}_{\L^1(M)}.
\eq
Thus, one is left with the task of proving \eqref{eq:3}. Now, the $\L^1\to\L^2$ operator norm can be estimated according to 
\begin{align*}
\begin{split}
\norm{ \widetilde \chi_\lambda \circ \Pi_\gamma }^2_{\L^1\rightarrow \L^2} &=\sup_{y \in M} \int_M | K_{ \widetilde \chi_\lambda \circ \Pi_\gamma }(x,y) |^2 \d M(x)\\
&= \sup_{y \in M} \sum_{j\geq 0, e_j \in \L^2_\gamma(M) } [\rho(\lambda-\lambda_j)]^2 | e_j(y)|^2 \leq  \norm{\rho}_{\L^\infty(\R)} \sup_{y \in M} K_{ \widetilde \chi_\lambda \circ \Pi_\gamma }(y,y).
\end{split}
\end{align*}
Hence, everything is shown, since by Proposition \ref{thm:kernelasymp} we have  the uniform bound 
\bqn
|K_{ \widetilde \chi_\lambda\circ \Pi_\gamma}(y,y)| \ll d_\gamma \sup_{l \leq \lfloor \kappa/2+1\rfloor} \norm{D^l \gamma}_\infty (1+\lambda)^{\frac{n-\kappa-1}m}, \qquad  y \in M=M_\mathrm{prin} \cup M_\mathrm{except},
\eqn
with $l=0$ for $G$ finite, 
and we obtain again \eqref{eq:5} with the slightly better estimate
\bq
\label{eq:1.6.2017}
C_{\gamma}=O\left (\sqrt {d_\gamma \sup_{l \leq \lfloor \kappa/2+1\rfloor} \norm{D^l \gamma}_\infty} \right ).
\eq

 \begin{rem}
If $G$ is a connected compact semisimple Lie group, the bound \eqref{eq:1.6.2017} can be rewritten in terms of the highest weight $\Lambda_\gamma\in \t^\ast_\C$ of $\gamma \in \widehat G$, and we obtain
\bqn 
C_{\gamma} = O \Big (\sqrt{|\Lambda_\gamma|^{2|\Sigma^+|+\lfloor \kappa/2+1\rfloor} }\Big ),
\eqn
compare Remark \ref{rem:22.5.2017}.
\end{rem}

\begin{ex}
In the situation of Example \ref{ex:torus}, where $M=T^2\subset \R^3$ is the standard $2$-torus on which $G=\SO(2)$ acts by rotations, Proposition \ref{thm:bounds} and \eqref{eq:1.6.2017} imply the bound
\bqn 
\norm{u}_{\L^\infty(T^2)} =O\left (\sqrt{1+|m|}\right ) , \qquad u \in \L^2_m(T^2), \, \norm{u}_{\L^2}=1,
\eqn
for any eigenfunction of $P$  in a specific isotypic component, which in case of the Laplace-Beltrami operator $\Delta$ are well-known. Indeed, via  the identification 
\bqn 
\R^2/\Z^2 \stackrel{\simeq} \longrightarrow T^2 \simeq S^1 \times S^1, (x_1,x_2) \, \longmapsto \, (e^{2\pi i x_1}, e^{2\pi i x_2}),
\eqn
the  standard orthonormal basis of $\Delta$ is given by $\mklm{e^{2\pi i k_1 x_1}e^{2\pi i k_2 x_2}\mid (k_1,k_2) \in \Z^2}$, showing that the above bound is sharp in the eigenvalue but not in the isotypic aspect. 
\end{ex}

In what follows, we shall derive refined $\L^p$-bounds for isotypic spectral clusters using complex interpolation techniques. For this, we shall need  the additional assumption that the co-spheres $S_x^\ast M$ are strictly convex. In essence, the proof is an elaboration of arguments from \cite{seeger-sogge} applied to the equivariant setting. While for the proof of the $\L^\infty$-bounds in the previous proposition it was sufficient to consider the asymptotic behaviour of  the integrals $I_{x,y}(\mu)$ in case that $x=y$, the proof of $\L^p$-estimates actually requires  estimates for the  integrals  $I_{x,y}(\mu)$ in a neighborhood of the diagonal, making things significantly more involved. This leads us to our second main result.

\begin{thm}[\bf $\L^p$-bounds for isotypic spectral clusters]
\label{thm:20.02.2016}
Let $M$ be  a closed connected Riemannian manifold $M$ of dimension $n$ on which a compact   Lie group $G$ acts effectively and isometrically with orbits of the same dimension $\kappa$. Further, let $P$ be the unique self-adjoint extension of a $G$-invariant elliptic positive symmetric classical pseudodifferential operator on $M$ 
of degree $m$, and assume that its principal symbol $p(x,\xi)$ is such that the co-spheres $S_x^\ast M:=\mklm{(x,\xi) \in T^\ast M\mid \, p(x,\xi)=1}$ are strictly convex. Denote by $\chi_\lambda$ the spectral projection onto the sum of eigenspaces of $P$ with eigenvalues in the interval  $(\lambda, \lambda+1]$, and by $\Pi_\gamma$ the projection onto the isotypic component $\L^2_\gamma(M)$, where  $\gamma \in \widehat G$. Then,  for $u \in \L^2(M)$ 
\bq
\label{eq:31.12.2015}
\norm{(\chi_\lambda \circ \Pi_\gamma) u}_{\L^q(M)} \leq \begin{cases} C_{\gamma} \,  \lambda^{\frac{\delta_{n-\kappa}(q)}{m}} \norm{u}_{\L^2(M)}, &  \frac{2(n-\kappa+1)}{n-\kappa-1} \leq q \leq \infty, \vspace{2mm} \\ C_{\gamma} \, \lambda^{\frac{(n-\kappa-1)(2-q')}{4m q'}} \norm{u}_{\L^2(M)}, &  2 \leq q \leq \frac{2(n-\kappa+1)}{n-\kappa-1}, \end{cases} 
\eq
 where $\frac 1q+\frac 1{q'}=1$,  
 \bqn 
 \delta_{n-\kappa}(q):=\max \left ( (n-\kappa) \left | \frac 12-\frac 1q \right| -\frac 12,0 \right ),
 \eqn
 and  the constant $C_{\gamma}>0$ satisfies the bound  \eqref{eq:1.6.2017} if $G$ is continuous, or $C_{\gamma}=O(\sqrt {d_\gamma \norm{\gamma}_\infty})$ in case that $G$ is finite.  In particular, 
\bqn 
\norm{u}_{\L^q(M)} \leq \begin{cases} C_\gamma \,  \lambda^{\frac{\delta_{n-\kappa}(q)}{m}}, &  \frac{2(n-\kappa+1)}{n-\kappa-1} \leq q \leq \infty, \vspace{2mm} \\C_\gamma  \, \lambda^{\frac{(n-\kappa-1)(2-q')}{4m q'}}, &  2 \leq q \leq \frac{2(n-\kappa+1)}{n-\kappa-1}, \end{cases} 
\eqn
for any eigenfunction $u \in \L^2_\gamma(M)$ of $P$  with eigenvalue $\lambda$    satisfying $\norm{u}_{\L^2}=1$.
\end{thm}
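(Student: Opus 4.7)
The plan is to adapt the Seeger--Sogge complex interpolation method \cite{seeger-sogge} to the equivariant setting, the crucial new ingredient being the uniform pointwise kernel bound supplied by Theorem \ref{thm:14.05.2017}. Set $q_c:=2(n-\kappa+1)/(n-\kappa-1)$. The upper branch $q\in[q_c,\infty]$ of \eqref{eq:31.12.2015} will then be obtained by interpolating a single critical bound at $q=q_c$ with the $\L^\infty$-estimate already established in Proposition \ref{thm:bounds}, while the lower branch $q\in[2,q_c]$ follows by interpolating the same critical bound with the trivial $\L^2\to\L^2$ bound $\norm{\chi_\lambda\circ \Pi_\gamma}\leq 1$ coming from orthogonality. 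Everything thus reduces to proving the critical endpoint estimate
\bqn
\norm{(\chi_\lambda\circ\Pi_\gamma) u}_{\L^{q_c}(M)} \ll C_\gamma\,\lambda^{\frac{n-\kappa-1}{2m(n-\kappa+1)}}\norm{u}_{\L^2(M)}.
\eqn

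To prove this, I would first reduce as usual to the approximate projection $\widetilde\chi_\mu\circ \Pi_\gamma$, whose kernel is described by Corollary \ref{cor:12.05.2015}, and introduce a Sogge-type analytic family
\bqn
T^s_\mu := \frac{e^{s^2}}{\Gamma(s+1)}\,(\mu+i)^{-s}\,(\widetilde\chi_\mu\circ \Pi_\gamma)
\eqn
(or a closely related family built from fractional powers of $Q$ and a dyadic spectral cutoff near $\mu$). Stein's complex interpolation theorem applied on the strip $-(n-\kappa-1)/2\leq \Re s\leq 0$ will then deliver the required critical bound once suitable boundary estimates are available on the two vertical edges. On the line $\Re s=0$ the $\L^2\to\L^2$ bound is immediate from orthogonality of eigenfunctions and the fact that $\Pi_\gamma$ is a projection, the polynomial dependence on $\Im s$ being absorbed by the gaussian factor $e^{s^2}/\Gamma(s+1)$.

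The main step, and the expected main obstacle, is the boundary bound on $\Re s=-(n-\kappa-1)/2$, which must be a uniform $\L^1\to\L^\infty$ estimate of the correct size; by duality this amounts to a uniform pointwise estimate on the Schwartz kernel of $T^s_\mu$. For this I would apply Theorem \ref{thm:14.05.2017} to the oscillatory integrals $I_{x,y}(\mu)$ entering the formula of Corollary \ref{cor:12.05.2015}; the two hypotheses, namely that all orbits have the same dimension and that the cospheres $S^\ast_x M$ are strictly convex, are exactly those required. The dispersive decay
\bqn
\big(\mu\,\norm{\kappa(x)-\kappa(g_\J\cdot y)}+1\big)^{-(n-\kappa-1)/2}
\eqn
provided by that theorem is precisely the gain needed to run the Seeger--Sogge mechanism in the reduced effective dimension $n-\kappa$; combined with the weight $|(\mu+i)^{-s}|\approx \mu^{(n-\kappa-1)/2}$ on the relevant line, it produces the sought uniform estimate with constant $C_\gamma$ as in \eqref{eq:1.6.2017}, the factorized-amplitude remainder bound in Theorem \ref{thm:14.05.2017} being what keeps the loss in $\gamma$ down to $\sqrt{d_\gamma\sup_{l\leq \lfloor \kappa/2+1\rfloor}\norm{D^l\gamma}_\infty}$.

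Stein interpolation of these two endpoint bounds then yields the estimate at $q=q_c$, and the stated pointwise bound for an eigenfunction $u\in\E_\lambda\cap\L^2_\gamma(M)$ with $\norm{u}_{\L^2}=1$ follows immediately from the operator bound, since in that case $(\chi_\lambda\circ\Pi_\gamma)u=u$. The chief technical subtlety is to ensure full uniformity in $x$ and $y$ of the kernel estimate as $y$ approaches the orbit of $x$, where the stationary phase geometry switches between the regimes (a) and (b) of Theorem \ref{thm:12.05.2015}; this transition is precisely what Theorem \ref{thm:14.05.2017} is designed to handle, by interpolating smoothly between the two different asymptotic profiles, and it is there that the assumption $M=M_\mathrm{prin}\cup M_\mathrm{except}$ becomes essential.
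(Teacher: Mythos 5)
Your overall strategy matches the paper's: reduce by Riesz interpolation to a single critical bound at $q_c=\frac{2(n-\kappa+1)}{n-\kappa-1}$, and obtain that bound via Stein complex interpolation between an $\L^2\to\L^2$ boundary estimate and an $\L^1\to\L^\infty$ boundary estimate, the latter being a uniform pointwise kernel bound that ultimately comes from the dispersive decay of Theorem \ref{thm:14.05.2017}. You have also correctly identified that the uniformity of Theorem \ref{thm:14.05.2017} across the two regimes $y\in\O_x$ and $y\notin\O_x$ is where the assumptions of same-dimensional orbits and strictly convex cospheres enter.

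However, there is a genuine gap in the construction of the analytic family. The family you propose,
$T^s_\mu=\frac{e^{s^2}}{\Gamma(s+1)}(\mu+i)^{-s}\,\big(\widetilde\chi_\mu\circ\Pi_\gamma\big)$,
is a \emph{scalar} multiple of one fixed operator, since $(\mu+i)^{-s}$ is a complex number rather than an operator. Stein interpolation on such a family gains nothing: the kernel of $T^s_\mu$ is the kernel of $\widetilde\chi_\mu\circ\Pi_\gamma$ times a constant, so the $\L^1\to\L^\infty$ norm at $\Re s=-\frac{n-\kappa-1}{2}$ is $|(\mu+i)^{-s}|\,\norm{\widetilde\chi_\mu\circ\Pi_\gamma}_{\L^1\to\L^\infty}\approx\mu^{(n-\kappa-1)/2}\cdot\mu^{n-\kappa-1}$, and the scalar factor cancels when you divide back out at the target. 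In addition, you placed the target at $\Re s=0$, an \emph{endpoint} of your strip, whereas Stein interpolation only produces bounds at interior points. What is actually needed is a family that modifies the operator itself, as the paper does by inserting the distribution $(t-i\,0)^z$ into the half-wave representation:
$\check\chi^z_\mu=\frac{e^{z^2}}{2\pi}\int_\R\widehat{\rho^2}(t)\,e^{it\mu}\,(t-i\,0)^z\,U(t)\,dt$,
with $z$ ranging over the strip $-1\leq\Re z\leq\frac{n-\kappa-1}{2}$ and the target $\check\chi_\mu$ sitting in the interior at $z=0$. The factor $(t-i\,0)^z$ alters the oscillation of the kernel, and it is precisely when this is combined with the uniform pointwise estimates of Theorem \ref{thm:14.05.2017} on the oscillatory integrals $I^\gamma_\iota(\mu,R,t,x,y)$ that the kernel bound improves from $\mu^{n-\kappa-1}$ to $\mu^{(n-\kappa-1)/2}$ on the line $\Re z=\frac{n-\kappa-1}{2}$. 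Without such an operator-level deformation the interpolation cannot close. A further omitted (though minor) reduction is the passage via H\"older's inequality from $\widetilde\chi_\mu$ to $\check\chi_\mu$ defined with $\rho^2$, which is what makes the $\L^p\to\L^{p'}$ self-dual estimate applicable.
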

\begin{proof}
By duality, \eqref{eq:31.12.2015} is equivalent to
\bq
\label{eq:31.12.2015a}
\norm{(\chi_\mu \circ \Pi_\gamma) u}_{\L^2(M)} \leq \begin{cases} C_{\gamma} \,  \mu^{\delta_{n-\kappa}(p)} \norm{u}_{\L^p(M)}, & 1 \leq p \leq  \frac{2(n-\kappa+1)}{n-\kappa+3},  \vspace{2mm} \\ C_{\gamma} \, \mu^{\frac{(n-\kappa-1)(2-p)}{4p}} \norm{u}_{\L^p(M)}, &  \frac{2(n-\kappa+1)}{n-\kappa+3}\leq p \leq 2, \end{cases} 
\eq
where  $\chi_\mu$ denotes the spectral projection onto the sum of eigenspaces of $Q:=\sqrt[m]{P}$ with eigenvalues  in the interval  $(\mu, \mu+1]$, $\mu=\sqrt[m]{\lambda}$.  The case $p=1$ follows from the equivariant local Weyl law, and has already  been dealt with in \eqref{eq:2}. On the other hand, orthogonality arguments immediately imply
\bqn 
\norm{(\chi_\mu \circ \Pi_\gamma) u}_{\L^2(M)} \leq \norm{u}_{\L^2(M)}.
\eqn
By the Riesz interpolation theorem \cite[Chapter V, Theorem 1.3]{stein-weiss} it therefore suffices to prove \eqref{eq:31.12.2015a} in case that  $p=\frac{2(n-\kappa+1)}{n-\kappa+3}$, which can be inferred from  the corresponding bound
\bq
\label{eq:01.01.2016}
\norm{(\widetilde \chi_\mu \circ \Pi_\gamma) u}_{\L^2(M)} \leq C_{\gamma}\,  \mu^{\delta_{n-\kappa}(p)} \norm{u}_{\L^p(M)}, \qquad p=\frac{2(n-\kappa+1)}{n-\kappa+3},
\eq
for the approximate spectral projection $\widetilde \chi_\mu$ defined in \eqref{eq:2.1}. Now, by H\"older's inequality one computes
\begin{align*}
\norm{(\widetilde \chi_\mu \circ \Pi_\gamma) u}_{\L^2(M)}^2&=\int_M\left |\sum_{j\geq 0, \, e_j \in \L^2_\gamma(M)} \rho(\mu-\mu_j) E_ju (x) \right |^2 \d M(x)\\
&=\int_M \sum_{j\geq 0, \, e_j \in \L^2_\gamma(M)} \rho^2(\mu-\mu_j) E_ju (x) \overline{u(x)} \d M(x)\\ & \leq \norm{(\check \chi_\mu \circ \Pi_\gamma) u}_{\L^{p'}(M)} \norm{u}_{\L^p(M)},
\end{align*}
where $\frac 1p + \frac 1{p'}=1$, and we put $\check \chi_\mu u := \sum_{j=0}^\infty \rho^2(\mu-\mu_j) E_{j}u $ for $u \in \L^2(M)$. In order to see \eqref{eq:01.01.2016} it is therefore sufficient to prove 
\bq
\label{eq:01.01.2016a}
\norm{(\check \chi_\mu \circ \Pi_\gamma) u}_{\L^{p'}(M)} \leq C_{\gamma} \,  \mu^{2 \delta_{n-\kappa}(p)} \norm{u}_{\L^p(M)}, \qquad p=\frac{2(n-\kappa+1)}{n-\kappa+3}.
\eq
In order to show the latter, we shall use analytic interpolation \cite[Chapter V, Theorem 4.1]{stein-weiss}, and consider the analytic family of operators
\bqn 
\check \chi^z_\mu:= \frac {e^{z^2}} {2\pi} \int_\R\widehat{\rho^2}(t) \, e^{it\mu} \, (t-i\,0)^z \, U(t) \d t, \qquad z \in \C,
\eqn
where $(t-i0)^z$ denotes the distribution $\lim_{\eps \to 0^+} (t-i\eps )^z$. 
Clearly, $\check \chi^z_\mu=\check \chi_\mu$ if $z=0$, and since $2\delta_{n-\kappa}(2(n-\kappa +1)/(n-\kappa+3))=(n-\kappa-1)/(n-\kappa +1) $,  analytic interpolation theory implies that \eqref{eq:01.01.2016a} would follow if we were able to show that
\begin{align}
\label{eq:01.01.2016b}
\norm{(\check \chi_\mu^z \circ \Pi_\gamma) u}_{\L^2(M)} &\leq C_{\gamma} \, \norm{u}_{\L^2(M)}, & \Re z =-1,\\ 
\label{eq:01.01.2016c}
\norm{(\check \chi_\mu^z \circ \Pi_\gamma) u}_{\L^\infty(M)} &\leq C_{\gamma} \, \mu^{\frac{n-\kappa-1}{2}} \norm{u}_{\L^1(M)}, &  \Re z = \frac{n-\kappa-1}2. 
\end{align}
The crucial observation for the following estimates is that  the Fourier transform of the distribution $\tau_+^{z}/\Gamma(z+1)$ is given by the formula
\bq
\label{eq:t-i0}
 \int_\R e^{-it\tau}  \frac {\tau^{z}_+}{\Gamma(z+1)} \d \tau= e^{-i\pi (z+1) /2} (t-i\,0)^{-z-1}, \qquad z \in \C, 
\eq
where $\Gamma$ denotes the Gamma function, see \cite[Example 7.1.17]{hoermanderI}; in particular,   the singularity  of $\tau_+^{z}/\Gamma(z+1)$ at $\tau=0$ determines the asymptotic behaviour of $(t-i\0)^{-z-1}$ as $t \to \infty$, and viceversa. From this \eqref{eq:01.01.2016b} immediately  follows. The non-trivial bound to be proven is  \eqref{eq:01.01.2016c}, which would follow if we were able to show that the Schwartz kernel of $\check \chi_\mu^z \circ \Pi_\gamma$ satisfies
\begin{align}
\label{eq:15.01.2016}
|K_{\check \chi_\mu^z \circ \Pi_\gamma} (x,y)| \leq C_{\gamma}\,  \mu^{\frac{n-\kappa-1}2}, \qquad   \Re z = \frac{n-\kappa-1}2,
\end{align}
uniformly in $x, y \in M$.  Note that, in contrast, by Remark \ref{rem:12.7.2017} we  have the uniform bound $|K_{\check \chi_\mu\circ \Pi_\gamma} (x,y)| \leq C_{\gamma} \,  \mu^{n-\kappa-1}$. Furthermore, it is not possible to reduce the proof of \eqref{eq:15.01.2016} to the case $x=y$, since $\check \chi^z_\mu$ is not a positive operator, compare Remark \ref{rem:23.04.2017} (2). Now, it is clear from \eqref{eq:25.12.2015} that
\begin{gather*}
\begin{split}
K_{\check \chi_\mu^z \circ \Pi_\gamma}(x,y)
= \frac {\mu^n  d_\gamma \, e^{z^2}}{(2\pi)^{n+1}} \sum _\iota  \int_{\R}\int _{\R} e^{i\mu(t-Rt)} (t-i \, 0)^z \,  I^\gamma_\iota(\mu,R,t,x,y)  \d t\d R
\end{split}
\end{gather*}
where  $I^\gamma_\iota(\mu,R,t,x,y)$ is as in \eqref{eq:02.05.2015} with $\rho$ replaced by $\rho^2$. Due to the presence of the distribution $(t-i \,0)^z$ we cannot apply the stationary phase theorem to the $(R,t)$-integral. Instead, we shall apply the stationary phase principle to the  integrals $I^\gamma_\iota(\mu,R,t,x,y)$ first, and then use   \eqref{eq:t-i0} to deal with the $(R,t)$-integral. Let us first consider the case of a continuous group $G$. If $x \not\in Y_\iota$ or $\O_y\cap Y_\iota=\emptyset$, $I^\gamma_\iota(\mu,R,t,x,y)=0$.  Otherwise, one deduces from Theorem \ref{thm:14.05.2017}  for fixed $R,t \in \R$, and  any $\tilde N_i \in \N$ the asymptotic expansion
\begin{gather*}
 I^\gamma_\iota(\mu, R, t, x,y)\\ = \sum_{\J \in \pi_0(\Crit \, \Phi_{\iota,x,y})}   \frac{ e^{i \mu  \Phi_{\iota, x,y}^\J(R,t)}}{\mu^\kappa (\mu \norm{ \kappa_\iota(x)-\kappa_\iota(g_\J \cdot y)}+1 )^{\frac{n-1-\kappa}2}} \left [\sum_{k_1,k_2 =0}^{\tilde N_1-1, \tilde N_2-1}  \frac {\mathcal{Q}^\gamma_{\iota,\J, k_1,k_2}(R,t,x,y)}{\mu^{k_1} (\mu \norm{ \kappa_\iota(x)-\kappa_\iota(g_\J \cdot y)}+1)^{k_2}}\right. \\  \left. + \mathcal{R}^\gamma_{\iota,\J, \tilde N_1,\tilde N_2}(R,t,x,y,\mu) \right ].
\end{gather*}
The coefficients $\mathcal{Q}^\gamma_{\iota,\J, k_1,k_2}(R,t,x,y)$  and the remainder term $$\mathcal{R}^\gamma_{\iota,\J, \tilde N_1,\tilde N_2}(R,t,x,y,\mu)=O_{R,t}(\mu^{-\tilde N_1} (\mu \norm{ \kappa_\iota(x)-\kappa_\iota(g_\J \cdot y)}+1 )^{-\tilde N_2})$$ are given by distributions depending smoothly on $R,t$ with support in the component $\J$ of $\Crit  \, \Phi_{\iota,x,y}$ and $\Sigma^{R,t}_{\iota,x} \times G$, respectively. Furthermore, they and their derivatives with respect to $R,t$ are uniformly bounded in $x$ and $y$ by derivatives of $\gamma$ up to order $2k_1$ and $2\tilde N_1+\lfloor \kappa/2+1\rfloor$, respectively, while  
$$\Phi_{\iota,x,y}^\J(R,t):= R \,  c_{x,g_\J\cdot y} (t)
$$
 denotes the constant value of $\Phi_{\iota,x,y}$ on $\J$.  If $y \in \O_x$ one has $x=g_\J \cdot y$, so that up to remainder terms the kernel $K_{\check \chi_\mu^z \circ \Pi_\gamma}(x,y)$ is given by a linear combination of terms of the form
\bq
\label{eq:17.01.2016}
 {\mu^{n-\kappa-k_1}  \, d_\gamma \, e^{z^2}}  \int_{\R}\int _{\R} e^{i\mu(t-Rt)} (t-i \, 0)^z \,  \mathcal{Q}^\gamma_{\iota,\J, k_1,k_2}(R,t,x,y)  \d t \d R, 
\eq
 and if $y \notin \O_x$,  up to remainder terms   by a linear combination  of terms of the form 
\begin{align}
\begin{split}
\label{eq:18.01.2016}
 \mu^{n-\kappa-k_1} &(\mu \norm{ \kappa_\iota(x)-\kappa_\iota(g_\J \cdot y)}+1 )^{-\frac{n-1-\kappa}2-k_2}  \, d_\gamma \, e^{z^2}  \\ \cdot  \int_{\R}\int _{\R} e^{i\mu(t-Rt)} &(t-i \, 0)^z \, e^{i\mu \Phi_{\iota, x,y}^\J(R,t)}  \mathcal{Q}^\gamma_{\iota,\J, k_1,k_2}(R,t,x,y)  \d t\d R.
\end{split}
\end{align}
Now, as a consequence of \eqref{eq:t-i0}, one has for any $f \in \CT(\R\times \R)$, that might depend on $\mu$ as a parameter, and $z \in \C$
\begin{align*}
\eklm{(t-i0)^z, e^{i\mu(1-R) t } f(R,t )}&=\frac{e^{-i\pi z/2}}{\Gamma(-z)}\eklm{\tau_+^{-z-1},\widehat{f(R,\cdot)}(\tau-\mu(1-R)) }.
\end{align*}
Let us consider first the case when $z=0,1,2, 3, \dots$, and write $-l:=-z-1$. Since  $\tau_+^{-l}/\Gamma(-l+1)=\delta^{(l-1)}_0$, compare \cite[(3.2.17)']{hoermanderI},  partial integration yields 
\begin{align*}
\begin{split}
\int_\R \int_\R  e^{i\mu(1-R) t } (t-i0)^z  f(R,t ) \d t \, \d R&=e^{-i\pi z/2} (-1)^{l-1}  \int_\R \int_\R (-it)^{l-1} e^{it\mu(1-R)} f(R,t) \d t \d R \\&=e^{-i\pi z/2} \mu^{-l+1}  \int_\R \int_\R  e^{it\mu (1-R)} (\gd_R^{l-1}f)(R,t) \d t \d R.
\end{split}
\end{align*}
The relevant integrals in  \eqref{eq:17.01.2016} and \eqref{eq:18.01.2016} therefore read
\bq
\label{eq:18.02.2016} 
e^{-i\pi z/2} \mu^{-l+1}  \int_\R \int_\R  e^{it\mu (1-R)} \gd_R^{l-1}\Big [e^{i\mu   \Phi_{\iota, x,y}^\J(R,t)}  \mathcal{Q}^\gamma_{\iota, \J,k_1,k_2}(R,t,x,y)\Big ] \d t \d R.
\eq
Since  similar considerations also hold for the remainder terms,  an application of  the classical stationary phase theorem \cite[Proposition 2.3]{grigis-sjoestrand} to the $(R,t)$-integral allows us to deduce for $z=0,1,2,3, \dots $ the uniform bound  \eqref{eq:15.01.2016}. Indeed, if $y \in \O_x$ the phase function in \eqref{eq:18.02.2016} simply reads $t(1-R)$, and the only critical point is $(R_0,t_0)=(1,0)$, which is non-degenerate, the determinant of  the Hessian being $-1$. If $y \not \in \O_x$, the phase function is given by $t(1-R)+ \Phi^\J_{\iota,x,y}(R, t)$, and a computation shows that the determinant of the matrix of its second derivatives is given by
\bq
\label{eq:20.06.2016}
-\left (-1+ c_{x,g_\J\cdot y}'(t)    \right )^2 \approx - (-1 \pm O(\norm{\kappa_\iota(x) - \kappa_\iota(g_\J \cdot y)}))^2 
\eq
since $c_{x,g \cdot y}(t)=\pm {\norm {\kappa_\iota(x) - \kappa_\iota( g \cdot y )}}/{\norm {\grad_\eta \zeta_\iota(t,\kappa_\iota(x),\omega) }}$.  By choosing the charts $Y_\iota$ sufficiently small so that $|\kappa_\iota(x) - \kappa_\iota(g_\J \cdot y)|1$ is small, we can therefore achieve that  in a  sufficiently small neighborhood of $(R,t)=(1,0)$, which is where $\mathcal{Q}^{\gamma}_{\iota, \J,k_1,k_2}(R,t,x,y)$ is supported, the phase function $t(1-R)+ \Phi^\J_{\iota,x,y}(R, t)$ has, if at all, only non-degenerate, hence isolated, critical points.  If we now apply the stationary phase theorem to the integral \eqref{eq:18.02.2016} with respect to the phase function $t(1-R)$ and  $t(1-R)+ \Phi^\J_{\iota,x,y}(R,t)$, respectively, treating the remainder terms alike, we obtain 
\bqn
|K_{\check \chi_\mu^z \circ \Pi_\gamma}(x,y)|\leq   C_{\gamma} \, \mu^{n-\kappa-z-1},  \qquad y \in \mathcal{O}_x, 
\eqn
as well as
\begin{align*}
|K_{\check \chi_\mu^z \circ \Pi_\gamma}(x,y)|&\leq C_{\gamma}  \, \mu^{n-\kappa-z-1} \big (\mu \norm{\kappa_\iota(x)-\kappa_\iota(g_\J \cdot y)}+1\big )^{-\frac{n-1-\kappa}2} \sum_{l'=0}^{l-1} \big (\mu \norm{\kappa_\iota(x)-\kappa_\iota(g_\J \cdot y)}\big )^{l'} \\
&\leq C_\gamma \, \mu^{n-\kappa-z-1}, \qquad y \notin \mathcal{O}_x, 
\end{align*}
yielding \eqref{eq:15.01.2016} for $z=0,1,2,3, \dots$. Next, let us turn to the case where  $z\not=0,1,2, 3, \dots$, and note that by  homogeneity of $\tau_+^z$  one has
\begin{align*}
\eklm{(t-i0)^z, e^{i\mu(1-R) t } f(R,t )}&= \frac{e^{-i\pi z/2}}{\Gamma(-z)}\eklm{\tau_+^{-z-1}, \mu^{-1} \widehat{f(R,{\cdot}/\mu )}\Big (\tau/\mu -1+R\Big ) }\\
&=\frac{e^{-i\pi z/2}}{\Gamma(-z)} \mu^{-z-1} \eklm{\tau_+^{-z-1},\widehat{f (R,\cdot/ \mu )}(\tau-1+R) },
\end{align*}
compare \cite[(3.2.7)]{hoermanderI}. By definition of $\tau_+^{-z-1}$ and partial integration one computes   
\begin{align*}
\label{eq:21.01.2016}
\begin{split}
-z(-z+1) & \dots  (-z-1+l) (-1)^l \int_\R \int_\R \tau_+^{-z-1} \widehat{f (R,\cdot/ \mu )}(\tau-1+R) \d \tau  \d R \\ 
&=
 \int_\R \int_\R \tau_+^{-z-1+l}\gd_\tau^l \Big [\widehat{f (R,\cdot/ \mu )}(\tau-1+R)\Big ] \d \tau \d R \\ 
 &= (-1)^l  \mu  \int_\R \tau_+^{-z-1+l} \left [  \int_\R \int_\R e^{-it\mu(\tau-1+R)} 
(\gd_R^l f )(R,t ) \d t  \d R  \right ]\d \tau,
\end{split}
\end{align*}
where $l > \Re z$ is a sufficiently large positive integer, so that $ \tau_+^{-z-1+l}$ becomes locally integrable. Note that we have, as we may,  interchanged the integrals over $\tau$ and $R$, while the integrals over $\tau$ and $t $ cannot be interchanged. As a consequence, the relevant integrals in \eqref{eq:17.01.2016} and \eqref{eq:18.01.2016} are given by  linear combinations of terms of the form
\bq
\label{eq:20.02.2016}
\mu^{-z} \int_\R \tau_+^{-z-1+l} \left [  \int_\R \int_\R e^{-it\mu(\tau-1+R)} 
\gd_R^l \big [e^{i\mu \Phi_{\iota, x,y}^\J(R,t)}  \mathcal{Q}^\gamma_{\iota,\J, k_1,k_2}(R,t,x,y)\big ]   \d t  \d R  \right ]\d \tau.
\eq
Again, let us examine the $(R,t)$-integral by means of the stationary phase. If $y \in \O_x$, the phase function is given by $t(\tau-1+R)$, the only critical point is $(R_0,t_0)=(1-\tau,0)$, and we obtain for \eqref{eq:20.02.2016} the estimate 
\bqn 
2\pi \mu^{-z-1} \int_\R \tau_+^{-z-1+l}  \Big [ (\gd_R^l   \mathcal{Q}^{\gamma}_{\iota,\J,k_1,k_2})(1-\tau,0,x,y) +O_{\gamma,\tau}(\mu^{-1})\Big ]  \d \tau=O_\gamma(\mu^{-\Re z-1})
\eqn
uniformly in $x,y$, 
the remainder $O_{\gamma,\tau}(\mu^{-1})$ being rapidly falling in $\tau$, since $\mathcal{Q}^{\gamma}_{\iota,\J,k_1,k_2}$ has compact $(R,t)$-support. Now, if $y \not \in \O_x$, the phase function reads
$
t(1-R)+ \Phi^\J_{\iota,x,y}(R,t)-t\tau$, and the determinant of the matrix of its second derivatives is again given by \eqref{eq:20.06.2016}. By the previous arguments, we can therefore assume that  in a  sufficiently small neighborhood of $(R,t)=(1,0)$ the phase function $ t(1-R)+ \Phi^\J_{\iota,x,y}(R,t)-t\tau$ has only one non-degenerate critical point $(R_0,t_0)$.
 It  satisfies the relations
\bqn 
t_0= c_{x,g_\J\cdot y}(t_0) \approx 0, \qquad R_0=\frac{1-\tau}{1-c_{x,g_\J\cdot y}'(t_0)}\approx 1-\tau,
\eqn
and at this point, the phase function takes the value
$
t_0(1-R_0)+ \Phi^\J_{\iota,x,y}(R_0,t_0)-t_0\tau=t_0(1-\tau)$. Taking into account that  for any $w \in \C$ with $\Re w>-1$ and $g \in \S(\R)$ one has
\bqn
\label{eq:24.02.2016}
\int_\R  e^{-i\mu\tau}  \tau_+^{w} g(\tau) \d \tau = O\big ((1+\mu)^{-\Re w-1})\big ) , \qquad \mu \geq 0, 
\eqn
compare \eqref{eq:t-i0}, we obtain for \eqref{eq:20.02.2016}  the bound
\begin{gather*}
2\pi \mu^{-z-1} \int_\R \tau_+^{-z-1+l}  e^{i\mu t_0(1-\tau)}  \sum_{l'+l''=l} c_{l',l''}
\\ \left [ (i\mu \, c_{x,g_\J\cdot y}(t_0))^{l'}    (\gd_R^{l''}   \mathcal{Q}^{\gamma}_{\iota,\J,k_1,k_2})(R_0,t_0,x,y)   +O_{\gamma,\tau}\big (\mu^{-1+l'}\norm{\kappa_\iota(x)-\kappa_\iota(g_\J \cdot y)}^{l'}\big )\right]  \d \tau\\ =O_\gamma\Big (\mu^{- \Re z-1}\big (1+\mu  \norm{\kappa_\iota(x)-\kappa_\iota(g_\J \cdot y)}\big ) ^{\Re z-l }\sum_{l'=0}^l (\mu \norm{\kappa_\iota(x)-\kappa_\iota(g_\J \cdot y)})^{l'}\Big )\\
=O_\gamma\Big (  \mu^{- \Re z-1}\big (1+\mu  \norm{\kappa_\iota(x)-\kappa_\iota(g_\J \cdot y)}\big ) ^{\Re z} \Big )
\end{gather*}
uniformly in $x,y$, where  the $c_{l',l''}$ are certain coefficients, and the remainder is rapidly falling in $\tau$.  Treating the remainder terms alike, we have shown \eqref{eq:15.01.2016} for $z\not=0,1,2,3,\dots$ as well. This completes the proof of Theorem \ref{thm:20.02.2016} in case that $G$ is continuous. The finite group case follows in an analogous way using Proposition \ref{prop:10.08.2017} instead of Theorem \ref{thm:14.05.2017}.
 \end{proof}

\begin{ex}
Let us resume Example \ref{ex:cocpt} of  a connected semisimple Lie group $G$ with finite center, discrete  co-compact subgroup  $\Gamma$, and   maximal compact subgroup $K$. The group  $K$ acts on $\Gamma \backslash G$ with orbits of principal and exceptional type, all orbits having the  dimension $\dim K$, and  we deduce from Proposition \ref{thm:bounds}  for each $\gamma \in \widehat K$ the estimate
\bqn 
\norm{u}_{\L^\infty(\Gamma \backslash G)} \leq C_{\gamma}\, \lambda^{\frac{\dim G/K -1}{2m}}, \qquad u \in \L^2_\gamma(\Gamma \backslash G), \, \norm{u}_{\L^2}=1,
\eqn
for any eigenfunction $u$ of a $K$-invariant elliptic positive symmetric classical pseudodifferential operator $P$ on $\Gamma \backslash G$ 
of degree $m$ with eigenvalue $\lambda$.  More generally,  with $\frac 1q+\frac 1{q'}=1$  and 
 \bqn 
 \delta(q):=\max \left ( \dim G/K \left | \frac 12-\frac 1q \right| -\frac 12,0 \right )
 \eqn
we have by Theorem \ref{thm:20.02.2016} the bound
\bqn 
\norm{u}_{\L^q(\Gamma \backslash G)} \leq \begin{cases} C_{\gamma} \,  \lambda^{\frac{\delta(q)}{m}}, &  \frac{2(\dim G/K+1)}{\dim G/K-1} \leq q \leq \infty, \vspace{2mm} \\ C_{\gamma} \, \lambda^{\frac{(\dim G/K-1)(2-q')}{4m q'}}, &  2 \leq q \leq \frac{2(\dim G/K+1)}{\dim G/K-1}, \end{cases} 
\eqn
provided that $P$ satisfies the strict convexity assumption in Theorem \ref{thm:20.02.2016}. In case that $\Gamma$ has no torsion, $\Gamma \backslash G/K$ is a locally symmetric space, and eigenfunctions of the Beltrami-Laplace operator on $\Gamma \backslash G/K$ correspond exactly to $K$-invariant eigenfunctions of the Beltrami-Laplace operator on $\Gamma \backslash G$,  the space $ \L^2(\Gamma \backslash G\slash K)\simeq \L^2(\Gamma \backslash G)^K$ being isomorphic to  the trivial isotypic component in the Peter-Weyl decomposition of $\L^2(\Gamma \backslash G)$.
Thus, our results  generalize  the classical $\L^p$-bounds on $\Gamma \backslash G/K $  to arbitrary $K$-types. 
\end{ex}

\section{The desingularization process}
\label{sec:DP}

As already noted, the asymptotic formula for the reduced spectral function $e_\gamma(x,x,\lambda)$ given in Theorem \ref{thm:main} depends in a highly non-smooth way on $x \in M$ if non-principal orbits are present. Moreover,  if $G$ is continuous, the mentioned formula  does not give a  precise description of the caustic behaviour of   $e_\gamma(x,x,\lambda)$ near singular orbits, leaving  it  unclear if the coefficients in the expansion of $e_\gamma(x,x,\lambda)$ are integrable in $x$, and how one could deduce from Theorem \ref{thm:main} asympotics for the equivariant spectral counting function $N_\gamma(\lambda):= \int_M e_\gamma(x,x,\lambda) \d M(x)$. In what follows, we shall therefore examine the case of a continuous group $G$ more closely. Our goal is to derive a description of $e_\gamma(x,x,\lambda)$ that  interpolates between the  asymptotics for different values of $x$, and in particular to characterize the  behaviour of the leading coefficient and the remainder term  in Theorem \ref{thm:main} as $x \in M_\mathrm{prin}$ approaches singular orbits. For this, we shall make use of resolution of singularities. As we shall see, the major difficulty resides in the fact that,  unless the Hamiltonian $G$-action on $T^\ast M$ is free, so that the corresponding momentum map becomes a submersion,  $\Omega$ and the critical set
  \eqref{eq:23.04.2015} of the phase function $\Phi$ are not  smooth manifolds. To overcome this difficulty, it was shown in \cite{ramacher10}   that by constructing a strong resolution of the set 
\bq
\label{eq:calN}
\Ncal:=\mklm{(x,g) \in M \times G \mid g\cdot x = x}
\eq
a partial desingularization 
\bq
\label{eq:220215}
\mathcal{Z}: \widetilde {\bf X} \rightarrow {\bf X}:= T^\ast M \times G 
\eq
 of the critical  set $\Crit \, \Phi$ can be achieved, and after applying the stationary phase theorem in the resolution space $\widetilde {\bf X}$, an asymptotic description of the integrals $I(\mu)$ defined in \eqref{eq:integral} can be  obtained, leading to an asymptotic formula for $N_\gamma(\lambda)$. 
 In the ensuing sections, we shall use  the partial desingularization \eqref{eq:220215}   to obtain an  asymptotic formula for the integrals $I_x(\mu)$ defined in \eqref{eq:11.9.2017}
  that allows us to describe  the caustic behaviour of the  coefficients $\mathcal{Q}_k(x,x)$   in Theorem \ref{thm:12.05.2015} (a) as one approaches singular orbits. One can deduce  from this  the asymptotic description of the integrals $I(\mu)$ given in \cite{ramacher10}, but the converse implication is more subtle and not straight-forward. For this reason, a careful re-examination of the results of \cite{ramacher10} is needed in order to obtain a precise description of the coefficients in the asymptotic formula for the integrals $I_x(\mu)$ and, ultimately, of the leading coefficient in the asymptotic formula for the equivariant spectral function. \\

Let $M$ be a closed connected Riemannian manifold and $G$ a  continuous compact Lie group  acting on $M$ by isometries. 
In what follows, we shall   recall the construction  of  the  partial desingularization \eqref{eq:220215}  of the critical set $
\Ccal:=\mklm{(x,\eta,g) \in ( \Omega \cap T^\ast M) \times G\mid g \in G_{(x,\eta)}} $ performed in \cite{ramacher10}.  The desingularization process presented here is exactly the same, only that we apply it now to the study of the integrals \eqref{eq:03.05.2015} instead of the integrals \eqref{eq:integral}. For details,  the reader is referred to \cite{ramacher10}. Consider the decomposition of $M$ into orbit types
 \bq
 \label{eq:2.19}
M=M(H_1) \, \dot \cup \, \cdots \, \dot \cup \, M(H_L),
\eq
where we suppose that  the isotropy types are numbered in such a way that $(H_i) \geq (H_j)$ implies $i \leq j$, $(H_L)$ being the principal isotropy type, see Figure \ref{fig:tree}.

\medskip

\begin{figure}[h!]
\begin{center}
\begin{tikzpicture}[node distance=1.4cm, auto]

\node (A00) {$H_{L}$}; 

\node (B0) [above left of=A00] {$H_{L-4}$}; 
\node (C0) [right of=B0] {$H_{L-3}$}; 
\node (D0) [right of=C0] {$H_{L-2}$}; 
\node (F0) [right of=D0] {$H_{L-1}$}; 

\node (D) [above left of=B0] {$H_{m-1}$}; 
\node (E) [right of=D] {$H_m$}; 
\node (F) [right of=E] {$H_{m+1}$}; 

\node (E1) [above left of =D] {$H_{i+2}$}; 
\node (F1) [right of=E1] {$H_{i+3}$}; 
\node (G1) [right of=F1] {$\cdots$};
\node (H1) [right of=G1] {$H_l$};
\node (I1) [right  of=H1] {$H_{l+1}$};

\node (D2) [above left of=E1] {$H_1$}; 
\node (E2) [right of=D2] {$H_2$}; 
\node (F2) [right of=E2] {$H_3$}; 
\node (G2) [right of=F2] {$\cdots$};
\node (H2) [right of=G2] {$H_{i-1}$};
\node (I2) [right of=H2] {$H_{i}$};
\node (J2) [right of=I2] {$H_{i+1}$};

\draw[-] (A00) to node {} (B0);
\draw[-] (A00) to node {} (C0);
\draw[-] (A00) to node {} (D0);
\draw[-] (A00) to node {} (F0);

\draw[-] (E1) to node {} (D2);
\draw[-] (E2) to node {} (E1);
\draw[-] (E2) to node {} (F1);

\draw[-, dashed] (D) to node {} (B0);
\draw[-, dashed] (E) to node {} (C0);
\draw[-, dashed] (F) to node {} (C0);

\draw[-] (F1) to node {} (F2);
\draw[-] (E2) to node {} (E1);

\draw[-] (H1) to node {} (H2);
\draw[-] (I2) to node {} (I1);
\draw[-] (J2) to node {} (I1);

\draw[-, dashed] (F) to node {} (H1);
\draw[-, dashed] (F) to node {} (I1);

\draw[-, dashed] (D) to node {} (E1);
\draw[-, dashed] (D) to node {} (F1);

\draw[-, dashed] (I1) to node {} (F0);

\end{tikzpicture}
\end{center}
\vspace{-.3cm}

\caption{\small An isotropy tree corresponding to the decomposition \eqref{eq:2.19}.  A line between two subgroups indicates partial ordering.}\label{fig:tree}
\end{figure}
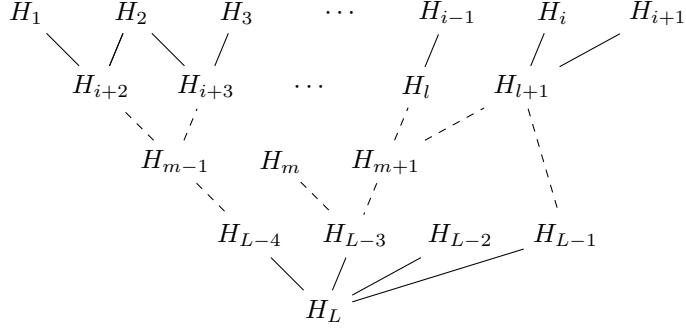

\smallskip

To construct  \eqref{eq:220215},  an iterative  process along the strata of the  $G$-action on $M$ is set up, where the centers of the blow-ups are successively chosen as isotropy bundles over unions of maximally singular orbits. For simplicity, one assumes that at each  step the union of maximally singular orbits is connected. 

\subsection*{Beginning of iteration} Let  $f_k:\nu_k\rightarrow M_k$  be an invariant tubular neighborhood of $M_k(H_k)$ in 
\bdm
M_k:=M-\bigcup_{i=1}^{k-1} f_i(\stackrel{\circ}{D}_{1/2}(\nu_i)), \qquad k=1, \dots, L, 
\edm 
a manifold with corners on which $G$ acts with the isotropy types $(H_k), (H_{k+1}), \dots, (H_L)$. Here $\nu_k$ denotes the normal G-vector bundle of $M_k(H_k)$, $\stackrel{\circ}{D}_{1/2}(\nu_i):=\mklm{  v \in \nu_i\mid \, \norm {v} <1/2}$, 
\bqn
f_k(p^{(k)},v^{(k)}):=(\exp_{p^{(k)}} \circ \,  \gamma ^{(k)})( v^{(k)}), \qquad  p^{(k)} \in M_k(H_k), \, v^{(k)} \in (\nu_k)_{p^{(k)}},
\eqn
 is an equivariant diffeomorphism given in terms of the exponential map, and
\bqn
\gamma^{(k)}(v^{(k)}):=\frac {F_k( p^{(k)})}{ (1+\|v^{(k)}\|^2 )^{1/2}} v^{(k)}, 
\eqn
where $F_k:M_k(H_k)\rightarrow \R$ is  a smooth, $G$-invariant, positive function, see \cite[p. 306]{bredon}. Let $S_k$ be the unit sphere bundle over $M_k(H_k)$, and put  $W_k:=f_k(\stackrel{\circ}{D_1}(\nu_k))$, $W_L: = \stackrel{\circ}{M}_L$, so that we obtain the open covering
\bq
\label{eq:872}
M= W_1 \cup \dots \cup W_L.
\eq
Fix an inner product on $\g$,  which induces a Riemannian structure on $G$, and consider for each $k$ and $p^{(k)}\in M_k(H_k)$  the decomposition
\bqn
T_eG\simeq  \g=\g_{p^{(k)}}\oplus \g_{p^{(k)}}^\perp, 
\eqn
where $\g_{p^{(k)}}\simeq  T_e G_{p^{(k)}}$ denotes the Lie algebra of the  stabilizer $G_{p^{(k)}}$ of $p^{(k)}$, and $\g_{p^{(k)}}^\perp$ its orthogonal complement with respect to the above Riemannian structure.  Now, introduce a partition of unity $ \{ \chi_k\}_{k=1, \dots, L}$ subordinated to the covering \eqref{eq:872}, and define
\bqn 
I_k(x,\mu)
:=   \chi_k (x) I_x(\mu)
\eqn
with  $I_x(\mu)$ as in   \eqref{eq:11.9.2017}.  By Theorem \ref{thm:12.05.2015} (a)  the asymptotic expansion for  $I_L(x,\mu)$ depends smoothly on $x \in W_L\cap Y$. Let us therefore turn to the case when $1 \leq k \leq  L-1$ and $W_k \cap Y \not= \emptyset$.   For fixed $k$ and $x=f_k(p^{(k)}, v^{(k)})\in W_k\cap Y$  Lemma \ref{lem:21.04.2015} (a) implies that 
 $$\Crit \, \Phi_x  =\mklm{(\omega,g) \in \Sigma^{R,t}_x \times G\mid (x,\omega) \in \Omega, \, g\cdot x=x} \subset  \Sigma^{R,t}_x\times G_{p^{(k)}}.$$
Up to non-stationary contributions, it will therefore suffice to evaluate the integrals $I_k(x,\mu)$ in a neighborhood of $G_{p^{(k)}}$. To this end, consider   the isotropy  bundle $\mathrm{Iso} \,M_k(H_k) \rightarrow M_k(H_k)$ over $M_k(H_k)$, as well as the canonical projection
\bqn 
\pi_k: W_k \rightarrow M_k(H_k), \qquad f_k(p^{(k)},v^{(k)}) \mapsto p^{(k)}.
\eqn
Further, let  
\bqn
\pi_k^\ast \, \mathrm{Iso}\,  M_k(H_k)=\mklm {(f_k(p^{(k)},v^{(k)}),h^{(k)})\in W_k \times G\mid h^{(k)} \in G_{p^{(k)}}}
\eqn
 be the induced bundle. Let  $U_k$ be  a sufficiently small tubular neighborhood of $\pi^\ast_k \mathrm{Iso} \, M_k(H_k)$ in $W_k \times G$, and note that the fiber of the normal bundle $N \, \pi^\ast_k \mathrm{Iso}\,  M_k(H_k)$ at a point $(f_k(p^{(k)},v^{(k)}),h^{(k)})$ may be identified with the fiber of the normal bundle to $G_{p^{(k)}}$ at the point $h^{(k)}$.  Consider  further an orthonormal basis $\{A_1(p^{(k)}), \dots, A_{d^{(k)}}(p^{(k)})\}$ of $\g_{p^{(k)}}^\perp$, and introduce  canonical coordinates of the second kind
\bq
\label{eq:922}
\R^{d^{(k)}} \times G_{ p^{(k)}} \ni (\alpha_1^{(k)}, \dots, \alpha_{d^{(k)}}^{(k)}, h^{(k)}) \, \longmapsto  \,  \e{\sum_i \alpha_i^{(k)} A_i(p^{(k)})} h^{(k)}
\eq
in a neighborhood of $G_{p^{(k)}}$,  see \cite[p. 146]{helgason78}. Denote by $b_\mu$  the amplitude $a$ multiplied by   a smooth cut-off-function with  support in $U_k$ which is equal to $1$ in a small neighborhood of  $\pi^\ast_k \mathrm{Iso} \, M_k(H_k)$. Taking into account the non-stationary phase theorem \cite[Theorem 7.7.1]{hoermanderI} one  computes 
\begin{align}\begin{split}\label{eq:19}
 I_k(x,\mu)
 &=  \chi_k(x) \int_{ G_{p^{(k)}} \times  \g^\perp_{p^{(k)}} \times {\Sigma^{R,t}_x}
} e^{i\mu \Phi_x}  b_\mu  \,  \d(\Sigma^{R,t}_x)( \omega) \, dA^{(k)} \, dh^{(k)} +O(\mu^{-\infty}),
\end{split}
\end{align}
where $dh^{(k)}, dA^{(k)}$ are suitable volume densities on the sets $G_{p^{(k)}}$ and $ \g_{p^{(k)}}^\perp\simeq  N_{h^{(k)}} G_{p^{(k)}} $, respectively, such that 
$ \d g \equiv   dA^{(k)} \, dh^{(k)}$, compare \cite[(5.4)]{ramacher10}, and the remainder estimate is uniform in $x$. 

 We shall now sucessively resolve the singularities of \eqref{eq:calN} in order to obtain a factorization of $\Phi_x$. 
Note that  by \cite[Eq. (5.1)]{ramacher10} 
\bqn 
\mathcal{N}  = \mathcal{N}_L  \cup \bigcup_{k=1}^{L-1} \mathcal{N}_k,
\eqn
where   $\mathcal{N}_k:=\mathcal{N} \cap U_k$, $\mathcal{N}_L:=\mathrm{Iso } \, W_L$,  $\mathrm{Iso } \, W_L \to W_L$ being the isotropy bundle over $W_L$. While   $\mathcal{N}_L$ is  a smooth submanifold, 
$\mathcal{N}_k$ is in general singular. In particular, if $\dim H_k\not=\dim H_L$,
 $\mathcal{N}_k$ has a maximal singular locus  given by $\mathrm{Iso} \, M_k(H_k)$. One then performs for each $k \in \mklm{1,\dots,L-1}$  a blow-up 
\bqn 
\zeta_k: B_{Z_k}( U_k) \longrightarrow U_k 
\eqn
 with center $Z_k:= \mathrm{Iso} \, M_k(H_k)\subset \mathcal{N}_k$, and by piecing these transformations together one obtains the global blow-up
 \bqn 
 \zeta^{(1)}: B_{Z^{(1)}}  \M \longrightarrow \M, \qquad Z^{(1)} :=\bigcup _{k=1}^{\stackrel{L-1}{\bullet }} Z_k,
 \eqn
 where we put $\M:=M \times G$, compare \cite[p.\ 56]{ramacher10}.  To get a local description, fix $k$,  let $\{v_1^{(k)},\dots  ,v_{c^{(k)}}^{(k)}\}$ be an orthonormal frame in $\nu_k$,  
and $(\theta_1^{(k)},\dots, \theta_{c^{(k)}}^{(k)})$ be coordinates in $\gamma^{(k)}((\nu_k)_{p^{(k)}})$. Similarly, consider the coordinates  $(\alpha_1^{(k)},\dots, \alpha_{d^{(k)}}^{(k)})$  introduced in \eqref{eq:922}. If one now covers $B_{Z_k}( U_k)$ with standard projective charts $\mklm{(\phi^\rho_k, \mathcal{O}^\rho_k)}$ one obtains in the so-called $\theta^{(k)}$-charts $\mklm{\mathcal{O}^\rho_k}_{1\leq \rho\leq c^{(k)}}$, in which the $\theta_\rho^{(k)}$-coordinate is non-zero, for $\zeta_k$ the local expressions
\begin{align}
\label{eq:21}\begin{split}
\zeta^\rho_k&=\zeta_k \circ (\phi^\rho_k)^{-1}: ( p^{(k)},\tau_k, \tilde v^{(k)},  A^{(k)}, h^{(k)})
 \mapsto \Big (\exp_{p^{(k)}}   \tau_k \tilde v^{(k)}, \e{\tau_k A^{(k)}} h^{(k)}\Big )=(x,g),
\end{split}
\end{align}
where 
$$ p^{(k)} \in M_k(H_k), \qquad   A^{(k)}\in \g^\perp_{p^{(k)}}, \qquad h^{(k)} \in G_{p^{(k)}}, \qquad \tilde v^{(k)}
  \in  \gamma^{(k)} \big (  ( S_k^+)_{p^{(k)}} \big ),
  $$
and 
$ S_k^+:=\mklm{v \in \nu_k\mid v := \sum s_i v_i^{(k)}, s_\rho>0,  \norm{v}=1}$, while $\tau_k \in (-1,1)$, see \cite[Eq. (5.6)]{ramacher10}. A similar description of $\zeta_k$ is given in the so-called  $\alpha^{(k)}$-charts $\mklm{\mathcal{O}^\rho_k}_{c^{(k)}+1 \leq \rho\leq c^{(k)}+d^{(k)}}$, in which the $\alpha_\rho^{(k)}$-coordinate does not vanish. By performing Taylor expansion at $\tau_k=0$ one can then show that  the phase function  \eqref{eq:phase}  factorizes  according to 
\bq
\label{eq:22}
 \Phi \circ (\id_\eta \otimes \zeta_k^\rho) =  \,^{(k)} \widetilde \Phi^{tot}=\tau_k \cdot  \,  ^{(k)} \phw, 
 \eq
$ \,^{(k)} \widetilde \Phi^{tot}$ and $  \,  ^{(k)} \phw $ being the \emph{total} and \emph{weak transform} of the phase function $\Phi$, respectively,  see \cite[Eqs. (5.8) and (5.9)]{ramacher10}. Since $\zeta_k$ is a real-analytic surjective proper map, which is a diffeomorphism on the complement of $\zeta_k^{-1}(Z_k)$,  we can lift the integral $I_k(x,\mu)$  along the restriction of $\zeta_k$ to the fiber over $\mklm{x} \times G$ to the resolution space $B_{Z_k}(U_k)$. To obtain local expressions,  introduce a compactly supported partition $\mklm{u^\rho_k}$ of unity subordinate to the covering $\mklm{\mathcal{O}^\rho_k}$, set $a_k^\rho := (u^\rho_k \circ (\phi_k^\rho)^{-1})  \cdot [ (b_\mu \chi_k) \circ ( \id_\omega \otimes \zeta_k^\rho)]$, and define for $x=\exp_{p^{(k)}}   \tau_k \tilde v^{(k)} \in W_k\cap Y$ and $1 \leq \rho \leq c^{(k)}$ the integrals
 \begin{gather*}
I_k^\rho(x,\mu):=|\tau_k|^{d^{(k)}}  \int_{ G_{p^{(k)}} \times  \g_{p^{(k)}}^\perp\times \Sigma^{R,t}_x
} e^{i\mu{\tau_k}\,  ^{(k)} \phw_{\tau_k, p^{(k)}, \tilde v^{(k)}}}a_k^\rho \,    \d(\Sigma^{R,t}_x)(\omega) \, dA^{(k)} \, dh^{(k)},
\end{gather*}
and for $c^{(k)} + 1 \leq \rho \leq c^{(k)}+d^{(k)}$ corresponding integrals $\widetilde I_k^\rho(x,\mu)$. Here $\phw_{\tau_k, p^{(k)}, \tilde v^{(k)}}$ denotes the weak transform regarded as a function of the variables $\omega, {A^{(k)},h^{(k)}}$, while ${\tau_k, p^{(k)},\tilde v^{(k)}}$ are considered as  parameters. Let us emphasize that the amplitudes $a_k^\rho$ are compactly supported. In view of \eqref{eq:19} we arrive for $x \in W_k$ at the decomposition 
\bqn 
I_k(x,\mu)=\sum_{\rho=1} ^{c^{(k)}}  I_k^\rho(x,\mu)+\sum_{\rho=c^{(k)}+1} ^{d^{(k)}}  \widetilde I_k^\rho(x, \mu)
\eqn 
up to terms of order $O(\mu^{-\infty})$, compare \cite[p.\ 57]{ramacher10}. As we shall see in Corollary \ref{cor:23.09.2015}, the weak transforms $\phw_{\tau_k, p^{(k)}, \tilde v^{(k)}}$ have no critical points in the $\alpha^{(k)}$-charts, which will imply that the integrals $  \widetilde I_k^\rho(x,\mu)$ contribute to $I(x,\mu)$  with terms of  order $O(\mu^{-\infty})$. If $G$ acts on $S_k$ only with isotropy type $(H_L)$,  we shall  see  in the next section  that in each of the $\theta^{(k)}$-charts  the weak transforms $^{(k)} \phw$ have clean critical sets, so that one can apply the stationary phase theorem  in order to obtain asymptotics for each of the $I_k^\rho(x,\mu)$. 
  But in general, $G$ will act on $S_k$ with singular orbit types, so that  neither $\mathcal{N}_k$ is   resolved, nor do the weak transforms $^{(k)} \widetilde \Phi^{wk}$ have clean critical sets, and we are forced to continue with the iteration.

\subsection*{Iteration step from $N-1$ to $N$} 
Denote by $\Lambda\leq L$  the maximal number of elements that a totally ordered subset of the set of isotropy types can have. Assume that  $2 \leq N <\Lambda$, and let   $\mklm{(H_{i_1}), \dots , (H_{i_{N}})}$ be a totally ordered subset of the set of isotropy types  such that $i_1 <  \dots < i_N<L$. Let 
 $f_{i_1}$,  $S_{i_1}$, as well as  $p^{(i_1)}\in M_{i_1}(H_{i_1})$ be defined as at the beginning of the iteration, and assume that $f_{i_1\dots i_{j}}$,  $S_{i_1\dots i_{j}},p^{(i_{j})},\dots$ have already been defined for $j<N$. 
For every fixed $p^{(i_{N-1})}$, denote by $ \gamma^{(i_{N-1})} ((S_{i_1\dots i_{N-1}})_{p^{(i_{N-1})}})_{i_{N}}$  the submanifold with corners of the closed $G_{p^{(i_{N-1})}}$-manifold 
$ \gamma^{(i_{N-1})}  ((S_{i_1\dots i_{N-1}})_{p^{(i_{N-1})}})$ from which all  orbit types  less than $G/H_{i_{N}}$ have been removed, and define $ \gamma^{(i_{N-1})} ((S_{i_1\dots i_{N-1}})_{p^{(i_{N-1})}})_{L}$ analogously. Consider  the invariant tubular neighborhood 
$$f_{i_1\dots i_{N}}:=\exp \circ \gamma^{(i_{N})}: \nu _{i_1\dots i_{N}} \rightarrow \gamma^{(i_{N-1})}  ((S_{i_1\dots i_{N-1}})_{p^{(i_{N-1})}})_{i_{N}}$$ of the set  $ \gamma^{(i_{N-1})} ((S_{i_1\dots i_{N-1}})_{p^{(i_{N-1})}})_{i_{N}}(H_{i_{N}})$, where $ \nu _{i_1\dots i_{N}}$ denotes its normal $G_{p^{(i_{N-1})}}$-vector bundle, and $\exp \circ \gamma ^{(i_N)}$ the corresponding equvariant diffeomorphism, and define $S_{i_1\dots i_{N}}$  as the sphere sub-bundle in $ \nu _{i_1\dots i_{N}}$, while  
 $$S_{i_1 \dots i_N}^+:=\mklm { v \in S_{i_1 \dots i_N}\mid  v=\sum s_i v_i^{(i_1 \dots i_N)}, \, s_{\rho_{i_N}}>0 }$$
for some $\rho_{i_N}$. 
Put 
\bqn
W_{i_1 \dots i_N}:= f_{i_1 \dots i_N}(\stackrel \circ D_1(\nu_{i_1 \dots i_N})), \qquad  W_{i_1 \dots i_{N-1}L}:=\mathrm{Int}(\gamma^{(i_{N-1})} ((S_{i_1\dots i_{N-1}})_{p^{(i_{N-1})}})_{L}),
\eqn
and denote the corresponding integrals in the decomposition of $I_{i_1 \dots i_{N-1}}^{\rho_{i_1} \dots \rho_{i_{N-1}}}(x, \mu)$ by $I_{i_1 \dots i_N}^{\rho_{i_1} \dots \rho_{i_{N-1}}}(x, \mu)$ and $I_{i_1 \dots i_{N-1}L}^{\rho_{i_1} \dots \rho_{i_{N-1}}}(x, \mu)$, respectively. Here we can assume that, modulo terms of order $O(\mu^{-\infty})$,  the $W_{i_1\dots i_N} \times G_{p^{(i_{N-1})}}$-support of the integrand in   $I_{i_1 \dots i_N}^{\rho_{i_1} \dots \rho_{i_{N-1}}}(\mu)$ is contained in a compactum of a tubular neighborhood  of the induced bundle $\pi^\ast _{i_1 \dots i_N} \mathrm{Iso} \,\gamma^{(i_{N-1})} ((S_{i_1\dots i_{N-1}})_{p^{(i_{N-1})}})_{i_{N}}(H_{i_{N}})$, where $\pi _{i_1 \dots i_N}:W_{i_1 \dots i_N}\rightarrow \gamma^{(i_{N-1})} ((S_{i_1\dots i_{N-1}})_{p^{(i_{N-1})}})_{i_{N}}(H_{i_{N}})$ denotes the canonical projection.
For  a given point $p^{(i_{N})}\in \gamma^{(i_{N-1})}   ((S^+_{i_1\dots i_{N-1}})_{p^{(i_{N-1})}})_{i_{N}}(H_{i_{N}})$,  consider further  the decomposition
\bqn 
 \g_{p^{(i_{N-1})}}= \g_{p^{(i_N)}}\oplus \g_{p^{(i_N)}}^\perp, 
\eqn
and set $d^{(i_N)}:=\dim \g_{p^(i_N)}^\perp$, $ e^{(i_N)}:=\dim \g_{p^(i_N)}$. This yields  the decomposition
\begin{gather}
\label{eq:gdecomp}
\g = \g_{p^{(i_1)}} \oplus \g_{p^{(i_1)}}^\perp =(\g_{p^{(i_2)}}\oplus \g_{p^{(i_2)}}^\perp) \oplus \g_{p^{(i_1)}}^\perp =\dots = \g_{p^{(i_N)}}\oplus \g_{p^{(i_N)}}^\perp \oplus \cdots \oplus \g_{p^{(i_1)}}^\perp.
\end{gather}
Denote by  $\mklm{ A_r^{(i_N)}(p^{(i_1)},\dots,p^{(i_N)})}$ an orthonormal basis of $\g_{p^(i_N)}^\perp$, and let $\left (\alpha^{(i_N)}_1,\dots,  \alpha^{(i_N)}_{d^{(i_N)}}\right )$ be corresponding coordinates. Further,  let   $\mklm{v_1^{(i_1\dots i_N)}, \dots ,v_{c^{(i_N)}}^{(i_1\dots i_N)} }$ be  an orthonormal frame in $\nu_{i_1\dots  i_N}$, and $(\theta^{(i_N)}_1,\dots,  \theta^{(i_N)}_{c^{(i_N)}})$  corresponding coordinates.
Now, let the blow-up $\zeta^{(1)}$ be defined as in the beginning of the iteration, and assume that the blow-ups $\zeta^{(j)}$ have already been defined for $j<N$. Put $\widetilde \M^{(j)} := B_{Z^{(j)}}( \widetilde \M^{(j-1)})$, $\widetilde \M ^{(0)}:= \M=M \times G$, and consider the blow-up 
 \bq
 \label{eq:montrans}
 \zeta^{(N)}: B_{Z^{(N)}}( \widetilde \M^{(N-1)} )\rightarrow  \widetilde \M^{(N-1)}, \qquad Z^{(N)}:= \bigcup_{i_1 < \dots < i_N<L}^\bullet Z_{i_1\dots i_N},
 \eq
 where the union is over all totally ordered subsets $\mklm{(H_{i_1}), \dots , (H_{i_{N}})}$ of $N$ elements with $i_1 < \dots < i_N < L$, and 
  \begin{gather*}
 Z_{i_1\dots i_N}\simeq \bigcup _{p^{(i_{1})},   \dots,  p^{(i_{N-1})}}   (-1,1)^{N-1} \times \mathrm{Iso} \, \gamma^{(i_{N-1})}((S_{i_1\dots i_{N-1}})_{p^{(i_{N-1})}})_{i_N} (H_{i_N})
 \end{gather*}
 are the  possible maximal singular loci of $(\zeta^{(1)} \circ \dots \circ \zeta^{(N-1)})^{-1}(\mathcal{N})$, compare \cite[Eq.\ (5.14)]{ramacher10}.  Denote   by  $ \zeta^{\rho_{i_1}}_{i_1} \circ \dots \circ  \zeta^{\rho_{i_1}\dots \rho_{i_N}}_{i_1\dots i_N}$ a local realization of the sequence of blow-ups $ \zeta^{(1)} \circ \dots \circ \zeta^{(N)}$  corresponding to   the totally ordered subset    $\mklm{(H_{i_1}), \dots, (H_{i_{N}})}$  in a set of charts labeled by the indices $\rho_{i_1},\dots ,\rho_{i_N}$. 
 As a consequence, we obtain  local factorizations of  the phase function according to 
 \bqn
\Phi \circ ( (\zeta_{i_1}^{\rho_{i_1}} \circ\dots \circ  \zeta_{{i_1} \dots {i_N}}^{\rho_{i_1} \dots \rho_{i_N}})\otimes \id_\eta)=\, ^{(i_1\dots i_N)} \widetilde \Phi^{tot}=\tau_{i_1} \cdots \tau_{i_N} \, ^{(i_1\dots i_N)}\widetilde \Phi^ {wk},
\eqn
see \cite[pp. 67]{ramacher10}. Assume now that the indices $\rho_{i_1},\dots ,\rho_{i_N}$ correspond to a set of $(\theta^{({i_1})},\dots ,\theta^{({i_N})})$-charts. Then $\zeta^{\rho_{i_1}}_{i_1} \circ \dots \circ  \zeta^{\rho_{i_1}\dots \rho_{i_N}}_{i_1\dots i_N}$ is explicitly given by 
\begin{gather*} 
(\tau_{i_1}, \dots, \tau_{i_N}, p^{(i_1)}, \dots, p^{(i_N)}, \tilde v ^{(i_N)}, A^{(i_1)}, \dots, A^{(i_N)}, h^{(i_N)}) \quad \longmapsto \quad (x_{i_1\dots i_N}^{\rho_{i_1}\dots \rho_{i_N}}, g_{i_1\dots i_N}^{\rho_{i_1}\dots \rho_{i_N}}) = (x,g),
\end{gather*}
where we set
\begin{align*}
\begin{split}
x_{i_j\dots i_N}^{\rho_{i_j}\dots \rho_{i_N}}&:=\exp_{p^{(i_j)}}[\tau_{i_j} \exp_{p^(i_{j+1})}[\tau_{i_{j+1}}\exp_{p^(i_{j+2})}[\dots [ \tau_{i_{N-2}}\exp_{p^(i_{N-1})}[\tau_{i_{N-1}}\exp_{p^{(i_N)}}[ \tau_{i_N} \tilde v ^{(i_N)}]]] \dots ]]], \\
g_{i_j\dots i_N}^{\rho_{i_j}\dots \rho_{i_N}}&:=\e{\tau_{i_j} \cdots \tau_{i_N}A^{(i_j)}}\e{\tau_{i_{j+1}} \cdots \tau_{i_N}A^{(i_{j+1})}}\cdots\e{\tau_{i_{N-1}}  \tau_{i_N}A^{(i_{N-1})}} \e{\tau_{i_N} A^{(i_N)}}h^{(i_N)}.
\end{split}
\end{align*}
In this situation we define
\begin{align}
\label{eq:N}
\begin{split}
 & \qquad\qquad  I_{i_1\dots i_N}^{\rho_{i_1}\dots \rho_{i_N}}(x, \mu) := \prod_{j=1}^N |\tau_{i_j}|^{\sum_{r=1}^j d^{(i_r)}}  \int_{ \widetilde \Xbf_{i_1\dots i_N}^{\rho_{i_1}\dots \rho_{i_N}}\times \Sigma^{R,t}_x }   \\ & \cdot e^{i\mu {\tau_1 \dots \tau_N} \, ^{(i_1\dots i_N)} \widetilde \Phi^{wk}_{\tau_{i_j}, p^{(i_j)},\tilde v^{(i_N)}}}   a_{i_1\dots i_N}^{\rho_{i_1}\dots \rho_{i_N}}   \d \omega  \d A^{(i_1)} \dots  \d A^{(i_N)}  \d h^{(i_N)},
\end{split}
\end{align}
compare \cite[Eq.\ (5.15)]{ramacher10}, where
\begin{itemize}
\item 
 $\widetilde \Xbf_{i_1\dots i_N}^{\rho_{i_1}\dots \rho_{i_N}}:=G_{p^{(i_{N})}}\times  \g_{p^{(i_{N})}}^\perp \times \cdots \times \g_{p^{(i_{1})}}^\perp$, 
 \item  $ ^{(i_1\dots i_N)} \widetilde \Phi^{wk}_{\tau_{i_j}, p^{(i_j)},\tilde v^{(i_N)}}$ denotes the weak transform regarded as a function on $\widetilde \Xbf_{i_1\dots i_N}^{\rho_{i_1}\dots \rho_{i_N}}\times \Sigma^{R,t}_x$, while the $\tau_{i_j}, p^{(i_j)},\tilde v^{(i_N)}$ are regarded as parameters,
\item the $a_{i_1\dots i_N}^{\rho_{i_1}\dots \rho_{i_N}} $
are amplitudes with compact support in a system of $(\theta^{(i_1)}, \dots, \theta^{(i_N)})$-charts labeled  by the indices $ \rho_{i_1}, \dots, \rho_{i_N}$,
\item $dA^{(i_1)},\dots, dA^{(i_N)}, dh^{(i_N)}$ are suitable measures on $ \g_{p^{(i_{1})}}^\perp, \dots,   \g_{p^{(i_{N})}}^\perp$, and $G_{p^{(i_{N})}}$,  respectively.
\end{itemize}
Similarly, one defines analogous integrals $\widetilde I_{i_1\dots i_N}^{\rho_{i_1}\dots \rho_{i_N}}(x, \mu)$ in the $(\theta^{(i_1)}, \dots, \theta^{(i_{N-1})}, \alpha^{(i_N)} )$-charts. As we shall see, $I_x(\mu)$ will be  given by a sum involving integrals of the type  $I_{i_1\dots i_N}^{\rho_{i_1}\dots \rho_{i_N}}(x, \mu)$, compare   \eqref{eq:65}.

Now, for each $p^{(i_{N-1})}$, the isotropy group $G_{p^{(i_{N-1})}}$ acts on $ \gamma^{(i_{N-1})} ((S_{i_1\dots i_{N-1}})_{p^{(i_{N-1})}})_{i_{N}}$ by the isotropy types $(H_{i_N}), \dots, (H_L)$. The types occuring in $W_{i_1 \dots i_N}$ constitute a subset of these, and   $G_{p^{(i_{N-1})}}$ acts on  the sphere bundle $S_{i_1\dots i_N}$ over the submanifold $\gamma^{(i_{N-1})}((S_{i_1\dots i_{N-1}})_{p^{(i_{N-1})}})_{i_N} (H_{i_N})\subset W_{i_1 \dots i_N}$  with one type less.

 \medskip

\subsection*{End of iteration}

After  $N=\Lambda-1$ steps, the end of the iteration is reached, yielding a strong desingularization  of $\mathcal{N}$, see \cite[Theorem 5.1]{ramacher10}, and a factorization of the phase function $\Phi_x$  that will allow us to interpolate between the different asymptotics for  the integrals $I_x(\mu)$ described in Theorem \ref{thm:12.05.2015} (a).

\section{The singular equivariant local Weyl law. Caustics  and concentration of eigenfunctions}
\label{sec:5}

We are now ready to give an asymptotic formula for the integrals \eqref{eq:N} that will result in a corresponding description of the integrals \eqref{eq:03.05.2015} in case that  $x=y$. With the notation as before, consider for  fixed $1 \leq N \leq \Lambda-1$ a maximal, totally ordered subset $\mklm{(H_{i_1}),\dots, (H_{i_N})}$ of non-principal isoptropy types in the sense that if there is an isotropy type $(H_{i_{N+1}})$ with $i_N < i_{N+1}$ such that $\mklm{(H_{i_1}),\dots, (H_{i_{N+1}})}$ is a totally ordered subset, then $(H_{i_{N+1}})=(H_L)$. Assign to each such subset   the sequence  of consecutive local blow-ups  
\bqn 
\mathcal{Z}^{\rho_{i_1}\dots \rho_{i_N}}_{i_1\dots i_N}:=  (\zeta^{\rho_{i_1}}_{i_1} \circ \dots \circ  \zeta^{\rho_{i_1}\dots \rho_{i_N}}_{i_1\dots i_N} \circ (\delta_{i_1\dots i_N}\otimes \id) )\otimes \id_\eta
\eqn
where $\delta_{i_1\dots i_N}$ denotes the sequence of local quadratic transformations
 \begin{align*}
\delta_{i_1\dots i_N}: (\sigma_{i_1}, \dots \sigma_{i_N}) &\mapsto \sigma_{i_1}( 1, \sigma_{i_2}, \dots, \sigma_{i_N})= (\sigma_{i_1}', \dots ,\sigma_{i_N}')\mapsto \sigma_{i_2}'(\sigma_{i_1}',1,\dots, \sigma_{i_N}')= (\sigma_{i_1}'', \dots, \sigma_{i_N}'')\\
 &\mapsto \sigma_{i_3}''(\sigma_{i_1}'',\sigma_{i_2}'', 1,\dots, \sigma_{i_N}'')= \cdots \mapsto \dots = (\tau_{i_1}, \dots ,\tau_{i_N}).
\end{align*}
  The global morphism induced by the local transformations $\mathcal{Z}_{i_1\dots i_N}^{\rho_{i_1}\dots \rho_{i_N}}$  is then denoted by 
  \bqn
\mathcal{Z}: \widetilde {\bf X} \rightarrow {\bf X}:= T^\ast M \times G, 
\eqn
and constitutes a partial desingularization of the critical set $\Ccal$, see \cite[Section 9]{ramacher10}.  Pulling  the phase function \eqref{eq:phase} back along   the maps  $\mathcal{Z}^{\rho_{i_1}\dots \rho_{i_N}}_{i_1\dots i_N}$ then yields the local factorization
\bqn
\Phi \circ \mathcal{Z}^{\rho_{i_1}\dots \rho_{i_N}}_{i_1\dots i_N} =\, ^{(i_1\dots i_N)} \widetilde \Phi^{tot}=\tau_{i_1}(\sigma) \dots \tau_{i_N}(\sigma) \, ^{(i_1\dots i_N)}\widetilde \Phi^ {wk}, 
\eqn
 where the  $\tau_{i_j}$ are monomials in the desingularization parameters $\sigma_{i_1},\dots, \sigma_{i_N}$. The principal result in \cite{ramacher10} is 
 
 \begin{thm}
 \label{thm:maincrelle}
In any of the $(\theta^{({i_1})},\dots ,\theta^{({i_N})})$-charts, the critical sets of the weak transforms $\, ^{(i_1\dots i_{N})}\widetilde \Phi^ {wk}$ are smooth submanifolds  in the resolution space of codimension $2\kappa$, and the Hessians $\mathrm{Hess } \, ^{(i_1\dots i_{N})}\widetilde \Phi^ {wk}$ are transversally non-degenerate. In other words, the weak transforms $\, ^{(i_1\dots i_{N})}\widetilde \Phi^ {wk}$ have clean critical sets in the mentioned charts. On the other hand,   the weak transforms $\, ^{(i_1\dots i_{N})}\widetilde \Phi^ {wk}$ have no critical points  in any of  the $(\theta^{({i_1})},\dots , \theta^{({i_{N-1}})}, \alpha^{({i_N})})$-charts.  
 \end{thm}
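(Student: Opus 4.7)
The proof will proceed by induction on the depth $N$ of the iterated resolution, using the explicit local coordinate formulas for the desingularization $\mathcal{Z}^{\rho_{i_1}\dots\rho_{i_N}}_{i_1\dots i_N}$ given in Section \ref{sec:DP}. The key point is that the weak transform at each level has a geometric meaning: it is the linearization of the original phase $\Phi$ along the slice representation at the current base point, and as such it is governed by the action of the residual isotropy group $G_{p^{(i_N)}}$ on the corresponding slice.

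For the base case $N=1$, in a $\theta$-chart I would use \eqref{eq:21} and Taylor-expand both $\kappa(x)$ and $\kappa(g\cdot x)$ in the blow-up parameter $\tau_{i_1}$. Since $h^{(i_1)} \in G_{p^{(i_1)}}$ fixes $p^{(i_1)}$, a direct computation yields
\[
{}^{(i_1)}\widetilde\Phi^{wk} = \bigl\langle \tilde v^{(i_1)} - h^{(i_1)}_\ast\tilde v^{(i_1)} - \widetilde{A^{(i_1)}}_{p^{(i_1)}},\, \omega \bigr\rangle + O(\tau_{i_1}),
\]
where $\widetilde{A^{(i_1)}}$ denotes the fundamental vector field of $A^{(i_1)} \in \mathfrak g_{p^{(i_1)}}^\perp$. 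Setting the partial derivatives with respect to $\omega$, $A^{(i_1)}$, and $h^{(i_1)}$ to zero yields in turn $\tilde v^{(i_1)} - h^{(i_1)}_\ast \tilde v^{(i_1)} = \widetilde{A^{(i_1)}}_{p^{(i_1)}}$, $\omega \in \mathrm{Ann}(T_{p^{(i_1)}}\mathcal O_{p^{(i_1)}})$, and a bracket condition constraining $h^{(i_1)}$. The inductive step from depth $N-1$ to depth $N$ would proceed identically with the new parameter $\tau_{i_N}$: the slice theorem for compact group actions \cite[Theorem VI.2.2]{bredon} guarantees that after blowing up the maximally singular stratum, the geometry near the exceptional divisor is $G_{p^{(i_{N-1})}}$-equivariantly modelled on the normal bundle to $M_{i_N}(H_{i_N})$, so that ${}^{(i_1\dots i_N)}\widetilde\Phi^{wk}$ acquires a functional form analogous to the base case, now centered at $p^{(i_N)}$ with residual isotropy $G_{p^{(i_N)}}$ acting on the reduced slice. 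After $N$ steps, only $H_L$ can occur as an isotropy type in the slice, and an application of Lemma \ref{lem:21.04.2015}(c) to this reduced slice problem delivers cleanness together with codimension equal to $2\kappa$.

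For the second assertion, in an $\alpha^{(i_N)}_\rho$-chart the blow-up pivots on the $\rho$-th component of $A^{(i_N)}$, so that after normalization this component becomes a pinned value $\pm 1$. The corresponding fundamental vector field contributes a non-removable term $\pm\langle \widetilde{A^{(i_N)}_\rho}_{p^{(i_N)}},\, \omega\rangle$ to the weak transform. Setting the $\omega$-derivative to zero would then force $\omega$ to pair non-trivially with a tangential orbit direction, while the derivatives with respect to the remaining ratio-coordinates and to $A^{(i_j)}$ with $j < N$ simultaneously force $\omega \in \mathrm{Ann}(T_{p^{(i_N)}}\mathcal O_{p^{(i_N)}})$. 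Together with $\omega \in \Sigma^{R,t}_x$ being bounded away from the origin, no simultaneous solution exists, showing that the critical set is empty.

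The hard part will be the inductive verification of transversal non-degeneracy of $\mathrm{Hess}\,{}^{(i_1\dots i_N)}\widetilde\Phi^{wk}$. The successive exponential compositions in \eqref{eq:N} introduce correction terms of order $O(\tau_{i_j})$ in every entry of the Hessian, and one must show that these corrections --- which depend on the Riemannian curvature and on higher order brackets of the fundamental vector fields arising from \eqref{eq:gdecomp} --- do not destroy non-degeneracy uniformly in the desingularization parameters. I would handle this by a block decomposition of the Hessian analogous to \eqref{eq:27.05.2016bis}, isolating the leading block coming from the slice representation (non-degenerate by Lemma \ref{lem:21.04.2015}(c) applied to the principal isotropy action) and estimating the perturbation blocks uniformly in terms of $|\tau_{i_j}|$.
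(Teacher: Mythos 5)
The paper does not actually prove this theorem --- the proof is a bare citation to \cite[Theorems 6.1 and 7.2, p.~90]{ramacher10} --- but the paragraph following the theorem statement (and Corollary \ref{cor:23.09.2015}) outlines the structure of the argument in that reference, and your sketch diverges from it in ways that leave real gaps.

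The first and most serious omission is the implication, emphasized by the paper as step one, that
\begin{equation*}
\partial_{\eta,\,\alpha^{(i_1)},\dots,\alpha^{(i_N)},\,h^{(i_N)}}\,{}^{(i_1\dots i_N)}\widetilde\Phi^{wk}=0 \ \Longrightarrow\ \partial_{\sigma_{i_1},\dots,\sigma_{i_N},\,p^{(i_1)},\dots,p^{(i_N)},\,\tilde v^{(i_N)}}\,{}^{(i_1\dots i_N)}\widetilde\Phi^{wk}=0.
\end{equation*}
You only impose $\partial_\omega=\partial_{A^{(i_j)}}=\partial_{h^{(i_N)}}=0$. The weak transform is a function of \emph{all} resolution-space variables, including $\sigma_{i_j}$, $p^{(i_j)}$, $\tilde v^{(i_N)}$, and without proving the implication your equations describe only an a priori smaller set of constraints than the actual critical locus; neither smoothness nor the codimension $2\kappa$ can be read off from them. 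Establishing the implication is itself a genuine computation, not a formality.

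Second, the final appeal to Lemma \ref{lem:21.04.2015}(c) to deliver cleanness and non-degeneracy is not justified. For $\sigma_{i_1}\cdots\sigma_{i_N}\ne 0$ the blow-up is a diffeomorphism and you are indeed back in the principal-stratum situation of Lemma \ref{lem:21.04.2015}(c); but the locus where the theorem actually has content is the exceptional divisor $\sigma_{i_1}\cdots\sigma_{i_N}=0$, and there the restricted weak transform is a new phase function in the variables $(\omega,A^{(i_1)},\dots,A^{(i_N)},h^{(i_N)})$ with $N$ separate groups of ``$A$''-coordinates arising from the filtration \eqref{eq:gdecomp}, and it is \emph{not} $\Phi_{y,y}$ for any $y\in M_\mathrm{prin}$. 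The cited reference proves transversal non-degeneracy of this limiting Hessian directly at $\sigma=0$ (Proposition 7.4 of \cite{ramacher10}), and passes to $\sigma\ne 0$ via the reduction of Lemma 7.1 there; it does not obtain it from the principal-stratum lemma. Your plan (``leading block non-degenerate by Lemma \ref{lem:21.04.2015}(c), perturbation blocks $O(|\tau_{i_j}|)$'') therefore has the logic inverted: the difficult non-degeneracy is exactly at $\tau=0$, where the perturbation vanishes and the block you want to control is precisely the one not covered by Lemma \ref{lem:21.04.2015}(c). You would need an independent argument there, which is where the whole weight of the theorem lies.

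The base-case formula for ${}^{(i_1)}\widetilde\Phi^{wk}$ is correct, and the $\alpha^{(i_N)}$-chart conclusion is right in substance, though the phrasing ``would force $\omega$ to pair non-trivially with a tangential orbit direction'' misstates it: the point is that $\partial_\eta\,{}^{(i_1\dots i_N)}\widetilde\Phi^{wk}$ contains the fixed nonzero tangential vector $\pm\widetilde{A^{(i_N)}_\rho}_{p^{(i_N)}}$, which cannot be cancelled by the remaining contributions lying in the normal space, so $\partial_\eta$ never vanishes.
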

 \begin{proof}
 See \cite[Theorems 6.1 and 7.2, as well as p.\ 90]{ramacher10}.
 \end{proof}

In order to prove Theorem \ref{thm:maincrelle} for the $(\theta^{({i_1})},\dots ,\theta^{({i_N})})$-charts one  first shows that 
\bqn 
\gd _{\eta, \alpha^{(i_1)}, \dots, \alpha^{(i_N)}, h^{(i_N)}} \, ^{(i_1\dots i_N)}\widetilde \Phi^ {wk}=0 \quad \Longrightarrow \quad \gd _{\sigma_{i_1}, \dots, \sigma_{i_N}, p^{(i_1)}, \dots, p^{(i_N)},\tilde v^{(i_N)}} \, ^{(i_1\dots i_N)}\widetilde \Phi^ {wk}=0,
\eqn
see\cite[p.\ 80]{ramacher10}.    If therefore 
$$
\, ^{(i_1\dots i_N)}\widetilde \Phi^ {wk}_{\sigma_{i_j}, p^{(i_j)},\tilde v^{(i_N)}}(\alpha^{(i_j)},  h^{(i_N)},\eta)
$$ 
denotes the weak transform of $\Phi$ regarded as a function of the variables $(\alpha^{(i_1)},\dots, \alpha^{(i_N)}, h^{(i_N)},\eta)$ alone, while the variables $(\sigma_{i_1},\dots,\sigma_{i_N}, p^{(i_1)},\dots, p^{(i_N)},\tilde v^{(i_N)})$ are kept fixed at constant values, its critical set is given by the transversal intersection
\bqn 
\Crit \big ( \, ^{(i_1\dots i_N)}\widetilde \Phi^ {wk}_{\sigma_{i_j}, p^{(i_j)},\tilde v^{(i_N)}}\big )=\Crit \big ( \, ^{(i_1\dots i_N)}\widetilde \Phi^ {wk}\big )  \cap \mklm{\sigma_{i_j}, p^{(i_j)},\tilde v^{(i_N)} = \, \, \text{constant}}.
\eqn
In fact, $\Crit \big ( \, ^{(i_1\dots i_N)}\widetilde \Phi^ {wk}\big )$ turns out to be a fibre bundle \cite[p.\ 78]{ramacher10}, and the critical set of the phase function $\, ^{(i_1\dots i_N)}\widetilde \Phi^ {wk}_{\sigma_{i_j}, p^{(i_j)},\tilde v^{(i_N)}}$ is equal to the fiber over $(\sigma_{i_j}, p^{(i_j)},\tilde v^{(i_N)})$ of this bundle,  in particular  being a smooth submanifold. Furthermore, \cite[Lemma 7.1]{ramacher10}   implies that  the transversal Hessian  of $\, ^{(i_1\dots i_N)}\widetilde \Phi^ {wk}$ is non-degenerate iff the transversal Hessian of  $\, ^{(i_1\dots i_N)}\widetilde \Phi^ {wk}_{\sigma_{i_j}, p^{(i_j)},\tilde v^{(i_N)}}$ is non-degenerate, the latter fact being proved in \cite[Proposition 7.4]{ramacher10} for the critical case $\sigma_{i_1} \cdots \sigma_{i_N}=0$. Thus, we arrive at

\begin{cor}
\label{cor:23.09.2015}
In any of the $(\theta^{({i_1})},\dots ,\theta^{({i_N})})$-charts, the weak transforms $\, ^{(i_1\dots i_N)}\widetilde \Phi^ {wk}_{\sigma_{i_j}, p^{(i_j)},\tilde v^{(i_N)}}$  have clean critical sets of codimension $2\kappa$ as functions on $\widetilde \Xbf_{i_1\dots i_N}^{\rho_{i_1}\dots \rho_{i_N}}\times \Sigma^{R,t}_x$.  They do not have  critical points  in  the $(\theta^{({i_1})},\dots , \theta^{({i_{N-1}})}, \alpha^{({i_N})})$-charts.
\end{cor}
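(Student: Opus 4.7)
The plan is to bootstrap directly from Theorem \ref{thm:maincrelle} together with the two auxiliary facts already highlighted in the discussion preceding the corollary, namely the implication
\bqn
\gd _{\eta, \alpha^{(i_1)}, \dots, \alpha^{(i_N)}, h^{(i_N)}} \, ^{(i_1\dots i_N)}\widetilde \Phi^ {wk}=0 \quad \Longrightarrow \quad \gd _{\sigma_{i_1}, \dots, \sigma_{i_N}, p^{(i_1)}, \dots, p^{(i_N)},\tilde v^{(i_N)}} \, ^{(i_1\dots i_N)}\widetilde \Phi^ {wk}=0
\eqn
from \cite[p.\ 80]{ramacher10}, and the fibre-bundle structure of $\Crit(\, ^{(i_1\dots i_N)}\widetilde \Phi^ {wk})$ over the parameter space $(\sigma_{i_j}, p^{(i_j)},\tilde v^{(i_N)})$ established in \cite[p.\ 78]{ramacher10}. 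The first implication gives that every critical point of the parameter-frozen phase $\, ^{(i_1\dots i_N)}\widetilde \Phi^ {wk}_{\sigma_{i_j}, p^{(i_j)},\tilde v^{(i_N)}}$ is automatically a critical point of the ambient weak transform $\, ^{(i_1\dots i_N)}\widetilde \Phi^ {wk}$ restricted to the slice where $(\sigma_{i_j}, p^{(i_j)},\tilde v^{(i_N)})$ are kept constant, while the converse inclusion is clear by definition. Hence
\bqn
\Crit \big ( \, ^{(i_1\dots i_N)}\widetilde \Phi^ {wk}_{\sigma_{i_j}, p^{(i_j)},\tilde v^{(i_N)}}\big )= \Crit \big ( \, ^{(i_1\dots i_N)}\widetilde \Phi^ {wk}\big )  \cap \big \{\sigma_{i_j}, p^{(i_j)},\tilde v^{(i_N)} =\text{const}\big \}.
\eqn

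First, I would verify the smoothness and codimension statements. By the fibre-bundle description, the right-hand side above is exactly the fibre of $\Crit(\, ^{(i_1\dots i_N)}\widetilde \Phi^ {wk})$ over the chosen value of parameters, and is therefore a smooth submanifold. Since the total critical set has codimension $2\kappa$ in the resolution space by Theorem \ref{thm:maincrelle}, and since the slicing is transversal to the fibres, the parameter-frozen critical set inherits codimension $2\kappa$ in $\widetilde \Xbf_{i_1\dots i_N}^{\rho_{i_1}\dots \rho_{i_N}}\times \Sigma^{R,t}_x$. Next, for the non-degeneracy, I would invoke \cite[Lemma 7.1]{ramacher10}, which precisely relates the transversal Hessian of $\, ^{(i_1\dots i_N)}\widetilde \Phi^ {wk}$ to that of $\, ^{(i_1\dots i_N)}\widetilde \Phi^ {wk}_{\sigma_{i_j}, p^{(i_j)},\tilde v^{(i_N)}}$: one is transversally non-degenerate iff the other is. Since Theorem \ref{thm:maincrelle} supplies this property for the former, cleanness of the parameter-frozen critical set follows.

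Finally, for the second assertion, I would argue by contradiction: a critical point of $\, ^{(i_1\dots i_N)}\widetilde \Phi^ {wk}_{\sigma_{i_j}, p^{(i_j)},\tilde v^{(i_N)}}$ in a $(\theta^{({i_1})},\dots , \theta^{({i_{N-1}})}, \alpha^{({i_N})})$-chart would, by the implication from \cite[p.\ 80]{ramacher10}, produce a critical point of the full weak transform $\, ^{(i_1\dots i_N)}\widetilde \Phi^ {wk}$ in such a chart, contradicting the second assertion of Theorem \ref{thm:maincrelle}.

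The main obstacle, if any, is purely bookkeeping: one has to keep track of which variables are being differentiated, and verify that the slicing by the parameters $(\sigma_{i_j}, p^{(i_j)},\tilde v^{(i_N)})$ is indeed transversal to the fibres of the critical bundle, so that codimensions are preserved; but both of these are immediate from the explicit local description of $\mathcal{Z}^{\rho_{i_1}\dots \rho_{i_N}}_{i_1\dots i_N}$ and the fibre-bundle structure already exhibited in \cite{ramacher10}. No new analytic input beyond Theorem \ref{thm:maincrelle} and its proof is required.
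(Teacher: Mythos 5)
Your proposal follows essentially the same route as the paper: the paper's own proof is the one-line remark that the corollary is "a direct consequence of the foregoing explanations and transversality arguments like those given in \cite[Section 7]{ramacher10}," and the "foregoing explanations" are exactly the three ingredients you reproduce — the implication from \cite[p.~80]{ramacher10} identifying $\Crit ( \, ^{(i_1\dots i_N)}\widetilde \Phi^ {wk}_{\sigma_{i_j}, p^{(i_j)},\tilde v^{(i_N)}} )$ with the slice of $\Crit ( \, ^{(i_1\dots i_N)}\widetilde \Phi^ {wk} )$, the fibre-bundle description from \cite[p.~78]{ramacher10} giving smoothness and the codimension count, and \cite[Lemma 7.1]{ramacher10} transferring transversal non-degeneracy. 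One minor caveat for the second assertion: the paper states the implication from \cite[p.~80]{ramacher10} in the context of $\theta$-charts, so invoking it in an $\alpha$-chart to reduce to the second part of Theorem~\ref{thm:maincrelle} needs a word of justification (in fact, the absence of critical points of the full weak transform in the $\alpha$-charts is established in \cite{ramacher10} by exhibiting a nonvanishing derivative in a fibre direction, which passes directly to the parameter-frozen phase); but this is a small gap that the paper itself leaves implicit.
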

\begin{proof}
The assertion is a direct consequence of the foregoing explanations and transversality arguments like those given in \cite[Section 7]{ramacher10}.
\end{proof}

From this we immediately deduce

\begin{proposition}
\label{prop:18.08.2016}
For every $\tilde N\in \N$, $\eps> 0$, any  $(\theta^{({i_1})},\dots ,\theta^{({i_N})})$-chart labeled by the indices $\rho_{i_1},\dots,  \rho_{i_N}$, and $x=x_{i_1\dots i_N}^{\rho_{i_1}\dots \rho_{i_N}}$ in $Y$ (or in $Y \cap M_\mathrm{prin}$ and $\eps\geq 0$)  one has the asymptotic formula
\bqn
  I_{i_1\dots i_N}^{\rho_{i_1}\dots \rho_{i_N}}(x, \mu)= \prod_{j=1}^N |\tau_{i_j}|^{\dim G-\dim H_{i_j}} \left (\sum_{k=0}^{\tilde N-1}  \frac {\,^k \mathcal{Q}_{i_1\dots i_N}^{\rho_{i_1}\dots \rho_{i_N}}(x)}{(\mu |\tau_{i_1} \cdots \tau_{i_N}|+\eps)^{\kappa+k}}  +\mathcal{R}_{\tilde N}(x,\mu)\right ),
\eqn
where  the $^k \mathcal{Q}_{i_1\dots i_N}^{\rho_{i_1}\dots \rho_{i_N}}(x)$ and $\mathcal{R}_{\tilde N}(x,\mu)$ are explicitly known coefficients that are uniformly bounded in  $x$ by $(A^{(i_j)},h^{(i_N)})$-derivatives of the amplitude $ a_{i_1\dots i_N}^{\rho_{i_1}\dots \rho_{i_N}}$ up to order $2k$ and $2\tilde N+\kappa +1$, respectively, and 
\bqn 
\mathcal{R}_{\tilde N}(x,\mu)=O\big ((\mu |\tau_{i_1} \cdots \tau_{i_N}|+\eps)^{-\kappa -\tilde N}\big ).
\eqn
In particular, with $\widetilde \Phi^{wk}:= \,^{(i_1\dots i_N)} \widetilde \Phi^{wk}_{\sigma_{i_j}, p^{(i_j)},\tilde v^{(i_N)}}$ we have 
\bqn 
^0 \mathcal{Q}_{i_1\dots i_N}^{\rho_{i_1}\dots \rho_{i_N}}(x)=(2 \pi)^\kappa \int_{\mathrm{Crit} \, \widetilde \Phi^{wk}} \frac{   a_{i_1\dots i_N}^{\rho_{i_1}\dots \rho_{i_N}} }{\big | \det \mathrm{Hess} \, \widetilde \Phi^{wk}_{|N \mathrm{Crit} \, \widetilde \Phi^{wk} } \big |^{1/2}}. 
\eqn
 If the amplitude $a$  factorizes according to $a(x,y,\omega,g)=a_1(x,y,\omega) \, a_2(x,y,g)$, the remainder can also be estimated by derivatives of $a_{i_1\dots i_N}^{\rho_{i_1}\dots \rho_{i_N}}$ with respect to $(A^{(i_j)},h^{(i_N)})$  up to order  $2\tilde N+\lfloor \kappa/2+1\rfloor$.
\end{proposition}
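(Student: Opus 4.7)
The strategy is to apply the clean-critical-manifold stationary phase theorem (Theorem~\ref{thm:SP}) to the integrand in \eqref{eq:N} after a suitable interpolation substitution in the spirit of Theorem~\ref{thm:14.05.2017}, using crucially that by Corollary~\ref{cor:23.09.2015} the weak transform $\widetilde\Phi^{wk} := {}^{(i_1\dots i_N)}\widetilde\Phi^{wk}_{\tau_{i_j},p^{(i_j)},\tilde v^{(i_N)}}$ has a clean critical set of codimension $2\kappa$ on $\widetilde\Xbf_{i_1\dots i_N}^{\rho_{i_1}\dots\rho_{i_N}}\times\Sigma^{R,t}_x$.

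First, I would rewrite the oscillatory factor appearing in \eqref{eq:N} as
\[
e^{i\mu\tau_{i_1}\cdots\tau_{i_N}\widetilde\Phi^{wk}} \;=\; e^{i(\mu|\tau_{i_1}\cdots\tau_{i_N}|+\eps)\,\Psi}\cdot e^{-i\eps\,\Psi},
\]
where $\Psi := \mathrm{sgn}(\tau_{i_1}\cdots\tau_{i_N})\,\widetilde\Phi^{wk}$; the second exponential is absorbed into the amplitude to yield a new compactly supported amplitude $\widetilde a$ whose derivatives up to any given order are uniformly controlled by those of $a_{i_1\dots i_N}^{\rho_{i_1}\dots\rho_{i_N}}$, since $\eps$ is bounded and $\widetilde\Phi^{wk}$ is smooth on the support of $a_{i_1\dots i_N}^{\rho_{i_1}\dots\rho_{i_N}}$.

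Next, with asymptotic parameter $\nu := \mu|\tau_{i_1}\cdots\tau_{i_N}|+\eps$ and phase $\Psi$, I would apply Theorem~\ref{thm:SP} to the remaining integral. Corollary~\ref{cor:23.09.2015} ensures that the critical set of $\Psi$ coincides with that of $\widetilde\Phi^{wk}$, is a clean submanifold of codimension $2\kappa$, and --- by the explicit form of the resolution maps --- $\widetilde\Phi^{wk}$ vanishes on its critical set (mirroring the vanishing of $\Phi$ on $\Crit \Phi$ that was already used in the proof of Proposition~\ref{thm:kernelasymp}). Hence $e^{i\nu\Psi}=1$ and $e^{-i\eps\Psi}=1$ on $\Crit\,\widetilde\Phi^{wk}$, and stationary phase yields
\[
\int_{\widetilde\Xbf_{i_1\dots i_N}^{\rho_{i_1}\dots\rho_{i_N}}\times\Sigma^{R,t}_x} e^{i\nu\Psi}\,\widetilde a \;=\; \nu^{-\kappa}\left[\sum_{k=0}^{\tilde N-1}{}^k\mathcal{Q}_{i_1\dots i_N}^{\rho_{i_1}\dots\rho_{i_N}}(x)\,\nu^{-k}+\mathcal{R}_{\tilde N}(x,\mu)\right],
\]
with leading coefficient $(2\pi)^\kappa\int_{\Crit\,\widetilde\Phi^{wk}}\widetilde a\,|\det\,\mathrm{Hess}\,\widetilde\Phi^{wk}_{|N\Crit\,\widetilde\Phi^{wk}}|^{-1/2}$, and with the usual bounds on the higher coefficients and the remainder in terms of $(A^{(i_j)},h^{(i_N)})$--derivatives of $\widetilde a$ up to orders $2k$ and $2\tilde N+\kappa+1$, respectively (and $2\tilde N+\lfloor\kappa/2+1\rfloor$ under the factorization hypothesis, by Remark~\ref{rem:25.07.2017}).

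Finally, multiplying by the prefactor already present in \eqref{eq:N} and noting the telescoping identity $\sum_{r=1}^j d^{(i_r)}=\dim G-\dim H_{i_j}$ coming from the iterated decomposition \eqref{eq:gdecomp}, yields the announced formula; the case $\eps=0$ requires $x\in Y\cap M_\mathrm{prin}$ so that all $\tau_{i_j}\ne 0$ and $\nu>0$. The main obstacle is ensuring that the stationary phase estimates --- in particular a uniform lower bound on the transversal Hessian of $\widetilde\Phi^{wk}$ --- hold uniformly in the desingularization parameters $\sigma_{i_j}, p^{(i_j)}, \tilde v^{(i_N)}$ and in $x$, but this is precisely what \cite[Proposition 7.4]{ramacher10} provides, so no new geometric input is required.
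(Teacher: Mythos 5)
Your proposal is correct and follows essentially the same route as the paper: apply the clean stationary phase theorem (Theorem~\ref{thm:SP}) together with the $\eps$-interpolation device from the appendix, using $\mu|\tau_{i_1}\cdots\tau_{i_N}|+\eps$ as the asymptotic parameter, Corollary~\ref{cor:23.09.2015} for the cleanness and codimension $2\kappa$ of the critical set, the vanishing of $\widetilde\Phi^{wk}$ on $\Crit\,\widetilde\Phi^{wk}$ so that $e^{-i\eps\Psi}=1$ there, the telescoping identity $\sum_{r=1}^j d^{(i_r)}=\dim G-\dim H_{i_j}$, and the uniform lower bound on the transversal Hessian determinant from \cite[Proposition 7.4]{ramacher10}. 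The only detail the paper adds that you gloss over is the explicit observation that $\Crit\,\widetilde\Phi^{wk}$ has a Cartesian product structure, which is what lets Remark~\ref{rem:25.07.2017} be invoked for the sharpened remainder order under the factorized-amplitude hypothesis, but this is a cosmetic omission rather than a gap.
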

\begin{proof}
By definition we have  $d^{(i_r)}=\dim H_{i_{r-1}}-\dim H_{i_r}$ with $H_{i_{0}}:=G$. Consequently, $\sum_{r=1}^j d^{(i_r)}=\dim G-\dim H_{i_j}$. By Corollary \ref{cor:23.09.2015} we can apply Theorem \ref{thm:SP} and the final remarks in the appendix  to the integral \eqref{eq:N} with asymptotic parameter $\mu |\tau_{i_1}(\sigma) \cdots \tau_{i_N}(\sigma)|+\eps$, yielding the assertion, since $e^{-i\eps \, \widetilde \Phi^{wk}} =1$ on $\mathrm{Crit} \, \widetilde \Phi^{wk}$. In particular,  \eqref{eq:25.07.2017} implies that the coefficients and the remainder in the expansion are uniformly bounded in $x$, since $\det \mathrm{Hess}\,  \widetilde \Phi^{wk}$ is uniformly bounded away from zero with respect to the parameters $\sigma_{i_j}, p^{(i_j)},\tilde v^{(i_N)}$. Furthermore, $\mathrm{Crit} \, \widetilde \Phi^{wk}$ is given as a Cartesian product of $G_{\tilde v^{(i_N)}}$ with a certain subspace in $T^\ast_xM$ intersected with $\Sigma^{R,t}_x$, compare \cite[p.\ 78]{ramacher10}, so that Remark \ref{rem:25.07.2017} applies.
\end{proof}

\begin{proposition} In any  $(\theta^{({i_1})},\dots ,\theta^{({i_{N-1}})}, \alpha^{({i_N})})$-chart labeled by the indices $\rho_{i_1},\dots,  \rho_{i_N}$ one has
\label{prop:nonstat}
\bqn 
 \widetilde  I_{i_1\dots i_N}^{\rho_{i_1}\dots \rho_{i_N}}(x, \mu)= O(\mu^{-\infty})
\eqn
uniformly in $x$. 
\end{proposition}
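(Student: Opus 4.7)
The strategy is a non-stationary phase argument in the resolution space. By Corollary \ref{cor:23.09.2015}, in any $(\theta^{(i_1)},\dots,\theta^{(i_{N-1})},\alpha^{(i_N)})$-chart the weak transform $\widetilde\Phi^{wk} := \,^{(i_1\dots i_N)}\widetilde\Phi^{wk}_{\sigma_{i_j},p^{(i_j)},\tilde v^{(i_N)}}$ has no critical points as a function of the integration variables $u=(\alpha^{(i_1)},\dots,\alpha^{(i_N)},h^{(i_N)},\omega)$. Since the amplitude $a_{i_1\dots i_N}^{\rho_{i_1}\dots\rho_{i_N}}$ is compactly supported and the parameters $(\sigma_{i_j},p^{(i_j)},\tilde v^{(i_N)})$ range over a compact set, a uniform lower bound $|\nabla_u\widetilde\Phi^{wk}|\geq C>0$ holds on the relevant region.

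The plan is to define the first-order differential operator
\[
L \;:=\; \frac{1}{i\mu\,\tau_{i_1}\cdots\tau_{i_N}\,|\nabla_u\widetilde\Phi^{wk}|^{2}}\sum_k \overline{\partial_{u_k}\widetilde\Phi^{wk}}\,\partial_{u_k},
\]
which satisfies $L(e^{i\mu\tau_{i_1}\cdots\tau_{i_N}\widetilde\Phi^{wk}})=e^{i\mu\tau_{i_1}\cdots\tau_{i_N}\widetilde\Phi^{wk}}$ on the support of the amplitude, and to apply integration by parts $M$ times. Since the transposed operator ${}^tL$ has smooth bounded coefficients in all the integration variables, this yields the naive bound
\[
\big|\widetilde I_{i_1\dots i_N}^{\rho_{i_1}\dots \rho_{i_N}}(x,\mu)\big|\;\leq\; C_M\prod_{j=1}^{N}|\tau_{i_j}|^{\dim G-\dim H_{i_j}}\,\big(\mu\,|\tau_{i_1}\cdots\tau_{i_N}|\big)^{-M}
\]
with $C_M$ controlled by a finite number of derivatives of $a_{i_1\dots i_N}^{\rho_{i_1}\dots\rho_{i_N}}$, uniformly in the parameters.

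The main obstacle is to convert this into true $O(\mu^{-\infty})$ uniform in $x$, since the factor $(\mu|\tau_{i_1}\cdots\tau_{i_N}|)^{-M}$ is not uniformly small when some $|\tau_{i_j}|\to 0$.  I plan to overcome this in two complementary ways.  First, using that every $|\tau_{i_j}|\leq 1$ and that each exponent $\dim G-\dim H_{i_j}\geq 1$ is strictly positive (since $H_{i_j}$ is a proper subgroup of $G$ for $j\leq N$), one sees that $\prod_{j}|\tau_{i_j}|^{\dim G-\dim H_{i_j}}(\mu|\tau_{i_1}\cdots\tau_{i_N}|)^{-M}\leq \mu^{-M}$ as long as $M\leq \dim G-\dim H_{i_1}$, so the uniform decay $O(\mu^{-d^{(i_1)}})$ is automatic. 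Second, to extend this to arbitrary $M$ I will exploit that the derivatives of the amplitude in the $(\alpha^{(i_1)},\dots,\alpha^{(i_N)},h^{(i_N)})$-directions, read off from the blow-up formula \eqref{eq:21}, produce additional powers of the $\tau_{i_j}$ upon repeated differentiation; combined with a dyadic decomposition in the $|\tau_{i_j}|$-variables that balances, at each scale, the gain from further IBP iterations against the vanishing of the prefactor, this gives the claimed bound $\widetilde I_{i_1\dots i_N}^{\rho_{i_1}\dots \rho_{i_N}}(x, \mu)= O(\mu^{-\infty})$ uniformly in $x$, as worked out in detail in the corresponding argument of \cite{ramacher10}.
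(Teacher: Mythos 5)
The paper's own proof of this Proposition is a one-line citation: Corollary \ref{cor:23.09.2015} (no critical points of the weak transform in the $\alpha^{(i_N)}$-charts) together with H\"ormander's non-stationary phase theorem. You instead set up an explicit integration-by-parts scheme and then notice, correctly, that the naive bound $\prod_j|\tau_{i_j}|^{\dim G-\dim H_{i_j}}(\mu|\tau_{i_1}\cdots\tau_{i_N}|)^{-M}$ is not uniformly $O(\mu^{-\infty})$ in the $\tau_{i_j}$. The concern you raise is genuine and deserves to be made explicit, but the repair you offer does not close the gap, for the following concrete reasons.

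First, your operator $L$ is built assuming $\tau_{i_1}\cdots\tau_{i_N}$ is a constant of the integration. This is correct for $\tau_{i_1},\dots,\tau_{i_{N-1}}$, which come from $\theta$-charts and are functions of $x$ alone. But in an $(\theta^{(i_1)},\dots,\theta^{(i_{N-1})},\alpha^{(i_N)})$-chart the last blow-up is centered in the group-fiber direction: the radial coordinate $\tau_{i_N}$ is identified with the dominant $\alpha^{(i_N)}_{\rho_{i_N}}$-coordinate in $\g^\perp_{p^{(i_N)}}$, so it varies as one integrates over $G$. Consequently $\partial_{u_k}\bigl(\tau_{i_1}\cdots\tau_{i_N}\widetilde\Phi^{wk}\bigr)$ acquires the extra term $\tau_{i_1}\cdots\tau_{i_{N-1}}\widetilde\Phi^{wk}\,\partial_{u_k}\tau_{i_N}$ and the identity $L\bigl(e^{i\mu\tau_{i_1}\cdots\tau_{i_N}\widetilde\Phi^{wk}}\bigr)=e^{i\mu\tau_{i_1}\cdots\tau_{i_N}\widetilde\Phi^{wk}}$ fails. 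This structural difference between the $\theta$- and $\alpha$-charts is precisely what the paper is leaning on when it calls the result immediate, and is lost in your set-up.

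Second, your fallback estimate $O(\mu^{-d^{(i_1)}})$ rests on the assertion ``each exponent $\dim G-\dim H_{i_j}\geq 1$ is strictly positive since $H_{i_j}$ is a proper subgroup.'' This is wrong when fixed points are present: if $H_{i_1}=G$ then $d^{(i_1)}=\dim G-\dim H_{i_1}=0$, the prefactor in $\tau_{i_1}$ is trivial, and the claimed $O(\mu^{-d^{(i_1)}})$ bound collapses to $O(1)$. Third, the claim that differentiating $a_{i_1\dots i_N}^{\rho_{i_1}\dots\rho_{i_N}}$ in the $(\alpha,h)$-directions produces additional powers of $\tau_{i_j}$ does not survive scrutiny: the amplitude is a product $a_{i_1\dots i_N}^{\rho_{i_1}\dots\rho_{i_N}}=(u^\rho\circ\phi^{-1})\cdot\bigl[(b_\mu\chi_k)\circ\zeta\bigr]$, and while the chain rule through $\zeta$ does yield $\tau$-factors on the second factor, the partition of unity $u^\rho\circ\phi^{-1}$ lives intrinsically on the resolution space and its fiber derivatives are $O(1)$, not $O(\tau)$. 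So the gain you want to extract from the dyadic decomposition is not there. Finally, you defer the balancing argument entirely to \cite{ramacher10}, so even granting the sketches, the proof is not self-contained. In short: you found the right question (uniformity in $x$) but the wrong picture of the $\alpha$-chart geometry, and the compensatory devices you propose do not actually compensate.
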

\begin{proof}
This is an immediate consequence of the previous corollary and  the non-stationary phase principle \cite[Theorem 7.7.1]{hoermanderI}. 
\end{proof}

Now,  let us consider the oscillatory integral $I_x(\mu) $ introduced in \eqref{eq:11.9.2017}. Transforming it under the global morphism  $\mathcal{Z}$  we obtain with our previous notation for $x=x_{i_1\dots i_N}^{\rho_{i_1}\dots \rho_{i_N}}$  the decomposition
\begin{align}
\label{eq:65}
I_x(\mu)&=\sum _{N=1}^{\Lambda-1} \Big ( \sum_{\stackrel{i_1<\dots< i_{N-1}<L}{ \rho_{i_1}, \dots ,\rho_{i_{N-1}}}}   I_{i_1\dots i_{N-1} L}^{\rho_{i_1} \dots \rho_{i_{N-1}}}(x, \mu)+ \sum_{\stackrel{i_1<\dots< i_{N}}{ \rho_{i_1}, \dots ,\rho_{i_{N}}}} I_{i_1\dots i_N}^{\rho_{i_1} \dots \rho_{i_{N}}}(x, \mu) \Big ) + \mathcal{R}(x,\mu),
\end{align}
where  the first multiple  sum is one over arbitrary  totally ordered subsets of non-principal isotropy types and corresponding charts, while the second multiple sum is one over arbitrary maximal   totally ordered subsets of non-principal isotropy types and corresponding charts,  and $\mathcal{R}(\mu,x)$ denotes the non-stationary contributions of order $O(\mu^{-\infty})$ 
that arise by localizing the relevant integrals to tubular neighborhoods of the relevant critical sets, or  correspond to integrals  over charts of the resolution spaces where the
weak transforms of the phase functions do not have critical points, compare \cite[Eq. (9.1)]{ramacher10}. Here $I_{i_1\dots i_N}^{\rho_{i_1} \dots \rho_{i_{N}}}(x, \mu)=0$ unless $x=x_{i_1\dots i_N}^{\rho_{i_1}\dots \rho_{i_N}}$ lies in the corresponding chart, and similarly for $I_{i_1\dots i_{N-1} L}^{\rho_{i_1} \dots \rho_{i_{N-1}}}(x, \mu)$.  Since the latter integrals  have an analogous asymptotic description than the one given for the  integrals $I_{i_1\dots i_{N} }^{\rho_{i_1} \dots \rho_{i_{N}}}(x, \mu) $ in Proposition \ref{prop:18.08.2016} we arrive at

\begin{thm} 
\label{thm:31.10.2015}
For every $\tilde N\in \N$, $x \in Y$ and $\eps >0$ (or $x \in Y\cap M_\mathrm{prin}$ and $\eps \geq 0$) one has  
\begin{gather*} 
I_x(\mu)= \sum _{N=1}^{\Lambda-1}\, \sum_{\stackrel{i_1<\dots< i_{N-1}<L}{ \rho_{i_1}, \dots ,\rho_{i_{N-1}}}}  \prod_{l=1}^{N-1} |\tau_{i_l}|^{\dim G-\dim H_{i_l}} \\  
\cdot \left [    \sum_{k=0}^{\tilde N-1} \frac {^k\mathcal{P}_{i_1\dots i_{N-1}L }^{\rho_{i_1} \dots \rho_{i_{N-1}}}(x)}{(\mu |\tau_{i_1} \cdots \tau_{i_{N-1}}|+\eps)^{\kappa+k}}  +O\big ((\mu |\tau_{i_1} \cdots \tau_{i_{N-1}}|+\eps)^{-\kappa -\tilde N}\big )  \right ]\\ 
 +  \sum _{N=1}^{\Lambda-1}\, \sum_{\stackrel{i_1<\dots< i_{N}}{ \rho_{i_1}, \dots ,\rho_{i_{N}}}}  \prod_{l=1}^{N} |\tau_{i_l}|^{\dim G-\dim H_{i_l}}   \left [  \sum_{k=0}^{\tilde N-1} \frac {^k\mathcal{Q}_{i_1\dots i_{N} }^{\rho_{i_1} \dots \rho_{i_{N}}}(x)}{(\mu |\tau_{i_1} \cdots \tau_{i_N}|+\eps)^{\kappa+k}} +O\big ((\mu |\tau_{i_1} \cdots \tau_{i_N}|+\eps)^{-\kappa -\tilde N}\big )  \right ]
 \end{gather*}
 up to terms of order $O(\mu^{-\infty})$, where the multiple sums run over arbitrary totally ordered subsets  and arbitrary maximal  totally ordered subsets of non-principal isotropy types, respectively. Furthermore, all coefficients and remainders are given  explicitly in terms of distributions on the resolution space, and   are  uniformly bounded in  $x$ by $G$-derivatives of the corresponding amplitudes up to order $2k$ and $ 2\tilde N+\kappa +1$, respectively.  If the amplitude $a$  factorizes according to $a(x,y,\omega,g)=a_1(x,y,\omega) \, a_2(x,y,g)$, the remainder can also be estimated by $G$-derivatives 
   up to order  $2\tilde N+\lfloor \kappa/2+1\rfloor$.
\end{thm}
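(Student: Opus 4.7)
The plan is to start from the global decomposition \eqref{eq:65} supplied by the partial desingularization $\mathcal{Z}:\widetilde{\bf X}\to{\bf X}$, and to apply, uniformly in $x$, a stationary-phase expansion of the type furnished by Proposition \ref{prop:18.08.2016} to each of the two kinds of local integrals appearing in \eqref{eq:65}. All ``non-stationary'' contributions, in particular those coming from $(\theta,\alpha)$-charts, will then be absorbed into the universal $O(\mu^{-\infty})$ remainder $\mathcal{R}(x,\mu)$ already present in \eqref{eq:65}.

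For each maximal totally ordered subset $\{(H_{i_1}),\dots,(H_{i_N})\}$ of non-principal isotropy types and each admissible $(\theta^{(i_1)},\dots,\theta^{(i_N)})$-chart labelled by $\rho_{i_1},\dots,\rho_{i_N}$, Proposition \ref{prop:18.08.2016} yields for $I_{i_1\dots i_N}^{\rho_{i_1}\dots\rho_{i_N}}(x,\mu)$ precisely the expansion appearing in the second multiple sum of the claim. The prefactor $\prod_{l=1}^{N}|\tau_{i_l}|^{\dim G-\dim H_{i_l}}$ comes out of the identity $\sum_{r=1}^{j}d^{(i_r)}=\dim G-\dim H_{i_j}$, which follows from $d^{(i_r)}=\dim H_{i_{r-1}}-\dim H_{i_r}$ with $H_{i_0}:=G$. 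The uniform bounds on the coefficients and the remainder by $G$-derivatives of the amplitude of orders $2k$ and $2\tilde N+\kappa+1$, respectively $2\tilde N+\lfloor\kappa/2+1\rfloor$ in the factorised-amplitude case, are inherited verbatim from the corresponding statement of Proposition \ref{prop:18.08.2016}. The associated $(\theta^{(i_1)},\dots,\theta^{(i_{N-1})},\alpha^{(i_N)})$-chart contributions $\widetilde I_{i_1\dots i_N}^{\rho_{i_1}\dots\rho_{i_N}}(x,\mu)$ are of order $O(\mu^{-\infty})$ uniformly in $x$ by Proposition \ref{prop:nonstat}, and are therefore swallowed by $\mathcal{R}(x,\mu)$.

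For the remaining terms $I_{i_1\dots i_{N-1}L}^{\rho_{i_1}\dots\rho_{i_{N-1}}}(x,\mu)$ the iterative desingularization procedure stops after $N-1$ steps, because the residual $G_{p^{(i_{N-1})}}$-action on the associated sphere bundle $S_{i_1\dots i_{N-1}}$ carries only the principal orbit type. The local phase function equals $\tau_{i_1}\cdots\tau_{i_{N-1}}\,^{(i_1\dots i_{N-1})}\widetilde\Phi^{wk}$, and by the same arguments used in Theorem \ref{thm:maincrelle} and Corollary \ref{cor:23.09.2015} the weak transform already possesses a clean critical set of codimension $2\kappa$ with transversally non-degenerate Hessian, without any further blow-up being necessary. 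A direct application of Theorem \ref{thm:SP} together with Remark \ref{rem:25.07.2017}, with asymptotic parameter $\mu|\tau_{i_1}\cdots\tau_{i_{N-1}}|+\eps$ and with the factor $e^{-i\eps\,\widetilde\Phi^{wk}}$ equal to $1$ on the critical set, then produces the first multiple sum of the claim with prefactor $\prod_{l=1}^{N-1}|\tau_{i_l}|^{\dim G-\dim H_{i_l}}$ and uniform coefficient and remainder bounds analogous to those of the previous case.

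The main delicate point I anticipate is the bookkeeping of the auxiliary parameter $\eps$: for $x\in Y\cap M_\mathrm{prin}$ all desingularization parameters $|\tau_{i_l}|$ stay bounded away from $0$, so the asymptotic parameters $\mu|\tau_{i_1}\cdots\tau_{i_N}|+\eps$ are of order $\mu$ already for $\eps=0$; for general $x\in Y$ some $\tau_{i_l}$ may vanish on singular orbits, and taking $\eps>0$ is what keeps the asymptotic parameter away from zero and the whole expansion globally uniform in $x$. Summing the resulting local expansions over $N=1,\dots,\Lambda-1$, over the relevant subsets $\{(H_{i_1}),\dots\}$ of non-principal isotropy types, and over the chart indices $\rho_{i_l}$, and collecting all leftover non-stationary contributions into $\mathcal{R}(x,\mu)=O(\mu^{-\infty})$, then yields exactly the formula stated in the theorem.
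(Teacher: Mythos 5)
Your proposal is correct and follows essentially the same route as the paper, which proves the theorem simply by invoking the decomposition \eqref{eq:65} together with Propositions \ref{prop:18.08.2016} and \ref{prop:nonstat}, and noting that the integrals $I_{i_1\dots i_{N-1}L}^{\rho_{i_1}\dots\rho_{i_{N-1}}}(x,\mu)$ admit an analogous expansion because the weak transform already has a clean critical set on the part of the resolution space where only principal isotropy remains. Your elaboration of why the iteration terminates for the $L$-labelled terms, and your bookkeeping of the parameter $\eps$ and of the $\tau_{i_l}$-prefactors via $\sum_{r\le j}d^{(i_r)}=\dim G-\dim H_{i_j}$, fill in precisely the details the paper leaves implicit.
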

\qed 

\medskip

Theorem \ref{thm:31.10.2015}  gives a simultaneous description of the competing asymptotics $\mu  \to +\infty $ and $\tau_{i_j}\to 0$, and for $\eps >0$ interpolates between the different asymptotics  in Theorem \ref{thm:12.05.2015} (a). For $\eps=0$, it  yields a  description of the singular behaviour of the coefficients in the expansion of $I_x(\mu)$ in Theorem \ref{thm:12.05.2015} (a) as $x\in M_\mathrm{prin}$ approaches singular orbits.  Note that the factors $ |\tau_{i_l}|^{\dim G-\dim H_{i_l}} $  in the expansion of Theorem  \ref{thm:31.10.2015} reflect the fact that the coefficients  become more singular as the dimension of the stabilizer groups $H_{i_l}$ become large, that is, as one approaches more and more singular orbits, answering for the different asymptotics in Theorem \ref{thm:12.05.2015} (a) given by the  exponents $\kappa_x=\dim \mathcal{O}_x$.\footnote{Indeed, assume that   $M_\mathrm{prin} \ni x_{i_1\dots i_N}^{\rho_{i_1}\dots \rho_{i_N}} \to y \in M(H_{i_q})$ in such a way  that  the index  $\tau_{i_q}$ goes to zero with rate $\tau_{i_q}\approx \mu^{-1} \to 0$. Then,  if $\kappa=\dim G$,
\bqn 
\prod_{l=1}^N \frac { |\tau_{i_l}|^{\dim G-\dim H_{i_l}}} {(\mu |\tau_{i_1} \cdots \tau_{i_N}|)^{\kappa}}= \prod_{l=1}^N \frac{|\tau_{i_l}|^{-\dim H_{i_l}}}{\mu^\kappa}\approx O(\mu^{-\dim G+\dim H_{i_q}})=O(\mu^{-\dim \mathcal{O}_{y}}).
\eqn
}
For an exceptional orbit of type $(H_{i_l})$ one has $\dim G - \dim H_{i_l}=\kappa$, so that the corresponding factors $|\tau_{i_l}|^\kappa$ cancel each other, in concordance with Theorem \ref{thm:12.05.2015} (a), by which the summands in the expansion of $I_x(\mu) $ in Theorem \ref{thm:31.10.2015} must stay bounded as one approaches exceptional orbits. Besides, note that the terms with $k\geq 1$ involve derivatives with respect to $g$  that give rise to additional positive powers in the desingularization parameters.
In the same way that Theorem \ref{thm:main} was deduced from Theorem \ref{thm:12.05.2015} (a), the previous theorem allows us to derive  the asymptotic formula for the reduced spectral function we were looking for. First, one deduces

\begin{proposition}[\bf Singular point-wise asymptotics for the kernel of the equivariant approximate projection]
\label{prop:15.11.2015}
For    arbitrary integers $\tilde N_1,\tilde N_2=0,1,2,3, \dots$, fixed $\gamma \in \widehat G$, $x \in M$ and $\eps > 0$ (or $x \in M_\mathrm{prin}\cup M_\mathrm{except}$ and $\eps\geq 0$)  one has for $\mu \to + \infty$ the asymptotic expansion
\begin{gather*}
  K_{ \widetilde \chi_\mu \circ \Pi_\gamma } (x,x) =  \frac{\mu^{n-1}d_\gamma}{(2\pi)^{n-\kappa}}
 \sum_{j=0}^{\tilde N_1-1}  \mu^{-j}  \sum _{N=1}^{\Lambda-1} \, \sum_{i_1<\dots< i_{N}} \prod_{l=1}^{N} |\tau_{i_l}|^{\dim G-\dim H_{i_l}}\\ 
\cdot \left [ \sum_{k=0}^{\tilde N_2-1}  \frac {   \mathcal{L}^{j,k}_{i_1\dots i_{N}}(x,\gamma)}{(\mu |\tau_{i_1} \cdots \tau_{i_{N}}|+\eps)^{\kappa+k}}  +O_\gamma\big ((\mu |\tau_{i_1} \cdots \tau_{i_{N}}|+\eps)^{-\kappa -\tilde N_2}\big )  \right ] 
\end{gather*}
up to terms of order $O(\mu^{n-\tilde N_1-1})$, where the multiple sum runs over all possible totally ordered subsets $\mklm{(H_{i_1}),\dots, (H_{i_N})}$ of singular isotropy types, and all coefficients and remainders are explicitly given by distributions on the resolution space bounded uniformly in $x$ by derivatives of $\gamma$ up to order $2k$ and $2\tilde N_2+\lfloor \kappa/2 + 1\rfloor$, respectively. For $\mu \to -\infty$, the function $K_{\widetilde \chi_\mu \circ \Pi_\gamma}(x,x)$ is rapidly decreasing in $\mu$. 
\end{proposition}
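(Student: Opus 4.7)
The plan is to combine the $(R,t)$-stationary phase expansion from Corollary \ref{cor:12.05.2015}, which expresses $K_{\widetilde \chi_\mu \circ \Pi_\gamma}(x,x)$ as an asymptotic series in $\mu^{-1}$ with coefficients given by $(R,t)$-derivatives of the oscillatory integrals $I^\gamma_\iota(\mu, R, t, x, x)$ evaluated at $(R, t) = (1, 0)$, with the singular equivariant asymptotic expansion of these integrals supplied by Theorem \ref{thm:31.10.2015}. Concretely, Corollary \ref{cor:12.05.2015} gives up to an $x$-uniform remainder of size $O(\mu^{n - \tilde N_1 - 1})$ the identity
\bqn
K_{\widetilde \chi_\mu \circ \Pi_\gamma}(x,x) = \Big (\frac{\mu}{2\pi}\Big )^{n-1} \frac{d_\gamma}{2\pi} \sum_\iota \sum_{j=0}^{\tilde N_1 - 1} \mu^{-j} D^{2j}_{R, t} I^\gamma_\iota(\mu, R, t, x, x)_{|(R, t) = (1, 0)},
\eqn
and the task reduces to inserting into each summand the singular asymptotic expansion of $I^\gamma_\iota(\mu, R, t, x, x)$ obtained from Theorem \ref{thm:31.10.2015}.

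To carry this out, I would apply Theorem \ref{thm:31.10.2015} to $I^\gamma_\iota(\mu, R, t, x, x)$ treating $R, t$ as smooth parameters near $(1, 0)$. The amplitude of $I^\gamma_\iota$ factorizes into an $\omega$-part carrying the $R, t, x, \mu$-dependence and a $g$-part given by $\overline{\gamma(g)}$, so the sharper form of the remainder in Theorem \ref{thm:31.10.2015} applies, producing an error controlled by $\gamma$-derivatives of order at most $2\tilde N_2 + \lfloor \kappa/2 + 1 \rfloor$. The crucial observation is that the desingularization morphism $\mathcal Z$ resolves singularities of $\mathcal N \subset M \times G$, so the parameters $\tau_{i_l}$ and the prefactors $\prod_l |\tau_{i_l}|^{\dim G - \dim H_{i_l}}$ depend only on $x$, not on $R, t$. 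Consequently, $D^{2j}_{R, t}$ acts only on the $(R, t)$-dependent coefficients and remainders of that expansion, leaving the $\tau$-prefactors and the denominators $(\mu |\tau_{i_1} \cdots \tau_{i_N}| + \eps)^{\kappa + k}$ untouched. Defining $\mathcal{L}^{j, k}_{i_1 \dots i_N}(x, \gamma)$ as the $D^{2j}_{R, t}$-derivative at $(R, t) = (1, 0)$ of the corresponding coefficient in Theorem \ref{thm:31.10.2015}, combining the prefactor $(\mu / 2\pi)^{n-1} d_\gamma / (2\pi)$ with the $(2\pi)^\kappa$ produced by the leading stationary phase coefficient of Proposition \ref{prop:18.08.2016} yields the global factor $\mu^{n-1} d_\gamma / (2\pi)^{n - \kappa}$ and the claimed structure of the expansion.

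The uniform bounds on the coefficients and on the remainder in terms of derivatives of $\gamma$ transfer verbatim from Theorem \ref{thm:31.10.2015}, since $D^{2j}_{R, t}$ does not differentiate $\gamma$; similarly, the rapid decay of $K_{\widetilde \chi_\mu \circ \Pi_\gamma}(x, x)$ as $\mu \to -\infty$ is inherited directly from Proposition \ref{prop:0}, reflecting the positivity of the spectrum of $Q$ and the Schwartz character of $\rho$. The main obstacle is the bookkeeping required to reorganize the two multiple sums of Theorem \ref{thm:31.10.2015} --- one over maximal, one over non-maximal totally ordered subsets of non-principal isotropy types, with a last blow-up step along the principal stratum in the first case --- into a single sum over totally ordered subsets of singular isotropy types, absorbing the exceptional contributions (for which $\dim G - \dim H_{i_l} = \kappa$) into the terms of lower rank via the cancellation of $|\tau_{i_l}|^\kappa$ against the corresponding $\kappa$-power in the denominator. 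Equally, one has to verify that the two stationary phase procedures --- in $(R, t)$ around the non-degenerate critical point $(1, 0)$ of $t - R t$, and in the resolution space --- commute in the sense that the remainder bound of Corollary \ref{cor:12.05.2015}, which requires up to $2 \tilde N_1 + 3$ derivatives of $I^\gamma_\iota$ in $(R, t)$, remains controlled after insertion of the joint asymptotic formula. This amounts to tracking the smooth dependence on $R, t$ of the coefficients and remainders supplied by Proposition \ref{prop:18.08.2016} and its analogue for the last-step principal integrals, which follows from the smoothness statements already built into Theorem \ref{thm:SP}.
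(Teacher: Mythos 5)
Your proposal is correct and follows essentially the same route as the paper: combine the $(R,t)$-stationary phase expansion of Corollary \ref{cor:12.05.2015} with the singular equivariant asymptotics of Theorem \ref{thm:31.10.2015}, noting that the desingularization parameters $\tau_{i_l}$ depend only on $x$ so that $D^{2j}_{R,t}$ only differentiates the smooth coefficients, and then reorganize the two multiple sums by merging exceptional and principal contributions into a single sum over totally ordered subsets of singular isotropy types. The normalization factor $(2\pi)^\kappa$, the rapid decay as $\mu\to-\infty$, and the derivative-order bookkeeping via the factorized-amplitude version of the remainder estimate all match the paper's argument.
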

\begin{proof}
The assertion  follows from Corollary \ref{cor:12.05.2015} by applying Theorem \ref{thm:31.10.2015} to the integrals \eqref{eq:02.05.2015},  combing the coefficients $^k\mathcal{P}_{i_1\dots i_{N-1}L }^{\rho_{i_1} \dots \rho_{i_{N-1}}}(x)$ and $^k\mathcal{Q}_{i_1\dots i_{N-1} }^{\rho_{i_1} \dots \rho_{i_{N-1}}}(x)$ in the expansion of $I_x(\mu) $, and collecting  the terms from different charts corresponding to the same subset of isotropy types. Then, one merges the contributions from exceptional and principal isotropy types, taking into account that by Theorem \ref{thm:12.05.2015} (a)  the summands in Theorem \ref{thm:31.10.2015} must stay bounded as one approaches exceptional orbits, all coefficients and remainders in the  expansions being smooth in $R,t$ and uniformly bounded in $x$ by derivatives of $\gamma$ up to order $2k$ and $2\tilde N_2+\lfloor \kappa/2 + 1\rfloor$, respectively.
\end{proof}

Using standard Tauberian arguments we obtain as our third main result

\begin{thm}[\bf Singular equivariant local Weyl law]
\label{thm:15.11.2015}
Let $M$ be  a closed connected Riemannian manifold $M$ of dimension $n$ with an isometric and effective action of a continuous compact   Lie group $G$ and $P_0$ a $G$-invariant elliptic classical pseudodifferential operator on $M$ 
of degree $m$. Let  $p(x,\xi)$ be its  principal symbol, and assume that $P_0$    is positive and symmetric. Denote its  unique self-adjoint extension by  $P$, and for a given $\gamma \in \widehat G$ let $e_\gamma(x,y,\lambda)$ be its reduced spectral counting function. Write 
$\kappa$ for the dimension of an $G$-orbit in $M$  of principal type and   $d_\gamma$ for  the dimension of an irreducible $G$-representation $\pi_\gamma$ of class $\gamma$. Then, for $x \in M_\mathrm{prin}\cup M_\mathrm{except}$ one has the asymptotic formula
 \begin{gather*}
\left |e_\gamma(x,x,\lambda)- \frac{d_\gamma \lambda^{\frac{n-\kappa}{m}}}{(2\pi)^{n-\kappa}} \sum_{N=1}^{\Lambda-1} \,  \sum_{i_1<\dots< i_{N}} \, \prod_{l=1}^{N}   |\tau_{i_l}|^{\dim G- \dim H_{i_l}-\kappa}  \mathcal{L}_{i_1\dots i_{N} }^{0,0}(x,\gamma)    \right | \\
\leq C_\gamma \, \lambda^{\frac{n-\kappa-1}m} \sum_{N=1}^{\Lambda-1}\, \sum_{i_1<\dots< i_{N}}   \prod_{l=1}^N  |\tau_{i_l}|^{\dim G- \dim H_{i_l}-\kappa-1} 
\end{gather*}
 as $\lambda \to +\infty$, where  the multiple sum runs over all possible totally ordered subsets $\mklm{(H_{i_1}),\dots, (H_{i_N})}$ of singular isotropy types, and the coefficients satisfy the bounds
$
 \mathcal{L}_{i_1\dots i_{N}}^{0,0}(x,\gamma) \ll \norm{\gamma}_\infty
 $ 
 uniformly in $x$, while 
  \bqn 
  C_\gamma \ll d_\gamma \sup_{l\leq \lfloor \kappa/2+3\rfloor} \norm{D^l \gamma}_\infty
  \eqn 
  is a constant independent of $x$ and $\lambda$,  the $D^l$ are differential operators on $G$ of order $l$, and the $\tau_{i_j}=\tau_{i_j}(x)$ parameters satisfying  $|\tau_{i_j}|\approx \dist (x, M(H_{i_j}))$.
\end{thm}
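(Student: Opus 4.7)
The plan is to mirror the derivation of Theorem \ref{thm:main} from Proposition \ref{thm:kernelasymp}, now feeding in the finer singular expansion of Proposition \ref{prop:15.11.2015} in place of the smooth one. Since $x \in M_\mathrm{prin} \cup M_\mathrm{except}$ keeps every desingularization parameter $\tau_{i_l}(x)$ bounded away from zero, I may take $\eps = 0$. Choosing $\tilde N_2 = 1$ and $\tilde N_1$ large enough that the outer error $O(\mu^{n-\tilde N_1-1})$ is absorbed into the inner combinatorial remainder, and simplifying the factor $\mu^{n-1}(\mu|\tau_{i_1}\cdots \tau_{i_N}|)^{-\kappa}$ against the weight $\prod_{l=1}^N |\tau_{i_l}|^{\dim G - \dim H_{i_l}}$, Proposition \ref{prop:15.11.2015} yields
\begin{align*}
K_{\widetilde \chi_\mu \circ \Pi_\gamma}(x,x) =\, & \frac{\mu^{n-\kappa-1} d_\gamma}{(2\pi)^{n-\kappa}} \sum_{N=1}^{\Lambda-1} \sum_{i_1 < \dots < i_N} \prod_{l=1}^N |\tau_{i_l}|^{\dim G - \dim H_{i_l} - \kappa}\, \mathcal{L}^{0,0}_{i_1 \dots i_N}(x,\gamma) \\
& + O_\gamma\!\left(\mu^{n-\kappa-2} \sum_{N,\, i_1<\dots<i_N} \prod_{l=1}^N |\tau_{i_l}|^{\dim G - \dim H_{i_l} - \kappa - 1}\right),
\end{align*}
together with rapid decay of $K_{\widetilde \chi_\mu \circ \Pi_\gamma}(x,x)$ as $\mu \to -\infty$.

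Next, I would apply the classical Fourier--Tauberian argument of Duistermaat--Guillemin (\cite[proof of Eq.\ (2.25)]{duistermaat-guillemin75}) exactly as in the derivation of Theorem \ref{thm:main}, integrating the above expansion with respect to $\mu$ from $-\infty$ to $\mu_0 = \lambda^{1/m}$. This raises each $\mu$-exponent by one and produces a factor $1/(n-\kappa)$ in the leading term, which is absorbed into the definition of $\mathcal{L}^{0,0}_{i_1\dots i_N}$, yielding the asserted leading term of order $\lambda^{(n-\kappa)/m}$ and remainder of order $\lambda^{(n-\kappa-1)/m}$. The deconvolution from the smoothed counting function $K_{\widetilde \chi_\mu \circ \Pi_\gamma}(x,x)$ back to $e_\gamma(x,x,\lambda)$ costs two additional orders of smoothness in $\gamma$, pushing the derivative count from $\lfloor \kappa/2 + 1\rfloor$ in Proposition \ref{prop:15.11.2015} up to $\lfloor \kappa/2 + 3\rfloor$, exactly as in the passage from Proposition \ref{thm:kernelasymp} to \eqref{eq:4.6.2017}. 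The uniform estimate $\mathcal{L}^{0,0}_{i_1\dots i_N}(x,\gamma) \ll \norm{\gamma}_\infty$ is inherited from Proposition \ref{prop:18.08.2016}, whose leading coefficient is an integral of $\overline{\gamma(g)}$ against $|\det \mathrm{Hess}\, \widetilde \Phi^{wk}|^{-1/2}$, the determinant being uniformly bounded below by the transversal non-degeneracy of the weak transforms established in Theorem \ref{thm:maincrelle}.

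I expect the main obstacle to be the bookkeeping required so that the Tauberian deconvolution respects the $\tau_{i_l}$-weighted structure of the remainder: the classical argument typically produces an error of the same size as the leading term, so one must first establish a singular analogue of \eqref{eq:21.06.2016} for clusters of eigenvalues in short spectral windows, namely
\[
|e_\gamma(x,x,\lambda+1) - e_\gamma(x,x,\lambda)| \leq C_\gamma \, \lambda^{(n-\kappa-1)/m} \sum_{N=1}^{\Lambda-1} \sum_{i_1<\dots<i_N} \prod_{l=1}^N |\tau_{i_l}|^{\dim G - \dim H_{i_l} - \kappa - 1},
\]
and check that this propagates uniformly in the combinatorial weights under the deconvolution, without spurious cross-terms mixing different chains. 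A consistency check is provided by the observation that on exceptional strata $M(H_{i_l})$ with $\dim H_{i_l} = \dim H_L$ the exponent $\dim G - \dim H_{i_l} - \kappa$ vanishes, so the expansion remains bounded as $|\tau_{i_l}| \to 0$, in accordance with Theorem \ref{thm:12.05.2015}(a).
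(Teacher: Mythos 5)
Your proposal is correct and follows essentially the same route as the paper: apply Proposition \ref{prop:15.11.2015} with $\eps=0$, $\tilde N_2=1$, and $\tilde N_1$ large enough (the paper takes $\tilde N_1=\kappa+1$, which is exactly the threshold you identify), simplify the $\mu$- and $\tau$-powers, and run the Duistermaat--Guillemin Tauberian argument, the jump estimate you flag being the first step of that argument just as in Theorem \ref{thm:main}. The only nuance in your framing is that the derivative count $\lfloor\kappa/2+3\rfloor$ is already baked into the choice $\tilde N_2=1$ via the bound $2\tilde N_2+\lfloor\kappa/2+1\rfloor$ in Proposition \ref{prop:15.11.2015}, rather than arising as a separate deconvolution cost; and the final sum runs only over singular (not exceptional) isotropy types, your observation that exceptional factors collapse to $1$ being precisely what lets one merge the exceptional contributions into the smooth part.
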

\begin{proof}
The assertion follows by integrating the expression for $K_{\Pi_\gamma  \circ \widetilde \chi_\mu} (x,x) $ in  Proposition \ref{prop:15.11.2015} with respect to $\mu$ from $-\infty$ to $\sqrt[m]\lambda$ for the values  $\eps=0$, $\tilde N_1=\kappa+1$, $\tilde N_2=1$ with the arguments given in the proof of Theorem \ref{thm:main}, noting that $ \dim G - \dim H_{i_l}-\kappa\leq 0$ for all $i_l$.
\end{proof}

\begin{rem}
Again, if $G$ is a connected compact semisimple Lie group, in terms of the highest weight $\Lambda_\gamma\in \t^\ast_\C$ of $\gamma \in \widehat G$ we have
$C_{\gamma}  \ll  |\Lambda_\gamma|  ^{2|\Sigma^+|+\lfloor \kappa/2 +3\rfloor}$,  compare Remark \ref{rem:22.5.2017}.
\end{rem}

As an immediate consequence this yields 

\begin{cor}[\bf Singular point-wise bounds for isotypic spectral clusters]
\label{cor:2.12.2015}
In the setting of Theorem \ref{thm:15.11.2015} we have 
\bqn 
\sum_{\stackrel{\lambda_j \in (\lambda,\lambda+1],}{ e_j \in \L^2_\gamma(M)}} |e_j(x)|^2  \leq \begin{cases}  C \, \lambda^{\frac{n-1}m}, & x\in M_\mathrm{sing}, \\
& \\
C_\gamma \,  \lambda^{\frac{n-\kappa-1}m} \sum\limits_{N=1}^{\Lambda-1}\, \sum\limits_{i_1<\dots< i_{N}}\prod\limits_{l=1}^N  |\tau_{i_l}|^{\dim G- \dim H_{i_l}-\kappa-1}, & x\in M-M_\mathrm{sing},  \end{cases}
\eqn
with $C>0$ independent of $\gamma$.  In particular, the bound holds for each individual $e_j \in \L^2_\gamma(M)$ with $\lambda_j \in (\lambda, \lambda+1]$. 
\end{cor}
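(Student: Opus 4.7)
The plan is to reinterpret the spectral cluster sum as a difference of reduced spectral functions and then apply Theorem \ref{thm:15.11.2015} together with the classical Hörmander bound \eqref{eq:1.2} in a case distinction based on the orbit type of $x$. Since the orthonormal basis $\{e_j\}$ is compatible with the Peter--Weyl decomposition \eqref{eq:PW}, one has
\[
\sum_{\substack{\lambda_j \in (\lambda,\lambda+1],\\ e_j \in \L^2_\gamma(M)}} |e_j(x)|^2 \;=\; e_\gamma(x,x,\lambda+1) - e_\gamma(x,x,\lambda),
\]
so the task reduces to bounding this difference.

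For $x \in M - M_\mathrm{sing}= M_\mathrm{prin}\cup M_\mathrm{except}$, I would apply Theorem \ref{thm:15.11.2015} at both $\lambda$ and $\lambda+1$ and take differences. The remainder at each step is directly of the desired form $C_\gamma \lambda^{(n-\kappa-1)/m} \sum_N\sum_{i_1<\dots<i_N}\prod_l|\tau_{i_l}|^{\dim G-\dim H_{i_l}-\kappa-1}$. The contribution from the main term is controlled via
\[
(\lambda+1)^{(n-\kappa)/m} - \lambda^{(n-\kappa)/m} = O\big(\lambda^{(n-\kappa-1)/m}\big),
\]
together with the uniform bound $|\mathcal{L}_{i_1\dots i_N}^{0,0}(x,\gamma)| \ll \|\gamma\|_\infty$ from Theorem \ref{thm:15.11.2015} and the observation $\dim G - \dim H_{i_l} - \kappa \leq -1 + (\dim G - \dim H_{i_l} - \kappa - 1) + 1$, so that each power $|\tau_{i_l}|^{\dim G-\dim H_{i_l}-\kappa}$ in the leading term is dominated (up to a harmless constant, since $|\tau_{i_l}|\leq 1$) by $|\tau_{i_l}|^{\dim G-\dim H_{i_l}-\kappa-1}$. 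This yields the desired estimate on $M - M_\mathrm{sing}$.

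For $x \in M_\mathrm{sing}$, the desingularization-based bound fails because the exponents $\dim G - \dim H_{i_l} - \kappa - 1$ are strictly negative and the corresponding $|\tau_{i_l}|$ vanish. Instead, since the sum over $e_j \in \L^2_\gamma$ is a subsum of the full sum over all $j$,
\[
\sum_{\substack{\lambda_j \in (\lambda,\lambda+1],\\ e_j \in \L^2_\gamma(M)}} |e_j(x)|^2 \;\leq\; \sum_{\lambda_j \in (\lambda,\lambda+1]} |e_j(x)|^2 \;=\; e(x,x,\lambda+1)-e(x,x,\lambda),
\]
and the classical local Weyl estimate \eqref{eq:1.2} of Avacumovi\v{c}--Levitan--H\"ormander gives $e(x,x,\lambda+1)-e(x,x,\lambda)\leq C\lambda^{(n-1)/m}$ with $C$ independent of $x$ and $\gamma$, producing the first case of the bound.

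The final ``in particular'' assertion is immediate: each term $|e_j(x)|^2$ in the sum is non-negative, so any single summand is bounded by the whole cluster sum. The only subtlety worth flagging is the power counting in the leading-term contribution for the non-singular case, where one must verify that the differencing of the main term in Theorem \ref{thm:15.11.2015} really produces the same geometric weight $\prod_l |\tau_{i_l}|^{\dim G-\dim H_{i_l}-\kappa-1}$ as the remainder; this is the only computational point that requires care, but it is a routine consequence of $|\tau_{i_l}|\leq 1$ and the mean value estimate for the $\lambda$-power.
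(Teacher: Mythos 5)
Your proposal is correct and takes the natural (and essentially unique) route: writing the cluster sum as $e_\gamma(x,x,\lambda+1)-e_\gamma(x,x,\lambda)$, differencing Theorem \ref{thm:15.11.2015} on $M_\mathrm{prin}\cup M_\mathrm{except}$ and absorbing the main-term contribution into the remainder via $|\tau_{i_l}|\le 1$, and falling back on the subsum trick plus the classical H\"ormander bound \eqref{eq:1.2} on $M_\mathrm{sing}$, which is exactly what produces a constant $C$ independent of $\gamma$. One small slip: the displayed ``observation'' $\dim G - \dim H_{i_l} - \kappa \leq -1 + (\dim G - \dim H_{i_l} - \kappa - 1) + 1$ is vacuous (and, read as an inequality between the two exponents, has the wrong sign); what you actually need and correctly use in the next clause is that $\dim G - \dim H_{i_l} - \kappa > \dim G - \dim H_{i_l} - \kappa - 1$, so that for $|\tau_{i_l}|\le 1$ one has $|\tau_{i_l}|^{\dim G-\dim H_{i_l}-\kappa} \le |\tau_{i_l}|^{\dim G-\dim H_{i_l}-\kappa-1}$.
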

\qed

We would like to remark that the expansion in Theorem \ref{thm:15.11.2015} is only meaningful if $\lambda$ is sufficiently large compared to the  desingularization parameters $\tau_{i_l}$, more precisely, if
\bqn 
\lambda^{1/m} \prod_l |\tau_{i_l}|>1
\eqn
for all possible combinations of the $\tau_{i_l}$. While  \eqref{eq:29.10.2015} describes the asymptotics of the equivariant spectral function for arbitrary, but  fixed $x\in M$, Theorem \ref{thm:15.11.2015} gives a uniform description of  the  behaviour of the coefficients as $x\in M_\mathrm{prin}$ approaches singular orbits.

 An asymptotic formula for $e_\gamma(x,x,\lambda)$ that  interpolates between the various asymptotic behaviours  in Theorem \ref{thm:main}, in the same way than Theorem \ref{thm:31.10.2015}   interpolates between the different asymptotics  in Theorem \ref{thm:12.05.2015} (a) can be obtained by integrating the expression for $K_{\Pi_\gamma  \circ \widetilde \chi_\mu} (x,x) $ in  Proposition \ref{prop:15.11.2015} with respect to $\mu$ from $-\infty$ to $\sqrt[m]\lambda$ for the values  $\eps=1$, $\tilde N_1=\kappa+1$, $\tilde N_2=1$ with the arguments given in the proof of Theorem \ref{thm:main}. This leads to expressions for  $e_\gamma(x,x,\lambda)$ which  involve the hypergeometric function, in the same way than the associated Legendre polynomials are given in terms of that function \cite[p.\ 188]{hobson}.

\begin{ex} 
\label{ex:7.12.2015}
To illustrate  the desingularization process and our results, let us resume  Example \ref{ex:sphere}, where we considered the action of $G=\SO(2)$ on the standard $2$-sphere $M=S^2\subset \R^3$ by rotations around the $x_3$-axis. The isotropy types are $H_1=\SO(2)$ and $H_2=\mklm{e}$, and the set of maximally singular orbits $M_1(H_1)=\mklm{x_N,x_S}$ is disconnected in this case. Instead of working with the covering \eqref{eq:872}, we can cover $S^2$ with the two charts $Y_1:=S^2-\mklm{x_N}$ and $Y_2:=S^2 - \mklm{x_S}$ by  introducing geodesic polar coordinates  $x=\exp_{x_S} (\tau_1 \tilde v)$ and $x=\exp_{x_N} (\tau_2 \tilde v)$ around the poles,  respectively, where  $\tilde v \in S^1$, and $\tau_i> 0$  equals the induced Riemannian distance of $x$ to the corresponding  pole. Note that $\g_{x_N}^\perp=\g_{x_S}^\perp=\mklm{0}$, so that it is not necessary to perform a blow-up in the group variables, and no additional $O(\mu^{-\infty})$-terms arise. After one iteration, the action is desingularized, and one obtains in agreement with Theorem \ref{thm:31.10.2015} for arbitrary $\tilde N \in \N$ and $\eps \geq 0$ the asymptotic formula
\bqn 
I_x(\mu)= \sum_{i=1,2} \left [ \sum_{k=0}^{\tilde N-1}  \, ^k\mathcal{Q}^i(x) \, (\mu \tau_i + \eps)^{-1-k} + O((\mu\tau_i +\eps)^{-1-\tilde N})\right ],
\eqn
all coefficients being bounded in $x$. In particular, setting $\eps=0$ one sees that the leading coefficient in Theorem \ref{thm:12.05.2015} (a) is given by
\bqn 
2 \pi \, \mathcal{Q}_0(x)= \frac 1 {\tau_1} \, ^0\mathcal{Q}^1(x)+ \frac 1 {\tau_2} \, ^0\mathcal{Q}^2(x), \qquad x \not=x_N,\, x_S,
\eqn
which describes its singular behaviour as one approaches the fixed points. 
This implies for the reduced spectral counting function of the Laplace-Beltrami operator $-\Delta$ on $S^2$ the asymptotics
 \bqn
 \label{eq:1599}
e_m(x,x,\lambda)-\frac{ \sqrt \lambda }{2 \pi}  \frac{ \mathcal{L}(x)}{\text{dist}(x,\mklm{x_N,x_S})}  \ll    \frac {1+|m|^3}{\text{dist}^2(x,\mklm{x_N,x_S})} , \qquad  m \in \Z, \, x \not=x_N,\, x_S,  
\eqn
$\mathcal{L}(x)$  being bounded  in $x$, provided that $\sqrt\lambda \, \text{dist}(x,\mklm{x_N,x_S})>1$,  in agreement with Theorem \ref{thm:15.11.2015}.  From this, we immediately deduce the following pointwise bounds for spherical harmonics. Let $Y_{k,m}$ be 
 the classical spherical functions with $ k \in \N, m \in \Z\simeq \widehat{\SO(2)}$, $|m| \leq l$ satisfying 
\bqn 
-\Delta Y_{k,m} = \lambda_k \, Y_{k,m}, \qquad \lambda_k =k(k+1).
\eqn
Then, from 
\bqn 
e_m(x,x,\lambda+1)-e_m(x,x,\lambda) = \sum_{ \lambda_k \in (\lambda,  \lambda+1]}  \big | Y_{k,m}(x)\big |^2
\eqn
 one  directly infers for fixed $m$  the point-wise  bounds
\bqn 
|Y_{k,m}(x)|^2 \ll  \begin{cases}   (1+|m|^3) \, \sqrt {\lambda_k}, & x=x_N,\, x_S, \\ (1+|m|^3) \, [\mathrm{dist} (x, \mklm{x_N,x_S})]^{-2} , & x \not=x_N,\, x_S,  \end{cases}
\eqn
as $k \to \infty$,  where we took into account the  bound \eqref{eq:31.10.2015}. In particular, this is consistent with \eqref{eq:3.12.2015}. Thus, spherical harmonics with fixed $m$ concentrate on the poles as $k$ becomes large. This fact is in accordance with the probability of finding a classical particle of zero angular momentum  near singular orbits and the shape of the corresponding equivariant quantum limits, see \cite[Section 9.2]{kuester-ramacher15}. Furthermore, if $\mathbf{c}$ denotes a closed geodesic on $S^2$ we obtain for the restriction of $Y_{k,m}$ to  $\mathbf{c}$ the $\L^\infty$-bounds
\bqn 
\norm{Y_{k,m|\mathbf{c}}}_{\infty}=\begin{cases} O_m( \lambda^{1/4}_k), & \text{if }x_N,x_S \in\mathbf{c},  \\ O_{m,\mathbf{c}}(1), & \text{otherwise}, \end{cases}
\eqn
as $k \to \infty$. The foregoing considerations can be immediately generalized to surfaces of revolution diffeomorphic to the $2$-sphere. 
\end{ex}

\section{Sharpness}
\label{sec:sharpness}

To conclude, we show that the obtained bounds are sharp and that, as in the classical case \cite{avacumovic, hoermander68} and \cite[Section 3.4]{sogge14}, they are already attained on the $2$-dimensional sphere. Denote by  $M=S^n$  the standard sphere in $\R^{n+1}$ endowed with the induced metric, and let $\Delta$ be the Laplace-Beltrami operator on $S^n$. The eigenvalues of $-\Delta$ are given by the numbers $ \lambda_k=k(k+n-1)$, where $k=0,1,2,3,\dots$  and the corresponding $d_k$-dimensional eigenspaces $\H_k$ are spanned by     the classical spherical functions $Y_{kl}$, $1 \leq l \leq d_k$, so that 
\bqn 
-\Delta Y_{kl} = \lambda_k \, Y_{kl}.
\eqn
The ${Y_{kl}}$ are orthonormal to each other, and by the spectral theorem we have the decomposition $
\L^2(M)= \bigoplus _{k=0}^\infty \H_k$. Now, let  $G\subset \SO(n)$ be a subgroup of the isotropy group of a point in $S^n\simeq \SO(n+1)/\SO(n)$, and 
\bqn 
\H_k=\bigoplus_{\gamma \in \widehat G} \H_k^\gamma
\eqn
be the decomposition of the eigenspace $\H_k$ into its isotypic components. It is clear that 
$
d_k=\sum_{\gamma \in \widehat G} m_\gamma(k) d_\gamma,
$
where $m_\gamma(k)$ denotes the multiplicity of $\pi_\gamma\in \gamma$ in $\H_k$. Let $\{Z_{kj}^\gamma\}\subset \mathrm{Span} \mklm{Y_{kl}}_{l=1}^{d_k}$ be an orthonormal basis of $\H_k^\gamma$ so that with $\mu=\mu_k-1$, $\mu_k=\sqrt{\lambda_k}$, 
\bqn 
K_{ \chi_{\mu}\circ \Pi_\gamma }(x,y)= \sum_{j=1}^{ m_\gamma(k) d_\gamma} Z_{kj}^\gamma(x) \overline{Z_{kj}^\gamma(y)},
\eqn
$ \chi_{\mu}\circ \Pi_\gamma $ being the projection onto $\H_k^\gamma$. By  Theorem  \ref{thm:main} we have  the bound
\bqn
|K_{ \chi_{\mu}\circ \Pi_\gamma}(x,x)| =|e_\gamma(x,x,\mu_k)-e_\gamma(x,x,\mu_k-1)| \leq C_{x,\gamma} \, \mu_k^{n-\kappa_x-1}, \qquad C_{x,\gamma}>0, \, x \in S^n,
\eqn
while the behaviour near singular orbits is described in  Theorem \ref{thm:15.11.2015}. 
We   now define for fixed $x\in S^n$ the {isotypic zonal eigenfunction}
\bqn
e_{\mu_k}^\gamma: S^n \ni y \longmapsto \sum_{j=1}^{ m_\gamma(k) d_\gamma} Z_{kj}^\gamma(x) \overline{Z_{kj}^\gamma(y)}\in \C,
\eqn
which is  an eigenfunction of $\sqrt{-\Delta}$ for the eigenvalue $\mu_k$ and satisfies
\bqn 
\norm{e_{\mu_k}^\gamma}_{\L^2}=\left (\sum_{j=0}^{ m_\gamma(k) d_\gamma} |Z_{kj}^\gamma(x)|^2 \right )^{1/2} =(K_{\chi_{\mu}\circ \Pi_\gamma}(x,x))^{1/2}.
\eqn
In order to examine the sharpness of the bounds obtained, we specialize  to  the case where $n=2$ and $G=\SO(2)$ acts by rotations around the symmetry axis through the poles. In this case, $\H_k^\gamma$, $\gamma \equiv m \in \Z$, $|m| \leq k$ is spanned by the spherical function
\begin{equation*}
Y_{k,m}(\phi,\theta)=\sqrt{\frac{2k+1}{4\pi}\frac{(k-m)!}{(k+m)!}} P_{k,m}(\cos \theta)e^{im\phi}, \qquad 0\leq \phi<2\pi, \, 0 \leq \theta < \pi, 
\end{equation*}
where  $P_{k,m}$ is the associated Legendre polynomial
\begin{equation*}
P_{k,m}(\alpha):=(-1)^m \left(1-\alpha^2\right)^{\frac{m}{2}}  \frac{d^{m}}{d\alpha^{m}} P_k(\alpha): =\frac{(-1)^m}{2^kk!}\left(1-\alpha^2\right)^{\frac{m}{2}}\frac{d^{k+m}}{d\alpha^{k+m}}\left(\alpha^2-1\right)^{k}.
\end{equation*}
Furthermore,  for the Legendre polynomials $P_k(\cos \theta)$ one has the asymptotics
\bq
\label{eq:27.11.2015}
P_k(\cos \theta)=\sqrt{\frac 2{ \pi k \sin \theta}} \cos \left ( \left (k+\frac 12 \right ) \theta -\frac \pi 4 \right ) +O\left (\frac 1{(k \sin \theta)^{3/2}}\right ),
\eq
where the remainder is uniform in $\theta$ on any interval  $[\eps,\pi - \eps]$ with $0<\eps$ small  \cite[p. 303]{hobson} 
\footnote{There is even an asymptotic expansion of $P_k(\cos \theta)$, provided that $k \sin \theta >1$.}. 
Thus,  in the special case where $m=0$ we see that  with  $\mu=\mu_k-1$ one has in the limit $k \to \infty $  
\bqn 
K_{ \chi_{\mu}\circ \Pi_\gamma }(x,x)= |Y_{k,0}(x)|^2={\frac{2k+1}{4\pi}} |P_{k,0}(\cos \theta)|^2 \approx\begin{cases} \sqrt{\lambda_k}, & x=x_N,x_S,\\
  \frac{1}{{\sin \theta}}\approx \frac 1 {\mathrm{dist}(x,\mklm{x_N,x_S})}, & x \in S^2-\mklm{x_N,x_S}, \end{cases}
\eqn
where $x_N$ and $x_S$ denote the  poles.  Consequently, we conclude that  the remainder estimates in  Theorems \ref{thm:main} and  \ref{thm:15.11.2015}   are sharp in the spectral parameter $\lambda$, but not optimal in the desingularization parameters $\tau_{i_j}$, since in the present case we have $\lambda\approx k^2$, $\sin \theta \approx \theta\approx \tau_{i_j}$, compare also Example \ref{ex:7.12.2015}.  Nevertheless, the estimate given in  Theorem \ref{thm:15.11.2015} qualitatively reflects the singular behaviour of $Y_{k,0}(x)$ as $x$ approaches the poles, and suggests that the asymptotic formula \eqref{eq:27.11.2015} should have a structural  explanation in terms of  caustics of oscillatory integrals.  On the other hand,  the  bound for $|Y_{k,0}(x)|$ implies similar bounds for $e^\gamma_{\mu_k}(y)=Y_{k,0}(x)\overline{Y_{k,0}(y)}$, and that for an eigenfunction $f\in \L^2(S^2)$ of $-\Delta$ belonging to a specific isotypic component with $\norm{f}_{\L^2}=1$ and eigenvalue $\lambda$  the estimate
\bqn 
|f(x)| \leq C_{x,\gamma} \, \lambda^{\frac{n-\kappa_x-1}4}, \qquad x \in S^2,
\eqn
 in Corollary \ref{cor:21.06.2016} cannot be improved in the eigenvalue aspect.  \\ 

To close, let us mention that in the  considered case $M=S^2$ and $G=\SO(2)$
the previous considerations imply for the equivariant counting function $N_{\gamma}(\lambda)$   of the Beltrami-Laplace operator with $\gamma \equiv m$ the estimate
\begin{align}
\label{eq:3.6.2017}
N_{\gamma }(\lambda) = d_\gamma \sum_{\lambda_k \leq \lambda} m_\gamma(k)=\sum_{k(k+1) \leq \lambda, \, |m| \leq k} 1\approx \sum_{|m| \leq k \leq \sqrt{\lambda}} 1\approx \sqrt{\lambda}-|m|,
\end{align}
as $\lambda \to +\infty $. From this one recovers the classical Weyl law
\[
N(\lambda)= \sum_{k(k+1) \leq \lambda} \dim \mathcal{H}_k =\sum_{\gamma \in \widehat G} N_{\gamma}(\lambda)\approx \sum_{|m| \leq \sqrt{\lambda}} (\sqrt{\lambda} -|m|)\approx (2\sqrt\lambda +1)\sqrt \lambda -2 \frac{\sqrt \lambda(\sqrt \lambda+1)}2=\lambda.
\]
The asymptotic formula  \eqref{eq:3.6.2017} implies  that  the equivariant Weyl law proved in \cite[Theorem 9.5] {ramacher10} is sharp up to a logarithmic factor in the remainder estimate, but  shows that  the remainder estimates in  Theorems \ref{thm:main} and  \ref{thm:15.11.2015}   are  not optimal in $\gamma \in \widehat G$. 

\appendix
\renewcommand*{\thesection}{\Alph{section}}

\section{Stationary phase principle and caustics}
\label{appendix}

Our analysis relies on the  generalized stationary phase principle, which we state below. Sketches of proofs can be found in  \cite[Theorem 3.3]{combescure-ralston-robert} and \cite[Theorem 2.12]{varadarajan97}. For a detailed proof, which includes explicit expressions for the coefficients and the remainder term in the stationary phase expansion, see \cite[Theorem 4.1 and Remark 4.2]{ramacher10}.

\begin{thm}[\bf Generalized stationary phase principle]
\label{thm:SP}
Consider   an  $n$-dimensional Riemannian manifold $\M$  with volume density $d\M$, a phase function  $\psi \in \Cinft(\M, \R)$,   and set
\bq
\label{eq:SPT}
I({\mu})=\int_\M e^{i\mu\psi(m)} a(m) \, \d\M(m), \qquad \mu >0,
\eq
where $a(m)\in \CT(\M)$ is an amplitude. In addition, assume that the critical set $$\mathcal C:=\Crit (\psi)=\mklm{m \in \M\mid \psi_\ast:T_m\M \rightarrow T_{\psi(m)}\R \text{ is zero}}$$ of the phase function $\psi$ is clean\footnote{That is, $\mathcal{C}$ is a smooth submanifold and the Hessian of $\psi$ is non-degenerate on $N_{m}\Ccal$ for all $m\in \Ccal$. In this case, we shall also say that the Hessian is \emph{transversally non-degenerate} or that the \emph{transversal Hessian} is non-degenerate at $m\in \Ccal$.}, meaning that  $\psi$ is a Morse--Bott function. Then, for all $\tilde N\in \N$ one has the asymptotic formula 
\bq
\label{eq:07.10.2015}
I(\mu) := e^{i\mu \psi_0}(2\pi/ \mu)^{\frac {n-p}{2}}\left [\sum_{r=0} ^{\tilde N-1} \mu^{-r} \mathcal{Q}_r (\psi,a)+ \mathcal{R}_{\tilde N}(\psi,a;\mu)\right ],
\eq
 where $p$ denotes the dimension of $\Ccal$,   $\psi_0$ is the constant value of $\psi$ on $\mathcal{C}$, and the expressions $\mathcal{Q}_r(\psi,a)$ and $\mathcal{R}_{\tilde N}(\psi,a;\mu)$ can be computed explicitly. 
 Furthermore,  there exist constants $C_{r,\psi}>0$ and  $\widetilde C_{\tilde N,\psi,\eps}>0$  such that 
\begin{align*}
|\mathcal{Q}_r(\psi;a)|&\leq  C_{r,\psi} \, \vol (\supp a \cap \mathcal{C}) \sup _{l\leq 2r} \norm{D^l a }_{\infty,\mathcal{C}}, \\
|\mathcal{R}_{\tilde N}(\psi,a;\mu) | &\leq \widetilde C_{\tilde N,\psi,\eps} \,  \mu^{-\tilde N}  \int_{\mathcal{C}} \, \sup_{l\leq 2\tilde N} \norm{D^l a }_{H^{(n-p)/2+\eps}(N_m\mathcal{C})} \d \sigma_{\mathcal{C}} (m),
\end{align*}
for any $\eps>0$, where $D^l$ are differential operators on $\M$ transversal to $\mathcal{C}$ of order $l$ independent of $\psi$, $H^s$ denotes  the $s$-th Sobolev space, and 
\bq
\label{eq:25.07.2017}
C_{r,\psi} \ll \sup_{m \in \mathcal{C} \cap \supp a} \norm {\Big ( \psi''(m)_{|N_m\mathcal{C}}\Big ) ^{-1}}^r \cdot |\det \psi''(m)_{|N_m\mathcal{C}}|^{-1/2}
\eq
with a similar bound for $\tilde C_{\tilde N,\psi,\eps}$. In particular, 
\bqn
\mathcal{Q}_0(\psi,a)= \int _{\mathcal{C}} \frac {a(m)}{|\det \psi''(m)_{|N_m\mathcal{C}}|^{1/2}} d\sigma_{\mathcal{C}}(m) e^{ i \frac\pi 4 \sigma_{\psi''}},
\eqn
where $d\sigma_{\mathcal{C}}$ stands for the induced volume density on $\mathcal{C}$ and 
 $\sigma_{\psi''}$ for  the constant value of the signature of the transversal Hessian $\psi''(m)_{|N_m\mathcal{C}}$ on $\mathcal{C}$.
\end{thm}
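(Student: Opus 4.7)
The plan is to proceed by the standard localization-plus-normal-form strategy, and then reduce everything to a fiberwise application of the classical (non-degenerate) stationary phase theorem. First, using a smooth partition of unity $\{\chi_{\Ccal}, \chi_0\}$ subordinate to the cover of $\supp a$ by a sufficiently small tubular neighborhood $\mathcal{T}$ of $\Ccal$ and its complement, one splits $I(\mu)=I_{\Ccal}(\mu)+I_0(\mu)$. On the complement, $d\psi\neq 0$, so repeated integration by parts with the operator ${}^tL=\frac{1}{i\mu}\frac{\langle d\psi,\cdot\rangle}{|d\psi|^2}$ yields $I_0(\mu)=O(\mu^{-\infty})$, contributing to the remainder only. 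It remains to analyze $I_{\Ccal}(\mu)$.

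Next I would exploit the cleanness hypothesis to introduce adapted coordinates. By the Morse--Bott normal form (Meyer--Marle--Miranda), on $\mathcal{T}$ one can find a tubular diffeomorphism $\mathcal{T}\simeq N\Ccal$ and a choice of fiber-linear coordinates $y\in \R^{n-p}$ on $N_m\Ccal$ such that $\psi(m,y)=\psi_0+\tfrac{1}{2}\langle H(m) y,y\rangle$, where $H(m):=\psi''(m)_{|N_m\Ccal}$ is the transversal Hessian. Writing $d\M$ in these coordinates as $J(m,y)\, dm\, dy$ with $J$ smooth and positive and $J(m,0)$ equal to the Jacobian relating the induced density $d\sigma_{\Ccal}$ to $d\M$, one obtains after a partition of unity argument
\[
I_{\Ccal}(\mu)=e^{i\mu\psi_0}\int_{\Ccal}\left[\int_{N_m\Ccal} e^{i\mu\langle H(m)y,y\rangle/2}\,\tilde a(m,y)\,dy\right] d\sigma_{\Ccal}(m)+O(\mu^{-\infty}),
\]
where $\tilde a(m,y):=\chi_{\Ccal}(m,y)\,a(m,y)\,J(m,y)/J(m,0)$, smoothly and compactly supported. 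Using the Fourier transform of a Gaussian, the inner integral is the classical oscillatory integral for the non-degenerate quadratic phase $\langle H(m)y,y\rangle/2$, and one applies the standard asymptotic expansion (e.g.\ \cite[Theorem 7.7.5]{hoermanderI}) to it. This produces the factor $(2\pi/\mu)^{(n-p)/2}|\det H(m)|^{-1/2}e^{i(\pi/4)\sigma_{H(m)}}$ multiplying $\tilde a(m,0)+\sum_{r\geq 1}\mu^{-r}\langle H(m)^{-1}D_y,D_y\rangle^r \tilde a(m,y)|_{y=0}/r!\cdot c_r$, yielding the leading coefficient formula as claimed upon noting that $\tilde a(m,0)=a(m)$ and that $\sigma_{H(m)}$ is locally constant.

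Integrating over $\Ccal$ and collecting terms produces the expansion \eqref{eq:07.10.2015}. The bound on $\mathcal{Q}_r$ follows because the $r$-th coefficient involves at most $2r$ $y$-derivatives of $\tilde a$ on $\Ccal$, together with a factor of $|\det H(m)|^{-1/2}\|H(m)^{-1}\|^r$, giving \eqref{eq:25.07.2017}; the factor $\vol(\supp a\cap \Ccal)$ comes from integrating in $m$. For the remainder term the classical stationary phase estimate gives an $L^\infty$-bound on fiber derivatives up to order $2\tilde N$; this does not suffice since we need uniform control on the $\Ccal$-integral. The \emph{main obstacle} is therefore the Sobolev-type remainder estimate in $H^{(n-p)/2+\varepsilon}(N_m\Ccal)$. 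To obtain it one replaces the pointwise remainder estimate by the sharper inequality in \cite[Theorem 7.7.1]{hoermanderI} / \cite[Lemma 7.7.3]{hoermanderI}, rescaling $y\mapsto y/\sqrt{\mu}$ inside the fiber integral to reduce to a single oscillatory factor and then applying Sobolev embedding in dimension $n-p$ (which is precisely why the threshold $(n-p)/2+\varepsilon$ appears). The derivative bound \eqref{eq:25.07.2017} for $\widetilde C_{\tilde N,\psi,\varepsilon}$ then follows by keeping track of all factors of $H(m)^{-1}$ produced in the Taylor expansion of the amplitude and in the Plancherel-type estimate used to pass from pointwise to Sobolev control. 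This step is routine but notationally intensive; full details, including explicit expressions for the higher coefficients $\mathcal{Q}_r$, are carried out in \cite[Theorem 4.1 and Remark 4.2]{ramacher10}.
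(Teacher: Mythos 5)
The paper does not give a self-contained proof of this theorem; it defers to \cite[Theorem 3.3]{combescure-ralston-robert}, \cite[Theorem 2.12]{varadarajan97}, and, for the detailed version with explicit coefficients and the Sobolev-type remainder estimate, to \cite[Theorem 4.1 and Remark 4.2]{ramacher10}. Your proposal follows exactly the standard strategy underlying those references (localization away from $\Ccal$ by non-stationary phase, tubular neighborhood plus parametric Morse lemma to reduce the phase to a pure quadratic form in the normal variables, fiberwise Gaussian stationary phase, then integration over $\Ccal$), and the place you flag as the \emph{main obstacle}---upgrading the pointwise remainder control to an $H^{(n-p)/2+\eps}(N_m\Ccal)$ bound via scaling and Sobolev embedding in dimension $n-p$---is indeed the genuinely nontrivial refinement carried out in \cite{ramacher10}. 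The only slip is terminological: the normal form $\psi(m,y)=\psi_0+\tfrac12\langle H(m)y,y\rangle$ comes from a fiber-\emph{preserving} (not fiber-linear) diffeomorphism $(m,y)\mapsto(m,T(m,y))$ with $T(m,0)=0$ and $\partial_yT(m,0)=\mathrm{id}$; the latter two conditions are exactly what guarantee $J(m,0)$ is the correct Jacobian and $\tilde a(m,0)=a(m)$, as you use. With that correction the sketch is sound and matches the cited proofs.
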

\qed 

\begin{rem}
\label{rem:25.07.2017}
In the setting of the previous theorem, suppose that $\M=\M_1\times \M_2$ is a product manifold, as well as  $\mathcal{C}=\mathcal{C}_1\times \mathcal{C}_2$, where $\mathcal{C}_i\subset \M_i$ are submanifolds of codimension $q_i$, and that the amplitude factorizes according to $a(m)=a_1(m_1) \, a_2(m_2)$,  $m=(m_1,m_2) \in \M$. Then, the remainder term can be estimated according to
\bqn
|\mathcal{R}_{\tilde N}(\psi,a;\mu) | \leq \widetilde C_{\tilde N,\psi,\eps} \,  \mu^{-\tilde N}  \prod_{i=1,2} \int_{\mathcal{C}_i} \, \sup_{l\leq 2\tilde N} \norm{D_i^l a_i }_{H^{q_i/2+\eps}(N_{m_i}\mathcal{C}_i)} \d \sigma_{\mathcal{C}_i} (m_i)
\eqn
for any $\eps>0$, the $D_i^l$ being differential operators on $\M_i$ transversal to $\mathcal{C}_i$ of order $l$. This allows one to estimate the remainder term by derivatives of the amplitudes $a_i$ of lower order. 
\end{rem}

\begin{rem}
As stated, the expansion \eqref{eq:07.10.2015} is valid for arbitrary $\mu>0$, though  the case of interest is when $\mu \to +\infty $, since then the error becomes smaller than the other terms. In essence, the point is that by Taylor's formula one has
\bqn 
\left | e^{it} - \sum_{k=0}^{N-1} \frac {(it)^k}{k!} \right | =O(|t|^N) \qquad \text{for arbitrary }\, t \in \R,
\eqn
no matter how large $|t|$ is, though the estimate is only meaningful for $|t| <1$. 
\end{rem}

One of the main concerns of  this paper is  extrapolating between stationary phase expansions of different orders. Thus, consider  an integral of the form \eqref{eq:SPT} with a clean critical set, let $\tau\geq 0$ be an additional parameter, and define the integral
\bqn 
I(\mu,\tau ) := \int_\M e^{i\mu\tau \psi(m)} a(m) \, \d\M(m).
\eqn
Depending on the value of $\tau$, it will exhibit different asymptotic behaviours in $\mu$. Indeed, for $\tau>0$ the integral $I(\mu,\tau )$ decreases with order $O(\mu^{-\frac{n-p}2})$, while for $\tau=0$ it is actually independent of $\mu$. This behaviour is reflected in the fact that  if we apply the previous theorem to the integral $I(\mu,\tau )$, either with $\mu \tau$ as asymptotic parameter, or with $\tau \psi$ as phase function, we would arrive at an expansion of the form \eqref{eq:07.10.2015} in which the coefficients in the expansion blow up as $\tau \to 0$ due to the  abrupt change of the  critical set of the phase function $ \tau \psi(m) $ when $\tau$ becomes zero. In general, if $\psi_\aleph\in \Cinft(\mathcal{M},\R)$ denotes a family of phase functions depending on a parameter $\aleph$ such that $\Crit (\psi_\aleph)$ is clean for generic values of $\aleph$, one understands by a \emph{caustic point} for this family  a parameter value $\aleph$ such that  $\Crit (\psi_\aleph)$ is not clean or where $\Crit (\psi_\aleph)$ changes drastically its dimension, compare  \cite{varadarajan97}. With this terminology,  in the situation above $\tau=0$ constitutes a caustic point. Nevertheless, it is possible to derive an adequate asymptotic expansion for $I(\mu,\tau)$ that smoothly interpolates between the different asymptotics, and takes into account the competing asymptotics $\mu \to +\infty $ and $\tau \to 0$,   based on the following simple idea.  Let $\epsilon\geq 0$ be   a fixed positive real number, and consider the integral
\bqn 
I_\eps({\mu}):=\int_\M e^{i\mu\psi(m)}   e^{-i\eps \psi(m)} a(m) \, \d\M(m).
\eqn
Clearly, $I(\mu)=I_\eps(\mu+\eps)$. Since $e^{-i\eps \psi}$ is independent of $\mu$, we can apply the previous theorem with $\mu+\eps$ as parameter, obtaining for each $\tilde N \in \N$ and each $\eps\geq 0$ the asymptotic formula
\bq
\label{eq:07.10.2015a}
I(\mu) = e^{i(\mu+\eps) \psi_0}\Big (\frac{2\pi}{\mu+\eps}\Big )^{\frac {n-p}{2}}\sum_{r=0} ^{\tilde N-1} (\mu+\eps)^{-r} \mathcal{Q}_r (\psi, e^{-i\eps \psi} a)+ \mathcal{R}_{\tilde N}(\psi, e^{-i\eps \psi} a; \mu+\eps).
\eq
Because 
\bqn 
\frac 1{\mu+\eps}=\frac 1 \mu \cdot \frac 1{1+\frac \eps\mu}=\frac 1 \mu \sum_{k=0}^\infty \Big ( \frac {-\eps}\mu \Big )^k= \frac 1 \mu -\frac \eps{\mu^2} + \frac{\eps^2}{\mu^3} - \cdots, \qquad \eps/\mu<1,
\eqn
the expansion \eqref{eq:07.10.2015} is consistent with the expansion \eqref{eq:07.10.2015a}, the respective corrections being of lower order. 
Now, if we apply  the previous argument to $I(\mu,\tau)=I(\mu \tau)$ we obtain
\bqn 
I(\mu,\tau ) = e^{i(\mu\tau +\eps) \psi_0}\Big (\frac{2\pi}{\mu\tau +\eps}\Big )^{\frac {n-p}{2}}\sum_{r=0} ^{\tilde N-1} (\mu\tau +\eps)^{-r} \mathcal{Q}_r (\psi, e^{-i\eps \psi} a)+ \mathcal{R}_{\tilde N}(\psi, e^{-i\eps \psi} a; \mu\tau +\eps)
\eqn
 as $\mu \to +\infty $. The formula is only meaningful for $\tau \mu +\eps>1$, and  simultaneously describes the asymptotic behaviour of $I(\mu,\tau)$  in the competing parameters $\tau$ and $\mu$. For $\eps>0$, it  interpolates between the asymptotics $O(\mu^{-\frac{n-p}2})$ and $O(\mu^0)$  in a smooth way; in fact, for $\tau=0$ it simply collapses to $ \int_\M  a \, \d\M$.

%\bibliography{bibliography}
%\bibliographystyle{amsplain}

\providecommand{\bysame}{\leavevmode\hbox to3em{\hrulefill}\thinspace}
\providecommand{\MR}{\relax\ifhmode\unskip\space\fi MR }
% \MRhref is called by the amsart/book/proc definition of \MR.
\providecommand{\MRhref}[2]{%
  \href{http://www.ams.org/mathscinet-getitem?mr=#1}{#2}
}
\providecommand{\href}[2]{#2}

\end{document}